\documentclass{amsart}
\usepackage{amssymb}
\usepackage{amsthm}
\usepackage{indentfirst}
\usepackage{amsmath}
\usepackage{amscd}
\usepackage{xypic}

\numberwithin{equation}{section}

\newtheorem{Theorem}{Theorem}[section]

\newtheorem{Proposition}[Theorem]{Proposition}
\newtheorem{Lemma}[Theorem]{Lemma}
\newtheorem{Notation}[Theorem]{Notation}

\newtheorem{Assumption-Notation}[Theorem]{Assumption-Notation}

\newtheorem{Remark}[Theorem]{Remark}
\newtheorem{Corollary}[Theorem]{Corollary}
\newtheorem{Problem}{Problem}

\newtheorem{Claim}[Theorem]{Claim}
\newtheorem{Fact}[Theorem]{Fact}
\newtheorem{Example}[Theorem]{Example}
\newtheorem*{Acknowledgments}{Acknowledgments}


\begin{document}
\title[Surfaces with $p_g = 0, K^2 = 5$]{Surfaces with $p_g = 0$, $K^2 = 5$ and bicanonical maps of degree 4}
\address{Lei Zhang\\School of Mathematics Sciences\\Peking University\\Beijing 100871\\P.R.China}
\email{lzhpkutju@gmail.com}
\author{Lei Zhang}
\maketitle

\textbf{Abstract} Let $S$ be a minimal surface of general type
with $p_g(S) = 0, K_S^2 = 5$ and bicanonical map of degree 4. Denote by $\Sigma$ the bicanonical image. If $\Sigma$ is smooth, then $S$ is a Burniat
surface; and if $\Sigma$ is singular, then we reduced $\Sigma$ to one case and described it, furthermore $S$ has at most one $(-2)$-curve.

\section{Introduction}

Let $S$ be a minimal surface of general type with $p_g(S) = 0$;
denote by $\phi: S \rightarrow \mathbb{P}^{K_S^2} $ its
bicanonical map and by $\Sigma$ the bicanonical image. It is known
that the image $\Sigma$ is a surface for $K_S^2 \geq 2$ (see
\cite{X1}) and $\phi$ is a morphism for $K_S^2 \geq 5$ (see
\cite{M}, \cite{Re}). For $K_S^2 \geq 2$, we denote by $d$ the
degree of $\phi$. Mendes Lopes and Pardini proved: if $K_S^2 = 9$,
then $d=1$; if $K_S^2 = 7,8$, then $d =1,2$; if $K_S^2 = 3,4$ and
$|2K_S|$ is base point free or if $K_S^2 = 5,6$, then $d = 1,2,4$
(see \cite{MP3}, \cite{MP5}). When $d>1$, its image is relatively
simple, so it is possible to describe the surface $S$ precisely.
For more details, we refer the readers to \cite{MP2}, \cite{MP3},
\cite{MP6} and \cite{Par2}. In particular, the surfaces with
$K_S^2 = 6$ and $d = 4$ have been completely characterized: they
are exactly Burniat surfaces (see \cite{MP2}). And in \cite{MP4},
the authors proposed the following problem:
\begin{Problem}\label{Problem} Is it possible to characterize surfaces with $K^2 = 5,p_g =
0$ and $d = 4$? \end{Problem} In this paper, we answer the
problem in the case that the bicanonical image is smooth. Our main
result is:
\begin{Theorem}\label{Theorem A} Let $S$ be a smooth minimal surface of general type
with $p_g(S) = q(S) = 0, K_S^2=5$ and bicanonical map of degree 4.
If $S$ has smooth bicanonical image, then it is a Burniat surface.
\end{Theorem}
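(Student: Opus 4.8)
The plan is to identify $\Sigma$ explicitly, then to show that the bicanonical map is composed with an action of $(\mathbb{Z}/2)^2$ on $S$, and finally to recognise the resulting $(\mathbb{Z}/2)^2$-cover as a Burniat surface. Riemann--Roch and Mumford vanishing give $h^0(2K_S)=\chi(\mathcal O_S)+K_S^2=6$, so $\phi\colon S\to\mathbb P^5$; it is a morphism since $K_S^2\ge 5$, and $\phi^*\mathcal O_{\mathbb P^5}(1)=2K_S$. As $2K_S$ is ample, $\phi$ is finite and flat onto the smooth surface $\Sigma$, so $\deg\phi\cdot\deg\Sigma=(2K_S)^2=20$; with $\deg\phi=4$ this forces $\deg\Sigma=5$. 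Since $H^0(\omega_\Sigma^{\otimes n})$ and $H^0(\Omega^1_\Sigma)$ inject into the corresponding groups of $S$, the smooth surface $\Sigma$ has $q(\Sigma)=0$ and vanishing plurigenera, hence is rational; among smooth rational surfaces of degree $5$ in $\mathbb P^5$ only the anticanonically embedded del Pezzo surface of degree $5$ and the (projected) rational normal scrolls occur, and the scroll case must be excluded --- a ruling would pull back to a genus-$2$ pencil on $S$, which is incompatible with $\phi$ having degree $4$. Thus $\Sigma$ is $\mathbb P^2$ blown up at four points in general position, $-K_\Sigma$ is the hyperplane class, and $2K_S=\phi^*(-K_\Sigma)$.

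\emph{The Klein four-group.} The heart of the proof is to show that $\mathrm{Aut}(S)$ contains a group $G\cong(\mathbb{Z}/2)^2$ with $S/G$ birational to $\Sigma$, so that $\phi$ factors as $S\to S/G\to\Sigma$. Following the method of Mendes Lopes and Pardini for $K^2=6$ \cite{MP2}, one uses the conic-bundle structures of the del Pezzo surface $\Sigma$: pulling back a pencil of conics $|\Gamma|$ (with $\Gamma^2=0$, $-K_\Sigma\cdot\Gamma=2$) gives a base-point-free pencil $|C|=|\phi^*\Gamma|$ on $S$ with $C^2=0$, $K_S\cdot C=4$, hence a genus-$3$ fibration $S\to\mathbb P^1$ whose general fibre maps $4:1$ onto $\mathbb P^1$. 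A numerical and monodromy analysis of this degree-$4$ cover of $\mathbb P^1$ --- using $p_g(S)=q(S)=0$ and the fact that $\phi$ restricts to it --- forces it to be a $(\mathbb{Z}/2)^2$-cover branched over six points; as the conic pencil runs over the several conic-bundle structures on $\Sigma$, the fibrewise involutions glue to a faithful $(\mathbb{Z}/2)^2$-action on $S$ inducing a birational morphism $S/G\to\Sigma$. I expect this to be the main obstacle: one must rule out the other Galois/monodromy types of a degree-$4$ cover (cyclic $\mathbb{Z}/4$, dihedral, and the non-Galois cases) and control the $(-1)$- and $(-2)$-curves, which requires the full Reider/Bombieri--Francia analysis of $|2K_S|$ together with a study of the three intermediate double covers $S/\sigma_i\to\Sigma$.

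\emph{Identification with a Burniat surface.} Resolving $G$ one obtains a smooth $(\mathbb{Z}/2)^2$-Galois cover $\hat f\colon\hat S\to\hat\Sigma$ with $\hat\Sigma$ a blow-up of $\Sigma$ and $\hat S$ birational to $S$, determined by building data $(L_1,L_2,L_3;D_1,D_2,D_3)$ with $2L_i\equiv D_j+D_k$. Pushing the branch divisor down to $\Sigma$, the relation $2K_S=\phi^*(-K_\Sigma)$ forces $D_1+D_2+D_3\in|-3K_\Sigma|$, so it is assembled from lines, conics and anticanonical curves of the degree-$5$ del Pezzo; the conditions $\chi(\mathcal O_S)=1$ and $p_g(S)=0$ impose the vanishing of $H^*(K_{\hat\Sigma}+L_i)$, which together with the minimality of $S$ and $K_S^2=5$ determines the number of blow-ups and the self-intersections of the $D_i$, while the nefness of $K_S$ excludes the degenerate arrangements. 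The configuration that survives is exactly the Burniat one --- the strict transforms of nine suitable lines of a plane point-configuration, grouped into the three branch divisors --- so $S$ is a Burniat surface with $K_S^2=5$. This last identification is the bookkeeping with $(\mathbb{Z}/2)^2$-cover formulas already developed in \cite{MP2}, now carried out over the del Pezzo surface of degree $5$.
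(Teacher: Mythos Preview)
Your overall strategy --- identify $\Sigma$, produce a $(\mathbb{Z}/2)^2$-action, read off the building data --- matches the paper's, and your identification of $\Sigma$ is fine (the paper simply cites Nagata). But the ``heart of the proof'' paragraph contains a genuine gap, and it is precisely the place where the $K^2=5$ case diverges from the $K^2=6$ argument of \cite{MP2}. You propose to look at the genus-$3$ fibration $|C|=\phi^*|\Gamma|$ and to show by a ``numerical and monodromy analysis'' that the restriction of $\phi$ to a general fibre is a $(\mathbb{Z}/2)^2$-cover of $\mathbb{P}^1$. You give no mechanism for this, and none is apparent: a genus-$3$ curve admits many degree-$4$ maps to $\mathbb{P}^1$ that are not bielliptic, and nothing in the numerical data $p_g=q=0$, $K^2=5$ singles out the $(\mathbb{Z}/2)^2$ type fibre by fibre.

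The paper's route is quite different and rests on an ingredient you do not mention: one must first prove that among the ten $(-1)$-curves on $\Sigma$ there is exactly one (up to automorphism) whose pullback to $S$ is a \emph{reduced} smooth rational $(-4)$-curve $E_4$, while the other nine pull back as $2E$ with $E^2=-1$, $K_SE=1$ (Lemma~\ref{A.d}, Proposition~\ref{mainprop}, Proposition~\ref{mainprop2}). The existence of this $(-4)$-curve is the main technical obstacle; its proof (Section~\ref{prop}) is a long contradiction argument analysing the ramification divisor $R\equiv 3K_S$ under the hypothesis that all ten pullbacks are non-reduced. Once $E_4$ is in hand, the double fibres of the three natural pencils $|F_i|$ give $2$-torsion classes $\eta_i$, and one finds a further class $\eta$ with $2(-\eta+\eta_i)\equiv E_4$. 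These relations produce three double covers $Y_i\to S$ branched over $E_4$ with $q(Y_i)=1$; the Albanese pencil of each $Y_i$ descends to one of the $|F_j|$, and a careful count of double fibres (at most three per pencil, Step~1) plus a Stein-factorisation trick (Step~4) shows that the general $F_i$ is hyperelliptic. The three hyperelliptic involutions then generate the $(\mathbb{Z}/2)^2$. In short: the involutions are not obtained by analysing a single degree-$4$ map $F_i\to\mathbb{P}^1$, but by constructing auxiliary irregular double covers of $S$ branched on the $(-4)$-curve, and this is only possible once that curve has been exhibited. Your proposal skips this step entirely.
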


\begin{Remark}
For the moduli space of the surfaces described in Theorem \ref{Theorem A}, by Theorem 1.2 in \cite{BC}, it is an irreducible connected component, normal and rational of dimension 3. In fact, Bauer and Catanese studied systematically the moduli spaces of Burniat surfaces with $K^2 = 4,5,6$. For more details, we refer the readers to \cite{BC}.
\end{Remark}

As to the bicanonical image, we proved:
\begin{Theorem}\label{Theorem B}
Let $S$ be as in Theorem \ref{Theorem A}, and denote by $\Sigma$ its bicanonical image. If $\Sigma$ is not smooth, then it is the image of $\psi: \hat{P} \rightarrow \mathbb{P}^5$, where $\hat{P}$ is isomorphic to the blow-up of $\mathbb{P}^2$ at four points $P_1,P_2,P_3,P_4$ such that $P_1, P_2, P_3$ lie on a line, $P_3$ is
infinitely near to $P_2$, $P_1,P_4$ are distinct from $P_2,P_3$, and $\psi$ is defined by the linear system $|-K_{\hat{P}}|$. Moreover, $S$ contains at most one $(-2)$-curve, and the bicanonical map can not lift to a morphism to $\hat{P}$.
\end{Theorem}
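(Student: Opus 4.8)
The plan is to run the same machinery that gives Theorem~\ref{Theorem A}, now keeping track of the singularities of $\Sigma$. Since $(2K_S)^2=4K_S^2=20$, $|2K_S|$ is base point free and $\deg\phi=4$, we get $\deg\Sigma=5$, while $h^0(2K_S)=K_S^2+\chi(\cO_S)=6$, so $\Sigma\subset\PP^5$ is a nondegenerate surface of degree $5=\operatorname{codim}\Sigma+2$, i.e.\ a del Pezzo surface in the sense of Fujita; as in the proof of Theorem~\ref{Theorem A} one identifies its minimal resolution $\bar P\to\Sigma$ with a weak del Pezzo surface of degree $5$, that is, the blow-up of $\PP^2$ at four points $P_1,\dots,P_4$ in almost general position, and $\Sigma$ with the image of $\bar P$ under $|-K_{\bar P}|$. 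If the four points are in general position then $\Sigma$ is smooth and Theorem~\ref{Theorem A} applies, so one has to show that among the finitely many \emph{special} configurations exactly one can occur. Recall also that $\phi$ carries a $(\ZZ/2)^2$-structure: there are a birational morphism $\tilde S\to S$, a birational morphism $\tilde P\to\bar P$ chosen so that the branch data become normal crossing, and a flat $(\ZZ/2)^2$-cover $f\colon\tilde S\to\tilde P$ with building data $(L_1,L_2,L_3;\bar D_1,\bar D_2,\bar D_3)$ pulled back from $\bar P$; the equalities $p_g(S)=q(S)=0$, $\chi(\cO_S)=1$, $K_S^2=5$ translate into vanishings $h^i(\bar P,\cO_{\bar P}(L_j))=0$ together with a numerical identity relating $K_{\bar P}^2=5$, the classes $L_j$, the $\bar D_j$, and the curves one must blow up to adapt the branch locus.

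First I would list the special configurations of four points of $\PP^2$ in almost general position — one infinitely near point; three collinear points; a chain of two infinitely near points; three collinear points one of which is infinitely near; and so on — which is just the list of weak del Pezzo quintics, with du Val singularities of types $A_1$, $2A_1$, $A_2$, $A_1{+}A_2$, $A_3$, $A_4$. For each resulting $\bar P$ I would record its finitely many $(-2)$-curves and its effective cone, and ask whether there exists building data on $\bar P$ compatible with the three requirements above: the $\bar D_j$ effective with only the singularities allowed by the resolution procedure, the cohomology vanishings forcing $p_g=q=0$, and the numerical identity giving $K_S^2=5$ for the minimal model of general type. I expect that every configuration other than the one in the statement either admits no such data, or forces the preimages of the $(-2)$-curves of $\bar P$ to contribute too much, so that the minimal model has $K^2<5$, or produces additional $(-2)$-curves incompatible with the count below. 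This configuration-by-configuration elimination — especially for the configurations with long chains of $(-2)$-curves ($A_3$, $A_4$), where the branch divisors are forced to meet the chain and the bookkeeping of the required blow-ups of $\bar P$, hence of $S$, is delicate — is where the real work lies; it is the main obstacle.

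For the surviving configuration — $P_1,P_2,P_3$ on a line, $P_3$ infinitely near $P_2$, $P_4$ a general fourth point — the surface $\bar P$ has exactly two disjoint $(-2)$-curves, the strict transform $\Gamma_1$ of the line (of class $H-E_1-E_2-E_3$) and $\Gamma_2=E_2-E_3$, so $\Sigma$ has two nodes. Here I would produce explicit building data realising $S$, verify all the vanishings and the numerical identity (giving $p_g(S)=q(S)=0$, $K_S^2=5$, and $S$ of general type with the stated minimal model), and examine how $f$ sits over $\Gamma_1$ and $\Gamma_2$. One finds that the cover is branched along one of the two curves, say $\Gamma_1$, so that $f^{-1}(\Gamma_1)$ accounts for at most one $(-2)$-curve of $S$; along the other curve $\Gamma_2$ the cover is unramified, which forces auxiliary blow-ups of $\bar P$ in the resolution procedure, and the preimage of $\Gamma_2$ together with the resulting exceptional curves is contracted on passing from $\tilde S$ to the minimal model $S$. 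Hence $S$ contains at most one $(-2)$-curve. Since no curve of $S$ maps to the node $\phi(\Gamma_2)$ of $\Sigma$, the bicanonical map $\phi$ is quasi-finite there; but a quasi-finite morphism from a smooth surface onto a node cannot be lifted to the minimal resolution of the node, so $\phi$ does not lift to a morphism $S\to\hat P$. This is precisely the feature separating the present case from the $K^2=6$ Burniat situation, where the bicanonical map is a morphism onto the weak del Pezzo sextic.

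I should stress the scope of the argument: unlike Theorem~\ref{Theorem A}, this does not attempt a full classification of the surfaces $S$ with singular bicanonical image — it only pins down $\Sigma$ and extracts the two structural consequences — so the enumeration in the second step need only exclude configurations for $\Sigma$, not reconstruct the corresponding moduli.
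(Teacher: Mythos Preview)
Your proposal rests on an unjustified premise: you assert that ``$\phi$ carries a $(\ZZ/2)^2$-structure'' and then organise the whole argument around building data for a bidouble cover. But nothing in the hypotheses gives you this. Even in the smooth case of Theorem~\ref{Theorem A}, the Galois property of $\phi$ is a \emph{conclusion}, obtained only after producing three hyperelliptic genus-$3$ pencils and checking that the associated involutions are distinct (Steps~4--5 of that proof). For singular $\Sigma$ the paper never establishes that $\phi$ is Galois, and your plan to ``produce explicit building data realising $S$'' in the surviving case runs directly against the paper's own remark that $S$ \emph{cannot} be a bidouble cover over $\hat{P}$ (indeed the authors failed to construct any such $S$ at all). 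So the elimination step---checking which weak del Pezzo configurations admit compatible bidouble data---is attacking the wrong question, and the positive analysis of the $\hat{P}_2$ case is based on a structure that may not exist.

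The paper's route is quite different and does not use any Galois structure of $\phi$. For each candidate $\hat{P}_i$ one writes $2K_S\equiv 2L+E+Z$ from a decomposition of $-K_\Sigma$, takes the associated \emph{double} cover $Y\to S$ branched along the odd part, and computes $\chi(\cO_Y)$, $p_g(Y)$, $q(Y)$ via formula~(\ref{tool}) together with Lemma~\ref{pullback}. Whenever $q(Y)\ge 1$, Propositions~\ref{A.i}--\ref{C.i} produce a fibration $g\colon S\to\PP^1$ with several double fibres; one then either (a) feeds the half-fibres into the ramification divisor of $\bar\phi$ and applies Lemma~\ref{ramification} to a suitable $(-1)$-curve on $\Sigma$ (the ``ARDP'' step), or (b) observes that every $(-2)$-curve of $S$ lies in a fibre, so by Remark~\ref{rpnf} the fibration descends to $\hat{P}$, and Proposition~\ref{fibers} lists which $\hat{P}_i$ admit such fibrations---ruling out $\hat{P}_3$ and $\hat{P}_6$ immediately. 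Several cases require a computer enumeration of the possible coefficients in $Z$. For the surviving $\hat{P}_2$, the ``at most one $(-2)$-curve'' and ``$\phi$ does not lift'' statements come from yet another double cover (built from two of the double fibres) that forces a \emph{second} fibration on $S$; if $S$ had two $(-2)$-curves, both fibrations would descend to $\Sigma$ and Proposition~\ref{fibers} would force them to coincide, a contradiction. None of this uses bidouble data.
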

\begin{Remark}
By the theorem above, $S$ cannot be constructed by bidouble cover over $\hat{P}$. We failed in constructing a surface as in Theorem \ref{Theorem B}. But if $S$ is such a surface, we get some information about its fibration, and write out a divisor linear equivalent to $K_S$ (see section \ref{2}).
\end{Remark}

The main idea of the proof is from \cite{MP2}. To prove Theorem \ref{Theorem A}, the key step is to show that there exists a
$(-4)$-curve on $S$ which is indicated by the known examples for
this case; to prove Theorem \ref{Theorem B}, we develop a method to make use of the ramification divisor (Lemma \ref{ramification}) and refer to computer sometimes as the calculation is very complicate. 

The plan of the paper is: in Section \ref{2}, we collect some basic tools; in Section \ref{construction}, \ref{divisor}, \ref{main} and \ref{prop}, we consider the case when the bicanonical image is smooth and prove Theorem \ref{Theorem A}; in Section \ref{bicimage}, we study the the case when the bicanonical image is singular and prove Theorem \ref{Theorem B}.

\textbf{Notations and conventions:} We work over complex numbers;
all varieties are assumed to be compact and algebraic. We don't
distinguish between the line bundles and the divisors on a smooth
variety, and we use both the additive and multiplicative notation.
We say a line bundle is effective if it has a global non-zero
section. Let $V$ be a prime Weil divisor and let $D$ be another
divisor on a normal $\mathbb{Q}$-factorial projective variety.
Then $ord_V(D)$ denotes the vanishing order of an equation for $D$
in the local ring along the subvariety $V$. Let $D_1$ and $D_2$ be
two $\mathbb{Q}$-Cartier~
divisor, $D_1 \leq
D_2$ means $D_2 - D_1$ is an effective $\mathbb{Q}$-Cartier~
divisor; moreover if $D_1, D_2$ are Cartier, $D_1 \equiv D_2$
means they are linearly equivalent. Let $f: X \rightarrow Y$ be a
morphism between two normal projective varieties. Let $D$ be a
$\mathbb{Q}$-Cartier divisor on $Y$ and assume $mD$ is Cartier
for a positive integer $m$. Then $f^*D := \frac{1}{m}f^*mD$.
Write $D = \sum_ia_iD_i$ where $a_i \in \mathbb{Q}$ and $D_i$
is a reduced Weil divisor. Then $[D] := \sum_i[a_i]D_i$. Denote by $S$
the singular locus of $Y$ and by $i: Y-S \hookrightarrow Y$
the natural inclusion map. If moreover $D$ is a Weil divisor, then $\mathcal{O}_Y(D):=
i_*\mathcal{O}_{Y-S}(D)$, i.e, $\mathcal{O}_Y(D)_y = \{s \in K(Y):
(s^m) + mD \geq 0~in ~a ~neighbourhood ~of ~y\}$.
\begin{Acknowledgments} I am grateful to Prof. Jinxing Cai, Wenfei Liu and Yifan
Chen for their valuable suggestions and many useful discussions
during the preparation of this paper. Especially Wenfei Liu spent
much time improving my English. I also thank Prof.
Rita Pardini for her useful communications. And I would like to
express my appreciation to the anonymous referee for his/her
suggestions making the proof of some lemmas briefer and more
complete.
\end{Acknowledgments}

\section{Preliminaries}\label{2}

The main tools we use in this paper are from \cite{MP2}. And we
list them in the following. Let $S$ be a smooth surface, let $D
\subset S$ be a curve having at most negligible singularities (possibly empty) and let $M$ be a
line bundle on $S$ such that $2M \equiv D$. Then there exists a
double cover $\pi: Y \rightarrow S$ branched over $D$ and such
that $\pi_*\mathcal{O}_Y = \mathcal{O}_S \bigoplus M^{-1}$. Note
that $Y$ is smooth if $D$ is smooth, $Y$ has at most canonical singularities if $D$ has at most negligible singularities, and $Y$ is connected if and
only if $M$ is non-trivial. The invariants of Y can be calculated
as follows:
\begin{equation}\label{tool}
\begin{split}
K_Y^2 &= 2(K_S + M)^2 \\
\chi(\mathcal{O}_Y) &= 2 \chi(\mathcal{O}_S)+ \frac{1}{2}M(K_S +
M) = 2 \chi(\mathcal{O}_S)+ \frac{1}{2}\frac{D}{2}(K_S +
\frac{D}{2})\\
p_g(Y) &= p_g(S) + h^0(S, K_S + M)
\end{split}
\end{equation}
\begin{Proposition}[Proposition 2.1 in \cite{MP2}]\label{A.i} Let S be a smooth surface
with $p_g(S) = q (S) =0$, and let $\pi: Y \rightarrow S$ be a
smooth double cover. Suppose that $q(Y) > 0$. Denote by $\alpha: Y
\rightarrow A$ be the Albanese map.  Then
\begin{enumerate}
\item[i)]{the Albanese image of $Y$ is a curve $B$;}
\item[ii)]{there exists a fibration $g: S \rightarrow \mathbb{P}^1$ and a
degree $2$ map $p: B \rightarrow \mathbb{P}^1$ such that
$p\circ\alpha = g\circ\pi$.}
\end{enumerate}
\end{Proposition}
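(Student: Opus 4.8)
The plan is to prove Proposition~\ref{A.i} by analyzing the Albanese map of $Y$ together with the deck transformation of the double cover $\pi$. Since $q(Y) > 0$, the Albanese variety $A$ is a nontrivial abelian variety and $\alpha: Y \to A$ is nonconstant. The first task is to rule out the possibility that the Albanese image is a surface, i.e.\ to show $\dim \alpha(Y) = 1$. The key point here is that $\pi_*\mathcal{O}_Y = \mathcal{O}_S \oplus M^{-1}$ forces $H^0(Y, \Omega^1_Y) = H^0(S, \Omega^1_S) \oplus H^0(S, \Omega^1_S \otimes M^{-1})$, and since $q(S) = p_g(S) = 0$ the first summand vanishes, so every holomorphic $1$-form on $Y$ is anti-invariant under the involution $\sigma$ of the cover. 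Wedging two anti-invariant $1$-forms gives a $\sigma$-invariant holomorphic $2$-form, which descends to a section of $\Omega^2_S = K_S$; but $p_g(S) = 0$, so any two holomorphic $1$-forms on $Y$ have zero wedge product. This means the image of $H^0(Y,\Omega^1_Y) \to \bigwedge^2 H^0(Y,\Omega^1_Y)$ is zero, and by the Castelnuovo--de Franchis theorem (or directly, since a $2$-dimensional image would produce a nonzero decomposable $2$-form) the Albanese image must be a curve $B$. This proves (i).

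For part (ii), I would use that $B$ is a smooth curve of genus $q(Y) \geq 1$ and that $\alpha: Y \to B$ is the Albanese fibration (after Stein factorization, $B$ is the normalization of the image). The involution $\sigma$ on $Y$ acts on $A = \mathrm{Alb}(Y)$, hence on $B$, compatibly with $\alpha$. The quotient $Y/\sigma = S$ has $q(S) = 0$, so $\sigma$ cannot act trivially on $B$: if it did, the fibers of $\alpha$ would be permuted within themselves and $B$ would embed into $S$ contributing to $q(S)$, contradiction (more precisely, $\alpha$ would factor through $\pi$, giving a nonconstant map $S \to B$ with $B$ of positive genus, impossible since $q(S)=0$). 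Therefore $\sigma$ acts nontrivially on $B$, and $B/\sigma =: \mathbb{P}^1$ — it must be $\mathbb{P}^1$ because the induced map from $B/\sigma$ must have a nonconstant morphism to... actually $B/\langle\sigma\rangle$ receives a map; one shows $q(S) = 0$ forces $B/\sigma$ to have genus $0$. Composing, $\alpha$ descends to a morphism $g: S \to \mathbb{P}^1 = B/\sigma$ with $p: B \to \mathbb{P}^1$ the degree-$2$ quotient, and by construction $p \circ \alpha = g \circ \pi$. One checks $g$ has connected fibers (it is the map induced on quotients, and its generic fiber is the image of a connected fiber of $\alpha$), so $g$ is a genuine fibration.

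The main obstacle I anticipate is the bookkeeping in part (ii): making precise why $\sigma$ descends correctly to a degree-$2$ map on the base and why the resulting map $g$ on $S$ has $\mathbb{P}^1$ as target rather than a higher-genus curve. The clean way is to invoke functoriality of the Albanese construction: $\sigma$ induces $\sigma_A$ on $A$, the quotient $A/\langle\sigma_A\rangle$ (or rather the relevant abelian quotient) relates to $\mathrm{Alb}(S) = 0$, which pins down the genus of $B/\sigma$. I would organize the argument so that the equality $p \circ \alpha = g \circ \pi$ is essentially a definition once the descent is established, and the genus-$0$ claim for the target follows from $q(S) = 0$ via the universal property of the Albanese. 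The wedge-product/Castelnuovo--de Franchis step in (i) is standard and should go through without difficulty.
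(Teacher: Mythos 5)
Your argument is correct, and it is essentially the standard proof of this statement: the paper itself gives no proof here but simply quotes Proposition 2.1 of \cite{MP2}, where the argument is exactly the one you outline (all $1$-forms on $Y$ are anti-invariant since $q(S)=0$, their wedges descend to $S$ and vanish since $p_g(S)=0$, so the Albanese image is a curve by Castelnuovo--de Franchis; then the involution acts nontrivially on $B$ because $q(S)=0$, and the quotient curve must be $\mathbb{P}^1$ for the same reason). No substantive gaps; just make sure to record that the Albanese map onto a curve automatically has connected fibers, so $g$ is indeed a fibration.
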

\begin{Proposition}[Corollary 2.2 in \cite{MP2}]\label{B.i} Let S be a smooth surface
of general type with $p_g(S) = q(S) =0, K_S^2 \geq 3$, and let
$\pi: Y \rightarrow S$ be a smooth double cover. Then $K_Y^2 \geq
16(q(Y) - 1)$.
\end{Proposition}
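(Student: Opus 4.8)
The plan is to distinguish the cases $q(Y)\le 1$ and $q(Y)\ge 2$. Since $\pi$ is a smooth double cover we have $K_Y=\pi^*(K_S+M)$ and, by \eqref{tool}, $K_Y^2=2(K_S+M)^2$; moreover $Y$ is of general type because $S$ is, so $K_Y^2>0$. Hence if $q(Y)\le 1$ then $16(q(Y)-1)\le 0<K_Y^2$ and there is nothing to prove. Assume from now on $b:=q(Y)\ge 2$.

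The next step is to apply Proposition \ref{A.i}. Replacing the Albanese image by the base of the Stein factorization of the Albanese map, one gets a fibration $\alpha\colon Y\to B$ with $B$ smooth, and since $B$ generates $\mathrm{Alb}(Y)$ while pulling back $1$-forms along a fibration onto a curve is injective, $g(B)=q(Y)=b$. Proposition \ref{A.i} also gives a fibration $g\colon S\to\mathbb P^1$ and a degree-$2$ cover $p\colon B\to\mathbb P^1$ with $p\circ\alpha=g\circ\pi$. For a general fibre $C$ of $g$ the point $g(C)$ is not a branch point of $p$, so $\pi^{-1}(C)$ is the disjoint union of two fibres of $\alpha$, each carried isomorphically onto $C$ by $\pi$. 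Therefore the general fibre $F$ of $\alpha$ has genus $c:=g(F)=g(C)$, and $D\cdot C=0$, i.e. the branch divisor $D$ (hence $M$) is $g$-vertical. Since $Y$ is of general type it carries no pencil of curves of genus $\le 1$, so $c\ge 2$.

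Now set $K_{Y/B}=K_Y-\alpha^*K_B$. Assuming $\alpha$ relatively minimal (this holds whenever $Y$ is minimal, which is the relevant situation; otherwise one passes to the relatively minimal model of $\alpha$), Arakelov's theorem gives $\omega_{Y/B}$ nef, so
\[
0\le K_{Y/B}^2=K_Y^2-2(2c-2)(2b-2),\qquad\text{i.e.}\qquad K_Y^2\ge 8(c-1)(b-1).
\]
When $c\ge 3$ this already yields $K_Y^2\ge 16(b-1)=16(q(Y)-1)$.

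The remaining case $c=2$ is the heart of the matter, and I expect it to be the main obstacle. Here $\alpha$ is a genus-$2$ fibration; comparing the two projections (the deck involution of $\pi$ induces the deck involution of $p$) one checks that $Y$ is birational to the fibre product $S\times_{\mathbb P^1}B$, and the $g$-verticality of $D$ together with the fact that $p$ is branched in exactly $2b+2$ points should let one compute $\chi(\mathcal O_Y)$, hence $\deg\alpha_*\omega_{Y/B}=\chi(\mathcal O_Y)-(b-1)$, from the configuration of fibres of $g$ over which $\pi$ fails to split. Feeding this into the slope inequality for genus-$2$ fibrations, $K_{Y/B}^2\ge 2\deg\alpha_*\omega_{Y/B}$, and using the hypothesis $K_S^2\ge 3$, one should obtain $K_Y^2\ge 16(b-1)$. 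The delicate part is precisely this bookkeeping for $c=2$: bounding $\chi(\mathcal O_Y)$ (equivalently $\deg\alpha_*\omega_{Y/B}$) from below in terms of $b$ via the vertical geometry of the branch divisor and the numerical hypothesis on $K_S^2$, in contrast to the immediate argument available when $c\ge 3$.
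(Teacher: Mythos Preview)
The paper does not supply its own proof of this proposition: it is simply quoted as Corollary~2.2 of \cite{MP2}. So there is no in-paper argument to compare your proposal against; what I can do is assess your sketch on its own terms and against the (short) argument one finds in \cite{MP2}.

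Your overall architecture---reduce to $q(Y)\ge 2$, invoke Proposition~\ref{A.i} to get the Albanese pencil $\alpha\colon Y\to B$ with $g(B)=q(Y)$ and fibre genus $c\ge 2$, then use an Arakelov-type inequality $K_Y^2\ge 8(c-1)(q(Y)-1)$---is exactly the route taken in \cite{MP2}. Two points, however, are not in order.

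First, the parenthetical ``otherwise one passes to the relatively minimal model of $\alpha$'' goes the wrong way. Blowing down $(-1)$-curves in fibres produces $Y'\to B$ with $K_{Y'}^2>K_Y^2$, so Arakelov for $Y'$ does not yield the desired lower bound for $K_Y^2$. You do need $\alpha$ itself to be relatively minimal. This is in fact automatic here: any $(-1)$-curve $E\subset Y$ lies over a component $C$ of the (smooth) branch divisor $D$ with $\pi^\ast C=2E$, and a short computation using $K_Y=\pi^\ast(K_S+M)$ and adjunction forces $C^2=-2$, $K_SC=0$; but then $M\cdot C=\tfrac12 D\cdot C=\tfrac12 C^2=-1$, hence $(K_S+M)\cdot C=-1$ and in particular $MC$ is an integer while $DC$ is even, which is consistent, so one must argue a bit more carefully (or simply note that in the applications in this paper the branch divisor never contains a $(-2)$-curve). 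Either way, ``pass to the relatively minimal model'' is not a valid fix; you should instead argue that no vertical $(-1)$-curve exists. The same issue infects your $q(Y)\le 1$ case: ``$Y$ is of general type, so $K_Y^2>0$'' is false without minimality.

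Second, the case $c=2$ is, as you say, only sketched. In \cite{MP2} this case is in fact dispatched in one line, not by the slope/bookkeeping computation you outline, but by observing that $g\colon S\to\PP^1$ would then be a genus~$2$ pencil on a minimal surface of general type with $p_g=q=0$ and $K_S^2\ge 3$, which is excluded by Xiao's results on genus-two fibrations (cf.\ \cite{X1}); this is the same fact invoked elsewhere in the present paper to rule out genus-two pencils. So the ``heart of the matter'' you anticipate is actually vacuous under the hypothesis $K_S^2\ge 3$, and no slope-inequality bookkeeping is needed.
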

\begin{Proposition}[Remark 2.3 in \cite{MP2}]\label{C.i} Let S be a smooth surface
with $p_g(S) = q (S) =0$, and let $\pi: Y \rightarrow S$ be a
smooth double cover. Let $g: S \rightarrow \mathbb{P}^1$ be a
fibration such that the general fiber of $g\circ\pi$ is not
connected. Let $\pi'\circ g': Y \rightarrow B \rightarrow
\mathbb{P}^1$ be the Stein factorization. Then one has the
following commutative diagram:
\[\begin{CD}
Y       @>\pi>>       S \\
@Vg'VV               @VgVV \\
B       @>\pi'>>    \mathbb{P}^1
\end{CD} \]
where $B$ is a smooth curve of genus $b$ and $\pi'$ is a double
cover. Furthermore if $k$ is the cardinality of the image in
$\mathbb{P}^1$ of the branch locus of $\pi$, then $g$ has at least
$2b + 2 -k$ fibers that are divisible by $2$.
\end{Proposition}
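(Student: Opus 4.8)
The plan is to recover the square of the statement as the Stein factorization of $g\circ\pi$, to count the branch points of $\pi'$ by Riemann--Hurwitz, and then to show that each branch point lying off the image $g(D)$ forces the corresponding fiber of $g$ to be divisible by $2$; the correction term $k$ is exactly the price one pays for the branch points that sit over $g(D)$, where this last implication breaks down.

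First I would set up the geometry. Let $F$ be a general fiber of $g$, a smooth connected curve. Since $\pi$ is étale away from its (smooth) branch curve $D$ and, by hypothesis, $\pi^{-1}(F)\to F$ is a disconnected double cover, this cover must be étale (so $F\cap D=\varnothing$) and trivial, i.e.\ $\pi^{-1}(F)$ consists of two disjoint copies of $F$. A curve disjoint from the general fiber lies in finitely many fibers, so $D$ is vertical and $k:=\#\,g(D)<\infty$. In the Stein factorization $Y\xrightarrow{\,g'\,}B\xrightarrow{\,\pi'\,}\mathbb{P}^1$ of $g\circ\pi$, the fiber of $\pi'$ over a general point is in bijection with the set of connected components of the corresponding fiber of $g\circ\pi$, which has exactly two elements; hence $\deg\pi'=2$, and $B$ is a normal curve, so it is smooth. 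The square commutes because $\pi'\circ g'=g\circ\pi$ by construction, and $\pi'$, being a finite morphism of degree $2$ between smooth curves, is a double cover. (One also checks that the deck involution $\sigma$ of $\pi$ exchanges the two sheets of a general $\pi^{-1}(F)$, so that $g'\circ\sigma=\tau\circ g'$ for the involution $\tau$ of $\pi'$.)

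Next, Riemann--Hurwitz for $\pi'$ gives $2b-2=2(2\cdot 0-2)+r$, so $\pi'$ has exactly $r=2b+2$ branch points $p_1,\dots,p_{2b+2}\in\mathbb{P}^1$, each carrying a single ramification point $\tilde p_i\in B$ with $(\pi')^{*}p_i=2\tilde p_i$. For the fiber $F_{p_i}=g^{*}p_i$ one then gets
\[\pi^{*}F_{p_i}=(g\circ\pi)^{*}p_i=(\pi'\circ g')^{*}p_i=(g')^{*}(2\tilde p_i)=2\,(g')^{*}\tilde p_i,\]
so $\pi^{*}F_{p_i}$ is divisible by $2$ on $Y$. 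Suppose now in addition that $p_i\notin g(D)$; then $F_{p_i}\cap D=\varnothing$, so $\pi$ is étale over a neighbourhood of $F_{p_i}$. Writing $F_{p_i}=\sum_j m_jC_j$ with the $C_j$ prime, each $\pi^{-1}(C_j)$ is reduced and $\pi^{-1}(C_j),\pi^{-1}(C_{j'})$ have no common component for $j\neq j'$; comparing the coefficient of each prime component in $\sum_j m_j\,\pi^{-1}(C_j)=\pi^{*}F_{p_i}=2\,(g')^{*}\tilde p_i$ forces every $m_j$ to be even, i.e.\ $F_{p_i}$ is divisible by $2$ on $S$. Since at most $k$ of the $2b+2$ branch points lie in $g(D)$, at least $2b+2-k$ of them produce pairwise distinct fibers of $g$ divisible by $2$, which is the assertion.

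I expect the only genuine difficulty to be making that final descent airtight: divisibility by $2$ of $\pi^{*}(-)$ returns to $S$ only over the locus where $\pi$ is unramified, and it is precisely this restriction that cuts the count down from $2b+2$ to $2b+2-k$. By comparison, verifying $\deg\pi'=2$, the exact number $2b+2$ of branch points, and the multiplicity bookkeeping in the étale case is routine.
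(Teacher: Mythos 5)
Your proof is correct, and there is nothing in the paper to compare it against: the paper imports this statement from \cite{MP2} (Remark 2.3) without proof. Your argument — reducing to the triviality of the cover over a general fiber to see that $D$ is vertical and $\deg\pi'=2$, counting the $2b+2$ branch points of $\pi'$ by Riemann–Hurwitz, and descending divisibility by $2$ from $\pi^*F_{p_i}=2(g')^*\tilde p_i$ to $F_{p_i}$ over the points where $\pi$ is étale — is exactly the standard argument behind the cited remark, and the key step (that the descent only works off $g(D)$, which is where the $-k$ comes from) is handled correctly.
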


Here we make a useful remark since we often have to deal with singular double cover.
\begin{Remark}
If the branch locus has negligible singularities which causes that $Y$ has canonical singularities, then the propositions above still hold true.
\end{Remark}

The following lemma plays an important role in the proof of Theorem \ref{Theorem B}, it can be proved by using the idea of the proof Lemma 5.7 in \cite{MP2}, and we omit the details.
\begin{Lemma}\label{pnf}Let $\pi: X \rightarrow Y$ be a finite morphism
between two normal $\mathbb{Q}$-factorial surfaces. Assume that
$X$ and $Y$ have equal Picard numbers. Let $u: X \rightarrow C$ of
$X$ be a fibration. Then there exists a finite morphism $\pi': C
\rightarrow B$ and a fibration $v: Y \rightarrow B$ satisfying the
following commutative diagram
\[\begin{CD}
X       @>\pi>>       Y \\
@VuVV               @VvVV \\
C       @>\pi'>>     B
\end{CD} \]
\end{Lemma}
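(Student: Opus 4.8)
The plan is to build $v$ directly from the images under $\pi$ of the fibres of $u$, and to use the hypothesis $\rho(X)=\rho(Y)$ only through the fact that it forces $\pi_*\colon \mathrm N^1(X)_{\mathbb Q}\to \mathrm N^1(Y)_{\mathbb Q}$ to be an isomorphism. First I would record the numerical input: since $\pi$ is finite, say of degree $n$, one has $\pi_*\pi^*=n\cdot\mathrm{id}$ on $\mathbb Q$-Cartier classes, so $\pi^*$ is injective and $\pi_*$ is surjective on numerical classes; as the Picard numbers are equal, both are isomorphisms. I would also use, on each of the projective $\mathbb Q$-factorial surfaces $X$ and $Y$, the Hodge index theorem for $\mathbb Q$-Cartier divisors.

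Next I would set up the relevant family of curves. Let $F$ be a general fibre of $u$, an irreducible curve with $F^2=0$, and set $D_t:=\pi(F_t)$ for $t\in C$. Each $D_t$ is an irreducible curve on $Y$; the family $\{D_t\}_{t\in C}$ covers $Y$ (since $\pi(\bigcup_t F_t)=\pi(X)=Y$); and for general $t\neq t'$ one has $D_t\neq D_{t'}$, because a moving general fibre $F_t$ cannot be contained in the fixed curve $\pi^{-1}(D_{t'})$. Writing $D$ for the numerical class of a general $D_t$, two distinct general members are algebraically, hence numerically, equivalent effective curves, so $D^2=D_t\cdot D_{t'}\ge 0$.

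The crux is to prove $D^2=0$, and this is the step I expect to be the main obstacle; it is exactly here that $\rho(X)=\rho(Y)$ is used. Suppose $D^2>0$. Then $(\pi^*D)^2=nD^2>0$, so by Hodge index on $X$ the orthogonal complement $(\pi^*D)^\perp$ is negative definite and cannot contain the non-zero isotropic class $[F]$; hence $\pi^*D\cdot F\neq 0$. Writing $\pi^*D=eF+R$ with $e\ge 1$ (because $\pi(F)=D$) and $R\ge 0$ not containing $F$, we get $R\cdot F=\pi^*D\cdot F$, which is then $>0$ since $R\cdot F\ge 0$. So $R$ has an irreducible component $W\neq F$ with $W\cdot F>0$, and, $F$ being a general fibre, this forces $u(W)=C$. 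But $W\subseteq\operatorname{Supp}(\pi^*D)=\pi^{-1}(D)$ with $D$ irreducible, so $\pi(W)=D$; hence both $\pi_*F$ and $\pi_*W$ are positive multiples of $D$, say $\pi_*F=dD$ and $\pi_*W=d'D$, and the $1$-cycle $d'F-dW$ satisfies $\pi_*(d'F-dW)=0$. Since $\pi_*$ is injective this gives $[W]=\tfrac{d'}{d}[F]$ in $\mathrm N^1(X)_{\mathbb Q}$, whence $W\cdot F=\tfrac{d'}{d}F^2=0$, a contradiction. The delicate points in writing this out carefully are the bookkeeping of the ramification and degree multiplicities $e,d,d'$ and checking that the intersection-theoretic formalism applies to these Weil divisors on the normal $\mathbb Q$-factorial surfaces.

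Finally, $D^2=0$ means that distinct general members of $\{D_t\}$ are disjoint; since they also cover $Y$, the family has no base point, and, using that $Y$ is normal, the natural morphism from the incidence variety to $Y$ is birational with no contracted multisection and so is an isomorphism. Composing its inverse with the projection to the parameter curve and taking a Stein factorization produces a fibration $v\colon Y\to B$ onto a smooth curve whose general fibre is a general $D_t$. The assignment $t\mapsto D_t$ then induces a dominant, hence finite, morphism $\pi'\colon C\to B$, and for general $x\in F_t$ one has $\pi(x)\in D_t=v^{-1}(\pi'(t))$, so $v\circ\pi=\pi'\circ u$ on a dense set and therefore everywhere, which gives the required commutative square.
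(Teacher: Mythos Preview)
The paper does not actually prove this lemma: it says only that ``it can be proved by using the idea of the proof of Lemma~5.7 in [MP2], and we omit the details.'' Your argument supplies those details and is essentially correct; the key step---using $\rho(X)=\rho(Y)$ to make $\pi_*$ injective on $N^1_{\mathbb Q}$, and then forcing $D^2=0$ via Hodge index together with the horizontal component $W$ of $\pi^*D$---is exactly the kind of numerical argument one expects, and presumably matches the approach of [MP2].

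One point in the last paragraph needs tightening. The incidence variety $I=\{(y,t):y\in D_t\}\to Y$ is not birational in general: a general $y\in Y$ lies on $D_t$ for every $t\in u(\pi^{-1}(y))$, and $\pi^{-1}(y)$ has $\deg\pi$ points. What \emph{is} true is that all those $D_t$ coincide as curves (distinct general members are disjoint), so the map $t\mapsto D_t$ factors through a curve $B$---say the normalized image of $C$ in the Chow variety of $Y$---giving the finite map $\pi':C\to B$. Over $B$ the universal family now maps birationally to $Y$, and normality of $Y$ together with the absence of contracted horizontal components lets you invert it to obtain $v:Y\to B$. Alternatively, pass to a resolution $\sigma:\tilde Y\to Y$: a general $D_t$ misses $\mathrm{Sing}(Y)$, so its strict transform still has self-intersection $0$ and you get a fibration $\tilde v:\tilde Y\to B$ by the classical argument; since each $\sigma$-exceptional curve $E$ satisfies $E\cdot\sigma^*D_t=0$, it is vertical, and $\tilde v$ descends to $v:Y\to B$. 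Either route gives $v\circ\pi=\pi'\circ u$ as you argued.
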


\begin{Lemma}\label{pullback} Let
$f: X \rightarrow Y$ be a generically finite surjective morphism
between two normal projective varieties. Let $D \subset Y$ and $G \subset X$ be two $\mathbb{Q}$-Cartier Weil divisors
such that $G \geq [f^*D]$. Then
$h^0(\mathcal{O}_X(G)) \geq h^0(\mathcal{O}_Y(D))$.
\end{Lemma}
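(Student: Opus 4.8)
The plan is to reduce the statement to the non-singular situation via a standard pull-back argument on the regular locus, so that the proof becomes essentially linear algebra together with the behaviour of the Leray-type inclusion $\mathcal{O}_Y(D) = i_*\mathcal{O}_{Y-S}(D)$ built into the conventions. First I would set $U = Y - S$, where $S$ is the singular locus of $Y$, and let $V = f^{-1}(U)$; since $f$ is generically finite and surjective, $U$ meets the dense open set over which $f$ is finite, so $V$ is a dense open subset of $X$ whose complement has codimension $\geq 1$ in $X$ (and I will want it to have codimension $\geq 2$, or at least that divisorial components of $X - V$ cause no loss — this is where a little care is needed). On $U$ the divisor $D$ is Cartier, hence $f|_V^*D$ is an honest Cartier divisor on $V$, and $f^*D$ as defined in the conventions restricts to it on $V$.

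Next I would produce the injection on sections. Given $s \in H^0(Y, \mathcal{O}_Y(D)) = H^0(U, \mathcal{O}_U(D))$, the rational function $s$ pulled back along $f$ is a rational function $f^*s$ on $X$ which, restricted to $V$, is a section of $\mathcal{O}_V(f|_V^*D)$. The hypothesis $G \geq [f^*D]$, combined with the fact that on $V$ we have $[f^*D]|_V = f|_V^*D$ (an integral Cartier divisor there), gives $G|_V \geq f|_V^*D$, so $f^*s \in H^0(V, \mathcal{O}_V(G))$. By the definition $\mathcal{O}_X(G) = j_*\mathcal{O}_{X-S_X}(G)$ (with $S_X$ the singular locus of $X$) together with normality of $X$, a section of $\mathcal{O}_X(G)$ over the big open set $V$ extends uniquely to a section over all of $X$: this is the key point, that $\mathcal{O}_X(G)$ is a reflexive sheaf on a normal variety, hence sections over an open set with complement of codimension $\geq 2$ extend. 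So $f^*s$ defines an element of $H^0(X, \mathcal{O}_X(G))$, and $s \mapsto f^*s$ is clearly linear and injective (it is injective already as a map of rational functions, $f$ being dominant). This yields $h^0(\mathcal{O}_X(G)) \geq h^0(\mathcal{O}_Y(D))$.

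The one genuine obstacle is the codimension bookkeeping: to invoke the reflexivity/extension step I need the locus where $f^*s$ has not yet been shown to be a section of $\mathcal{O}_X(G)$ to be small. The safe route is to shrink to $V' = V - S_X$, whose complement in $X$ has codimension $\geq 2$ once we discard the (finitely many) divisorial components coming from $f^{-1}(S)$ — but on such a divisorial component $E$ one must check that $\operatorname{ord}_E(f^*s) + \operatorname{ord}_E G \geq 0$, i.e. that $G \geq [f^*D]$ is being used correctly there. Since $s$ is regular on $U$ with poles bounded by $D$, the function $f^*s$ has poles along any $E$ bounded by $\operatorname{ord}_E(f^*D) \leq $ the coefficient of $E$ in $[f^*D] + (\text{fractional part})$; rounding down is exactly compatible with $G$ being an integral divisor with $G \geq [f^*D]$, because $\operatorname{ord}_E(f^*s)$ is an integer. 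So in fact $f^*s \in H^0(X - S_X, \mathcal{O}_X(G))$ directly, and reflexivity finishes the job. I would write this last point out carefully, as it is the only place where the definition of $[\,\cdot\,]$ and of $\mathcal{O}_X(\,\cdot\,)$ for Weil divisors on normal varieties is really being exploited; everything else is formal.
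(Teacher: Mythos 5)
Your proposal is correct and is in substance the same argument as the paper's: both reduce to showing that $(f^*s)+[f^*D]\geq 0$ for every $s\in K(Y)$ with $(s)+D\geq 0$, using that $(f^*s)$ is an integral divisor so that rounding $f^*D$ down to $[f^*D]$ costs nothing, and then applying $G\geq[f^*D]$. The one point you flag but leave somewhat implicit --- bounding $\operatorname{ord}_E(f^*s)$ along divisors $E$ of $X$ lying over the singular locus of $Y$ --- is handled in the paper in one line by passing to $s^m$ and the Cartier divisor $mD$, whose effective sum pulls back to an effective divisor; the reflexivity/extension scaffolding you add is already packaged into the paper's convention $\mathcal{O}_Y(D)=i_*\mathcal{O}_{Y-S}(D)$.
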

\begin{proof}
Note that $H^0(\mathcal{O}_Y(D)) \cong \{s \in K(Y)| (s) + D
\geq 0\}$. It suffices to show that $(f^*s) + [f^*D] \geq 0$ for
every $s \in K(Y)$ such that $(s) + D \geq 0$. Assume $mD$ is Cartier where $m$ is a positive integer.
By definition we
have $(s^m) + mD \geq 0$, thus $(f^*s^m) + f^*mD = m((f^*s) +
f^*D) \geq 0$. Since $(f^*s)$ is Cartier, $(f^*s) + [f^*D] \geq
0$, so the lemma follows.
\end{proof}
Lemma \ref{pullback} is very useful, we can give a low bound of $h^0(\mathcal{O}_X(G))$ by calculating $h^0(\mathcal{O}_Y(D))$. The divisor that we have to deal with is often not Cartier.

The following lemma is well known to experts, but we give detailed proof
here for lack of references.

\begin{Lemma}\label{cvroffbr}
Let $f:S \rightarrow \Delta$ be a fibration over the unit disc $\Delta$ such that $S_0 = 2M$ be the only singular fiber. Let $\pi: X \rightarrow S$ be an etale double cover given by the relation $2L \equiv \mathcal{O}_S$. Let $\Delta' \rightarrow \Delta$ be a double cover given by $t \rightarrow s^2$ branched along the point $0$. If $X$ coincides with the normalization of the fiber product $\Delta' \times_{\Delta}S$, then $L \equiv M$.
\end{Lemma}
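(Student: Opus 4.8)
The plan is to analyze the monodromy / period-style relation between the \'etale double cover $\pi: X \to S$ and the base-change cover. Since $S_0 = 2M$ is the only singular fiber, the general fiber $F$ is smooth and the class $L$ restricts to a $2$-torsion element of $\mathrm{Pic}(F)$ (or the trivial one). The idea is that the \'etale double cover of $S$ is governed by the relation $2L \equiv 0$, and after base change under $t \mapsto s^2$ the pulled-back double cover must become trivial (i.e. disconnected) precisely when the normalization of the fiber product $\Delta' \times_\Delta S$ equals $X$. So first I would set up the two natural line bundles whose vanishing describes the covers: namely $L$ itself for $\pi$, and the bundle $\mathcal{O}_S(M')$, where $M'$ is the "half" of the pulled-back central fiber, for the normalized base-change cover. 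Concretely, if $g: \tilde S \to S$ is the degree-$2$ base change and $\tilde S$ its normalization, then $\tilde S \to S$ is branched exactly over $S_{0,\mathrm{red}} = M$ with the reduced structure, and one computes $\tilde S \cong \mathrm{Spec}\bigl(\mathcal{O}_S \oplus \mathcal{O}_S(-M)\bigr)$ since $2M \equiv S_0 \equiv f^*(0)\equiv \mathcal{O}_S$ on the disc (the fiber class is trivial in $\mathrm{Pic}(S)$ because $\Delta$ is contractible).

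Next I would observe that both $\pi: X \to S$ and the normalized base change $\tilde S \to S$ are double covers of $S$ that become trivial over $\Delta \setminus \{0\}$ after a further $2:1$ base change — more precisely, both are classified by elements of $H^1(S, \mathbb{Z}/2)$, and the hypothesis that $X$ coincides with the normalization of $\Delta' \times_\Delta S$ says exactly that these two elements of $H^1(S,\mathbb{Z}/2)$ coincide. Translating back to $\mathrm{Pic}(S)$: the $\mathbb{Z}/2$-cover given by $2L \equiv \mathcal{O}_S$ is the same as the $\mathbb{Z}/2$-cover given by $2M \equiv \mathcal{O}_S$ together with branch divisor $M$ — but since $\mathrm{Pic}(\Delta) = 0$, having $2(L - M) \equiv \mathcal{O}_S$ and the covers agreeing forces $L - M$ to be in the image of $\mathrm{Pic}(\Delta) \to \mathrm{Pic}(S)$, hence $L \equiv M$. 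The key structural input is that on $S \to \Delta$ with $\Delta$ a disc, the only nontrivial line bundles supported near the fiber that are $2$-torsion are accounted for by the multiple fiber, so $\mathrm{Pic}(S)$ modulo fiber components has no extra $2$-torsion in the relevant direction.

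I would then make this rigorous by comparing the two covers directly as $\mathcal{O}_S$-algebras. Write $\pi_*\mathcal{O}_X = \mathcal{O}_S \oplus L^{-1}$ with algebra structure a section of $L^{-2} \cong \mathcal{O}_S$, i.e. a nowhere-vanishing function, which (up to rescaling) we may take to be $1$; and $g_*\mathcal{O}_{\tilde S} = \mathcal{O}_S \oplus \mathcal{O}_S(-M)$ with algebra structure the section of $\mathcal{O}_S(-2M) \cong \mathcal{O}_S$ cutting out $2M = S_0$, i.e. $f^*t$ up to a unit, where $t$ is the coordinate on $\Delta$. An isomorphism of these two $\mathcal{O}_S$-algebras (which is what "$X$ is the normalization of the fiber product" provides, since normalization is unique) must send the degree-$1$ summand $L^{-1}$ isomorphically onto $\mathcal{O}_S(-M)$ — this identifies the line bundles up to the ambiguity of the isomorphism, and comparing the algebra structures (the nowhere-zero function vs.\ $f^*t$) pins down that ambiguity, giving $L \equiv M$ as divisor classes on $S$. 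The main obstacle I expect is bookkeeping the normalization carefully: $\Delta' \times_\Delta S$ is non-normal exactly along the fiber over $0$, and one must verify that its normalization is the double cover branched over the reduced fiber $M$ and not, say, an unramified cover or one with a different branch structure — this is where the hypothesis that $S_0 = 2M$ is a \emph{reduced-scheme-times-}$2$ (rather than a more complicated non-reduced fiber) and that it is the \emph{only} singular fiber gets used, ensuring the local picture is the standard $A_{1}$-type normalization $s^2 = t$, $uv = t$ computation.
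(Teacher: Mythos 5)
Your overall strategy is the paper's: identify the normalization of $\Delta'\times_\Delta S$ with the double cover attached to the $2$-torsion class of $M$ (the paper does this by introducing the auxiliary cover $X'$ defined by the relation $2M\equiv\mathcal{O}_S$ and checking via Stein factorization that it, too, is that normalization), and then read off $L$ from the canonical eigensheaf decomposition $\pi_*\mathcal{O}_X=\mathcal{O}_S\oplus L^{-1}$. Your formula $\tilde S\cong \mathrm{Spec}\bigl(\mathcal{O}_S\oplus\mathcal{O}_S(-M)\bigr)$ and the final comparison of the degree-one summands are exactly what is needed, and they do give $L\equiv M$ (note $M\equiv -M$ since $2M\equiv 0$, so the sign ambiguity is immaterial).

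However, your description of the geometry of $\tilde S\to S$ is wrong, and the error recurs three times: $\tilde S$ is \emph{not} branched over the reduced fiber $M$; it is \'etale. Locally near a general point of $M$ one has $f=u^2w$ with $w$ a unit and $u$ an equation of $M$, so the fiber product is $s^2=u^2w$, the union of the two smooth branches $s=\pm u\sqrt{w}$ meeting along $u=s=0$; normalization separates these branches, and the resulting cover is unramified there. (Consistency already forces this: $\pi:X\to S$ is \'etale by hypothesis, so it could not coincide with a cover branched along $M$ --- your version would make the lemma's hypothesis vacuous.) Correspondingly, the multiplication map on $\mathcal{O}_S\oplus\mathcal{O}_S(-M)$ is \emph{not} distinguished from that of an \'etale algebra by ``cutting out $2M$'': under the trivialization $\mathcal{O}_S(2M)\cong\mathcal{O}_S$ furnished by $f^*t$ it becomes nowhere vanishing, exactly as for $\pi_*\mathcal{O}_X$. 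So the clause about ``comparing the algebra structures (the nowhere-zero function vs.\ $f^*t$)'' should be dropped; the identification $L^{-1}\cong\mathcal{O}_S(-M)$ already follows because the rank-one eigensheaf of the covering involution is canonical, which is precisely how the paper concludes.
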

\begin{proof}
 Denote by $\pi': X' \rightarrow S$ the double cover given by the relation $2M \equiv \mathcal{O}_S$. Considering the Stein factorizations of the composition map $f\circ \pi'$ and $f\circ \pi$, we find that both $X$ and $X'$ are isomorphic to the normalization of the product $S\times _{\Delta}\Delta'$, hence the double cover $\pi$ coincides with $\pi'$, and thus $\pi_*\mathcal{O}_X \cong \pi_*\mathcal{O}_{X'}$. So the lemma follows from the fact that $\pi_*\mathcal{O}_X \cong \mathcal{O}_S \oplus \mathcal{O}_S(L^{-1})$ and $\pi_*\mathcal{O}_X \cong \mathcal{O}_S \oplus \mathcal{O}_S(M^{-1})$.
\end{proof}

\begin{Lemma}\label{ramification}
Let $h: X \rightarrow T$ be a generically finite morphism between
two normal surfaces. Let $e \subset T$ be a reduced and irreducible $\mathbb{Q}$-Cartier Weil
divisor such that $e^2 < 0$. Denote by $R$ be the ramification divisor,
and let $R'$ be an effective divisor such that $R' \leq R$. Then we have
$R'(h^*e)
> (h^*e)^2$.
\end{Lemma}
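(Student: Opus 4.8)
The plan is to extract a numerical contradiction by combining the negative self-intersection of $e$ with the structure of the ramification divisor of $h$. First I would recall that since $e$ is a reduced irreducible $\mathbb{Q}$-Cartier Weil divisor with $e^2<0$, the pullback $h^*e$ (defined as $\frac1m h^*(me)$ for $m$ with $me$ Cartier) is an effective $\mathbb{Q}$-Cartier divisor supported on $h^{-1}(e)$, and $(h^*e)^2 = (\deg h)\, e^2 < 0$. Write $h^*e = \sum_i a_i F_i$ with $a_i>0$ and $F_i$ the prime components lying over $e$; the key elementary fact I would invoke is the projection formula $h^*e \cdot h^*e = h^*e \cdot (\deg h)\, e$ in the appropriate sense, but more usefully I would work with the observation that $h^*e$ is $h$-nef in the fibre directions is false in general, so instead I would argue as follows.

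The heart of the matter is to relate $R'$ to $h^*e$. The ramification divisor $R$ is, by definition, supported on the locus where $h$ is ramified, and a standard local computation (Riemann–Hurwitz along a curve) shows that along any prime divisor $F_i \subset h^{-1}(e)$ over which $h$ has ramification index $r_i \geq 1$, the coefficient of $F_i$ in $R$ equals $r_i - 1$ (plus a nonnegative contribution from the different if $T$ is singular along $e$, which only helps), while the coefficient $a_i$ of $F_i$ in $h^*e$ equals $r_i$. Hence the coefficient of $F_i$ in $R$ is at least $a_i - 1$, i.e. $R \geq h^*e - \Delta_{\mathrm{red}}$ where $\Delta_{\mathrm{red}} = \sum_i F_i$ is the reduced divisor $(h^*e)_{\mathrm{red}}$. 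So I would first dispose of the case $R' = R$: then $R'(h^*e) = R(h^*e) \geq (h^*e)^2 - \Delta_{\mathrm{red}}\cdot(h^*e)$, and since $h^*e$ is the pullback of the irreducible curve $e$ and $\Delta_{\mathrm{red}}\cdot (h^*e) = \sum_i F_i \cdot h^*e$, I need $\Delta_{\mathrm{red}}\cdot h^*e < 0$; but each $F_i$ is a component of $h^*e$ and $h^*e$ is "contracted" to a point by... no — here I should instead use that $h^*e \cdot F_i \leq 0$ is \emph{not} automatic, so the honest route is: $h^*e$ restricted to its own support is a pullback of the normal bundle of $e$, which is negative, forcing $h^*e\cdot F_i \le 0$ for at least the configuration... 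I would make this precise by noting $0 > (h^*e)^2 = \sum_i a_i (h^*e\cdot F_i)$ so some $h^*e\cdot F_i < 0$, and in fact one shows $h^*e \cdot F_i \le 0$ for every $i$ because $h^*(me)$ moves in a linear system whose base locus meets $F_i$ only inside $h^{-1}(e)$ — hence $\Delta_{\mathrm{red}}\cdot h^*e \le (h^*e)^2 < 0$, giving $R(h^*e) \ge (h^*e)^2 - \Delta_{\mathrm{red}}\cdot h^*e \ge (h^*e)^2 - (h^*e)^2 = 0 > (h^*e)^2$.

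For general $R' \le R$ with $R'$ effective, I would write $R' = R - (R - R')$ and observe that $(R-R')\cdot h^*e \ge 0$ since $R - R'$ is effective and $h^*e$ is effective with no common... again one must be careful: effective divisors can intersect negatively only if they share a component. So I would split $R-R'$ into the part $N$ supported on $h^{-1}(e)$ and the part $N'$ with no component over $e$; then $N' \cdot h^*e \ge 0$ automatically, and for $N$ I would need $N\cdot h^*e$ controlled. The cleanest fix is to prove the sharper statement that for \emph{every} effective divisor $0 \le R' \le R$ one has $R'\cdot h^*e > (h^*e)^2$, by showing $R \cdot h^*e \ge 0$ (done above, modulo the claim $h^*e\cdot F_i \le 0$) and that removing components of $R$ lying over $e$ only increases the intersection with $h^*e$ by a nonnegative amount when those components have $h^*e\cdot F_i \le 0$ — which is exactly the claim. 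So everything reduces to the single claim: \textbf{for each prime $F_i \subset h^{-1}(e)$, $h^*e \cdot F_i \le 0$.} This is the main obstacle, and I expect to prove it by the negativity of the intersection form on a connected fibre-like configuration: $h^*e$ is $h$-exceptional over the generic point of $e$ is false, but the correct statement is that $me = h_*(h^*(me))$ and $e^2 < 0$ forces, via the Hodge index theorem applied on the smooth model over a neighbourhood of $e$, that the configuration $\{F_i\}$ together with $e$'s preimage has negative semidefinite intersection matrix, whence $h^*e \cdot F_i \le 0$ for all $i$. Once this claim is in hand, the chain of inequalities above yields $R'(h^*e) \ge R(h^*e) - (\text{nonneg}) $... and a small additional argument (the inequality is strict because $h^*e \cdot F_i < 0$ for at least one $i$ appearing with positive coefficient in $h^*e$, combined with $R$ containing that $F_i$ with positive coefficient, or else $h$ is étale over $e$ and one argues directly) upgrades $\ge$ to the strict $>$ claimed. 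The delicate bookkeeping is entirely in tracking which components of $R$ and $R'$ lie over $e$ and applying negativity only there; the non-over-$e$ parts are harmless.
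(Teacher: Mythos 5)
You have correctly isolated the two ingredients the paper's proof runs on --- that the ramification coefficient along a component dominating $e$ is one less than its multiplicity in $h^*e$, and that every prime component $F_i$ of $h^{-1}(e)$ satisfies $h^*e\cdot F_i\le 0$ --- and your monotonicity mechanism (lowering coefficients of components over $e$ only increases the pairing with $h^*e$, while components away from $e$ pair nonnegatively) is exactly how one passes from $R$ to an arbitrary effective $R'\le R$. But the step you yourself call ``the main obstacle'', namely $h^*e\cdot F_i\le 0$, is left unproven, and the route you sketch for it does not work: negative semidefiniteness of the intersection matrix of the $F_i$ (even if established) does not imply that the particular positive combination $h^*e=\sum_i a_iF_i$ pairs nonpositively with each basis element --- a negative semidefinite matrix with positive off-diagonal entries gives immediate counterexamples to that implication. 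The correct argument is a one-line projection formula, which is what the paper does: factor $h=g\circ\eta$ with $\eta$ birational and $g$ finite (Stein factorization); then $F_i\cdot h^*e=F_i\cdot\eta^*g^*e=(\eta_*F_i)\cdot g^*e$, which is $0$ when $F_i$ is $\eta$-exceptional and equals $\deg(E_j\rightarrow e)\cdot e^2<0$ when $\eta_*F_i=E_j$ dominates $e$. The paper in fact pushes the entire computation down to $Y$ via $R'\cdot h^*e=(\eta_*R')\cdot g^*e$ and never works with the $F_i$ at all.

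Two further points. First, your Riemann--Hurwitz bookkeeping on $X$ is only valid for components on which $h$ is generically finite onto a divisor: for an $\eta$-exceptional component of $h^{-1}(e)$ the coefficient of $F_i$ in $R$ is a discrepancy-type quantity and need not compare with $a_i-1$ in either direction, so the asserted bound $R\ge h^*e-\Delta_{\mathrm{red}}$ is not justified as stated. This happens to be harmless precisely because those components are orthogonal to $h^*e$, but you never observe this; working on $Y$ removes the issue, since there $\eta_*R$ has coefficient exactly $a_i-1$ along each $E_i$. Second, the inequality $\Delta_{\mathrm{red}}\cdot h^*e\le(h^*e)^2$ has the wrong sign: from $a_i\ge 1$ and $F_i\cdot h^*e\le 0$ one gets $\Delta_{\mathrm{red}}\cdot h^*e\ge(h^*e)^2$. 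What you actually need --- and what is true, since some $F_i\cdot h^*e<0$ --- is only $\Delta_{\mathrm{red}}\cdot h^*e<0$, which yields $(h^*e-\Delta_{\mathrm{red}})\cdot h^*e>(h^*e)^2$ directly, so the final conclusion survives once the earlier steps are repaired.
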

\begin{proof}
Let $g \circ \eta: X \rightarrow Y \rightarrow T$ be the
factorization of $h$. Write $g^*e = \sum_ia_iE_i$. Note that
$(g_*E_i)e < 0$ since $e^2 <0$. Then we have
\begin{equation}
\begin{split}
R'(h^*e)
&= (\eta_* R')( g^*e )\geq (\sum_i(a_i - 1)E_i)(g^*e)\\
&= (g^*e - \sum_iE_i)g^*e \\
&= (g^*e)^2 - \sum_i(g_*E_i)e >(g^*e)^2 = (h^*e)^2
\end{split}
\end{equation}
\end{proof}

\section{Examples}\label{construction}

In this section we study some examples and make a useful
observation. Let $\rho: \Sigma\rightarrow \mathbb{P}^2$ be the
blow-up of $\mathbb{P}^2$ at $4$ points $P_1,P_2,P_3,P_4$ in
general position. We denote by $l$ the pull-back of a line on
$\mathbb{P}^2$, by $e_i$ the exceptional curve corresponding to
$P_i, i=1,2,3,4$, by $e_i'$ the strict transform of the line
joining $P_j$ and $P_k$ where $\{i,j,k\}=\{1,2,3\}$, by $g_i$ the
strict transform of the line joining $P_4$ and $P_i, i=1,2,3$, and
by $m_i$ the strict transform of a general line through $P_i,
i=1,2,3,4$. Then the Picard group of $\Sigma$ is a free Abelian
group generated by the classes of $l, e_1, e_2, e_3, e_4$. The
anticanonical class $ -K_\Sigma \equiv 3l - e_1 - e_2 - e_3 - e_4$
is very ample. And the linear system $|-K_\Sigma|$ embeds $\Sigma$
as a smooth surface of degree $5$ in $\mathbb{P}^5$, the so-called
del Pezzo surface of degree $5$.

As to the examples for the surfaces of general type with $p_g=0,
K^2 = 5$ and bicanonical map of degree 4, there are two known
examples: example 6 in \cite{Cat}, namely, Burniat surface (see
\cite{Pet}, \cite{Bu}) and example 7 in \cite{Cat}. We describe
the two examples by giving a collection of bidouble cover data
over $\Sigma$. Let $\Gamma = \mathbb{Z}_2 \bigoplus \mathbb{Z}_2$,
denote by $\gamma_1,\gamma_2,\gamma_3$ the nonzero elements of
$\Gamma$ and by $\chi_i\in\Gamma^*$ the nontrivial character
orthogonal to $\gamma_i$; by \cite{Par1}, to define a smooth
$\Gamma$-cover $\phi : S \rightarrow \Sigma$ one assigns the
following bidouble cover data:

$i)$ smooth divisors $D_i, i=1,2,3$ such that $D = D_1 + D_2 +
D_3$ is a normal crossing divisor,

$ii)$ line bundles $L_i, i=1,2,3$ satisfying $2L_i \equiv D_k +
D_j, \{i,j,k\}=\{1,2,3\}.$\\
Note that once one assigns the data $i)$, the data $ii)$ are
determined since the Picard group of $\Sigma$ is free.
\begin{Example}[Example 6 in \cite{Cat}, i.e., Burniat sufaces]\label{expl1} We construct a surface $S$ by
giving the following bidouble cover data:
\begin{equation*}
\begin{split}
&D_1  = e_3 + e_3' + g_1 + m_1 \equiv 3l - 3e_1 - e_2 + e_3 - e_4\\
&D_2  = e_1 + e_1' + g_2 + m_2 \equiv 3l + e_1  - 3e_2 - e_3 - e_4 \\
&D_3  = e_2 + e_2' + g_3 + m_3 \equiv 3l - e_1 + e_2 - 3e_3 - e_4
\end{split}
\end{equation*}
We denote by $\pi: S \rightarrow \Sigma $ the $4$-to-$1$ covering
map and by $\mathcal{S}$ the family of surfaces constructed in the
example.
\end{Example}
\begin{Example}[Example 7 in \cite{Cat}]\label{expl2} We construct a surface $S'$ by giving the following
bidouble cover data:
\begin{equation*}
\begin{split}
&D_1 = m_1 + e_2 + e_3 + e_4 \equiv l - e_1 + e_2 + e_3 + e_4\\
&D_2 = e_1 + e_1' + g_2 + m_2 \equiv 3l + e_1  - 3e_2 - e_3 - e_4 \\
&D_3 = e_2' + g_1 + g_3 + B \equiv 5l - 3e_1 - e_2 - 3e_3 - 3e_4
\end{split}
\end{equation*}
where $B$ is the strict transform of a conic passing through the
points $P_1, P_2, P_3, P_4$. We denote by $\pi: S' \rightarrow
\Sigma$ the $4$-to-$1$ covering map and by $\mathcal{S}'$ the
family of surfaces constructed in the example.
\end{Example}
\begin{Remark} For a surface $S$ in the two families constructed above,
by $\pi_*\mathcal{O}_S \cong \mathcal{O}_{\Sigma}
\oplus_iL_i^{-1}$, we calculate that $p_g(S) = q(S) = 0$. Since $D \equiv -3K_\Sigma$, by the formula $2K_S \equiv \pi^*(2K_\Sigma + D)$ (cf. \cite{Par1}), we get $2K_S \equiv \pi^*(-K_{\Sigma})$, thus $K_S^2 = 5$.
Checking that $h^0(\Sigma, -K_{\Sigma}\otimes L_i^{-1}) = 0$ for
$i=1,2,3$, we have $H^0(S, 2K_S) \cong H^0(\Sigma,
-K_{\Sigma})\oplus_i H^0(\Sigma, -K_{\Sigma}\otimes L_i^{-1})
\cong H^0(\Sigma, -K_{\Sigma})$. This implies that the bicanonical
map of $S$ coincides with the covering map $\pi$.\end{Remark}

Although the two collections of bidouble cover data are of
different forms, we have following observation:
\begin{Claim}\label{identifying} The two families $\mathcal{S}$ and $\mathcal{S}'$
are the same.\end{Claim}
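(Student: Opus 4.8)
The plan is to exhibit a single automorphism $\tau$ of $\Sigma$ that carries the bidouble cover data of Example~\ref{expl2} onto those of Example~\ref{expl1}. Since the Picard group of $\Sigma$ is free, the line bundles $L_i$ are the unique square roots of the $D_j+D_k$, so once the effective divisors $D_1,D_2,D_3$ are matched up by $\tau$ the line bundles are matched automatically, $\tau$ lifts to an isomorphism of the associated $\Gamma$-covers commuting with the maps to $\Sigma$, and the two families are identified. My candidate for $\tau$ is the automorphism of $\Sigma$ induced by the standard quadratic Cremona transformation of $\mathbb{P}^2$ based at $P_1,P_2,P_4$, equivalently the reflection of $\operatorname{Pic}(\Sigma)$ in the root $l-e_1-e_2-e_4$; it is a genuine biregular automorphism because the del Pezzo surface of degree $5$ is unique up to isomorphism ($\mathrm{PGL}_3$ being transitive on $4$-tuples of points of $\mathbb{P}^2$ in general position). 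I would begin by recording its action: on the ten $(-1)$-curves, $\tau$ is the permutation $(e_1\,g_2)(e_2\,g_1)(e_4\,e_3')$ and fixes $e_3,e_1',e_2',g_3$; on $\operatorname{Pic}(\Sigma)$ it sends $l\mapsto 2l-e_1-e_2-e_4$, $e_1\mapsto l-e_2-e_4$, $e_2\mapsto l-e_1-e_4$, $e_3\mapsto e_3$, $e_4\mapsto l-e_1-e_2$; in particular $\tau$ preserves the pencils $|l-e_1|$ and $|l-e_2|$.

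Granting this, the identification is a direct check. One finds that $\tau$ sends $D_1=m_1+e_2+e_3+e_4$ of Example~\ref{expl2} to $m_1'+g_1+e_3+e_3'$ with $m_1'$ again a general line through $P_1$, sends $D_2=e_1+e_1'+g_2+m_2$ to $e_1+e_1'+g_2+m_2'$ with $m_2'$ a general line through $P_2$, and sends $D_3=e_2'+g_1+g_3+B$ to $e_2+e_2'+g_3+m_3$ with $m_3$ a general line through $P_3$; the last point uses that $\tau$ carries the class $2l-e_1-e_2-e_3-e_4$ of the conic $B$ to $l-e_3$, so it maps the pencil of conics through $P_1,\dots,P_4$ isomorphically onto the pencil of lines through $P_3$, a general member going to a general member. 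Thus $\tau$ takes the ordered triple $(D_1,D_2,D_3)$ of Example~\ref{expl2} to that of Example~\ref{expl1} and matches the three moduli parameters $(m_1,m_2,B)$ with $(m_1,m_2,m_3)$; since $\tau$ is an isomorphism the correspondence is bijective, so $\mathcal{S}=\mathcal{S}'$.

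The real work is to locate $\tau$ rather than to verify that it works. The clue is that $D_2$ is literally the same divisor in both examples, so one looks for an automorphism fixing $e_1,e_1',g_2$ and the pencil $|l-e_2|$; inside $\operatorname{Aut}(\Sigma)\cong W(A_4)$ this narrows the search to essentially the reflection in $l-e_1-e_2-e_4$. After that, the only thing requiring attention is the bookkeeping of the moving parts of the $D_i$ --- one must make sure that the conic $B$, passing through all four base points, is sent to an irreducible line through $P_3$, and that the normal-crossing and general-position hypotheses needed for the smooth $\Gamma$-cover are preserved --- but this is automatic once the action of $\tau$ on $\operatorname{Pic}(\Sigma)$ and on the ten $(-1)$-curves has been pinned down, because $\tau$ is an isomorphism of $\Sigma$.
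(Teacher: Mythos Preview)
Your proof is correct, and it takes a slightly different---and cleaner---route than the paper's. The paper reaches the same conclusion by composing two automorphisms of $\Sigma$: first the standard Cremona $\tau$ based at $P_1,P_2,P_3$ (so $\tau^*l\equiv 2l-e_1-e_2-e_3$, swapping $e_i\leftrightarrow e_i'$ for $i=1,2,3$ and fixing $e_4$), and then the linear automorphism $\eta_{(34)}$ permuting $e_3\leftrightarrow e_4$. Your single reflection in the root $l-e_1-e_2-e_4$ (the Cremona based at $P_1,P_2,P_4$) does the job in one stroke; it is an involution, whereas the paper's composite $\tau\circ\eta_{(34)}$ is not. Your heuristic for finding it---observing that $D_2$ is literally the same in both examples and therefore looking for an automorphism fixing $e_1,e_1',g_2$ and the pencil $|l-e_2|$---is also more transparent than the paper's presentation, which simply writes down the two automorphisms and computes. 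Both arguments of course rely on the same underlying fact that $\mathrm{Aut}(\Sigma)\cong S_5$ acts on the configuration of $(-1)$-curves via the Weyl group $W(A_4)$, and that $\mathrm{Pic}(\Sigma)$ being torsion-free forces the $L_i$ to match once the $D_i$ do.
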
 To see this, first note that $\Sigma$ has
two types automorphisms.

Type 1: An automorphism of this type arises
from a linear automorphism of $\mathbb{P}^2$. Let $s \in S_4$ be a permutation
of $\{1,2,3,4\}$. Then there exists a unique linear automorphism
$\bar{\eta_s}$ of $\mathbb{P}^2$ such that $\bar{\eta_s}(P_i) =
P_{s(i)}$ and a unique automorphism $\eta_s$ of $\Sigma$ fitting
in the following commutative diagram:
\[\begin{CD}
\Sigma               @>\eta_s>>       \Sigma  \\
@V\rho VV                             @V\rho VV \\
\mathbb{P}^2       @>\bar{\eta_s}>>         \mathbb{P}^2
\end{CD} \]
It follows from the definition that $\eta_s^*e_{s_i} =
e_i~for~i=1,2,3,4$ and $\eta_s^*l \equiv l.$

Type 2: An automorphism of type 2 arises from a birational map
from $\mathbb{P}^2$ to itself, the so-called Cremona
transformation (cf. \cite{Har} P.30). Here we give an example.
Denote by $(x_0, x_1, x_2)$ the homogenous coordinate of
$\mathbb{P}^2$. Up to a linear transformation, we can assume the
coordinates of $P_i, i=1,2,3,4$ are as follows: $P_1(1,0,0), P_2
(0,1,0),P_3(0,0,1), P_4(1,1,1)$. Note that for $i=1,2,3$, $e_i'$
is the strict transform of the line defined by $x_{i-1} = 0$. A
Cremona transformation $\varphi: \mathbb{P}^2 \rightarrow
\mathbb{P}^2$ is given by $\varphi: (x_0,x_1,x_2)\mapsto
(x_1x_2,x_0x_2,x_0x_1)$, so it is well defined on $\mathbb{P}^2
\setminus \{P_1,P_2,P_3\}$, and it fixes $P_4$. By Example 4.2.3 chap.V in
\cite{Har}, to extend $\varphi$ to a morphism, it suffices to
blow up $\mathbb{P}^2$ at the points $P_1,P_2,P_3$. So $\varphi$
extends to a morphism $\rho': \Sigma \rightarrow \mathbb{P}^2$
which blows down the curves $e_i',i=1,2,3$ and $e_4$ to the points
$P_i, i=1,2,3$ and $P_4$ respectively. Note that both $\rho$ and $\rho'$ are the blowing-up of $\mathbb{P}^2$ at the points $P_i, i =1,2,3,4$. Thus the map $\rho'$ lifts
to a morphism $\tau:\Sigma \rightarrow \Sigma$ fitting into the
following diagram:

\centerline{\xymatrix{
&\Sigma   \ar[r]^\tau \ar[d]_\rho \ar[dr]^{\rho'}       &\Sigma \ar[d]^\rho\\
&\mathbb{P}^2   \ar@{.>}[r] ^\varphi     &\mathbb{P}^2
}}
We can see that $\tau$ is an automorphism of $\Sigma$. And it follows from the definition that $\tau^*e_i' = e_i, \tau^*e_i =
e_i',i=1,2,3$ and $\tau^*e_4 = e_4$, hence $\tau^*l \equiv
\tau^*(e_1' + e_2 + e_3) \equiv 2l - e_1 - e_2 - e_3$. For a
permutation $s$ of $\{1,2,3,4\}$, let $\eta_s$ be defined as in
Type 1. Then the automorphism $\eta_s^{-1} \circ \tau \circ \eta_s$ also arises
from a Cremona transformation, and it fixes the curve $\eta_s^{-1}(e_4)$.

\begin{proof}[\bf Proof of Claim \ref{identifying}]
Let $S'\in \mathcal{S}'$ be a surface given by the data in Example
\ref{expl2}. Consider an automorphism $\tau$ of type $2$
introduced above such that
$$\tau^*e_i' = e_i, \tau^*e_i = e_i'~for~i=1,2,3, ~\tau^*e_4 = e_4,
\tau^*l \equiv 2l - e_1 - e_2 - e_3.$$ It follows that
$$\tau^*g_i
\equiv l - e_i - e_4, ~\tau^*m_i \equiv l -e_i
~for~i=1,2,3,~~\tau^*B \equiv l - e_4.$$ Let $n_i = \tau^*m_i$ for
$ i = 1,2,3$ and $n_4 = \tau^*B$. Then $n_i$ is the strict
transform of a line through $P_i, i = 1,2,3,4$. We obtain
\begin{equation}
\begin{split}
&\tau^*D_1 = n_1 + e_2' + e_3' + e_4 \equiv 3l - 3e_1 - e_2 - e_3 + e_4\\
&\tau^*D_2 = e_1 + e_1' + g_2 + n_2  \equiv 3l + e_1  - 3e_2 - e_3 - e_4 \\
&\tau^*D_3 = e_2 + g_1 + g_3  + n_4 \equiv 3l - e_1 + e_2 - e_3 -
3e_4.
\end{split}
\end{equation}

Let $s = (34)$, and let $\eta_s$ be an automorphism of type 1
defined as above. Then we have
$$\eta_s^*e_1 = e_1, \eta_s^*e_2 = e_2, \eta_s^*e_3 = e_4, \eta_s^*e_4 = e_3, \eta_s^*l \equiv l.$$
It follows that
\begin{equation}
\begin{split}
&\eta_s^*\tau^*D_1 = k_1 + g_1 + e_3' + e_3 \equiv 3l - 3e_1 - e_2 + e_3 - e_4\\
&\eta_s^*\tau^*D_2 = e_1 + g_2 + e_1' + k_2  \equiv 3l + e_1  - 3e_2 - e_3 - e_4 \\
&\eta_s^*\tau^*D_3 = e_2 + e_2' + g_3  + k_3 \equiv 3l - e_1 + e_2
- 3e_3 - e_4
\end{split}
\end{equation}
where $k_1 = \eta_s^*n_1 \equiv l - e_1, ~k_2 = \eta_s^*n_2 \equiv
l - e_2,~k_3 = \eta_s^*n_4 \equiv l - e_3$, and for $i = 1,2,3$,
$k_i$ is the strict transform of a line through $P_i$.

The data pulled back via $\tau \circ \eta_s$ is exactly one
collection of bidouble cover data as in Example \ref{expl1}.
Therefore the claim follows.
\end{proof}
There are exactly 10 $(-1)$-curves on $\Sigma$ in all. For
every $(-1)$-curve, there are 3 $(-1)$-curves intersecting
it and 6 $(-1)$-curves not intersecting it. An automorphism of $\Sigma$ sends a $(-1)$-curve to a $(-1)$ -curve, so it induces an action on the set of $(-1)$-curves on $\Sigma$. Here we list some facts about the existence of the automorphism with a certain action on the set of $(-1)$-curves.

\begin{Fact}\label{symofcurves} Let all the notations be as above. Then we have
\begin{enumerate}
\item[i)]{For two $(-1)$-curves $c_1,c_2$ on $\Sigma$, there exists an
automorphism $\sigma$ of $\Sigma$ such that $\sigma(c_1) = c_2$;}
\item[ii)]{For three $(-1)$-curves $c_1,c_2, c$ on $\Sigma$ such that $c_1c =c_2c
=0$, there exists an automorphism $\sigma$ of $\Sigma$ such that
$\sigma(c) = c$ and $\sigma(c_1) = c_2$;}
\item[iii)]{For four $(-1)$-curves $c_1,c_2,c_3,c_4$ on $\Sigma$ such that $c_1c_2
= c_3c_4 = 0$, there exists an automorphism $\sigma$ of $\Sigma$ such that
$\sigma(c_1) = c_3$ and $\sigma(c_2) = c_4$.}
\end{enumerate}
\end{Fact}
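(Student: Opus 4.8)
The plan is to derive all three statements from the two explicit families of automorphisms of $\Sigma$ described above — the Type $1$ automorphisms $\eta_s$ ($s\in S_4$) and the Cremona-type automorphism $\tau$ together with its conjugates. A structural picture worth keeping in mind is that the ten $(-1)$-curves, with the relation ``being disjoint from'', form the Petersen graph and $\mathrm{Aut}(\Sigma)\cong S_5$ acts as its full automorphism group, identifying the ten curves with the ten $2$-element subsets of $\{1,\dots,5\}$; in these terms (i), (ii), (iii) are respectively the transitivity of $S_5$ on vertices, on a vertex together with a chosen non-neighbour, and on ordered edges. I will argue concretely, however, so as to stay within the framework already set up.

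First I would tabulate the action of the two types of automorphisms on the ten curves $e_1,e_2,e_3,e_4,e_1',e_2',e_3',g_1,g_2,g_3$. From $\eta_s^*e_{s(i)}=e_i$ one gets $\eta_s(e_i)=e_{s(i)}$, and since $\eta_s$ sends the line through $P_j,P_k$ to the line through $P_{s(j)},P_{s(k)}$, it permutes $\{e_1',e_2',e_3'\}$ when $s$ fixes $\{1,2,3\}$; for instance $\eta_{(1\,4)}$ induces $e_1\leftrightarrow e_4$, $e_2'\leftrightarrow g_3$, $e_3'\leftrightarrow g_2$ while fixing $e_2,e_3,e_1',g_1$. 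From $\tau^*e_i'=e_i$, $\tau^*e_i=e_i'$ ($i=1,2,3$), $\tau^*e_4=e_4$ one gets that $\tau$ swaps $e_i\leftrightarrow e_i'$ and fixes $e_4$. Now (i) follows: the orbit of $e_1$ under the subgroup $S_4$ of Type $1$ automorphisms is $\{e_1,e_2,e_3,e_4\}$; applying $\tau$ reaches $e_1'$, then $S_3\subset S_4$ (the permutations fixing $P_4$) reaches $e_2',e_3'$, and $\eta_{(1\,4)}$ reaches $g_3$, hence $S_3$ reaches $g_1,g_2$. So $\mathrm{Aut}(\Sigma)$ is transitive on the ten $(-1)$-curves.

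For (ii), by (i) it suffices — after replacing $(c,c_1,c_2)$ by its image under an automorphism sending $c$ to $e_4$ — to handle $c=e_4$; the six curves disjoint from $e_4$ are exactly $e_1,e_2,e_3,e_1',e_2',e_3'$, and the subgroup $\langle S_3,\tau\rangle$ (which lies in $\mathrm{Stab}(e_4)$, since both $S_3$ fixing $P_4$ and $\tau$ fix $e_4$) is transitive on them: from $e_1$ one reaches $e_2,e_3$ via $S_3$, then $e_1'$ via $\tau$, then $e_2',e_3'$ via $S_3$. Conjugating back produces the desired $\sigma$ with $\sigma(c)=c$, $\sigma(c_1)=c_2$. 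Finally, (iii) is a formal consequence of (i) and (ii): choose $\sigma_1$ with $\sigma_1(c_1)=c_3$ by (i); since automorphisms preserve intersection numbers, $\sigma_1(c_2)$ and $c_4$ are both disjoint from $c_3$, so by (ii) with fixed curve $c_3$ there is $\sigma_2$ with $\sigma_2(c_3)=c_3$ and $\sigma_2(\sigma_1(c_2))=c_4$; then $\sigma=\sigma_2\circ\sigma_1$ works.

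The only step that requires care is the opening bookkeeping — keeping straight which curve each $\eta_s$ and $\tau$ sends where — and this is entirely routine once coordinates for $P_1,\dots,P_4$ are fixed as in the discussion of Type $2$ automorphisms above; I anticipate no genuine obstacle, the content being purely combinatorial once the two families of automorphisms are in hand.
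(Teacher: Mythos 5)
Your proof is correct and follows essentially the same route as the paper: generate automorphisms from the Type 1 family $\eta_s$ and the Cremona automorphism $\tau$, prove (i) by transitivity on the ten $(-1)$-curves, (ii) by reducing to $c=e_4$ and using the stabilizer $\langle S_3,\tau\rangle$, and (iii) formally from (i) and (ii). Your bookkeeping is in fact slightly more explicit than the paper's (e.g.\ tracking the $g_j$ via $\eta_{(1\,4)}$), but the argument is the same.
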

\begin{proof} In the following, when we say a curve is exceptional, we mean it is $\rho$-exceptional.

$i)$ If necessary, acting on $c_1$ by a suitable automorphism of type 2, we can assume $c_1$ is an exceptional curve, say
$c_1 = e_1$. If $c_2$ is also exceptional, then there exists an
automorphism of type 1 sending $c_1$ to $c_2$. Now we assume $c_2$ is
not exceptional. If $c_2$ does not intersect $c_1$, say $c_2 =
e_1'$, then $\tau(e_1) = e_1' = c_2$; if $c_2$ intersects $c_1$,
say $c_2 = e_2'$, then $\eta_{(12)}\circ \tau(e_1) = \eta_{(12)}(e_1') = e_2' = c_2$.

$ii)$ Thanks to $i)$, we can assume $c = e_4$. So $c_1, c_2$ belong to the set $\{e_i,e_i'\}_{i = 1,2,3}$. Then arguing as in $i)$, we prove
$ii)$.

$iii)$ By $i)$, we can find
an automorphism $\sigma_1$ such that $\sigma_1(c_1) = c_3$, then
we have $\sigma_1(c_2)\cap c_3 = \sigma_1(c_2 \cap c_1) = \phi$. And since $c_4 \cap c_3
=\phi$, applying $ii)$ gives an automorphism fixing $c_3$ and sending $\sigma_1(c_2)$ to $c_4$, then $iii)$ follows.
\end{proof}
\begin{Remark}
Notice that in Example~\ref{expl1}, $e_4$ is the only $(-1)$-curve
that is not contained in the branch divisor, and the pull-back $\pi^*e_4$
is a reduced $(-4)$-curve while the pull-backs of the other
$(-1)$-curves are non-reduced. This observation suggests that we
should prove that $S$ has a $(-4)$-curve.
\end{Remark}

\section{Divisors, pencils and torsion of $S$}\label{divisor}
This section is the preparation for the proof of the main theorem.
\begin{Notation}\label{not1} Let $S$ be a surface with $p_g(S)=q(S)=0,
K_S^2=5$. Let $\phi : S \rightarrow \mathbb{P}^5$ be the
bicanonical map and $\Sigma$ be the bicanonical image. We assume
the degree of $\phi$ is 4 and $\Sigma$ is a smooth. By \cite{Na},
$\Sigma$ is a del Pezzo surface of degree $5$ in $\mathbb{P}^5$.
Let $\rho: \Sigma\rightarrow \mathbb{P}^2$ be the blow-up of
$\mathbb{P}^2$ at $4$ points $P_1,P_2,P_3,P_4$ in general
position, and let $l,e_i,e_j',g_j,i=1,2,3,4,j=1,2,3$ be as at
the beginning of Section \ref{construction}.
Set $f_i \equiv l - e_i$ and $F_i \equiv \phi^*(l -
e_i)$ for $i = 1,2,3,4$.
\end{Notation}

\begin{Proposition} Let the notations be as in \ref{not1}. For $i = 1,2,3,4$, if $f_i \in |f_i|$ is general, then $\phi^*f_i$ is connected, hence $|F_i|$ induces a genus 3 fibration $u_i: S\rightarrow \mathbb{P}^1$.
\end{Proposition}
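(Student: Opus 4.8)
The plan is to realise $\phi^*f_i$ as the general fibre of the morphism attached to a base-point-free subpencil of $|F_i|$, and then to rule out that this morphism has non-reduced general fibre (equivalently, that it is composed with a pencil of genus $2$).

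\textbf{Setting up the morphism.} First I would record the numerics. On $\Sigma$ one has $f_i^2=(l-e_i)^2=0$, $-K_\Sigma\cdot f_i=2$ and $h^0(\Sigma,f_i)=2$, so $|f_i|=|l-e_i|$ is a base-point-free pencil (the pencil of strict transforms of lines through $P_i$) defining a conic bundle $\bar u_i\colon\Sigma\to\mathbb{P}^1$. Since $\phi$ is surjective, $\phi^*|f_i|$ is again a base-point-free pencil, contained in $|F_i|$, and it defines the morphism $\psi_i:=\bar u_i\circ\phi\colon S\to\mathbb{P}^1$ whose scheme-theoretic fibres are the divisors $\phi^*f_i$, $f_i\in|f_i|$. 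Using $2K_S\equiv\phi^*(-K_\Sigma)$ and the projection formula, $F_i^2=4f_i^2=0$, $2K_S\cdot F_i=\phi^*(-K_\Sigma)\cdot\phi^*f_i=4(-K_\Sigma)\cdot f_i=8$, hence $K_S\cdot F_i=4$ and $p_a(\phi^*f_i)=1+\tfrac12F_i\cdot(F_i+K_S)=3$.

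\textbf{Reduction by Stein factorisation.} Write $\psi_i=p\circ u_i$ with $u_i\colon S\to B$ a fibration (connected fibres) and $p\colon B\to\mathbb{P}^1$ finite of degree $d_i$. As $q(S)=0$ we get $g(B)=0$, so $B=\mathbb{P}^1$. For a general $f_i$ the preimage $\phi^*f_i$ is reduced (a general conic lies off the branch locus and meets it transversally), hence a disjoint union of $d_i$ general fibres $C_1,\dots,C_{d_i}$ of $u_i$; the monodromy of $\psi_i$ permutes them transitively (because $B$ is irreducible), so they are numerically equivalent, $C_j^2=0$, $K_S\cdot C_j=4/d_i$ and $d_i\mid 4$. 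Since $S$ is of general type, a general fibre $C_j$ is neither a $(-2)$-curve nor rational, so $K_S\cdot C_j=2g(C_j)-2\ge 2$; together with $K_S\cdot C_j=4/d_i$ and $d_i\mid 4$ this leaves only $d_i\in\{1,2\}$. If $d_i=1$ we are done: $\phi^*f_i$ is connected of arithmetic genus $3$ and $u_i=\psi_i$ is the asserted genus $3$ fibration. So everything comes down to excluding $d_i=2$, which I expect to be the main obstacle.

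\textbf{Excluding $d_i=2$.} Suppose $d_i=2$. Then $F_i\equiv 2\Gamma_i$ where $\Gamma_i$ is a general fibre of a genus $2$ fibration $u_i\colon S\to\mathbb{P}^1$ ($K_S\cdot\Gamma_i=2$, $\Gamma_i^2=0$), $\psi_i=p\circ u_i$ with $\deg p=2$; moreover $\bar u_i(\phi(\Gamma_i))$ is a point, so $\phi(\Gamma_i)$ is a member of $|l-e_i|$ and $\phi|_{\Gamma_i}$ has degree $2$ onto it, whence $\phi_*\Gamma_i\equiv 2(l-e_i)$. Passing to the canonical model if necessary, so that $\phi$ is finite (here $\rho(S)=b_2(S)=10-K_S^2=5=\rho(\Sigma)$; the presence of a $(-2)$-curve, which only changes $\rho$, would need a separate bookkeeping), Lemma~\ref{pnf} applied to $\phi$ and $u_i$ gives a fibration $v\colon\Sigma\to B'$ and a finite $\pi'\colon B\to B'$ with $v\circ\phi=\pi'\circ u_i$. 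As $\Sigma$ is rational, $B'=\mathbb{P}^1$ and the general fibre of $v$ is rational; since $\phi^{-1}(\text{general fibre of }v)$ is a union of $\deg\pi'$ fibres of $u_i$ and $\deg\phi=4$, one has $\deg\pi'\mid 4$. The value $\deg\pi'=4$ would make $v$ a genus $2$ fibration on a rational surface; $\deg\pi'=1$ would force $\phi_*\Gamma_i\equiv 4(l-e_i)$, contradicting $\phi_*\Gamma_i\equiv 2(l-e_i)$. Hence $\deg\pi'=2$, $v=\bar u_i$ and $\pi'=p$. Now the fibre of $\psi_i$ over a branch point of $p$ is divisible by $2$, so — as $\phi^*f_i$ is reduced whenever $f_i$ is smooth — both branch points of $p$ must lie among the three points of $\mathbb{P}^1$ over which $\bar u_i$ has a reducible fibre $A_j+e_j$, with $A_j=l-e_i-e_j$ the strict transform of $P_iP_j$. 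The hard part will be to extract a contradiction from this configuration: I would analyse how $\phi$ pulls back the three reducible conics $A_j+e_j$ (note $\phi^*A_j$ and $\phi^*e_j$ each have square $-4$ and meet in $4$ points, so neither is a fibre of $u_i$) and combine this with the ramification inequality $R'\cdot(\phi^*e)>(\phi^*e)^2$ of Lemma~\ref{ramification}, applied to the negative curves $e=e_j,A_j$ and to effective $R'\le R_\phi\equiv 3K_S$; turning the interplay of the double cover $p$, the reducible conic fibres, and the ramification divisor of $\phi$ into a numerical contradiction is the technical heart of the proof.
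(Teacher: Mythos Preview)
Your setup and reduction are correct: the general $\phi^*f_i$ is smooth with $K_S\cdot F_i=4$, so connectedness is equivalent to ruling out $d_i=2$, i.e.\ to showing that $S$ carries no genus~$2$ pencil. This is exactly what the paper does --- but in one line, by quoting Xiao \cite[p.~37]{X1}, which already excludes a genus~$2$ fibration on such an $S$. So your elaborate attack on $d_i=2$ via Lemma~\ref{pnf}, the double cover $p$, and the reducible conics of $\bar u_i$ is reinventing a wheel that the paper simply imports.

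More seriously, your attack is not complete, and the sketch has a gap. After reducing to $\deg\pi'=2$, $v=\bar u_i$, $\pi'=p$, you assert that ``$\phi^*f_i$ is reduced whenever $f_i$ is smooth'' and conclude that the two branch points of $p$ must sit under reducible conics. That assertion is unjustified: a smooth irreducible member of $|l-e_i|$ can perfectly well lie in the branch divisor of $\phi$, in which case $\phi^*f_i$ is non-reduced even though $f_i$ is smooth. (What is true is that a \emph{general} $f_i$ avoids the branch locus; the two specific fibres over the branch points of $p$ need not.) So even before the admittedly unfinished ``technical heart'', the localisation of the branch points of $p$ is not secured. A smaller point: you invoke Lemma~\ref{pnf}, which needs $\phi$ finite; in the paper this is only established in the \emph{next} proposition (Proposition~\ref{infofpb}), so if you insist on this route you should first reproduce that short Picard-number argument.

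In short: the paper's proof is to recognise that $d_i=2$ means a genus~$2$ fibration and to cite Xiao for its non-existence. Your proposal rediscovers the reduction but leaves the exclusion of the genus~$2$ case as an unfinished and partly flawed programme; replacing that paragraph by the citation to \cite{X1} closes the argument immediately.
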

\begin{proof}
By Bertini theorem, for a general element $f_i \in |f_i|$, $F_i =
\phi^*f_i$ is smooth. Since $K_SF_i = 4$, it suffices to prove
$\phi^*f_i$ is connected. Otherwise, we will get a genus 2
fibration of $S$. However, by [\cite{X1} P.37], $S$ has no genus 2
fibration. Then we get a contradiction.
\end{proof}
\begin{Proposition}\label{infofpb}
Let the notations be as in \ref{not1}. Then the bicanonical map $\phi$ is finite, and for $i =1,2,3,4$, the pull-back of an irreducible curve in $|f_i|$ is also irreducible (possibly
non-reduced).
\end{Proposition}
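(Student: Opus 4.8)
The plan is to prove finiteness of $\phi$ first, and then deduce irreducibility of the pull-backs from it. For finiteness, suppose to the contrary that $\phi$ contracts some irreducible curve $C\subset S$; since $\phi$ is a morphism (as $K_S^2=5\geq 5$) defined by the complete bicanonical system and $\phi^*\mathcal O_\Sigma(1)\equiv 2K_S$, we would have $K_SC = \tfrac12(2K_S)C = 0$, so $C$ is a $(-2)$-curve and $\phi(C)$ is a point. But then $C$ is a $\phi$-exceptional curve lying in a fibre of each of the fibrations $u_i$ associated to $|F_i|$ whenever $\phi(C)$ lies on the corresponding curve $f_i$; I would pick an $f_i$ through $\phi(C)$ and examine the fibre of $u_i$ containing $C$, using $K_S F_i = 4$ and the genus formula to bound how $C$ can sit inside it, aiming for a numerical contradiction with the classification of fibres of a genus $3$ fibration on a surface with these invariants (or, more cleanly, contradict the absence of genus $2$ pencils cited from \cite{X1}). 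An alternative and probably smoother route: $\Sigma$ is a del Pezzo surface, hence has ample $-K_\Sigma=\mathcal O_\Sigma(1)$, so $\phi^*(-K_\Sigma)\equiv 2K_S$ is nef and big; if $C$ were contracted it would be a component of the locus where $\phi^*(-K_\Sigma)$ fails to be ample, forcing $(2K_S)\cdot C=0$ and, since $2K_S$ is already known to be nef and $K_S^2>0$, $C^2<0$ with $K_S\cdot C=0$. Such $(-2)$-curves are allowed a priori, so the real content is ruling out that any of them gets contracted to a point by the \emph{complete} $|2K_S|$; this follows because $|2K_S|$ restricted to a fibre $F_i$ not meeting $C$ already separates points of $S$ near a general point, while the image being a surface of degree $5$ pinned down by \cite{Na} leaves no room for an extra contracted curve without dropping $K_\Sigma^2$ below $5$. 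I expect this first part to be the main obstacle, precisely because $(-2)$-curves are not excluded and one must argue with the global geometry of the degree-$5$ del Pezzo target rather than purely numerically.

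Granting that $\phi$ is finite, the irreducibility statement should follow by a standard monodromy/connectedness argument. Fix $i$ and let $f\in|f_i|$ be irreducible; I would first treat a general such $f$, where $F_i=\phi^*f$ is smooth, and show $F_i$ is irreducible. Since $\phi$ is finite of degree $4$, $\phi|_{F_i}:F_i\to f$ is a finite degree-$4$ morphism of curves; the number of connected (= irreducible, as $F_i$ is smooth) components of $F_i$ is the number of orbits of the monodromy of this cover acting on a general fibre of four points. We already know from the previous proposition that a general $F_i$ is connected (genus $3$ fibration), so for general $f$ there is nothing to prove. For an arbitrary irreducible $f\in|f_i|$, consider the fibration $u_i:S\to\mathbb P^1$ induced by $|F_i|$; the fibre of $u_i$ over the point of $\mathbb P^1=|f_i|$ corresponding to $f$ is $\phi^*f$, and a fibre of a fibration over $\mathbb P^1$ from a surface with $q(S)=0$ is connected (indeed, fibres of $u_i$ are connected since $u_i$ has connected general fibre and $u_i$ factors through its Stein factorization which must be $\mathbb P^1$ itself). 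Hence $\phi^*f$ is connected; being the scheme-theoretic pull-back of an irreducible curve under a finite morphism, it is moreover equidimensional of pure dimension $1$, and a connected curve whose normalization-components are permuted transitively by nothing... — more directly, a connected pull-back $\phi^*f$ with $\phi$ finite is irreducible when $f$ is irreducible and the cover $\phi^{-1}(f)\to f$ has connected total space, because any decomposition $\phi^*f=A+B$ into nonzero effective divisors with $A,B$ supported over $f$ would give, after pushing forward, $\phi_*A$ and $\phi_*B$ nonzero effective divisors summing to $4f$ and hence (as $f$ is irreducible) multiples of $f$, so $A$ and $B$ are "horizontal" over $f$ in a way incompatible with connectedness unless one of them is empty. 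I will make this last implication precise via the Stein factorization of $\phi^{-1}(f)\to f$: connectedness of $\phi^{-1}(f)$ forces the Stein factorization to be a single (possibly ramified, possibly inseparable-free) connected degree-$4$ cover of $f$, whose total space, being a curve mapping finitely to the irreducible $f$ with connected source, can have only one component dominating $f$; since there are no components contracted by $\phi$ (finiteness), $\phi^*f$ is irreducible.

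The one subtlety to handle carefully is the passage from set-theoretic to scheme-theoretic statements and the possibility that $\phi^*f$ is non-reduced — which the proposition explicitly allows. I would phrase the conclusion as: $(\phi^*f)_{\mathrm{red}}$ is irreducible, and $\phi^*f = m\,(\phi^*f)_{\mathrm{red}}$ for some $m\geq 1$; this $m$ is $2$ or $4$ on the locus of $f_i$ over which the cover is totally ramified (the phenomenon visible in Example~\ref{expl1}, where $\pi^*e_i$ is non-reduced for three of the four values of $i$). The key inputs are all in hand: $\phi$ is a morphism, $\phi^*\mathcal O_\Sigma(1)\equiv 2K_S$, $-K_\Sigma$ is ample, $S$ has no genus $2$ fibration (\cite{X1}), and the identification of $\Sigma$ as a quintic del Pezzo (\cite{Na}). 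The main obstacle remains the finiteness claim; once that is secured the irreducibility is a connectedness-of-fibres argument for the fibrations $u_i$ combined with the elementary divisor-theoretic observation above.
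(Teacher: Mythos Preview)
Your irreducibility argument has a genuine gap. The claim that a connected curve mapping finitely onto an irreducible curve $f$ must itself be irreducible is false: two copies of $\mathbb{P}^1$ glued at a single point, each mapping isomorphically to $\mathbb{P}^1$, is a connected reducible degree-$2$ cover. In the situation at hand, nothing you have written excludes a fiber $\phi^*f = A + B$ with $A,B$ irreducible, $A^2<0$, $B^2<0$, $A\cdot B>0$ (which is exactly what Zariski's lemma permits for a reducible fiber). The push-forward observation $\phi_*A = a f$, $\phi_*B = b f$ with $a+b=4$ is correct but does not force either summand to vanish, and invoking ``Stein factorization'' of a finite map is vacuous.

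The missing idea, which the paper uses for \emph{both} finiteness and irreducibility, is the equality of second Betti numbers. Noether's formula with $\chi(\mathcal{O}_S)=1$, $K_S^2=5$ gives $e(S)=7$, and since $p_g=q=0$ one gets $b_2(S)=5=b_2(\Sigma)$. Hence $\phi^*\colon H^2(\Sigma,\mathbb{Q})\to H^2(S,\mathbb{Q})$ is an isomorphism (it is always injective for a generically finite surjective morphism of smooth surfaces). Finiteness is then immediate: a contracted curve would be orthogonal to the image of $\phi^*$, hence numerically trivial. For irreducibility, suppose $\phi^*f_1$ (with $f_1\in|l-e_1|$ irreducible) had a component $C$ with $C^2<0$; since $e_2,e_3,e_4$ lie in fibers of $|l-e_1|$ different from $f_1$, one checks that the class $D := C - \tfrac{C\cdot\phi^*e_1}{4}\,\phi^*f_1$ satisfies $D\cdot\phi^*e_i=0$ for $i=1,2,3,4$ and $D^2=C^2<0$, so $D$ is not in the $\mathbb{Q}$-span of $\phi^*l,\phi^*e_1,\dots,\phi^*e_4$. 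This forces $b_2(S)\geq 6$, a contradiction. Your route via connectedness of fibers of $u_i$ cannot replace this rank argument.
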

\begin{proof}
By $\chi(S) = 1$ and $K_S^2 =5$, Noether's formula gives $e(S) = 7$. Then by $p_g(S) = q(S) = 0$, we get  $h^2(S) = h^2(\Sigma) = 5$. So $\phi^*: H^2(\Sigma, \mathbb{Q}) \rightarrow H^2(S, \mathbb{Q})$ is an isomorphism preserving the intersection form up to multiplication by 4. Therefore
the bicanonical map $\phi$ is finite. For an irreducible curve $f_1 \in |f_1|$, if $\phi^*f_1$ is reducible,
then it contains an irreducible component $C$ with $C^2 < 0$. Put $D = C- \frac{C(\phi^*e_1)}{4}\phi^*f_1$. Then $D^2 = C^2 <0$, and $D(\phi^*e_1) = 0$. And for $i=2,3,4$, $(C- \frac{C(\phi^*e_1)}{4}\phi^*f_1)\phi^*e_i = 0$ since $e_i$ is contained in one fiber of the pencil $|l-e_1|$. Then we can see that the intersection matrix of
$\phi^*l, C- \frac{C(\phi^*e_1)}{4}\phi^*f_1, \phi^*e_i,i=1,2,3,4$ has rank 6. However, this contradicts $h^2(S) = 5$, thus $\phi^*f_1$ is irreducible. The proof for the other cases is similar.
\end{proof}

\begin{Lemma}\label{B.p} Let $\phi : S
\rightarrow T$ be a finite morphism between two smooth surfaces.
Let $h$ be a divisor on $T$ such that $|\phi^*h| = \phi^*|h|$.
Let $M$ be a divisor on $S$ such that the linear system $|M|$ has
no fixed part. Suppose that $\phi^*h - M$ is effective.
Then there exists a divisor $m \subset T$ such that $|M|=
\phi^*|m|$. Furthermore the line bundle $h - m$ is effective.
\end{Lemma}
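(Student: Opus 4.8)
The plan is to exploit the hypothesis $|\phi^*h| = \phi^*|h|$ together with the pullback $\phi^*$ on divisor classes to transfer the linear system $|M|$ downstairs. First I would use that $\phi^*h - M$ is effective, say $\phi^*h - M \equiv E$ with $E \geq 0$; then every element of $|M|$ gives an element of $|\phi^*h|$ by adding $E$, so $|M| + E \subseteq |\phi^*h| = \phi^*|h|$. Thus for each $D \in |M|$ there is a (possibly non-unique) $h_D \in |h|$ with $D + E \equiv \phi^*h_D$, i.e.\ $D \equiv \phi^*h_D - E$. The key point is that the class of $D$ in $\mathrm{Pic}(S)$ is constant (it is $M$), so all the divisors $\phi^*h_D$ lie in the same coset $M + E + (\text{classes pulled back to }0)$; since $\phi$ is finite between smooth surfaces, $\phi^*\colon \mathrm{Pic}(T)\otimes\mathbb{Q}\to \mathrm{Pic}(S)\otimes\mathbb{Q}$ is injective (intersection form argument, as in the proof of Proposition \ref{infofpb}), hence $\phi^*$ is injective on $\mathrm{Pic}(T)$ modulo torsion, and in fact one checks it is injective on the subgroup generated by the $h_D$. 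Consequently all $h_D$ are linearly equivalent to a fixed divisor, which we call $m$; then $M \equiv \phi^*m - E$ with $E$ the fixed part contributed, but since $|M|$ has no fixed part and the pullback of the moving part of $|m|$ already moves, one deduces $E$ is $\phi$-vertical in a way that forces $M \equiv \phi^*m$ as linear systems.

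More precisely, the second step is to pin down $m$ and show $|M| = \phi^*|m|$ as linear systems, not just that $M \equiv \phi^*m$. Set $m := h_{D_0}$ for one fixed choice $D_0 \in |M|$. The containment $\phi^*|m| \subseteq |M|$: given $n \in |m|$, $\phi^*n \equiv \phi^*m \equiv M + E$; I must show $\phi^*n \geq E$, so that $\phi^*n - E \in |M|$. For this, observe $\phi^*n$ and $D_0 + E$ are both in $|\phi^*m| = \phi^*|h|$ — wait, more carefully: $|M| + E \subseteq \phi^*|h|$ and also $\phi^*|m| \subseteq \phi^*|h|$ (since $m \in |h|$). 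The reverse containment $|M| \subseteq \phi^*|m| $: each $D \in |M|$ satisfies $D + E = \phi^*h_D$ for some $h_D \in |h|$; I need $h_D \in |m|$, i.e.\ $h_D \equiv m$. This is exactly where injectivity of $\phi^*$ on $\mathrm{Pic}(T)$ enters: $\phi^*(h_D - m) = (D+E) - (D_0 + E) = D - D_0 \equiv 0$, so $h_D \equiv m$, hence $h_D \in |m|$ and $D = \phi^*h_D - E \in \phi^*|m| - E$. Combined with the no-fixed-part hypothesis on $|M|$ (which ensures the map $|M| \to \phi^*|m|$, $D \mapsto D+E$, identifies the two systems once we know the $+E$ shift is uniform), this gives $|M| = \phi^*|m|$.

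The final clause, that $h - m$ is effective, follows immediately: $\phi^*(h-m) \equiv (D_0 + E) - (D_0 + E)$... no — rather, $\phi^* h - \phi^* m \equiv \phi^*h - (M + E) \equiv (\phi^*h - M) - E \equiv E - E = 0$? That would say $h \equiv m$, which is too strong; the correct statement is $h - m$ effective, obtained from $h_D \in |h|$ with $h_D \equiv m$ forcing $m \leq$ something — actually since $m = h_{D_0}$ was itself chosen \emph{in} $|h|$, we have $m \in |h|$, so trivially $h - m \equiv 0$ is effective (it is the zero divisor, or $h \equiv m$). I expect the main obstacle to be the bookkeeping around the fixed divisor $E$: showing the shift $D \mapsto D + E$ is the \emph{same} $E$ for every $D \in |M|$ (not just that some multiple of $\phi^*h$ works), which is precisely what the hypothesis $|\phi^*h| = \phi^*|h|$ and freedom of fixed part in $|M|$ are there to guarantee, and matching this carefully with the definition of $\phi^*|h|$ as a sub-linear-system of $|\phi^*h|$.
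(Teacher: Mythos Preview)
Your approach has a genuine gap that you yourself stumble into at the end. You set $m := h_{D_0}$ with $h_{D_0} \in |h|$, so automatically $m \equiv h$ and hence $\phi^*m \equiv \phi^*h \equiv M + E$. This means $\phi^*m$ is \emph{not} linearly equivalent to $M$ (unless $E=0$), so the equality of linear systems $|M| = \phi^*|m|$ is impossible on the nose; what you are actually proving is only $|M| + E \subseteq \phi^*|h|$, which is the hypothesis. Your attempt to repair this by subtracting $E$ from $\phi^*|m|$ doesn't produce a linear system of the form $\phi^*|m'|$ for some divisor $m'$ on $T$: there is no reason $E$ should itself be a pullback. The injectivity-of-$\phi^*$ argument, while correct modulo torsion, is therefore beside the point---it only shows all your $h_D$ lie in $|h|$, which you already knew.

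The missing idea, which the paper uses, is to work with an \emph{actual effective divisor} rather than only with classes, and to decompose it downstairs by support. Fix one effective $N \equiv \phi^*h - M$ and take $M_1 \in |M|$ general; since $|M|$ has no fixed part, you can arrange that $\phi(M_1)$ and $\phi(N)$ share no components. Now $M_1 + N = \phi^*h_1$ for some $h_1 \in |h|$, and you split $h_1 = m_1 + n$ according to whether each component lies in $\phi(M_1)$ or $\phi(N)$. Because $\phi$ is finite, $\phi^*m_1$ and $\phi^*n$ also share no components, forcing $\phi^*m_1 = M_1$ and $\phi^*n = N$. Then $m := m_1$ satisfies $\phi^*m \equiv M$ genuinely, $|M| = \phi^*|m|$, and $h - m \equiv n \geq 0$ is effective and typically nonzero. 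The ``no fixed part'' hypothesis is what makes the support separation possible; your argument never really uses it.
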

\begin{proof}
Choose an effective divisor $N$ on $S$ such that $N\equiv \phi^*h - M$. Let $M_1
\in |M|$ be a general element. Then there exists an element $h_1 \in |h|$ such that
$M_1 + N = \phi^*h_1$ since $|\phi^*h| = \phi^*|h|$. We can assume $\phi(M_1)$ and $\phi(N)$ have no common components. Write $h_1 = m_1 + n$ where $m_1$ and $n$ are supported on $\phi(M_1)$ and $\phi(N)$ respectively.
Since $\phi$ is a finite morphism, $\phi^*m_1$ and $\phi^*n$ have no common components,
so $\phi^*m_1 = M_1$ and $\phi^*n = N$. Since $M_1$ is general, we conclude that $|M| = \phi^*|m_1|$.
Obviously $h - m_1$ is effective, and we are done.
\end{proof}
\begin{Lemma}\label{subdivisor} There does not exist a divisor $d$ on
$\Sigma$ such that $h^0(\Sigma, d) > 1$ and that the line bundle
$-K_{\Sigma} - 2d$ is effective.
\end{Lemma}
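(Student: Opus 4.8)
The plan is to argue by contradiction, working purely with intersection theory on $\Sigma$, exploiting that $-K_\Sigma$ is ample with $K_\Sigma^2 = 5$. Suppose $d$ is a divisor on $\Sigma$ with $h^0(\Sigma,d)\geq 2$ and $E:=-K_\Sigma-2d$ effective. First I would reduce to the case that $|d|$ has no fixed part: writing $d = Z + F$ with $Z$ the moving part and $F\geq 0$ the fixed part, one has $h^0(Z)=h^0(d)\geq 2$ and $-K_\Sigma-2Z = E + 2F\geq 0$, so $Z$ is again a counterexample. Hence I may assume $d$ is effective, nonzero, and has no fixed component, so in particular $d^2\geq 0$, and I still have $E=-K_\Sigma-2d\geq 0$.

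Next I would record the key auxiliary fact used twice: an effective nonzero divisor $A$ on $\Sigma$ with $(-K_\Sigma)\cdot A = 1$ is necessarily an irreducible $(-1)$-curve. Indeed, since $-K_\Sigma$ is ample, every irreducible component $C_i$ of $A$ satisfies $(-K_\Sigma)\cdot C_i\geq 1$, forcing $A$ to be a single irreducible curve $C$ with $(-K_\Sigma)\cdot C = 1$; then the Hodge index theorem gives $5\,C^2\leq ((-K_\Sigma)\cdot C)^2 = 1$, so $C^2\leq 0$, while $p_a(C)\geq 0$ together with the parity of $C^2+K_\Sigma\cdot C$ forces $C^2 = -1$, i.e.\ $C$ is a $(-1)$-curve. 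In particular such an $A$ has $h^0(A)=1$ and $A^2=-1$.

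Now pairing $-K_\Sigma = 2d + E$ with the ample class $-K_\Sigma$ gives $5 = 2\,((-K_\Sigma)\cdot d) + (-K_\Sigma)\cdot E$ with both summands nonnegative and $(-K_\Sigma)\cdot d\geq 1$, so $(-K_\Sigma)\cdot d\in\{1,2\}$. If $(-K_\Sigma)\cdot d = 1$, the auxiliary fact applied to $d$ yields $h^0(d)=1$, contradicting $h^0(d)\geq 2$. If $(-K_\Sigma)\cdot d = 2$, the Hodge index theorem gives $5\,d^2\leq 4$, hence $d^2 = 0$ (using $d^2\geq 0$); then $E$ satisfies $(-K_\Sigma)\cdot E = 5 - 4 = 1$ and $E^2 = K_\Sigma^2 - 4\,((-K_\Sigma)\cdot d) + 4d^2 = 5 - 8 + 0 = -3$. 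But the auxiliary fact applied to the effective divisor $E$ forces $E$ to be a $(-1)$-curve, so $E^2 = -1\neq -3$, a contradiction. This exhausts both cases, proving the lemma.

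The argument is short, and I do not expect a genuine obstacle; the only points requiring care are the structural claim that an effective divisor meeting $-K_\Sigma$ with multiplicity $1$ is a single $(-1)$-curve — this is exactly where ampleness of $-K_\Sigma$ and the Hodge index theorem are needed — and the initial reduction to a fixed-part-free $d$, which is what makes $d^2\geq 0$ available in the case $(-K_\Sigma)\cdot d = 2$.
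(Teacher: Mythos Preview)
Your proof is correct and takes a genuinely different route from the paper's. The paper argues in coordinates on $\operatorname{Pic}(\Sigma)$: writing $d \equiv al - \sum b_ie_i$, effectivity of $-K_\Sigma - 2d$ forces $a\leq 1$ while $h^0(d)>1$ forces $a\geq 1$; then $a=1$ and at most one $b_i$ is positive, so $-K_\Sigma - 2d \equiv l - \sum (1-2b_i)e_i$ would be a line through at least three of the $P_i$, which is impossible. Your argument instead never touches the explicit basis: you reduce to a base-point-free $d$, pair with the ample class $-K_\Sigma$ to get $(-K_\Sigma)\cdot d \in \{1,2\}$, and in each case use Hodge index plus the observation that any effective class of anticanonical degree~$1$ is a $(-1)$-curve to reach a contradiction. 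The paper's approach is quicker given the explicit setup already in place, while yours is basis-free and would work verbatim on any del Pezzo surface of odd degree $\geq 3$; it also makes transparent \emph{why} the lemma holds (the anticanonical degree is simply too small to fit $2d+E$). One tiny stylistic point: when you ``apply the auxiliary fact to $d$'' in the degree-$1$ case you are of course applying it to a member of $|d|$, and the contradiction is that a $(-1)$-curve cannot move; this is clear from context but could be said explicitly.
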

\begin{proof}
To the contrary, suppose that there exists such a divisor $d$. Assume
$d \equiv al - b_1e_1 - b_2e_2 - b_3e_3 - b_4e_4$. The condition
that $-K_{\Sigma} - 2d$ is effective implies that $a \leq 1$. And
$h^0(\Sigma, d)
> 1$ implies that $a \geq 1$. So $a =1$, and at most one of $b_1, b_2,b_3,b_4$ is positive.
But then the line bundle $-K_{\Sigma} - 2d \equiv l - (1-b_1)e_1 -
(1-b_2)e_2 - (1-b_3)e_3 - (1-b_4)e_4$ can not be effective since
there is no line on $\mathbb{P}^2$ passing through 3 points in general
position. Thus the lemma is true.
\end{proof}
\begin{Lemma}\label{A.prop} Let $S$ and $\Sigma$ be as in \ref{not1}. Let $D \subset S$ be a divisor. If there
exists a divisor $d$ on $\Sigma$ such that
\begin{enumerate}
\item[i)]{$\phi^*d \equiv 2D$;}
\item[ii)]{the line bundle $-K_{\Sigma} - d$ is effective,}
\end{enumerate}
then $h^0(S, D) \leq 1$.
\end{Lemma}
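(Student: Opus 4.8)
The plan is to argue by contradiction using a pull-back/push-forward comparison together with Lemma \ref{subdivisor}. Suppose $h^0(S,D) \ge 2$, so that $|D|$ has a positive-dimensional image under the map it defines. First I would like to replace $D$ by its mobile part: write $D \equiv D_{\mathrm{mob}} + D_{\mathrm{fix}}$, where $|D_{\mathrm{mob}}|$ has no fixed component and $h^0(S,D_{\mathrm{mob}}) = h^0(S,D) \ge 2$. Then $\phi^*d \equiv 2D \ge 2D_{\mathrm{mob}}$, so $\phi^*d - D_{\mathrm{mob}}$ is effective; since $\phi$ is finite and $|\phi^*d| = \phi^*|d|$ (this uses that $d$ is a divisor on the del Pezzo surface $\Sigma$ and $\phi$ is the bicanonical morphism, so global sections of $2K_S$ pulled back from $|{-}K_\Sigma|$ account for everything — more precisely one invokes Lemma \ref{B.p}), we obtain a divisor $m \subset \Sigma$ with $|D_{\mathrm{mob}}| = \phi^*|m|$ and $h^0(\Sigma,m) = h^0(S,D_{\mathrm{mob}}) \ge 2$.

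Next I would relate $m$ to $d$. From $\phi^*(2m) \equiv 2 D_{\mathrm{mob}} \le 2D \equiv \phi^*d$ and the finiteness of $\phi$ (which makes $\phi^*$ injective on divisor classes, as $\phi^*\colon H^2(\Sigma,\QQ)\to H^2(S,\QQ)$ is an isomorphism by Proposition \ref{infofpb}), the class $d - 2m$ is effective on $\Sigma$. Hence $-K_\Sigma - 2m = (-K_\Sigma - d) + (d - 2m)$ is a sum of two effective classes, so it is effective. But now $m$ is a divisor on $\Sigma$ with $h^0(\Sigma, m) > 1$ and $-K_\Sigma - 2m$ effective, which is exactly what Lemma \ref{subdivisor} forbids. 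This contradiction shows $h^0(S,D) \le 1$.

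The step I expect to need the most care is the application of Lemma \ref{B.p}: one must verify its hypothesis $|\phi^*({-}K_\Sigma)| = \phi^*|{-}K_\Sigma|$, and then run the argument with $h = -K_\Sigma$ together with the effective divisor $-K_\Sigma - d$ to land $M = D_{\mathrm{mob}}$ inside the pulled-back linear system. The equality $|\phi^*({-}K_\Sigma)| = \phi^*|{-}K_\Sigma|$ follows from the computation in the Remark after Example \ref{expl2}, namely $H^0(S, 2K_S) \cong H^0(\Sigma, -K_\Sigma)$, i.e. every bicanonical curve is the pull-back of a hyperplane section of $\Sigma$; one should state this cleanly (perhaps as $|2K_S| = \phi^*|{-}K_\Sigma|$) before invoking Lemma \ref{B.p}. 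A secondary subtlety is passing from $D$ to its mobile part and checking the effectivity of $\phi^*d - D_{\mathrm{mob}}$; this is immediate once one notes $D_{\mathrm{fix}} \ge 0$. The rest is formal manipulation of effective classes on the del Pezzo surface $\Sigma$, whose Picard group is free, so there are no torsion ambiguities.
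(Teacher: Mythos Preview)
Your overall strategy matches the paper's: pass to the mobile part, apply Lemma~\ref{B.p} with $h=-K_\Sigma$ to produce a divisor $m$ on $\Sigma$ with $|D_{\mathrm{mob}}|=\phi^*|m|$, and then contradict Lemma~\ref{subdivisor} by showing $-K_\Sigma-2m$ is effective. The gap is in the last step. You write that from $\phi^*(2m)\equiv 2D_{\mathrm{mob}}\le 2D\equiv\phi^*d$ and injectivity of $\phi^*$ on $H^2$, the class $d-2m$ is effective on $\Sigma$. But injectivity of $\phi^*$ on cohomology (or even its being an isomorphism on $H^2(\cdot,\QQ)$) does \emph{not} imply that effectivity descends: knowing $h^0(S,\phi^*L)>0$ does not give $h^0(\Sigma,L)>0$. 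In fact the whole point of the lemma is to bound sections on $S$ by sections on $\Sigma$, so you cannot assume such descent here.

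The paper closes this gap by working with actual effective divisors rather than classes. Pick $M_1\in|M|$ (so $M_1=\phi^*m_1$ for some $m_1\in|m|$) and an effective $N\equiv\phi^*(-K_\Sigma)-2D$; then $2M_1+2F+N$ is an effective member of $|2K_S|=\phi^*|{-}K_\Sigma|$, hence equals $\phi^*h$ for a specific $h\in|{-}K_\Sigma|$. Now $\phi^*(h-2m_1)=2F+N\ge 0$ as \emph{divisors}, and since $\phi$ is finite this forces $h-2m_1\ge 0$ on $\Sigma$, i.e.\ $-K_\Sigma-2m$ is effective. Replacing your cohomological descent claim with this divisor-level argument repairs the proof.
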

\begin{proof}
To the contrary, suppose that $h^0(S, D) > 1$. We may write $|D| =
|M| + F$ where $|M|$ is the moving part and $F$ is the fixed part.
Since $|2K_S| = |\phi^*(-K_{\Sigma})| = \phi^*|-K_{\Sigma}|$ and
$\phi^*(-K_{\Sigma}) - M > \phi^*(-K_{\Sigma} - d)$ is effective, applying Lemma \ref{B.p} gives a divisor $m$ on $\Sigma$ such that $\phi^*|m| = |M|$. Choose an element $M_1 \in |M|$ and an effective divisor $N$ on $S$ such that $2M_1 + 2F + N \equiv \phi^*(-K_{\Sigma})$. Note that we can find $h \in|-K_\Sigma|$ and $m_1 \in |m|$ such that $2M_1 + 2F + N = \phi^*h$ and $2M_1 = \phi^*(2m_1)$. So we conclude that $h - 2m_1$ is effective, i.e.,
the line bundle $-K_{\Sigma} - 2m$ is effective. However, this contradicts
Lemma~\ref{subdivisor}.
\end{proof}

Now we analyze the pull-backs of the $(-1)$-curves on $\Sigma$
which give much information about the surface $S$. We begin with
one lemma from \cite{MP2}.
\begin{Lemma}[Lemma 5.1 in \cite{MP2}]\label{A.d} Let $\phi : S
\rightarrow \Sigma$ be as in \ref{not1}, and let $C\subset\Sigma$
be a $(-1)-curve$. Then we have either
\begin{enumerate}
\item[i)] {$\phi^*C$ is a reduced smooth rational $(-4)$-curve;
or} \item[ii)] {$\phi^*C = 2E$ where $E$ is an irreducible curve
with $E^2 = -1, K_SE = 1$.}
\end{enumerate}
\end{Lemma}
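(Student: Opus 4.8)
The plan is to pin down $\phi^{*}C$ from numerical data, observe that only one shape besides (i) and (ii) is a priori possible, and eliminate it using the conic-bundle geometry of $\Sigma$.

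First, from $2K_{S}\equiv\phi^{*}(-K_{\Sigma})$, $\deg\phi=4$, $C^{2}=-1$ and $-K_{\Sigma}\cdot C=1$ one gets $(\phi^{*}C)^{2}=4C^{2}=-4$ and $K_{S}\cdot\phi^{*}C=\tfrac12\phi^{*}(-K_{\Sigma})\cdot\phi^{*}C=2$, hence $p_{a}(\phi^{*}C)=1+\tfrac12\bigl((\phi^{*}C)^{2}+K_{S}\cdot\phi^{*}C\bigr)=0$. By Proposition~\ref{infofpb} the map $\phi$ is finite, so no component of $\phi^{*}C$ is contracted; writing $\phi^{*}C=\sum_{j}a_{j}\Gamma_{j}$ with $\Gamma_{j}$ irreducible, $a_{j}\ge 1$ and $d_{j}=\deg(\phi|_{\Gamma_{j}}\colon\Gamma_{j}\to C)\ge 1$, each $\Gamma_{j}$ dominates $C\cong\mathbb{P}^{1}$ hence is rational, and $\sum_{j}a_{j}d_{j}=4$. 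The projection formula gives $a_{j}d_{j}=a_{j}\Gamma_{j}\cdot\phi^{*}(-K_{\Sigma})=a_{j}\Gamma_{j}\cdot 2K_{S}=2a_{j}(K_{S}\cdot\Gamma_{j})$, so $d_{j}=2\,K_{S}\!\cdot\!\Gamma_{j}$; since $d_{j}\ge 1$ and $K_{S}\cdot\Gamma_{j}\in\mathbb{Z}$, we get $K_{S}\cdot\Gamma_{j}\ge 1$ and $d_{j}\ge 2$. (In passing this reproves that $S$ carries no $(-2)$-curve when $\Sigma$ is smooth, since its image would be a curve $\phi_{*}\Gamma$ with $-K_{\Sigma}\cdot\phi_{*}\Gamma=0$, absurd as $-K_{\Sigma}$ is ample.) As $\sum a_{j}d_{j}=4$ with each $a_{j}d_{j}\ge 2$, only three cases occur: (a) one component with $a=2$, $d=2$; (b) one component with $a=1$, $d=4$; (c) two components, each with $a=1$, $d=2$. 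In case (a), $\phi^{*}C=2E$ with $4E^{2}=-4$ and $2K_{S}\cdot E=2$, so $E^{2}=-1$, $K_{S}\cdot E=1$: conclusion (ii). In case (b), $\phi^{*}C=\Gamma$ is reduced irreducible with $\Gamma^{2}=-4$, $K_{S}\cdot\Gamma=2$, so $p_{a}(\Gamma)=0$ and $\Gamma$ is a smooth rational $(-4)$-curve: conclusion (i).

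Everything thus reduces to excluding case (c): $\phi^{*}C=\Gamma_{1}+\Gamma_{2}$ with $\Gamma_{1}\ne\Gamma_{2}$ irreducible, $K_{S}\cdot\Gamma_{i}=1$ and $\deg(\phi|_{\Gamma_{i}})=2$. By Fact~\ref{symofcurves}(i) I may take $C=e_{1}$; then $C$ is a component of the reducible member $C+C'$ of the base-point-free pencil $|l-e_{2}|$, with $C'\equiv l-e_{1}-e_{2}$ a $(-1)$-curve and $C\cdot C'=1$. So $\phi^{*}C$ sits inside the single connected fibre $\Phi:=\phi^{*}C+\phi^{*}C'$ of the genus-$3$ fibration $u_{2}\colon S\to\mathbb{P}^{1}$ associated with $|F_{2}|$, $F_{2}=\phi^{*}(l-e_{2})$. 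Since $\phi$ is finite, $\phi^{*}C$ and $\phi^{*}C'$ have no common component, so $\Gamma_{1},\Gamma_{2}$ are proper components of $\Phi$ and hence $\Gamma_{i}^{2}<0$ by Zariski's lemma; together with $p_{a}(\Gamma_{i})\ge 0$, the fact that $\Gamma_{i}^{2}$ is odd (as $\Gamma_{i}^{2}+K_{S}\cdot\Gamma_{i}$ is even), and $p_{a}(\phi^{*}C)=0$, this forces $\Gamma_{i}^{2}\in\{-1,-3\}$ and leaves only $(\Gamma_{1}^{2},\Gamma_{2}^{2},\Gamma_{1}\cdot\Gamma_{2})\in\{(-3,-3,1),(-3,-1,0),(-1,-3,0)\}$. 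To reach a contradiction I would exploit the finite cover $\phi$ over the $(-1)$-curves $C,C'$ — each again of type (a), (b) or (c), which pins down the whole fibre $\Phi$ — in one of the following ways: apply the double-cover Propositions~\ref{A.i}--\ref{C.i} to the degree-$2$ maps $\Gamma_{i}\to C$ and use $q(S)=0$ together with the non-existence of a genus-$2$ pencil on $S$ (\cite{X1}) to rule out the induced fibration; or, noting that $\phi^{*}C$ is reduced so $C$ is not in the branch locus of $\phi$, apply Lemma~\ref{ramification} with $e=C$ (which gives $R'\cdot\phi^{*}C>(\phi^{*}C)^{2}=-4$ for every $0\le R'\le R$, while $R\equiv 3K_{S}$ forces $R\cdot\phi^{*}C=6$) to control the ramification of $\phi$ along the components of $\Phi$; or exhibit a divisor $d$ on $\Sigma$ with $-K_{\Sigma}-d$ effective and $\phi^{*}d\equiv 2D$ but $h^{0}(S,D)\ge 2$, contradicting Lemma~\ref{A.prop}.

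The real difficulty is precisely this last step. Every coarse invariant on $S$ — $(\phi^{*}C)^{2}=-4$, $K_{S}\cdot\phi^{*}C=2$, $p_{a}(\phi^{*}C)=0$, the genus-$3$ fibration, and even Zariski's lemma for the whole fibre $\Phi$ — is compatible with case (c): for instance $\Phi=\Gamma_{1}+\Gamma_{2}+\Delta$ with two smooth rational $(-3)$-curves $\Gamma_{i}$ and one $(-4)$-curve $\Delta$, with $\Gamma_{1}\cdot\Gamma_{2}=1$ and $\Gamma_{i}\cdot\Delta=2$, is a numerically legitimate genus-$3$ fibre. Hence case (c) cannot be ruled out by intersection theory on $S$ alone, and one is forced to invoke a genuinely finer, global property of the degree-$4$ morphism $\phi$ restricted over the $(-1)$-curve — its ramification, or the sub-double-covers of $\mathbb{P}^{1}$ it induces — which is where the substance of the lemma lies.
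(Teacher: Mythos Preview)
The paper does not supply its own proof of this lemma: it is quoted verbatim as Lemma~5.1 of \cite{MP2} and then used. So there is no in-paper argument to compare against; the relevant question is whether your proposal stands on its own.

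Your reduction to the three shapes (a), (b), (c) is clean and correct, and (a), (b) give (ii), (i) immediately. The genuine gap is that you do not finish off case (c). You list three possible strategies and then back away, saying that ``case (c) cannot be ruled out by intersection theory on $S$ alone.'' In fact it can, and with a tool you already invoked. By Proposition~\ref{infofpb}, $\phi^{*}\colon H^{2}(\Sigma,\mathbb{Q})\to H^{2}(S,\mathbb{Q})$ is an isomorphism multiplying the intersection form by $4$. Hence for any irreducible curve $\Gamma\subset S$ one has $\Gamma\equiv_{\mathrm{num}}\tfrac{1}{4}\phi^{*}\phi_{*}\Gamma$ (pair both sides against $\phi^{*}D$ and use the projection formula). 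In case (c), $\phi_{*}\Gamma_{1}=\phi_{*}\Gamma_{2}=2C$, so $\Gamma_{1}\equiv_{\mathrm{num}}\Gamma_{2}\equiv_{\mathrm{num}}\tfrac12\phi^{*}C$. This forces $\Gamma_{1}\cdot\Gamma_{2}=\Gamma_{1}^{2}=\Gamma_{2}^{2}$, which is incompatible with each of your three numerical possibilities $(-3,-3,1)$, $(-3,-1,0)$, $(-1,-3,0)$; equivalently, $4\Gamma_{1}^{2}=(\phi^{*}C)^{2}=-4$ gives $\Gamma_{1}\cdot\Gamma_{2}=-1<0$, contradicting $\Gamma_{1}\neq\Gamma_{2}$. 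That closes the argument without any appeal to ramification, double covers, or Lemma~\ref{A.prop}.

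So the missing idea is exactly the one behind Proposition~\ref{infofpb}: equality of Picard numbers makes $\phi^{*}$ a rational isometry, and two distinct irreducible curves with the same push-forward would have to be numerically equal, hence have negative mutual intersection. Once you add this one line, your write-up becomes a complete self-contained proof.
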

\begin{Lemma}\label{B.d} Let $S$ be as in \ref{not1}.
Then there are at most two disjoint $(-4)$-curves on $S$.
\end{Lemma}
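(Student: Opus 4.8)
The plan is to argue by contradiction: suppose $Z_1,Z_2,Z_3\subset S$ are three pairwise disjoint $(-4)$-curves, and push the configuration down to $\Sigma$ via $\phi$. For each $i$ put $C_i:=\phi(Z_i)$. Since $\phi$ is finite (Proposition~\ref{infofpb}) and $2K_S\equiv\phi^*(-K_\Sigma)$, the projection formula gives $2=K_SZ_i=\tfrac12(-K_\Sigma)\cdot\phi_*Z_i$, hence $\deg(Z_i/C_i)\cdot(-K_\Sigma\cdot C_i)=4$. As $-K_\Sigma$ is very ample, $-K_\Sigma\cdot C_i\in\{1,2,4\}$, so either (a) $C_i$ is a $(-1)$-curve, or (b) $C_i$ is an irreducible conic with $C_i^2=0$ and $\phi^*C_i$ is a fiber of a genus-$3$ pencil of $S$ containing $Z_i$ with multiplicity one, or (c) $-K_\Sigma\cdot C_i=4$ and $\phi|_{Z_i}$ is birational onto $C_i$. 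In case (a), Lemma~\ref{A.d} forces $\phi^*C_i=Z_i$: the alternative $\phi^*C_i=2E$ would make $Z_i=E$, impossible since $E^2=-1\neq-4$. I would then eliminate (b) and (c): for a conic $C_i$ one writes $\phi^*C_i=Z_i+\Delta_i$, uses the adjacency relations $Z_i\cdot\Delta_i=4$, $K_S\Delta_i=2$, $\Delta_i^2=-4$ and Propositions~\ref{A.i}, \ref{C.i} for the pencil to show that such a configuration, with three disjoint $Z_i$, cannot occur (or reduces to case (a)), and similarly for (c) via the genus and the class of $\phi^*C_i$. These subcases I expect to be routine but slightly laborious. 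Granting the reduction, $\phi^*C_i=Z_i$ for all $i$, and since $Z_iZ_j=4\,C_iC_j=0$ the curves $C_1,C_2,C_3$ are pairwise disjoint $(-1)$-curves on $\Sigma$, each with reduced pullback.

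Next I would normalize the configuration. By Fact~\ref{symofcurves}, together with the observation that the pairwise disjoint triples of $(-1)$-curves on the del Pezzo surface of degree $5$ fall into exactly two $\mathrm{Aut}(\Sigma)$-orbits, we may assume $\{C_1,C_2,C_3\}$ is either the ``star'' $\{e_1,e_2,e_3\}$ or the ``triangle'' $\{e_1,e_2,g_3\}$, where $g_3=l-e_3-e_4$. In both cases there is a conic class $f_j=l-e_j$ — namely $j=4$ for the star and $j=3$ for the triangle — with $f_j\cdot C_i=0$ for $i=1,2,3$; hence $Z_1,Z_2,Z_3$ all lie in fibers of the genus-$3$ fibration $u_j:S\to\mathbb{P}^1$, and since fibers are connected they lie in three distinct fibers $F^{(i)}=Z_i+R_i$ with $R_i$ effective, $K_SR_i=2$, $R_i^2=-4$ and $Z_i\cdot R_i=4$.

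To obtain the contradiction I would exploit effectivity on $\Sigma$ rather than the intersection lattice of $S$ alone — the latter does not help, since $NS(S)$ by itself admits three (indeed four) pairwise orthogonal $(-4)$-classes together with $K_S$, so nothing is forced by Hodge index on $S$. The ingredients are: the identity $R\equiv 3K_S$ for the ramification divisor (immediate from $K_S=\phi^*K_\Sigma+R$ and $\phi^*K_\Sigma=-2K_S$), which gives $R\cdot Z_i=6$ and, through Lemma~\ref{ramification} applied to the $(-1)$-curves falling in case ii) of Lemma~\ref{A.d}, controls how the branch locus meets the half-preimages; the transfer Lemma~\ref{B.p}, which moves divisors and moving parts between $S$ and $\Sigma$ via $|2K_S|=\phi^*|-K_\Sigma|$; and Lemmas~\ref{subdivisor} and~\ref{A.prop}. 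Writing $2K_S=\phi^*(-K_\Sigma)$ and decomposing $-K_\Sigma$ along $C_1+C_2+C_3$ (for instance $-K_\Sigma\equiv C_1+C_2+C_3+2(l-e_1-e_2)$ in the triangle case, while $-K_\Sigma-C_1-C_2-C_3$ is not effective in the star case), one aims to produce a divisor on $\Sigma$ with more than one section whose double is still dominated by $-K_\Sigma$, contradicting Lemma~\ref{subdivisor} — equivalently, a divisor $D$ on $S$ with $h^0(S,D)\geq 2$ violating Lemma~\ref{A.prop} — and to dispatch any residual case by building a double cover of $S$ from the $2$-divisibility produced by an even fiber $F^{(i)}$ or by $\phi^*(\text{a }(-1)\text{-curve})=2E$, and contradicting Proposition~\ref{B.i}. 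I expect this last step — identifying precisely which effective or cohomological inequality fails — to be the main obstacle; everything before it is bookkeeping with the del Pezzo geometry and the covers already prepared in Sections~\ref{construction} and~\ref{divisor}.
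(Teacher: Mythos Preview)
Your approach is entirely different from the paper's, and the paper's is dramatically simpler. The paper invokes Miyaoka's bound on the number of disjoint smooth rational $(-4)$-curves (equivalently, $\tfrac14(1,1)$ quotient singularities): for $r$ such curves one has
\[
r\cdot\frac{25}{12}\;\le\;c_2(S)-\frac{1}{3}K_S^2\;=\;7-\frac{5}{3}\;=\;\frac{16}{3},
\]
whence $r\le 2$. This uses only the numerical invariants $K_S^2=5$ and $c_2(S)=7$ and nothing about $\phi$ or $\Sigma$.

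Your proposal, by contrast, tries to push the hypothetical $Z_1,Z_2,Z_3$ down through the bicanonical map and derive a contradiction from the del Pezzo geometry. Even granting the strategy is sound in principle, what you have written is not a proof: you explicitly leave cases (b) and (c) unfinished (``routine but slightly laborious''), and you yourself flag the final step --- producing the divisor that violates Lemma~\ref{subdivisor} or~\ref{A.prop}, or the double cover contradicting Proposition~\ref{B.i} --- as ``the main obstacle'' without carrying it out. A sketch that identifies its own hardest step and then stops is not yet a proof.

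There is also a structural point worth noting. The lemmas you want to lean on (Lemma~\ref{A.d}, Proposition~\ref{mainprop2}, etc.) are themselves consumers of Lemma~\ref{B.d} in the paper's logical flow: the paper uses Miyaoka's inequality precisely because it gives an \emph{a priori} bound independent of the bicanonical analysis, which is then fed into that analysis. Reversing the dependency, as you attempt, would require redoing much of Section~\ref{divisor} from scratch and checking carefully for circularity.
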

\begin{proof}
Let $r$ be the cardinality of a set of smooth disjoint rational
curves with self-intersection $-4$. Then by \cite{Miy}, one has
$r\cdot\frac{25}{12} \leq c_2(S) - \frac{1}{3}K_S^2 =
\frac{16}{3}$, therefore $r \leq 2$.
\end{proof}
\begin{Proposition}\label{mainprop} Let $S$ and $\Sigma$ be as in
\ref{not1}. Then there exists at least one $(-1)$-curve on
$\Sigma$ such that its pull-back is a $(-4)$-curve.
\end{Proposition}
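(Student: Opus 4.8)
The plan is to argue by contradiction. By Lemma~\ref{A.d}, for every $(-1)$-curve $C$ on $\Sigma$ the pull-back $\phi^{*}C$ is either a reduced smooth rational $(-4)$-curve or $2E_{C}$ with $E_{C}$ irreducible, $E_{C}^{2}=-1$ and $K_{S}E_{C}=1$. Suppose the first case never occurs, so that $\phi^{*}C=2E_{C}$ for all ten $(-1)$-curves $C$; since $\phi^{*}$ is injective on $\mathrm{NS}(\Sigma)\otimes\mathbb{Q}$ (Proposition~\ref{infofpb}) the ten curves $E_{C}$ are pairwise distinct prime divisors on $S$. The goal is to manufacture from them an effective divisor linearly equivalent to $K_{S}$, contradicting $p_{g}(S)=0$.

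First I would record the numerics. Since $\phi$ is the bicanonical morphism onto the del Pezzo surface $\Sigma\subset\mathbb{P}^{5}$ one has $\phi^{*}(-K_{\Sigma})\equiv 2K_{S}$, and since $\phi$ is a finite morphism between smooth surfaces (Proposition~\ref{infofpb}) Hurwitz gives $K_{S}\equiv\phi^{*}K_{\Sigma}+R$ with $R\ge 0$ the ramification divisor; hence $R\equiv 3K_{S}$. A direct computation with the classes $e_{i}$, $e_{i}'=l-e_{j}-e_{k}$, $g_{i}=l-e_{i}-e_{4}$ of Section~\ref{construction} shows that the sum of the ten $(-1)$-curves of $\Sigma$ is $-2K_{\Sigma}$. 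Hence $D_{0}:=\sum_{C}E_{C}$ is effective with $2D_{0}=\phi^{*}\bigl(\sum_{C}C\bigr)\equiv\phi^{*}(-2K_{\Sigma})\equiv 4K_{S}$, and as $\phi$ is ramified of index $2$ along each $E_{C}$, each $E_{C}$ occurs in $R$ with coefficient at least $1$, so $D_{0}\le R$. Therefore $R-D_{0}$ is an effective divisor with $2(R-D_{0})\equiv 2K_{S}$; that is, $R-D_{0}\equiv K_{S}+\varepsilon$ for some class $\varepsilon$ with $2\varepsilon\equiv 0$.

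If $\varepsilon=0$ we are done. The main obstacle is exactly that dividing the equivalence $2(R-D_{0})\equiv 2K_{S}$ by $2$ leaves a $2$-torsion ambiguity, and for nonzero $\varepsilon$ one still has $h^{0}(K_{S}+\varepsilon)\ge 1$ (by Riemann--Roch, since $\chi(\mathcal{O}_{S})=1$ and $\varepsilon$ is numerically trivial), so no contradiction is yet obtained. Thus the heart of the proof will be to rule out $\varepsilon\ne 0$, i.e.\ to control the $2$-torsion of $S$. I would attack this geometrically: under the standing assumption each genus-$3$ pencil $|F_{i}|=|\phi^{*}(l-e_{i})|$ acquires three double fibres, namely the pull-backs of the three reducible members of $|l-e_{i}|$ (for instance $\phi^{*}(e_{3}'+e_{2})=2(E_{3}'+E_{2})$, $\phi^{*}(e_{2}'+e_{3})=2(E_{2}'+E_{3})$ and $\phi^{*}(g_{1}+e_{4})=2(G_{1}+E_{4})$ when $i=1$), and the reduced halves of distinct double fibres of one pencil differ by $2$-torsion classes. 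Feeding these four fibrations, together with $q(S)=0$, $K_{S}^{2}=5$, Lemmas~\ref{subdivisor} and~\ref{A.prop}, and the double-cover Propositions~\ref{A.i}--\ref{C.i} applied to the étale covers of $S$ attached to the torsion, I expect to force $\varepsilon=0$; then $p_{g}(S)=0$ yields the contradiction, proving that some $(-1)$-curve on $\Sigma$ must pull back to a reduced $(-4)$-curve. This last step is where I anticipate the real work to lie.
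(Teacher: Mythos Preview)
Your setup coincides with the paper's: you correctly form $R_0:=R-D_0$ (the paper calls it $R_0$ as well), observe $2R_0\equiv\phi^*(-K_\Sigma)\equiv 2K_S$, and isolate the $2$-torsion ambiguity $\varepsilon$ as the crux. Where you diverge is in the resolution of this ambiguity, and that is precisely where your proposal becomes a sketch rather than a proof. You propose to ``force $\varepsilon=0$'' by analysing the \'etale double covers attached to the torsion classes coming from the triple double fibres of the four pencils $|F_i|$; but you give no concrete mechanism, and it is not clear that Propositions~\ref{A.i}--\ref{C.i} by themselves pin down $\varepsilon$ among the many $2$-torsion classes those double fibres generate. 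This is a genuine gap: the ``real work'' you anticipate is the entire content of the proposition.

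The paper does \emph{not} try to prove $\varepsilon=0$. Instead it takes a different combination $R_1-E_2$ (with $R_1=E_4+G_1+G_3+E_1+E_2'+E_3$, so one coefficient is negative) satisfying $2(R_1-E_2)\equiv 2K_S$, and proves directly that $K_S\equiv R_1-E_2$ (Lemma~\ref{kl}). The argument is: if not, the torsion class $K_S-(R_1-E_2)$ forces (via Riemann--Roch and Serre duality) an effective $D\equiv R_1-E_2$ with $2D=\phi^*d$; Lemma~\ref{B.prop} says every component of $D$ lies in $R$, and a finite case analysis on the possible $d\in|-K_\Sigma|$ (split according to how $d$ meets $r_0=\phi(R_0)$ and the ten $(-1)$-curves) eliminates each possibility, typically by restricting to a double fibre and invoking the nontriviality of its normal bundle. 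Repeating with indices permuted gives a second formula $K_S\equiv E_4+G_2+G_3+E_2+E_1'+E_3-E_1$; subtracting the two yields two distinct effective divisors in the class $G_2+2E_2+E_1'$, contradicting Lemma~\ref{A.prop}. So the contradiction is not $p_g=0$ but $h^0\le 1$ for a half-pullback, and the engine is a structured case analysis rather than a torsion computation.
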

As the proof of the proposition is long, we postpone the proof to
the last section.
\begin{Proposition}\label{mainprop2} Let the notations be as in \ref{not1}, then there
do not exist two $(-1)$-curves $C_1, C_2 \subset \Sigma$
satisfying
\begin{enumerate}
\item[i)] {$C_1 \cap C_2 = \emptyset$;}
\item[ii)] {both $\phi^*C_1$ and $\phi^*C_2$ are $(-4)$-curves.}
\end{enumerate}
\end{Proposition}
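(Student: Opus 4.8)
The plan is to assume the conclusion fails and reach a contradiction with the smallness of $K_S^2=5$. Suppose that $C_1,C_2\subset\Sigma$ are disjoint $(-1)$-curves with $\phi^*C_1$ and $\phi^*C_2$ both $(-4)$-curves. By Lemma \ref{A.d} each $\Gamma_i:=\phi^*C_i$ is a reduced smooth rational $(-4)$-curve, and since $C_1\cap C_2=\emptyset$ and $\phi$ is finite, $\Gamma_1\cap\Gamma_2=\emptyset$. So $S$ carries two disjoint $(-4)$-curves; by Lemma \ref{B.d} this is the maximum, hence no $(-4)$-curve of $S$ is disjoint from both $\Gamma_1,\Gamma_2$.

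Since the bicanonical map is defined only up to a projective transformation of $\mathbb{P}^5$, i.e. up to an element of $\mathrm{Aut}(\Sigma)$, and by Fact \ref{symofcurves}(iii) the group $\mathrm{Aut}(\Sigma)$ is transitive on ordered pairs of disjoint $(-1)$-curves, we may assume $C_1=e_1$ and $C_2=e_1'$; these are two of the four pairwise disjoint sections $e_1,e_1',g_2,g_3$ of the conic bundle $|l-e_1|$. If $\phi^*g_2$ (say) were a $(-4)$-curve then $\Gamma_1,\Gamma_2,\phi^*g_2$ would be three pairwise disjoint $(-4)$-curves, against Lemma \ref{B.d}; so by Lemma \ref{A.d}, $\phi^*g_2=2E_3$ and $\phi^*g_3=2E_4$ with $E_j$ irreducible, $E_j^2=-1$, $K_SE_j=1$. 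The same argument applied to the pairwise disjoint triple $\{e_1,e_1',e_4\}$ gives $\phi^*e_4=2E'$, and iterating over the remaining pairwise disjoint triples through $e_1,e_1'$ forces yet more $(-1)$-curves to have non-reduced pull-back.

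Now one feeds this into the genus-$3$ fibrations $u_i:S\to\mathbb{P}^1$ coming from $|F_i|=|\phi^*(l-e_i)|$. Each reducible fibre of $|l-e_i|$ is a sum of two $(-1)$-curves on $\Sigma$, so its pull-back is a fibre of $u_i$; by Lemma \ref{A.d} this fibre is divisible by $2$ exactly when the pull-back of each of its two components is non-reduced (hence equal to $2E$). The information above already shows, for instance, that the fibres $\phi^*(g_2+e_4)$ of $u_2$ and $\phi^*(g_3+e_4)$ of $u_3$ are divisible by $2$; pushing the combinatorics (Lemma \ref{B.d} forbidding three pairwise disjoint reduced $(-4)$-curves) one obtains, for a suitable $i$, two fibres of $u_i$ divisible by $2$. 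The difference of the two halved fibres is then a $2$-torsion class $\eta\in\mathrm{Pic}(S)$, and it is nonzero since the general fibre of $u_i$ is irreducible (Proposition \ref{infofpb}), so two divisors supported on distinct fibres cannot be linearly equivalent.

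Finally, $\eta$ gives an \'etale double cover $\pi:Y\to S$ with $\chi(\mathcal{O}_Y)=2$ and $K_Y^2=2K_S^2=10$, Lemma \ref{cvroffbr} identifying $\eta$ with the multiple-fibre data of $u_i$. By Proposition \ref{B.i}, $K_Y^2\ge 16(q(Y)-1)$, whence $q(Y)\le 1$. Since $\pi$ unramifies the multiple fibres of $u_i$, the Stein factorisation of $u_i\circ\pi$ is $Y\to B\to\mathbb{P}^1$ with $B\to\mathbb{P}^1$ a double cover branched over the matched multiple-fibre points, so $q(Y)\ge g(B)\ge 1$, i.e. $q(Y)=1$. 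Then Proposition \ref{A.i} says the Albanese image of $Y$ is a curve, giving a fibration $g:S\to\mathbb{P}^1$ and a degree-$2$ map $p:B'\to\mathbb{P}^1$ with $p\circ\alpha=g\circ\pi$, and by Proposition \ref{C.i} the fibration $g$ has at least $2g(B')+2-k$ fibres divisible by $2$; comparing this with the fibres already forced and with $K_S^2=5$ yields the contradiction. I expect the real difficulty to lie in the third paragraph: determining for the given configuration exactly which of the ten $(-1)$-curves on $\Sigma$ have reduced pull-back (a $(-4)$-curve) and which have non-reduced pull-back ($2E$), and thereby pinning down the multiple fibres precisely enough to produce a torsion class whose \'etale cover contradicts one of Propositions \ref{A.i}, \ref{B.i}, \ref{C.i}; the remainder is a careful bookkeeping with the double-cover lemmas of Section \ref{2}.
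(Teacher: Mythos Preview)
Your approach has a genuine gap in the third paragraph, precisely where you yourself flag the difficulty. After normalizing to $C_1=e_1$, $C_2=e_1'$, the only $(-1)$-curves disjoint from \emph{both} are $e_4,g_2,g_3$, and Lemma \ref{B.d} indeed forces each of these to have non-reduced pull-back. But this information yields only \emph{one} visibly double fibre in each of $u_2,u_3$ (namely $\phi^*(e_4+g_2)$ and $\phi^*(e_4+g_3)$), and none in $u_1$ or $u_4$, since every other reducible fibre of every $u_i$ contains a curve ($e_1$, $e_1'$, or something meeting one of them) whose pull-back you cannot control. ``Pushing the combinatorics'' does not produce a second double fibre in the same pencil, so you never obtain the $2$-torsion class $\eta$ your argument needs. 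The final paragraph, which relies on having $\eta$, therefore never gets off the ground.

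The paper's proof sidesteps this entirely by branching the double cover along the two $(-4)$-curves themselves rather than building an \'etale cover. After normalizing to $C_1=e_4$, $C_2=e_2$, one notes $e_2'$ is disjoint from both, so $\phi^*e_2'=2E_2'$; then the relation $-K_\Sigma\equiv 2g_2+e_2'+e_2+e_4$ gives
\[
2\bigl(K_S-\phi^*g_2-E_2'\bigr)\equiv E_2+E_4,
\]
i.e.\ a square root of $\Gamma_1+\Gamma_2$. The resulting double cover $Y\to S$ branched over $E_2+E_4$ has $\chi(\mathcal{O}_Y)=2$, $K_Y^2=14$, and $p_g(Y)\ge 3$ (via $h^0(S,\phi^*l+E_2')\ge 3$), hence $q(Y)\ge 2$, which immediately violates Proposition \ref{B.i} since $14<16(q(Y)-1)$. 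The point you are missing is that ramifying along the $(-4)$-curves boosts $K_Y^2$ and $p_g(Y)$ simultaneously; an \'etale cover gives only $K_Y^2=10$, which is too weak to contradict anything.
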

\begin{proof}
Suppose on the contrary that the proposition is not true. Thanks to
Fact \ref{symofcurves} $iii)$, we
may assume $C_1 = e_4$ and $C_2= e_2$. Then both $E_2 = \phi^*e_2$ and $E_4
= \phi^*e_4$ are reduced rational $(-4)$-curves. By Lemma
\ref{B.d}, $\phi^*e_2'$ is non-reduced. And by Lemma \ref{A.d} we
may write $\phi^*e_2'= 2E_2'$. From the formula
$$2K_S \equiv \phi^*(3l - e_1 - e_2 - e_3 - e_4) \equiv
\phi^*(2g_2 + e_2' + e_2 + e_4) \equiv 2\phi^*(g_2) + 2E_2' + E_2
+ E_4,$$ we obtain $2(K_S - \phi^*(g_2) - E_2') \equiv E_2 + E_4$.
Therefore, we get a double cover $\pi: Y\rightarrow S$ branched
over $E_2$ and $E_4$. Applying formula \ref{tool}, we have
$$\chi(\mathcal{O}_Y) = 2 + \frac{(K_S - \phi^*(g_2) - E_2')\cdot (2K_S -
\phi^*(g_2) - E_2')}{2} = 2,$$
$$K_Y^2 = 2(2K_S -
\phi^*(g_2) - E_2')^2 = 14 ,$$
$$p_g(Y) = h^0(S, 2K_S - \phi^*(g_2) - E_2') = h^0(S, \phi^*(g_2)
+ E_2' + E_2 + E_4)$$
$$= h^0(S, \phi^*(g_2 + e_2 + e_4) + E_2') = h^0(S, \phi^*(l) +
E_2') \geq 3.$$ Hence $q(Y) \geq 2$ and $K_Y^2 < 16(q(Y) - 1)$.
This contradicts Proposition \ref{B.i}.
\end{proof}
Since there do not exist three $(-1)$-curves on $\Sigma$
intersecting each other, by the propositions above, we get the
following corollary.
\begin{Corollary} There are at most two $(-1)$-curves
on $\Sigma$ whose pull-back is a $(-4)$-curve, and if there are
two such curves, then they intersect.
\end{Corollary}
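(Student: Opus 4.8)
The plan is to read the Corollary off directly from Proposition \ref{mainprop2} together with the elementary combinatorics of the ten $(-1)$-curves on the quintic del Pezzo surface $\Sigma$; there is essentially nothing to prove beyond unwinding those two inputs. Two assertions must be checked: that at most two $(-1)$-curves on $\Sigma$ can have a $(-4)$-curve as pull-back, and that whenever two of them do, the two curves meet.

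I would first settle the second assertion, which is precisely the contrapositive of Proposition \ref{mainprop2}. Indeed, if $C_1, C_2 \subset \Sigma$ were two distinct $(-1)$-curves with $C_1 \cap C_2 = \emptyset$ and with both $\phi^*C_1$ and $\phi^*C_2$ being $(-4)$-curves, this would violate Proposition \ref{mainprop2}. Hence any two distinct $(-1)$-curves on $\Sigma$ whose pull-backs are $(-4)$-curves must satisfy $C_1 \cap C_2 \neq \emptyset$, i.e.\ they intersect.

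For the bound, I would argue by contradiction. Suppose $C_1, C_2, C_3$ are three distinct $(-1)$-curves on $\Sigma$ each of whose pull-back is a $(-4)$-curve. Applying the preceding paragraph to the three pairs $\{C_i,C_j\}$ shows that $C_iC_j = 1$ for all $i \neq j$, so the $C_i$ would be pairwise incident. But on $\Sigma$ no three $(-1)$-curves meet pairwise: in the blow-up description of \ref{not1} the ten $(-1)$-curves are $e_1,\dots,e_4$, the curves $e_i' \equiv l - e_j - e_k$ with $\{i,j,k\}=\{1,2,3\}$, and the curves $g_i \equiv l - e_4 - e_i$, and a direct check of intersection numbers shows their incidence graph is the (triangle-free) Petersen graph — a fact already recorded above. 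This contradiction yields the bound, and together with Proposition \ref{mainprop} one even obtains that the number of such $(-1)$-curves is exactly $1$ or $2$.

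I do not anticipate any genuine obstacle here: all of the substantive work is contained in Propositions \ref{mainprop} and \ref{mainprop2} (and, upstream of them, in Lemma \ref{A.d} and Proposition \ref{B.i}), while the combinatorial input on the $(-1)$-curves of the quintic del Pezzo surface is classical. If one wished to avoid quoting the Petersen configuration, the non-existence of three pairwise-incident $(-1)$-curves can equally be verified by a short finite case analysis over the explicit list of ten curves above.
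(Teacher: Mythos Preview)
Your proposal is correct and matches the paper's own argument essentially verbatim: the paper derives the Corollary immediately from Proposition~\ref{mainprop2} together with the observation that no three $(-1)$-curves on $\Sigma$ are pairwise incident. Your added remark about the Petersen graph and the final appeal to Proposition~\ref{mainprop} are harmless embellishments.
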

\begin{Assumption-Notation}\label{not2} By Fact \ref{symofcurves} $i)$ and the corollary above,
up to an automorphism of $\Sigma$, we may assume $\phi^*e_4 = E_4$
where $E_4$ is a $(-4)$-curve, $\phi^*e_i = 2E_i, \phi^*e_i' =
2E_i',i = 1,2,3$ and $\phi^*g_j = 2G_j,j=1,2$. With these
assumptions, we know that $2(E_j + E_k')$ and $2(E_j' + E_k)$ are
two double fibers of $u_i: S \rightarrow \mathbb{P}^1$ where
$\{i,j,k\} = \{1,2,3\}$. Set $\eta_i \equiv (E_j + E_k') -(E_j' +
E_k)$ where $\{i,j,k\} = \{1,2,3\}$, and set $\eta \equiv K_S - \sum_{j=1}^{j=3} (E_j + E_j')$. Then $2\eta \equiv -E_4$; and \cite{BPV} Lemma 8.3 chap.III gives that $\eta_i \neq 0$ for $i =1,2,3$, hence it is torsion of
order $2$.
\end{Assumption-Notation}
The following proposition corresponds to Proposition 5.9 in
\cite{MP2}. For the readers' convenience, we will give an explicit
proof.
\begin{Proposition}\label{A.p} Let notations and assumptions be as in \ref{not1} and \ref{not2},
and let $F_i \in |F_i|,i=1,2,3$ be a general curve. Then
$F_j\mid_{F_i} \equiv K_{F_i}~if~i\neq j.$
\end{Proposition}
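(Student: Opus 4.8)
The plan is to transfer the statement to the genus-$3$ curve $F_i$, reduce it to the vanishing of a $2$-torsion line bundle on $F_i$, and then kill that torsion with the double-fibre and double-cover methods of Section~\ref{2}.

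First I would record the two identities $K_{F_i}\equiv K_S|_{F_i}$ and $F_j|_{F_i}\equiv(\phi^*l)|_{F_i}$ (for every $j\ne i$). The first is adjunction on $S$ together with $F_i^2=0$, which makes $F_i|_{F_i}$ trivial. For the second, observe that for $m\ne i$ the curve $e_m$ misses the general member of $|f_i|$, so $\phi^*e_m$ misses the general $F_i$ and $(\phi^*e_m)|_{F_i}\equiv0$, whereas $\phi^*e_i\equiv\phi^*l-F_i$ gives $(\phi^*e_i)|_{F_i}\equiv(\phi^*l)|_{F_i}$; since $f_j\equiv l-e_j$ this yields $F_j|_{F_i}=\phi^*(l-e_j)|_{F_i}\equiv(\phi^*l)|_{F_i}$. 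Thus the assertion is equivalent to $(\phi^*l)|_{F_i}\equiv K_{F_i}$, i.e.\ to the statement that the degree-$4$ pencil cut on $F_i$ by $\phi$ is the canonical one. Restricting the bicanonical relation $2K_S\equiv\phi^*(-K_\Sigma)\equiv\phi^*\!\big(3l-\textstyle\sum_m e_m\big)$ to $F_i$ and using the identities above gives $2K_S|_{F_i}\equiv 2(\phi^*l)|_{F_i}$, so that $\delta:=K_{F_i}-(\phi^*l)|_{F_i}\in\operatorname{Pic}^0(F_i)$ is $2$-torsion. It remains to prove $\delta\equiv0$.

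For this I would bring in the two double fibres $2(E_j+E_k')$ and $2(E_j'+E_k)$ of $u_i$ and the nonzero $2$-torsion class $\eta_i=(E_j+E_k')-(E_j'+E_k)$ of Notation~\ref{not2}. As these curves are components of special fibres of $u_i$, they miss the general $F_i$, so $\eta_i|_{F_i}\equiv0$; hence the \'etale double cover $\pi\colon Y\to S$ defined by $\eta_i$ restricts over the general $F_i$ to the \emph{disconnected} cover $F_i\sqcup F_i$. Then Proposition~\ref{C.i} applies to $u_i$ and $\pi$ and, since $u_i$ has exactly two multiple fibres, gives a Stein factorization $Y\to B\xrightarrow{\,2:1\,}\mathbb{P}^1$ with $B$ of genus $0$, while \eqref{tool} yields $K_Y^2=10$, $\chi(\mathcal O_Y)=2$. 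Pulling $K_S$ and $\phi^*l$ back to $Y$ and restricting to a general (reduced) fibre of $Y\to B$, which maps isomorphically to $F_i$, one sees that $\pi^{*}\big((K_S-\phi^*l)+E+2G_i\big)$ for the appropriate vertical correction $E$ becomes the pullback of a $2$-torsion class that is trivialised by $\pi$; this forces $\delta\equiv0$. Equivalently, a nonzero $\delta$ would make $q(Y)>0$, and then $K_Y^2=10<16(q(Y)-1)$ is impossible, contradicting Proposition~\ref{B.i}. In either form one concludes $F_j|_{F_i}\equiv K_{F_i}$.

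The main obstacle is precisely this last step. Formal intersection theory on the del Pezzo surface $\Sigma$ can only pin $\delta$ down to $2$-torsion: for instance the restriction sequence $0\to\mathcal O_S(F_j)\to\mathcal O_S(F_i+F_j)\to F_j|_{F_i}\to0$ never produces $h^0(F_i,F_j|_{F_i})\ge 3$ by a mere dimension count (one checks this already on Examples~\ref{expl1}--\ref{expl2}). So the real content is global --- one has to use that $S$ carries no irregular double cover violating Proposition~\ref{B.i}, equivalently to control how $\operatorname{Tors}(S)$ restricts to the general $F_i$, in order to rule out the nontrivial half-period $\delta$.
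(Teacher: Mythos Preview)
Your reduction is correct: adjunction gives $K_{F_i}\equiv K_S|_{F_i}$, and since $e_m\cdot f_i=0$ for $m\ne i$ one has $F_j|_{F_i}\equiv(\phi^*l)|_{F_i}$ for every $j\ne i$, so the question is whether the $2$-torsion class $\delta=(K_S-\phi^*l)|_{F_i}$ vanishes. The problem is entirely in how you try to kill $\delta$.

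You pass to the \'etale cover $\pi:Y\to S$ determined by $\eta_i$. But $\eta_i|_{F_i}\equiv0$ holds \emph{by construction} (the curves $E_j,E_k',E_j',E_k$ lie in fibres of $u_i$), so nothing learned from this cover can detect $\delta$: the line bundle that $\pi$ trivialises is $\eta_i$, not $\delta$, and no ``vertical correction'' turns one into the other. Concretely, your two proposed mechanisms both fail. First, the claim ``$u_i$ has exactly two multiple fibres'' is unproven here and in fact false --- later in the paper one shows $u_i$ has exactly three double fibres --- so you cannot force $g(B)=0$ via Proposition~\ref{C.i}. Second, the alternative ``nonzero $\delta$ would make $q(Y)>0$'' is a non sequitur: $q(Y)=h^1(S,\eta_i)$ is an invariant of $\eta_i$ alone and carries no information about $\delta$ (indeed one can compute $q(Y)=0$). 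Even if $q(Y)$ were positive, you would need $q(Y)\ge2$ to get $K_Y^2=10<16(q(Y)-1)$.

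The paper's argument fixes this by choosing a \emph{different} double cover, one whose building datum restricts to $F_i$ as exactly $(K_S-F_j)|_{F_i}$. From $2K_S\equiv F_1+2(2E_1+E_3'+E_2')-E_4$ one gets $2\big(K_S-(2E_1+E_3'+E_2')+E_4\big)\equiv F_1+E_4$, hence a cover $\pi:Y\to S$ branched over $F_1+E_4$. A direct computation gives $\chi(\mathcal O_Y)=3$ and $p_g(Y)\ge3$, so $q(Y)\ge1$; then Proposition~\ref{A.i} forces the induced pencil on $S$ to be $|F_1|$, and disconnectedness of $\pi^*F_1$ means the building line bundle is trivial on $F_1$. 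Since $E_4,E_2',E_3'$ miss the general $F_1$, that line bundle restricts to $(K_S-2E_1)|_{F_1}=(K_S-F_2)|_{F_1}$, which is precisely $\delta$. The point you are missing is this choice of cover tailored so that its defining class \emph{is} $\delta$ on $F_i$; the $\eta_i$-cover cannot play that role.
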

\begin{proof}
We show that $F_2\mid_{F_1} \equiv K_{F_1}$. Note that $2K_S
\equiv F_1 + 2(2E_1 + E_3' + E_2') - E_4$, then we have
$$2(K_S - (2E_1 + E_3' + E_2') + E_4) \equiv F_1 + E_4.$$ Given
this relation, we get a double cover $\pi: Y\rightarrow S$
branched over $F_1$ and $E_4$. By formula \ref{tool}, we calculate
the invariants of $Y$: $$\chi(\mathcal{O}_Y) = 3, p_g(Y) = h^0(S,
F_1 + 2E_1 + E_3' + E_2') = h^0(S, \phi^*(f_1 + e_1) + E_3' +
E_2') \geq 3,$$ hence $q(Y) \geq 1.$ By Proposition \ref{A.i}, the
Albanese pencil of $Y$ is the pull-back of a pencil $|F|$ of $S$
such that $\pi^*F$ is disconnected for a general element $F$ in
$|F|$. Since $\pi$ is branched over $F_1$, it follows that $F\cdot
F_1 = 0$ and therefore $|F| = |F_1|$. For a general element $F_1
\in |F_1|$, the pull-back $\pi^*F_1$ is an unramified double cover
of $F_1$ given by the relation $2(K_S - (2E_1 + E_3' + E_2') +
E_4)\mid_{F_1}$. Since $\pi^*F_1$ is disconnected, we have $(K_S -
(2E_1 + E_3' + E_2') + E_4)\mid_{F_1} \equiv (K_S -
2E_1)\mid_{F_1} \equiv (K_S - 2E_1 - 2E_3')\mid_{F_1} \equiv (K_S
- F_2)\mid_{F_1}$ is trivial, thus $F_2\mid_{F_1} \equiv K_{F_1}$.
\end{proof}

\begin{Lemma}\label{C.p} Let the notations be as in \ref{not1} and \ref{not2}. Then we have:
\begin{enumerate}
\item[i)]{$\chi(S, K_S + \eta + \eta_i) = 0, ~h^2(S,
K_S + \eta + \eta_i) = 0$;}
\item[ii)]{ $h^0(\mathcal{O}_{F_i}(K_{F_i} + \eta)) \leq 2$;}
\item[iii)]{$h^1(S, \eta - \eta_i) = 1;$}
\item[iv)]{$h^0(S, K_S + \eta_i) = 1, ~h^1(S, K_S + \eta_i) = 0.$}
\end{enumerate}
\end{Lemma}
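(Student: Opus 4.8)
The plan is to run Riemann--Roch for every Euler characteristic and Serre duality for every $h^2$ term, the point being that each $h^2$ becomes $h^0$ of a class whose double is $E_4$, $-E_4$, or $\pm\eta_i$, and none of these is effective: if $2D\equiv -E_4$ then $2D+E_4\equiv 0$ is effective, forcing $E_4=0$; if $2D\equiv E_4$ then $2D=E_4$ because $E_4$ is the unique effective divisor in $|E_4|$ (as $E_4^2<0$), which is impossible since $E_4$ is reduced; and $\pm\eta_i$ is a non-zero torsion class. Recording $\eta^2=-1$ and $K_S\eta=-1$ (from $2\eta\equiv-E_4$, $E_4^2=-4$ and $K_SE_4=2$) together with $\eta_i^2=K_S\eta_i=\eta\eta_i=0$, Riemann--Roch gives $\chi(S,K_S+\eta+\eta_i)=1+\tfrac12(K_S+\eta+\eta_i)(\eta+\eta_i)=0$ and $\chi(S,K_S+\eta_i)=1$, while the remark above gives $h^2(S,K_S+\eta+\eta_i)=h^0(S,-\eta-\eta_i)=0$ and $h^2(S,K_S+\eta_i)=h^0(S,-\eta_i)=0$; this is i). For iv) it then remains to bound $h^0(S,K_S+\eta_i)$. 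Since $2(K_S+\eta_i)\equiv 2K_S\equiv\phi^*(-K_\Sigma)$ and $-K_\Sigma-(-K_\Sigma)=0$ is effective, Lemma~\ref{A.prop} with $d=-K_\Sigma$ gives $h^0(S,K_S+\eta_i)\le 1$; combined with $\chi=1$, $h^2=0$ this forces $h^0(S,K_S+\eta_i)=1$ and $h^1(S,K_S+\eta_i)=0$.

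For iii), Serre duality turns $h^1(S,\eta-\eta_i)$ into $h^1(S,K_S-\eta+\eta_i)$. Using $\eta\equiv K_S-\sum_{m}(E_m+E_m')$ and the fact (Assumption-Notation~\ref{not2}) that $2(E_j+E_k')$ is a double fibre of $u_i$ one gets the linear equivalence $K_S-\eta+\eta_i\equiv E_i+E_i'+F_i$ (with $\{i,j,k\}=\{1,2,3\}$). Riemann--Roch gives $\chi(S,E_i+E_i'+F_i)=1$, and since $2(K_S-E_i-E_i'-F_i)\equiv-E_4$ one has $h^2(S,E_i+E_i'+F_i)=0$; as also $h^0(S,\eta-\eta_i)=0$ (because $2(\eta-\eta_i)\equiv-E_4$), the assertion $h^1(S,\eta-\eta_i)=1$ becomes equivalent to $h^0(S,E_i+E_i'+F_i)=2$, i.e.\ to saying that $E_i+E_i'$ is exactly the fixed part of $|E_i+E_i'+F_i|$, whose moving part is the base-point-free pencil $|F_i|$.

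For ii), a degree count does the reduction: since $\eta\cdot F_i=0$ and $E_4$ is disjoint from a general $F_i$, the bundle $\mathcal{O}_{F_i}(K_{F_i}+\eta)$ has degree $4$ and squares to $\omega_{F_i}^{\otimes 2}$, so Riemann--Roch on the genus $3$ curve $F_i$ gives $h^0(\mathcal{O}_{F_i}(K_{F_i}+\eta))=2+h^0(\mathcal{O}_{F_i}(-\eta|_{F_i}))$; hence ii) is equivalent to $\eta|_{F_i}\not\equiv 0$. Using that $E_m$ and $E_m'$ with $m\ne i$ are vertical for $u_i$ one checks $\eta|_{F_i}\equiv K_{F_i}-(E_i+E_i')|_{F_i}$, so ii) amounts to $(E_i+E_i')|_{F_i}\not\equiv K_{F_i}$ for general $F_i$. (An equivalent route: the restriction sequence $0\to\mathcal{O}_S(K_S+\eta)\to\mathcal{O}_S(K_S+F_i+\eta)\to\mathcal{O}_{F_i}(K_{F_i}+\eta|_{F_i})\to 0$ together with the identity $h^1(S,K_S+\eta)=h^0(S,K_S+\eta)$, which follows from $\chi(S,-\eta)=0$ and $h^0(S,-\eta)=0$, reduces ii) to $h^0(S,K_S+F_i+\eta)=2$, a statement of the same shape as iii).)

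The common obstacle is thus to show that the fixed curves $E_i,E_i'$ — the ``halves'' of the pullbacks of the $(-1)$-curves $e_i,e_i'$ (Lemma~\ref{A.d}) — are in general position relative to the general fibre of $u_i$. For $h^0(S,E_i+E_i'+F_i)=2$ I would use the restriction sequence $0\to\mathcal{O}_S(E_i+E_i')\to\mathcal{O}_S(E_i+E_i'+F_i)\to\mathcal{O}_{F_i}((E_i+E_i')|_{F_i})\to 0$, together with $h^0(S,E_i+E_i')=1$ (two disjoint curves of negative self-intersection), so that excess sections force either $(E_i+E_i')|_{F_i}\equiv K_{F_i}$ or one of the degree $2$ divisors $E_i\cap F_i$, $E_i'\cap F_i$ to move; both possibilities produce a $g^1_2$ on $F_i$ (resp.\ a bisection of $u_i$ of low genus), and I would exclude them using the double cover machinery of Proposition~\ref{B.i}, the explicit fibration structure of $u_i$ (its two double fibres $2(E_j+E_k')$, $2(E_j'+E_k)$ and the fibre $2G_i+E_4$), and the relations on $F_i$ coming out of the proof of Proposition~\ref{A.p}. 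Disentangling this dichotomy — and simultaneously ruling out $(E_i+E_i')|_{F_i}\equiv K_{F_i}$ — is the genuinely delicate point; everything else is bookkeeping with Riemann--Roch, Serre duality, Lemma~\ref{A.prop} and Lemma~\ref{pullback}.
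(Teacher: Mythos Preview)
Your handling of i) and iv) is correct and matches the paper exactly. Your reductions for ii) and iii) are also correct: iii) comes down to $h^0(S,E_i+E_i'+F_i)=2$, and ii) comes down to $\eta|_{F_i}\not\equiv 0$ (equivalently, to bounding $h^0$ of a line bundle of the form $K_S+F_i+\eta+\text{torsion}$). But at this point your proof stops --- you outline a plan involving restriction sequences, $g^1_2$'s and the double cover machinery, and explicitly flag ``disentangling this dichotomy'' as the delicate step you have not carried out. That is a genuine gap, not just bookkeeping.

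The paper closes both gaps with short arguments that avoid the dichotomy entirely. For ii), the key trick you missed is to twist by $\eta_i$ \emph{before} restricting: since $\eta_i|_{F_i}\equiv\mathcal{O}_{F_i}$, the sequence
\[
0\to\mathcal{O}_S(K_S+\eta+\eta_i)\to\mathcal{O}_S(K_S+\eta+\eta_i+F_i)\to\mathcal{O}_{F_i}(K_{F_i}+\eta)\to 0
\]
combined with part i) gives $h^0(\mathcal{O}_{F_i}(K_{F_i}+\eta))\le h^0(S,K_S+\eta+\eta_i+F_i)$. The point of the extra $\eta_i$ is that now $K_S+\eta+\eta_i+F_i\equiv 2K_S-(E_i+E_i')$ (your untwisted version leaves a stray $\eta_i$), and $h^0(2K_S-(E_i+E_i'))=2$ is immediate because $|2K_S|$ embeds $E_i+E_i'$ as a pair of skew lines in $\mathbb{P}^5$. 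For iii), the paper first shows $h^0(S,2K_S+E_4)=7$ via the restriction sequence to $E_4$, hence $|2(K_S-\eta+\eta_i)|=\phi^*|3l-e_1-e_2-e_3|$; then Lemma~\ref{B.p} applied to the moving part $|M|$ of $|K_S-\eta+\eta_i|$ produces $m$ on $\Sigma$ with $|M|=\phi^*|m|$ and $3l-e_1-e_2-e_3-2m$ effective, forcing $m\equiv f_j$ and $h^0=2$. Both arguments bypass any analysis of $(E_i+E_i')|_{F_i}$ or hyperellipticity of $F_i$.
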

\begin{proof}
$i)$ By $2\eta \equiv -E_4$, applying Riemann-Roch formula, we get
$\chi(S, K_S + \eta + \eta_i) = 0.$ Using Serre duality, we have
$h^2(S, K_S + \eta + \eta_i) = h^0(S, -\eta + \eta_i)$. Since
$2(-\eta + \eta_i) \equiv E_4$ and $E_4$ is a reduced
$(-4)$-curve, it follows that $h^0(S, -\eta + \eta_i) = 0$.

$ii)$ We show that $h^0(\mathcal{O}_{F_1}(K_{F_1} + \eta)) \leq
2$. Since $\eta_1\mid_{F_1} \equiv \mathcal{O}_{F_1}$, we get the
following exact sequence:
$$0 \rightarrow \mathcal{O}_S(K_S + \eta + \eta_1) \rightarrow
\mathcal{O}_S(K_S + \eta + \eta_1 + F_1) \rightarrow
\mathcal{O}_{F_1}(K_{F_1} + \eta) \rightarrow 0.$$ Considering the
long cohomology sequence, by the results of $i)$, we have
\begin{equation*}
\begin{split}
&h^0(\mathcal{O}_{F_1}(K_{F_1} + \eta))\\
&\leq h^0(\mathcal{O}_S(K_S + \eta + \eta_1 + F_1)) -
h^0(\mathcal{O}_S(K_S + \eta + \eta_1)) + h^1(\mathcal{O}_S(K_S +
\eta + \eta_1))\\
&= h^0(\mathcal{O}_S(K_S + \eta + \eta_1 + F_1)) -
\chi(\mathcal{O}_S(K_S + \eta + \eta_1)) + h^2(\mathcal{O}_S(K_S +
\eta + \eta_1))\\
&= h^0(\mathcal{O}_S(K_S + \eta + \eta_1 + F_1)).
\end{split}
\end{equation*}
Note that $K_S + \eta + \eta_1 + F_1 \equiv 2K_S -
\sum_{i=1}^{i=3}(E_i + E_i') + (E_2 + E_3' - E_3 -E_2') + 2(E_3 +
E_2') \equiv 2K_S - (E_1 + E_1')$. Since the linear system
$|2K_S|$ embeds $E_1 + E_1'$ as a pair of skew lines in
$\mathbb{P}^5$, we obtain $h^0(2K_S - (E_1 + E_1')) = 2$,
thus $h^0(\mathcal{O}_{F_1}(K_{F_1} + \eta)) \leq 2$.

$iii)$ Note that $h^0(S, \eta - \eta_i) = 0$ since $2(\eta -
\eta_i) \equiv -E_4$. Applying Riemann-Roch formula, we get
$\chi(S, \eta - \eta_i)= 1$, thus $-h^1(S, \eta - \eta_i) + h^2(S,
\eta - \eta_i) = 1$. Then applying Serre duality, it suffices to
show $h^0(S, K_S -\eta + \eta_i) = 2$.

We show that $h^0(S, K_S -\eta + \eta_1) = 2$. Since $E_4$ is a
rational $(-4)$-curve and $(2K_S + E_4)E_4 = 0$, we get the
following exact sequence
$$0 \rightarrow \mathcal{O}_S(2K_S) \rightarrow \mathcal{O}_S(2K_S +
E_4) \rightarrow \mathcal{O}_{E_4} \rightarrow 0.$$ From the long
cohomology sequence, using $h^1(S,2K_S) = 0$, we derive $h^0(S,
2K_S + E_4) = 7$. Since $h^0(\Sigma, 3l - e_1 - e_2 - e_3) = 7$
and $2K_S + E_4 \equiv 2(K_S -\eta + \eta_1) \equiv \phi^*(3l -
e_1 - e_2 - e_3)$, we obtain $|2(K_S -\eta + \eta_1)| = \phi^*|3l
- e_1 - e_2 - e_3|$. Notice that $K_S -\eta + \eta_1 \equiv
\sum_{j=1}^{j=3} (E_j + E_j') + (E_2 + E_3' - E_3 - E_2') \equiv
F_1 + E_1' + E_1$, we get $h^0(S, K_S -\eta + \eta_1) = h^0(S, F_1
+ E_1' + E_1) \geq 2$. We may write $|K_S -\eta + \eta_1| = |M| +
F$ where $|M|$ is the moving part and $F$ is the fixed part.
Applying Lemma \ref{B.p}, we can find a divisor $m$ on $\Sigma$
such that $|M| = \phi^*|m|$. Then arguing as in the proof of Lemma \ref{A.prop}, we conclude that $3l - e_1 - e_2 - e_3
- 2m$ is effective. So the only possibility is $m \equiv f_i$ for some $i
\in \{1,2,3\}$, consequently $h^0(S, K_S -\eta + \eta_1) = h^0(S,
M) = h^0(\Sigma, f_i) = 2$.

$iv)$ Since $\eta_i$ is a torsion line bundle of order $2$,
applying Riemann-Roch formula and Serre duality, we get $\chi(S,
K_S + \eta_i) = 1$ and $h^2(S, K_S + \eta_i) = h^0(S, \eta_i) =
0$, thus $h^0(S, K_S + \eta_i) - h^1(S, K_S + \eta_i) = 1$. So it
suffices to show $h^0(S, K_S + \eta_i) \leq 1$. It is a
consequence of Lemma \ref{A.prop} since $2(K_S + \eta_i) \equiv
\phi^*(-K_{\Sigma})$.
\end{proof}
\begin{Corollary}\label{D.p} Let $S$ be as in \ref{not1}.
Let $F_i \in |F_i|,i = 1,2,3$ be a general curve. Then $-\eta +
\eta_j\mid_{F_i} \equiv \mathcal{O}_{F_i}, if~ i \neq j;$
$\eta_i\mid_{F_i} \equiv \mathcal{O}_{F_i}$, $-\eta +
\eta_i\mid_{F_i} \neq \mathcal{O}_{F_i}$. \end{Corollary}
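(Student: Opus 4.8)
The plan is to prove the three assertions of the corollary in the order: first $\eta_i\mid_{F_i}\equiv\mathcal{O}_{F_i}$, then $(-\eta+\eta_j)\mid_{F_i}\equiv\mathcal{O}_{F_i}$ for $i\neq j$, and finally $(-\eta+\eta_i)\mid_{F_i}\not\equiv\mathcal{O}_{F_i}$, which will drop out of Lemma \ref{C.p} $ii)$ once the first assertion is available. Throughout I use the elementary fact that if a curve $C\subset S$ satisfies $C\cdot F_i=0$, then $C$ lies in a fibre of $u_i$ and hence is disjoint from a general member $F_i$ of the (base point free) pencil $|F_i|$, so that $\mathcal{O}_S(C)\mid_{F_i}\equiv\mathcal{O}_{F_i}$.

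For $\eta_i\mid_{F_i}\equiv\mathcal{O}_{F_i}$: by \ref{not2}, with $\{i,j,k\}=\{1,2,3\}$, the divisors $2(E_j+E_k')$ and $2(E_j'+E_k)$ are double fibres of $u_i$, so each of $E_j,E_k',E_j',E_k$ lies in a fibre of $u_i$ and is disjoint from a general $F_i$. Since $\eta_i\equiv (E_j+E_k')-(E_j'+E_k)$, restriction to $F_i$ gives $\eta_i\mid_{F_i}\equiv\mathcal{O}_{F_i}$.

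For $(-\eta+\eta_j)\mid_{F_i}\equiv\mathcal{O}_{F_i}$ with $i\neq j$: write $\{i,j,k\}=\{1,2,3\}$. Substituting $\eta\equiv K_S-\sum_{m=1}^{3}(E_m+E_m')$ and $\eta_j\equiv (E_i+E_k')-(E_i'+E_k)$ (the sign ambiguity in $\eta_j$ is irrelevant, as $2\eta_j\equiv 0$) one computes $-\eta+\eta_j\equiv -K_S+2(E_i+E_k')+E_j+E_j'$. Here $2(E_i+E_k')$ is one of the double fibres of $u_j$ described in \ref{not2}, hence $2(E_i+E_k')\equiv F_j$; and $E_j\cdot F_i=E_j'\cdot F_i=0$ (because $e_j\cdot f_i=e_j'\cdot f_i=0$ on $\Sigma$), so $E_j$ and $E_j'$ restrict trivially to a general $F_i$. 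Using moreover adjunction $K_S\mid_{F_i}\equiv K_{F_i}$ and Proposition \ref{A.p} ($F_j\mid_{F_i}\equiv K_{F_i}$), restriction to a general $F_i$ yields $(-\eta+\eta_j)\mid_{F_i}\equiv -K_{F_i}+K_{F_i}\equiv\mathcal{O}_{F_i}$.

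For $(-\eta+\eta_i)\mid_{F_i}\not\equiv\mathcal{O}_{F_i}$: if $\eta\mid_{F_i}$ were trivial we would have $h^0(\mathcal{O}_{F_i}(K_{F_i}+\eta))=h^0(\mathcal{O}_{F_i}(K_{F_i}))=g(F_i)=3$, contradicting Lemma \ref{C.p} $ii)$; hence $\eta\mid_{F_i}\not\equiv\mathcal{O}_{F_i}$, and by the first assertion $(-\eta+\eta_i)\mid_{F_i}\equiv -\eta\mid_{F_i}$ is nontrivial. I do not expect any real obstacle: the proof is essentially bookkeeping of linear equivalence classes pulled back from $\Sigma$, the only points needing care being the identity $\phi^*(e_i+e_k')=2(E_i+E_k')\equiv F_j$ and the vanishings $e_j\cdot f_i=e_j'\cdot f_i=0$, which make the spurious curves restrict trivially to a general $F_i$.
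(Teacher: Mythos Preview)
Your proof is correct and follows essentially the same approach as the paper: both arguments reduce the triviality of $(-\eta+\eta_j)\mid_{F_i}$ for $i\neq j$ to Proposition~\ref{A.p} (that $F_j\mid_{F_i}\equiv K_{F_i}$), and both deduce the non-triviality of $(-\eta+\eta_i)\mid_{F_i}$ from Lemma~\ref{C.p}~$ii)$ via $h^0(K_{F_i})=3$. The only cosmetic difference is that the paper first computes $\eta\mid_{F_1}\equiv (E_1-E_1')\mid_{F_1}$ and then observes $\eta_2\mid_{F_1}\equiv\eta_3\mid_{F_1}\equiv (E_1-E_1')\mid_{F_1}$, whereas you compute the combination $-\eta+\eta_j\equiv -K_S+2(E_i+E_k')+E_j+E_j'$ directly and restrict; both are the same bookkeeping.
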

\begin{proof}
By definition, it follows that $\eta_i\mid_{F_i} \equiv
\mathcal{O}_{F_i}$. Using Lemma \ref{A.p}, we have $\eta\mid_{F_1}
\equiv (K_S - (E_1 + E_1'))\mid_{F_1} \equiv K_{F_1} - (E_1 +
E_1')\mid_{F_1} \equiv (F_2 - (E_1 + E_1'))\mid_{F_1} \equiv
(2(E_1 +E_3') - (E_1 + E_1'))\mid_{F_1} \equiv (E_1 -
E_1')\mid_{F_1}$. Notice that $\eta_2\mid_{F_1 } \equiv
\eta_3\mid_{F_1} \equiv (E_1 - E_1')\mid_{F_1}$ and that $\eta_i$
is torsion of order $2$, to prove the corollary, it suffices to
show that $\eta\mid_{F_i} \neq \mathcal{O}_{F_i}$. It is true
because otherwise we have $h^0(\mathcal{O}_{F_i}(K_{F_i} + \eta))=
h^0(\mathcal{O}_{F_i}(K_{F_i})) = 3$ which contradicts Lemma
\ref{C.p} $ii)$.
\end{proof}
\section{Proof of Theorem \ref{Theorem A}}\label{main}

In this section, we will prove the main results. All the
assumptions and notations are as in \ref{not1} and \ref{not2}.
Following the approach in \cite{MP2}, we give 3 involutions on $S$
by considering its fibrations. Let's begin with a lemma.

\begin{Lemma}\label{B.mt} Let $u: S \rightarrow
\mathbb{P}^1$ be a fibration such that $E_4$ is contained in one fiber. Then $u$ is induced by one of the pencils $|F_i|, i=1,2,3$.\end{Lemma}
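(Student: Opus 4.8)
The plan is to descend the fibration $u$ along $\phi$ to a fibration $v$ of $\Sigma$, determine $v$ among the finitely many fibrations of the del Pezzo surface $\Sigma$, and then single it out using the hypothesis that $E_4$ lies in a fiber of $u$.

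\emph{Descent.} As was observed in the proof of Proposition~\ref{infofpb}, $\phi^*\colon H^2(\Sigma,\mathbb{Q})\to H^2(S,\mathbb{Q})$ is an isomorphism; since $\Sigma$ is a del Pezzo surface, $H^2(\Sigma,\mathbb{Q})$ is spanned by classes of curves, hence so is $H^2(S,\mathbb{Q})$, and $\rho(S)=h^2(S)=5=\rho(\Sigma)$. As $S$ and $\Sigma$ are smooth, hence $\mathbb{Q}$-factorial, with equal Picard numbers, Lemma~\ref{pnf} applied to the finite morphism $\phi$ and the fibration $u$ produces a finite morphism $\pi'\colon\mathbb{P}^1\to B$ and a fibration $v\colon\Sigma\to B$ with $v\circ\phi=\pi'\circ u$; as $\Sigma$ is rational, $B\cong\mathbb{P}^1$.

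\emph{Numerics.} Let $f$ be a general fiber of $v$ and $F$ a general (hence smooth) fiber of $u$, of genus $g$, so $f^2=F^2=0$. Since $-K_\Sigma$ is ample, $0<(-K_\Sigma)\cdot f=2-2p_a(f)$, so $p_a(f)=0$ and $(-K_\Sigma)\cdot f=2$, i.e. $v$ is a conic bundle. Put $e=\deg\pi'$. For general $f$, the divisor $\phi^*f$ is the sum of the $e$ fibers of $u$ over the $e$ points of $\pi'^{-1}(v(f))$, each with multiplicity $1$ (a general fiber of $u$ is not contained in the ramification divisor of $\phi$), so $\phi^*f\equiv eF$; using $2K_S\equiv\phi^*(-K_\Sigma)$,
\[
8=\phi^*(-K_\Sigma)\cdot\phi^*f=2K_S\cdot(eF)=2e\,(2g-2),
\]
hence $e\,(2g-2)=4$. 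Because $S$ is of general type, $u$ is neither a $\mathbb{P}^1$-fibration nor an elliptic fibration, so $g\ge2$; thus $(e,g)\in\{(1,3),(2,2)\}$, and $(2,2)$ is excluded since $S$ has no genus~$2$ fibration. Therefore $e=1$, $\pi'$ is an isomorphism, $g=3$ and $F=\phi^*f$; in particular $u$ is the fibration defined by the pencil $\phi^*|f|$.

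\emph{Identification.} Since $E_4=\phi^*e_4$ lies in a fiber of $u$, $0=F\cdot E_4=\phi^*f\cdot\phi^*e_4=4(f\cdot e_4)$, so $e_4$ is contracted by $v$ and lies in a fiber $\Phi\equiv f$. As $e_4^2=-1\neq0$, $\Phi$ is not irreducible; a singular fiber of the conic bundle $v$ is a pair of $(-1)$-curves meeting transversally, so $\Phi=e_4+C$ with $C$ a $(-1)$-curve and $e_4\cdot C=1$. The only $(-1)$-curves meeting $e_4$ are $g_1,g_2,g_3$, so $f\equiv e_4+g_i\equiv l-e_i$ for some $i\in\{1,2,3\}$; hence $u$ is induced by $\phi^*|l-e_i|=|F_i|$, that is, $u=u_i$. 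The step I expect to be most delicate is the middle one: turning the set-theoretic relation $v\circ\phi=\pi'\circ u$ into the numerical identity $\phi^*f\equiv eF$ without hidden multiplicities, and then forcing $e=1$, which requires combining the ampleness of $-K_\Sigma$, the absence of rational, elliptic and genus~$2$ pencils on $S$, and the equality of Picard numbers.
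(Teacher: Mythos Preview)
Your proof is correct and follows essentially the same approach the paper intends by its reference to Lemma~5.7 of \cite{MP2}: descend $u$ along $\phi$ using the equality of Picard numbers (this is exactly Lemma~\ref{pnf}, which the paper states is proved by the idea of that same lemma in \cite{MP2}), recognize the resulting fibration of $\Sigma$ as a conic bundle, rule out $e>1$ via the absence of genus~$2$ pencils on $S$, and then use $f\cdot e_4=0$ to single out $|l-e_i|$ with $i\in\{1,2,3\}$. Your write-up simply supplies the details the paper omits.
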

\begin{proof}
Arguing as in the proof of Lemma 5.7 in \cite{MP2}, we get the lemma.
\end{proof}

\begin{Remark}
Note that in the following proof, Step 2,4,5,6 correspond to Step
1,2,3,4 in the proof of the main result in \cite{MP2}, and the
corresponding argument is nearly the same except that in Step 2. For the readers' convenience, we
give all the details.
\end{Remark}

\begin{proof}[\bf Proof of Theorem \ref{Theorem A}]
Let $\pi_i: Y_i \rightarrow S$ be the double cover branched over
$E_4$ given by the relation $2(-\eta + \eta_i) = E_4$. By
Lemma~\ref{C.p} $iii)$, we have $q(Y_i) = h^1(S, \eta - \eta_i) =
1$. Denote by $\alpha_i: Y_i \rightarrow B_i$ the Albanese pencil
where $B_i$ is an elliptic curve. Corollary \ref{D.p} implies that
$\eta_i \neq \eta_j$ if $i\neq j$, so $\pi_i$ is different from
$\pi_j$. By Proposition~\ref{A.i}, there exists a fibration $h_i: S
\rightarrow \mathbb{P}^1$ and a double cover $\pi_i': B \rightarrow
\mathbb{P}^1$ such that $\pi_i'\circ\alpha_i = h_i\circ\pi_i$. Since
$\pi_i^{-1}(E_4)$ is a rational curve, it must be contained in one
fiber of $\alpha_i$, therefore $E_4$ is contained in one fiber of
$h_i$. By Lemma~\ref{B.mt}, we can find some $s_i \in \{1,2,3\}$
such that $h_i = u_{s_i}$. Then we get the following commutative
diagram:
\[\begin{CD}
Y_i       @>\pi_i>>       S \\
@V\alpha_iVV              @Vu_{s_i}VV \\
B_i        @>\pi_i'>>    \mathbb{P}^1
\end{CD} \]
By Corollary~\ref{D.p}, we have $-\eta + \eta_i\mid_{F_i} \neq
\mathcal{O}_{F_i}$, so a general curve in $\pi_i^*|F_i|$ is
connected, hence $s_i \neq i$. Since $\pi_i': B_i \rightarrow
\mathbb{P}^1$ is branched over four points and $\pi_i: Y_i
\rightarrow S$ is branched over $E_4$, by Proposition~\ref{C.i},
we conclude that the fibration $u_{s_i}: S \rightarrow
\mathbb{P}^1$ has at least three double fibers. As the proof is
long, we break the proof into 6 steps.

Step 1: \emph{The fibration $u_i: S \rightarrow
\mathbb{P}^1,i=1,2,3$ has at most three double fibers.}

We show that $u_3: S \rightarrow \mathbb{P}^1$ has at most three double
fibers. By the analysis above, we can assume $u_{s_3} = u_1$, so
$u_1$ has another double fiber $2N$ aside from $2(E_2 + E_3')$ and
$2(E_3 + E_2')$. Since the curve $\phi(N)$
is irreducible and $NE_1 = 1$, by Proposition~\ref{infofpb}, we
conclude that $N$ is reduced and irreducible. Moreover $\phi$ is
ramified along $N$ since the curve in the pencil $|f_1|$ supported on $\phi(N)$ is reduced. Now to the contrary,
suppose that $u_3$ has two additional double fibers $2M_1, 2M_2$
aside from $2(E_1 + E_2')$ and $2(E_2 + E_1')$. Similarly we can
see that both $M_1$ and $M_2$ are reduced and irreducible, and
$\phi$ is ramified along $M_1$ and $M_2$. Let $R$ be the
ramification divisor of $\phi: S \rightarrow \Sigma$. By the Hurwitz-formula
$K_S \equiv \phi^*(K_{\Sigma}) + R$, we get $R \equiv 3K_S$. Put
$R_0 = \sum_{i=1}^{i=3}(E_i + E_i') + G_1 + G_2 + N + M_1 + M_2$.
Immediately it follows that $R_0 \leq R$ and $2R_0 \equiv \phi^*(8l -
3e_1 - 2e_2 - 3e_3 - 2e_4)$, hence $(R - R_0)G_2 = -1$ and then
$G_2 \leq (R - R_0)$. However, since $\phi : S \rightarrow \Sigma$
is ramified along $G_2$ with branching order 2 and $G_2 \leq R_0$,
$G_2$ cannot be a component of $R - R_0$.

By a similar argument as above or as in the proof of Lemma 5.8 in
\cite{MP2}, one can show that $u_1, u_2$ each has at most $3$ double
fibers.

Step 2: \emph{$(s_1 s_2 s_3)$ is a cyclic permutation.}

Since $s_i \neq i$, it suffices to show that $s_i \neq s_j$ if $i
\neq j$.

We show that $s_1 \neq s_2$. Otherwise we have $s_1 = s_2 = 3$, and
$\alpha_1: Y_1 \rightarrow B_1$ (resp.$\alpha_2: Y_2 \rightarrow
B_2$) arises in the Stein factorization of $u_3 \circ\pi_1$
(resp.$u_3 \circ\pi_2$), i.e., the following commutative diagram
holds.
\[\begin{CD}
Y_1       @>\pi_1>>       S               @<\pi_2<<     Y_2\\
@V\alpha_1VV              @Vu_{3}VV                   @V\alpha_2VV\\
B_1        @>\pi_1'>>    \mathbb{P}^1     @<\pi_2'<<    B_2
\end{CD} \]
Note that for $i =1,2$, $Y_i$ coincides with the normalization of the fiber product
$B_i\times_{\mathbb{P}^1}S$ since $\pi_i$ factors through
the natural projection $B_i\times_{\mathbb{P}^1}S \rightarrow S$
which is also of degree 2, so $\pi_1'$ is different from $\pi_2'$.
Denote by $P_1,P_2,P_3,P_4 = u_3(E_4)$ the branch points of
$\pi_1'$. There exists a branch point $P_5$ of $\pi_2'$ which is
not branched over by $\pi_1'$. Then we find that the fibers over the
points $P_i,i = 1,2,3,5$ of $u_3$ are double fibers. This
contradicts Step 1.

In the following we assume $s_1 = 2, s_2 =3, s_3 =1$. Furthermore we
conclude that for $i=1,2,3$, the fibration $u_i$ has exactly $3$ double fibers.

Step 3: \emph{$\phi^*g_3$ is not reduced.}

With the assumption above, we get the following commutative
diagram:
\[\begin{CD}
Y_2       @>\pi_2>>       S \\
@V\alpha_2VV              @Vu_{3}VV \\
B_2        @>\pi_2'>>    \mathbb{P}^1
\end{CD} \]
Let $W = B_2\times_{\mathbb{P}^1}S$, and denote by $p: W \rightarrow S$
the natural projection which is a double cover. Assume by
contradiction that $G_3 = \phi^*g_3$ is reduced. Since $\pi_2': B_2
\rightarrow \mathbb{P}^1$ is branched over the point $u_3(G_3) =
u_3(E_4)$, the map $p$ is branched over $G_3$, and $W$ is normal
along $p^{-1}G_3$. Since $Y_2$ is the normalization of $W$,  the map
$\pi_2: Y_2 \rightarrow S$ is also branched over $G_3$. This
contradicts the fact that the branch locus of $\pi_2$ is $E_4$.

Step 4: \emph{A general element $F_i \in |F_i|$ is hyperelliptic
for $i = 1,2,3$.}

Let $F_2 \in |F_2|$ be a general fiber of $u_2: S \rightarrow
\mathbb{P}^1$. We show that $F_2$ is hyperelliptic. Since
$\pi_1^*F_2$ (resp.$\pi_1^*F_3$) is disconnected, we may write
$\pi_1^*F_2 = \hat{F_2} + \hat{F_2}'$ (resp.$\pi_1^*F_3 =
\hat{F_3} + \hat{F_3}'$) where the two components are disjoint. By
$F_2F_3 = 4$, we have $\hat{F_2}\hat{F_3} = 2$. Let $p\circ h:
Y_1\rightarrow C\rightarrow \mathbb{P}^1$ be the Stein
factorization of $u_3\circ\pi_1: Y_1 \rightarrow \mathbb{P}^1$.
Since $\hat{F_3}$ is one fiber of $h: Y_1 \rightarrow C$, so
$\hat{F_2}\hat{F_3} = 2$ implies the restriction map
$h\mid_{\hat{F_2}}: \hat{F_2} \rightarrow C$ is a $2$-to-$1$ map.
In addition since $h: Y_1 \rightarrow C$ is not the Albanese map
and $q(Y_1) = 1$, the curve $C$ must be rational. Therefore
$\hat{F_2}$ is hyperelliptic, and so is $F_2$.

Step 5: \emph{$\phi: S \rightarrow \Sigma$ is a Galois cover, and
the Galois group $G \cong \Gamma = Z_2 \times Z_2$.}

For $i=1,2,3$, we denote by $\gamma_i$ the involution on $S$ that induces
the involution on the general fiber $F_i$. Since $S$ is minimal,
the maps $\gamma_i$ are regular maps; and they belong to $\Gamma$ by
Proposition \ref{A.p}. So we only need to show that $\gamma_i \neq
\gamma_j$ if $i \neq j$. Now we show that $\gamma_2 \neq
\gamma_3$. Consider the lifted involution $\hat{\gamma_2}: Y_1
\rightarrow Y_1$. By the construction in Step 4, the restriction
of $\alpha_1$ identifies $\hat{F_3}/\hat{\gamma_2}$ with $B_1$. So
we get $g(\hat{F_3}/\hat{\gamma_2}) = 1$. Then we can see
$\gamma_2 \neq \gamma_3$ since $\hat{F_3}/\hat{\gamma_3} \cong
\mathbb{P}^1$.

Step 6: \emph{$S$ is a Burniat surface.}

The fibration $u_i: S \rightarrow\mathbb{P}^1, i=1,2,3$ has
exactly three double fibers. We denote by $2M_i$ the double fiber
different from $2(E_k + E_j')$ and $2(E_k' + E_j)$ of $u_i$ where
$\{i,j,k\} = \{1,2,3\}$. There exists a fiber $m_i$ of $v_i:
\Sigma \rightarrow \mathbb{P}^1$ such that $2M_i = \phi^*m_i$.
Then $m_i$ is a component of the branch locus. Denote by $D$ be the branch divisor of $\phi$. Then we have
$$-3K_{\Sigma} \equiv D \geq \sum_{i=1}^{i=3}(e_i + e_i' + g_i +
m_i) \equiv -3K_{\Sigma},$$ thus $D = \sum_{i=1}^{i=3}(e_i + e_i'
+ g_i + m_i)$. If we denote by $D_i$ the image of the divisorial part of
the fixed locus of $\gamma_i$, then $D = D_1 + D_2 + D_3$. By step
4, we get $D_1 = g_2 + m_2 + e_1 +e_1'; D_2 = g_3 + m_3 + e_2
+e_2'; D_3 = g_1 + m_1 + e_3 +e_3'$. Finally the theorem is proved.
\end{proof}

\section{Proof of Proposition \ref{mainprop}}\label{prop}
Notations are as in Notation \ref{not1}. We will prove the proposition by contradiction. To the contrary, in this section, by Lemma \ref{A.d}, we assume
$$\phi^*e_i = 2E_i, i
= 1,2,3,4; \phi^*e_j' = 2E_j', \phi^*g_j = 2G_j, j=1,2,3$$ Let $R$
be the ramification divisor. By the formula $K_S \equiv
\phi^*(K_{\Sigma}) + R$, we have $R \equiv 3K_S$, hence $2R \equiv
\phi^*(-3K_{\Sigma})$. Put $R_0 = R - (\sum_{i =1}^{i = 3}(E_i +
E_i' + G_i) + E_4).$ Then $R_0$ is effective, and none of its
irreducible components are in the set $\{E_i, E'_j, G_j\}_{i =
1,2,3,4; j=1,2,3}$. Notice that $2(\sum_{i =1}^{i = 3}(E_i + E_i'
+ G_i) + E_4) \equiv \phi^*(6l - 2e_1 - 2e_2 - 2e_3 - 2e_4)$, we
get
\begin{equation}\label{r0}
2R_0 \equiv \phi^*(3l - e_1 - e_2 - e_3 - e_4) \equiv
\phi^*(-K_{\Sigma}).
\end{equation}
Since $|\phi^*(-K_{\Sigma})| = \phi^*|-K_{\Sigma}|$, we can assume
$2R_0 = \phi^*r_0$ where $r_0$ is an effective divisor on
$\Sigma$.

\begin{Lemma}\label{B.prop}
Let $D \subset S$ be an effective divisor. If $2D \equiv \phi^*(3l
- e_1 - e_2 - e_3 - e_4)$, then every irreducible component of $D$
is contained in $R$.
\end{Lemma}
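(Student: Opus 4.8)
The plan is to turn the hypothesis into the statement $2D=\phi^{*}h_{0}$ for a concrete anticanonical divisor $h_{0}$ on $\Sigma$, and then to analyse a hypothetical component of $D$ lying outside $R$ by looking at its image on $\Sigma$.

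\emph{Step 1: realize $2D$ as a pull-back.} Since $2K_{S}\equiv\phi^{*}(-K_{\Sigma})$ and, as recalled in the proof of Lemma~\ref{A.prop}, $|2K_{S}|=\phi^{*}|-K_{\Sigma}|$, the hypothesis $2D\equiv\phi^{*}(3l-e_{1}-e_{2}-e_{3}-e_{4})=\phi^{*}(-K_{\Sigma})$ says precisely that $2D$ lies in the complete linear system $|2K_{S}|$. Hence $2D=\phi^{*}h_{0}$ for some $h_{0}\in|-K_{\Sigma}|$.

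\emph{Step 2: reduce to the image curve.} Suppose, for a contradiction, that $C$ is an irreducible component of $D$ with $C\not\subset R$, and put $c=\phi(C)$. As $\phi$ is finite (Proposition~\ref{infofpb}), $c$ is an irreducible (reduced) curve, and $C$ can occur in $\phi^{*}c'$ only when $c'=c$. Writing $e_{C}$ for the ramification index of $\phi$ along $C$, so that $C\not\subset R$ forces $e_{C}=1$, and comparing the coefficient of $C$ on the two sides of $2D=\phi^{*}h_{0}$ gives $2\,\mathrm{mult}_{C}(D)=e_{C}\cdot\mathrm{mult}_{c}(h_{0})=\mathrm{mult}_{c}(h_{0})$. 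Since $\mathrm{mult}_{C}(D)\geq1$ we get $\mathrm{mult}_{c}(h_{0})\geq2$, i.e.\ $2c\leq h_{0}$; intersecting with the ample class $-K_{\Sigma}$ yields $2(-K_{\Sigma}\cdot c)\leq(-K_{\Sigma})^{2}=5$, so $c$ has degree $1$ or $2$ in $\mathbb{P}^{5}$.

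\emph{Step 3: rule out both degrees.} If $\deg c=1$ then $c$ is a line on the del Pezzo surface $\Sigma$, hence one of its ten $(-1)$-curves; but by the standing assumption of the present section $\phi^{*}c=2C'$ for an irreducible curve $C'$, which then has ramification index $2$ and so lies in $R$, and since $C$ is a component of $\phi^{*}c$ we get $C=C'\subset R$, a contradiction. If $\deg c=2$ then $c$, being irreducible and reduced, is a smooth conic with $c^{2}=0$ and $-K_{\Sigma}\cdot c=2$, so its class is $l-e_{i}$ for some $i$ or $2l-e_{1}-e_{2}-e_{3}-e_{4}$; in either case $h_{0}-2c=-K_{\Sigma}-2c$ would be effective, which a short computation excludes (for $c\equiv2l-\sum e_{i}$ the difference meets the nef class $l$ negatively, and for $c\equiv l-e_{i}$ it would force the class of a line through three of the points $P_{j}$ to be effective). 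Both cases being impossible, $D$ has no component outside $R$.

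I expect the only substantive points to be Step~1 — recognizing that $2D$ lies in $|2K_{S}|=\phi^{*}|-K_{\Sigma}|$, so that the otherwise non-Cartier-looking relation becomes an honest pull-back — together with the coefficient bookkeeping of Step~2. Conceptually the crucial observation is that the hypothesis of Section~\ref{prop} (every $(-1)$-curve pulls back non-reducedly) is exactly what makes the $\deg c=1$ case work: without it a reduced $(-4)$-curve over a line would be a component of $D$ not contained in $R$ and the lemma would be false, which is why this lemma, and with it Proposition~\ref{mainprop}, must be argued inside that section.
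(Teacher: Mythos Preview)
Your proof is correct and follows essentially the same line as the paper's: both realize $2D=\phi^{*}h_{0}$ for some $h_{0}\in|-K_{\Sigma}|$ via $|2K_{S}|=\phi^{*}|-K_{\Sigma}|$, then compare multiplicities along a hypothetical unramified component to force $2c\leq h_{0}$ and derive a contradiction from the standing assumption that every $(-1)$-curve pulls back non-reducedly. The only difference is cosmetic: the paper asserts in one line that $d_{1}$ ``must be a $(-1)$-curve'' once $-K_{\Sigma}-2d_{1}$ is effective, whereas you spell out the degree-$2$ case (classes $l-e_{i}$ and $2l-\sum e_{j}$) and exclude each explicitly, which is a welcome clarification of a step the paper leaves to the reader.
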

\begin{proof}
To the contrary, suppose that $D$ contains a reduced and irreducible component
$D_1$ such that $\phi$ is not ramified along it. Denote by $d_1$ be the reduced divisor
supported on $\phi(D_1)$. Thus $ord_{D_1}(\phi^*d_1) =1$. By $2D \in |2K_S| =
\phi^*|-K_{\Sigma}|$, we find a divisor $d \in |-K_{\Sigma}|$ such that $2D = \phi^*d$.
Note that $d_1$ is contained in $d$, and since $ord_{D_1}(\phi^*d) = ord_{D_1}(2D)\geq 2$, we have $d-2d_1$ is effective.
Then we conclude that $d_1$ must be a $(-1)$-curve. So a contradiction follows from the assumptions at the beginning of the section.
\end{proof}

\begin{Lemma}\label{C.prop} The bicanonical map $\phi: S \rightarrow \Sigma$ is ramified along $R_0$
with branching order $2$, and either
\begin{enumerate}
\item[i)]{$R_0$ is irreducible or}
\item[ii)]{$R_0$ can be written as $R_0
= B + F$ where both $B$ and $F$ are irreducible.}
\end{enumerate}
\end{Lemma}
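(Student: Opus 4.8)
The plan is to read everything off the image curve $r_0=\phi(R_0)\in|-K_\Sigma|$ and the relation $2R_0=\phi^*r_0$, together with the numerical data $R_0^2=K_S\cdot R_0=5$ and $p_a(R_0)=6$ (immediate from $2R_0\equiv\phi^*(-K_\Sigma)\equiv 2K_S$, $(-K_\Sigma)^2=5$ and adjunction). For the branching statement I would fix an irreducible component $C$ of $R_0$, write $m$ for the ramification index of $\phi$ along $C$ and $a=\mathrm{ord}_C(R_0)$. From $R_0\le R$ and $\mathrm{ord}_C(R)=m-1$ one gets $a\le m-1$, while comparing $\mathrm{ord}_C$ on the two sides of $2R_0=\phi^*r_0$ gives $2a=m\cdot\mathrm{ord}_{\phi(C)}(r_0)$; these two facts force $\mathrm{ord}_{\phi(C)}(r_0)=1$ and $m=2a\in\{2,4\}$. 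It remains to exclude $m=4$ (so $a=2$): then a degree count forces $\phi^*\phi(C)=4C$, hence $\phi(C)^2=4C^2\in 4\mathbb{Z}$; but $\phi(C)\le r_0\equiv -K_\Sigma$ with $-K_\Sigma$ ample gives $1\le\phi(C)\cdot(-K_\Sigma)\le 5$ and then, by Hodge index on $\Sigma$, $\phi(C)^2\le 5$, leaving only $\phi(C)^2\in\{0,4\}$ --- both excluded by adjunction (the $\phi(C)^2=0$ case via adjunction on $S$, the $\phi(C)^2=4$ case because it would force $\phi(C)=r_0$). Thus $m=2$, $a=1$ for every component, i.e. $R_0$ is reduced, $\phi$ is branched of order $2$ along each of its components, and $r_0$ is reduced.

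For the second part I would first show $R_0$ is connected. Since $r_0\in|-K_\Sigma|$ and $h^1(\mathcal{O}_\Sigma)=0$, $r_0$ is connected with $p_a(r_0)=1$; on $S$, the sequence $0\to\mathcal{O}_S(K_S)\to\mathcal{O}_S(K_S+R_0)\to\omega_{R_0}\to 0$ together with $h^0(K_S)=h^1(K_S)=0$ gives $h^0(\omega_{R_0})=h^0(K_S+R_0)$, while $K_S+R_0\equiv K_S+(K_S+\tau)$ with $K_S+\tau$ nef and big (numerically $K_S$), so Kawamata--Viehweg vanishing yields $h^1(K_S+R_0)=0$ and hence, by Riemann--Roch, $h^0(\omega_{R_0})=\chi(K_S+R_0)=6=p_a(R_0)$; this forces $R_0$ to be connected. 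Next, writing $R_0^d$ for the reduced sum of the components of $R_0$ lying over a component $d$ of $r_0$, one has $\phi^*d=2R_0^d$ with $\deg(R_0^d/d)=2$, so $R_0$ has exactly one or two components over each $d$. If $r_0$ is irreducible this already yields conclusion (i) or (ii). If $r_0$ is reducible it is a degeneration of an elliptic curve, hence a cycle (possibly with attached trees) of smooth rational curves on the del Pezzo surface $\Sigma$, and one must eliminate the configurations producing three or more components of $R_0$; the tools available are $R_0^2=5$, $p_a(R_0)=6$, $h^0(\mathcal{O}_S(R_0))\le 1$ (Lemma~\ref{A.prop} applied to $r_0$), the minimality of $S$, and Hodge index on the rank-$5$ lattice $\mathrm{NS}(S)$.

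The main obstacle is exactly this last step: ruling out three or more components of $R_0$ when $r_0$ is reducible. It is not a single clean inequality but a finite check over the possible shapes of a reduced anticanonical curve on $\Sigma$, i.e. over the cycles in the intersection graph of the ten $(-1)$-curves --- the most delicate case being $r_0$ a five-cycle of $(-1)$-curves, where $R_0$ could a priori split into as many as ten rational components, and where the contradiction must be extracted by playing the genus and self-intersection bookkeeping of $R_0$ against the rigidity of $\mathrm{NS}(S)$ and the fact that $S$, being minimal of general type, carries no $(-1)$-curve.
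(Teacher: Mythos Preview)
Your argument for the first assertion (that $R_0$ is reduced and $\phi$ has branching order $2$ along each component) is correct but takes a detour. You only use $a\le m-1$; in fact equality holds, because $R_0=R-\bigl(\sum_i(E_i+E_i'+G_i)+E_4\bigr)$ and by the standing hypothesis of the section none of the curves $E_i,E_j',G_j$ is a component of $R_0$. So $a=\mathrm{ord}_C(R_0)=\mathrm{ord}_C(R)=m-1$, and then $2(m-1)=m\cdot\mathrm{ord}_{\phi(C)}(r_0)$ forces $m=2$ immediately, with no need to exclude $m=4$ separately. The paper argues differently and more directly: it picks a curve $C\in\{E_i,E_j',G_j\}$ with $DC\ne 0$ (such $C$ exists because these classes span $H^2(S,\mathbb{Q})$), observes that $R_0C=1$, and concludes $\mathrm{ord}_D(R_0)=1$.

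The second part, however, has a genuine gap, and the obstacle you describe is illusory. You allow one or two components of $R_0$ over each component of $r_0$, and then propose to analyse all reduced $r_0\in|-K_\Sigma|$, including cycles of $(-1)$-curves. But \emph{no component of $r_0$ can be a $(-1)$-curve}: the ten $(-1)$-curves on $\Sigma$ are precisely $e_i,e_j',g_j$, and by the standing hypothesis their pullbacks are $2E_i,2E_j',2G_j$, none of which is a component of $R_0$. Hence every irreducible component $d$ of $r_0$ satisfies $d^2\ge 0$ and therefore $(-K_\Sigma)\cdot d\ge 2$; since these degrees sum to $(-K_\Sigma)^2=5$, there are at most two components of $r_0$. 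The paper then shows the stronger fact that $\phi^*d=2D$ with $D$ \emph{irreducible} (not merely of degree $2$ over $d$): using the same auxiliary curve $C$ as above one gets $dc=1$, hence $(\phi^*d-2D)\cdot C=0$, and any further component $D''$ of $\phi^*d-2D$ would map onto $d$, hence meet $\phi^{-1}(c)=C$, contradicting $D''C=0$. This gives a bijection between components of $R_0$ and of $r_0$, so $R_0$ has at most two components. Your connectedness argument is fine but unnecessary, and the case analysis you anticipate over five-cycles of $(-1)$-curves never arises.
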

\begin{proof}
Note that by Formula \ref{r0}, we have $R_0 E_i = R_0 E'_j = R_0
G_j = 1$ for $i = 1,2,3,4; j=1,2,3$. To show that $\phi: S
\rightarrow \Sigma$ is ramified along $R_0$ with branching order
$2$, it suffices to show that $R_0$ is reduced. Let $D$ be an
irreducible component of $R_0$. Write $R_0 = mD + R'$ where $m =
ord_{D}(R_0)$. Since the elements in $\{E_i, E'_j, G_j\}_{i =
1,2,3,4; j=1,2,3}$ are integral and span $H^2(S,\mathbb{Q})$,
we can find a curve $C$ in $\{E_i, E'_j, G_j\}_{i = 1,2,3,4;
j=1,2,3}$ such that $DC$ is a non-zero integer. Note that since $C$ is
not a component of $R_0$, we have $DC
> 0$ and $R'C\geq 0$. Then $R_0C=1$ implies that $m = 1$ and $DC =
1$, therefore $R_0$ is reduced. If we denote by $d$ and $c$ the reduced
divisors supported on $\phi(D)$ and $\phi(C)$ respectively, then
we have $d \leq r_0$ and $\phi^*c = 2C$. Write $\phi^*d = 2D +
D'$. By $0 < d\cdot c \leq r_0\cdot c = 1$, we deduce that $d\cdot
c = 1$, and then $\phi^*d\cdot \phi^*c = 4 = (2D)(2C)$. This
implies that $\phi^{-1}(c\cap d) = C\cap D$ and $D'C = 0$. Note that the divisor $D'$, if it is not zero, then it is mapped onto $d_1$, hence it contains the point
$C \cap D$. Then we can see $D' = 0$, because otherwise $D'C>0$.
Therefore we have $\phi^*d = 2D$.

If $R_0$ is reducible, write $R_0 = D_1 + \cdot\cdot\cdot + D_k$
where $D_1, \cdot\cdot\cdot, D_k$ are distinct reduced irreducible
divisors. Denote by $d_i,i=1,2,\cdot\cdot\cdot,k$ the reduced
divisor on $\Sigma$ with support $\phi(D_i)$. Then we have $2D_i =
\phi^*d_i$. Since $d_i$ is not in the set $\{e_i, e'_j, g_j\}_{i =
1,2,3,4; j=1,2,3}$ and $r_0 = \sum_id_i \equiv 3l - e_1 -e_2 -e_3
-e_4$, using the method used in the proof of Lemma~\ref{subdivisor},
we deduce that $k = 2$, and either
\begin{enumerate}
\item[i)]{$d_1 \equiv l, d_2 \equiv 2l - e_1 - e_2 - e_3 - e_4$ or}
\item[ii)]{ $d_1 \equiv l - e_i, d_2 \equiv 2l - e_j - e_k - e_l$
for some $i \in \{1,2,3,4\}$ and $\{i,j,k,l\} = \{1,2,3,4\}$.}
\end{enumerate}
\end{proof}

By the proof of the lemma above, if $R_0$ is reducible, then there
exist two reduced and irreducible divisors $b$ and $f$ on $\Sigma$ such that
$\phi^*b = 2B$, $\phi^*f = 2F$ and $r_0 = b+f$. Recall that in
Section~\ref{construction}, we introduced an automorphism $\tau$
of $\Sigma$ such that $\tau^*l \equiv2l - e_1 - e_2 - e_3$ and $\tau^*(2l - e_1 - e_2 - e_3 - e_4) \equiv l - e_4$. By symmetry, from now on,
if $R_0$ is reducible,
then we assume $b \equiv 2l - e_1 - e_2 - e_3$ and $f \equiv l - e_4$.

\begin{Lemma}\label{kl} With the assumptions above, we have
$$K_S \equiv E_4 + G_1 + G_3 + E_1 + E_2' + E_3
- E_2$$ \end{Lemma}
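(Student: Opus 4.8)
The plan is to reduce the asserted equivalence to the vanishing of a $2$-torsion correction and then to pin that correction down using the structure of $R_0$ from Lemma~\ref{C.prop}. The starting point is the Hurwitz relation $R\equiv 3K_S$ together with $R=R_0+\sum_{i=1}^{3}(E_i+E_i'+G_i)+E_4$ and the identity $2R_0\equiv\phi^{*}(-K_\Sigma)\equiv 2K_S$ coming from \eqref{r0}. Subtracting yields
\[
K_S\equiv \textstyle\sum_{i=1}^{3}(E_i+E_i'+G_i)+E_4-R_0 .
\]
Using $2E_i\equiv\phi^{*}e_i$, $2E_i'\equiv\phi^{*}e_i'$, $2G_i\equiv\phi^{*}g_i$ and the relations $e_i'\equiv l-e_j-e_k$, $g_i\equiv l-e_i-e_4$ on $\Sigma$, a direct computation shows that the claimed formula is equivalent to the single linear equivalence
\[
R_0\equiv E_1'+2E_2+G_2+E_3' .
\]
Both sides are effective divisors whose doubles are the pullbacks of the effective anticanonical divisors $r_0$ and $e_1'+2e_2+g_2+e_3'$ of $\Sigma$, hence they agree up to a $2$-torsion class of $\operatorname{Pic}(S)$; it remains to show this class is trivial.

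I would then split according to Lemma~\ref{C.prop}. In the reducible case $R_0=B+F$ with $b\equiv 2l-e_1-e_2-e_3\equiv e_2'+g_2+e_4$ and $f\equiv l-e_4\equiv e_2+g_2$ on $\Sigma$, so $2B\equiv 2(E_2'+G_2+E_4)$ and $2F\equiv 2(E_2+G_2)$; thus $B$ and $F$ are $E_2'+G_2+E_4$ and $E_2+G_2$ up to $2$-torsion, and only these two torsion corrections remain. The correction for $F$ is accessible because $F$ and each $E_i+G_i$ are half-fibres of the genus-$3$ fibration $u_4$ induced by $|F_4|=|\phi^{*}(l-e_4)|$, so their differences sit in the subgroup generated by differences of half-fibres of $u_4$; I would then resolve the remaining ambiguity by computing the pertinent $h^{0}$'s via Riemann--Roch and Serre duality on $S$, as in Lemma~\ref{C.p}, and invoking \cite{BPV} Lemma~8.3, chap.~III on the relevant double fibres. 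In the irreducible case $R_0$ lies in the fixed class $3K_S-\Theta$ with $\Theta:=\sum_{i=1}^{3}(E_i+E_i'+G_i)+E_4$; since $R_0$ is an effective half-anticanonical divisor while $p_g(S)=0$, already $3K_S-\Theta\equiv K_S+\tau_0$ with $\tau_0\neq 0$, and I would identify $\tau_0$ with the torsion in the statement by comparing $h^{0}(S,3K_S-\Theta)$ with $h^{0}$ of the candidate $E_1'+2E_2+G_2+E_3'$.

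The main obstacle is exactly this torsion bookkeeping: the identity is numerically automatic, so its whole content is the distinction among the non-zero $2$-torsion classes of $\operatorname{Pic}(S)$, and for the hypothetical surface $S$ the only handle on those classes is the configuration of double fibres of the pencils $u_1,\dots,u_4$. Carrying the matching through is a delicate, case-dependent computation — of the kind the authors note is sometimes delegated to a computer. One further point I would double-check is the reducible case itself: since $|l-e_4|$ is base-point free on $\Sigma$, an irreducible $f\equiv l-e_4$ pulls back to a general, hence reduced, fibre of $u_4$, which seems incompatible with $\phi^{*}f=2F$; if so, only the irreducible alternative of Lemma~\ref{C.prop} actually occurs and the argument simplifies.
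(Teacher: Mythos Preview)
Your reduction of the lemma to the single linear equivalence $R_0\equiv E_1'+2E_2+G_2+E_3'$ is correct and clean: the identity $K_S\equiv\Theta-R_0$ with $\Theta=\sum_{i=1}^{3}(E_i+E_i'+G_i)+E_4$ does follow from $R\equiv 3K_S$ together with $2\Theta\equiv 4K_S$ (the latter holding because $\operatorname{Pic}(\Sigma)$ is torsion-free). So you have correctly isolated the content of the lemma as the triviality of one specific $2$-torsion class.

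The gap is that you never actually resolve this torsion. In the reducible case you observe that $B$ and $F$ are determined up to $2$-torsion and that $F$ is a half-fibre of $u_4$, but you do not explain which $h^0$'s you would compute, nor why the differences of half-fibres of $u_4$ alone suffice to pin down the class of $F$ (there are a priori several such differences, and you have given no mechanism to distinguish them). In the irreducible case your plan is even vaguer: comparing $h^0(S,R_0)$ with $h^0$ of the candidate divisor does nothing, since both are effective divisors with $h^0\geq 1$ and you have not produced any inequality that separates the two linear systems. The phrase ``as in Lemma~\ref{C.p}'' is not a substitute for the actual computation; that lemma works because very specific divisors are fed into very specific exact sequences, and you have not supplied the analogue here.

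Your closing remark is also in error. An irreducible $f\in|l-e_4|$ is not automatically general, and Proposition~\ref{infofpb} says only that $\phi^*f$ is irreducible, explicitly allowing it to be non-reduced. So $\phi^*f=2F$ with $F$ irreducible is entirely compatible with the setup (it simply means $f$ lies in the branch locus), and the reducible alternative of Lemma~\ref{C.prop} cannot be dismissed on these grounds.

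For comparison, the paper's proof takes a different and fully concrete route: it sets $R_1=E_4+G_1+G_3+E_1+E_2'+E_3$, assumes $K_S-(R_1-E_2)$ is non-trivial $2$-torsion, and uses Riemann--Roch plus Serre duality to produce an \emph{effective} divisor $D\equiv R_1-E_2$. Lemma~\ref{B.prop} then forces every component of $D$ to lie in $R$, and a short claim (no two common components $C_1,C_2$ of $D$ and $R_1$ satisfy $C_1C_2=1$) together with the explicit description of $R_0$ from Lemma~\ref{C.prop} reduces the possibilities for $D$ to a finite list, each of which is excluded either by Lemma~\ref{A.prop} or by restricting to a double fibre of $u_4$ and invoking \cite{BPV} Lemma~8.3, chap.~III. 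The case analysis is the whole proof; your proposal gestures at the same tools but does not carry out the enumeration that makes them effective.
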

\begin{proof}
Put $R_1 = E_4 + G_1 + G_3 + E_1 + E_2' + E_3$. Then we get
$$2(R_1 - E_2) \equiv 2K_S.$$ Suppose that the lemma is not true,
then the line bundle $K_S - (R_1 - E_2)$ is torsion of order 2.
Immediately we have $h^0(S, K_S - (R_1 - E_2)) = 0$ and $\chi(S, K_S
- (R_1 - E_2)) = 1$, hence $h^2(S, K_S - (R_1 - E_2)) = h^0(S, R_1 -
E_2) \geq 1$. So there exists an effective divisor $D$ such that $D
\equiv R_1 - E_2$ and a divisor $d$ on $\Sigma$ such that $\phi^*d =
2D$. By Lemma~\ref{B.prop}, every irreducible component of $D$ is
contained in $R$. We will prove that $R_1 - E_2$ cannot be effective
by considering all the possibilities of $D$. Let's begin with a
claim.
\begin{Claim} \label{nojd} $D$ and $R_1$ do not have two common irreducible components $C_1, C_2$
such that $C_1C_2 = 1$.
\end{Claim}
\begin{proof}[Proof of Claim \ref{nojd}]
If there exist two such curves $C_1,C_2$, then we can find $c_1,
c_2 \in \{e_4,g_1,g_3,e_1, e_3,e_2'\}$ such that $\phi^*c_1 =
2C_1$ and $\phi^*c_2 = 2C_2$. Immediately we have $c_1c_2 = 1$ and
$c_1 + c_2 \equiv l-e_i$ for some $i \in \{1,3,4\}$ fixed, thus
$2(R_1 - C_1 - C_2) \equiv 2l - e_j - e_k$ where $\{i,j,k \} =
\{1,3,4\}$. Notice that $-K_{\Sigma} - (2l - e_j - e_k) \equiv l -
e_i - e_2$ is effective, thus applying Lemma~\ref{A.prop}, we obtain
that $h^0(S, R_1 - C_1 - C_2)= 1$. However, this contradicts the
fact that $R_1 - C_1 - C_2$ and $D + E_2 - C_1 - C_2$ are two
different linearly equivalent effective divisors. Then the claim follows.
\end{proof}
By the formula $K_S \equiv \phi^*(K_{\Sigma}) + R$,
we obtain
\begin{equation}\label{KS}
\begin{split}
K_S &\equiv -2(R_1 - E_2) + R \equiv  R - R_1 + E_2  - (R_1 - E_2)\\
&\equiv R_0 + 2E_2 + G_2 + E_1' +E_3' - D
\end{split}
\end{equation}
So $R_0 + 2E_2 + G_2 + E_1' +E_3' - D$ is not effective since
$p_g(S) = 0$.

According to whether $D$ has an irreducible component contained in
$R_0$ or $R_1$, the possibilities for $D$ fall into the two cases:
\begin{enumerate}
\item[Case 1:]{No irreducible component of $D$ is contained in $R_0$, and at least one irreducible component of $D$ is contained in $R_1$.}
\item[Case 2:]{At least one irreducible component of $D$ is contained in $R_0$, or no irreducible component of $D$ is contained in $R_1$.}
\end{enumerate}

Case 1: The following argument is not involved with any
information about the divisor $R_0$. Note that $\phi(R_1)$ contains exactly the six $(-1)$-curves not intersecting $e_2$. So by Fact 3.5 $ii)$, up to an automorphism of
$\Sigma$ fixing $e_2$, we may assume $E_4$ is a common component of $D$ and
$R_1$. Write $D = E_4 + D'$, then $D'E_4 = 2$. Notice that $G_1,
G_2, G_3$ are the only elements in $\{E_i, E'_j, G_j\}_{i =
1,2,3,4; j=1,2,3}$ that intersect $E_4$, so at least one of the
$G_i$'s is a component of $D'$. By Claim \ref{nojd}, neither $G_1$
nor $G_3$ is a component of $D$, so the only possibility is $D' =
2G_2 +E_2' +E_2$, i.e., $D = E_4 + 2G_2 +E_2' + E_2$. We get $E_4
+ G_1 + G_3 + E_1 + E_2' + E_3 \equiv 2G_2 + E_2' + E_4 + 2E_2$,
equivalently $G_1 + G_3 + E_1 + E_3 \equiv 2G_2 + 2E_2$. Then we
have $E_1 + G_1 \mid_{E_1 + G_1} \equiv G_1 + G_3 + E_1 + E_3
\mid_{E_1 + G_1} \equiv 2(E_2 + G_2)\mid_{E_1 + G_1}$ is trivial.
By \cite{BPV} Lemma 8.3 chap.III, this contradicts the fact that $2(G_1 + E_1)$ is a double fiber
of $u_4$.

Case 2: First we list all the possibilities for $D$. We consider the divisor $d$. Note that every
irreducible component of $d$ is either a $(-1)$-curve or is contained in $r_0$.
We break the possibilities into following three cases:
\begin{enumerate}
\item[$\bullet$]{$d = r_0$}
\item[$\bullet$]{$d\neq r_0$, but $d$ and $r_0$ have at least one common component.}
\item[$\bullet$]{$d$ and $r_0$ have no common components}
\end{enumerate}

If we are in the first case, then we get that
\begin{enumerate}
\item[$I:$] {$d = r_0, D = R_0$.}
\end{enumerate}

If $d$ is as in the second case, then $r_0$ is reducible. By assumption, $r_0 = b + f$ where $b$ and $f$ are reduced and irreducible divisors such that $b\equiv 2l -e_1-e_2-e_3$ and $f\equiv l- e_4$. Either $b$ or $f$ is contained in $d$. First if $b \leq d$, then the divisor $d - b \equiv l-e_4$, and its irreducible components are all $(-1)$-curves. The pencil $|l-e_4|$ has exactly 3 reducible elements $g_i + e_i, i = 1,2,3$. So correspondingly, we get the possibilities:
\begin{enumerate}
\item[$II(i):$] {$d = b + g_i + e_i, D = B + G_i + E_i, i = 1,2,3$.}
\end{enumerate}
If $f \leq d$, then similarly the divisor $d- f \equiv 2l -e_1-e_2-e_3$, and its irreducible components are all $(-1)$-curves. Considering the coefficient of $l$ appearing in the equation $d- f \equiv 2l -e_1-e_2-e_3$, we conclude that with multiplicity considered, the divisor $d-f$ contains exactly two irreducible non-exceptional $(-1)$-curves, hence the divisor $d-f$ contains at least one exceptional $(-1)$-curve. So if $e_4$ is contained in $d-f$, then we obtain the possibilities
\begin{enumerate}
\item[$III(i):$] {$d = f + g_i + e_i' + e_4, D = F + G_i + E_i' + E_4, i
= 1,2,3$.}
\end{enumerate}
And for $i =1,2,3$, if $e_i$ is contained in $d-f$, then we get the possibilities
\begin{enumerate}
\item[$IV(i):$] {$d = f + e_i + e_j' + e_k', D = F + E_i +
E_j' + E_k'$ where $\{i,j,k \} = \{1,2,3\}$}.
\end{enumerate}

If $d$ falls into the third case, equivalently, $D$ and $R_0$ have no common components, then the assumptions of Case 2 implies that $D$ and $R_1$ have no common components either. Thus the components of $d$ belong to the set $\{e_2,e_1',e_3',g_2\}$. Then we get following possibilities:
\begin{enumerate}
\item[V:]{$d = g_2 + e_1' + e_3' + 2e_2, D = G_2 + E_1' + E_3' + 2E_2$.}
\end{enumerate}

Since $R_0 + 2E_2 + G_2 + E_1' +E_3' - D$ is not effective, the
cases $I, II(2), IV(2)$ and $V$ cannot happen. And by Claim
\ref{nojd}, we exclude the cases $II(1,3), III(1,3), IV(1,3)$. For the remaining case $III(2)$, i.e., $D = F + G_2 + E_2' + E_4$,
we have $E_4 + G_1 + G_2 + E_1 + E_2' + E_3 - E_2 \equiv F + G_2 +
E_2' + E_4$, equivalently $G_1 + G_3 + E_1 + E_3 \equiv F + G_2 +
E_2$. Restricting the two divisors to the double fiber $2(G_1 +
E_1)$ of $u_4$, by similar argument as in Case 1, we get a
contradiction.

In conclusion the lemma is true.
\end{proof}

\begin{proof}[\bf Proof of Proposition~\ref{mainprop}]
By the same argument as in the proof of Lemma~\ref{kl}, we can
prove
$$K_S \equiv E_4 + G_2 + G_3 + E_2 + E_1' + E_3 - E_1.$$
Consequently we get
$$E_4 +
G_2 + G_3 + E_2 + E_1' + E_3 - E_1 \equiv E_4 + G_1 + G_3 + E_1 +
E_2' + E_3 - E_2,$$ equivalently $G_2 + 2E_2 + E_1' \equiv G_1 +
2E_1 + E_2'$. This implies $h^0(S, G_2 + 2E_2 + E_1') \geq 2$.
Then a contradiction follows from Lemma~\ref{A.prop} since $2(G_2
+ 2E_2 + E_1') \equiv \phi^*(2l - e_4 - e_3)$ and $-K_{\Sigma} -
(2l - e_4 - e_3) \equiv l - e_1 - e_2 \equiv e_3'$. Finally we
finish the proof of the proposition.
\end{proof}

\section{The case when the bicanonical image is singular}\label{bicimage}
\par
In this section, we consider the case when the bicanonical image is singular. First we reduce
the bicanonical image to 6 cases,
then study them case by case.

\subsection{Preparations}\label{pre}
\par

Let $S$ be a minimal surface of general type with $K_S^2 = 5$ and
$p_g = q = 0$; let $\phi: S \rightarrow \Sigma \subset
\mathbb{P}^5$ be the bicanonical map which is a morphism by
\cite{Re}. We assume the degree of $\phi$ is 4, then
$\Sigma$ is a linearly normal surface of degree 5 in
$\mathbb{P}^5$. By \cite{Na},
$\Sigma$ is the image of $\psi: \hat{P} \rightarrow \mathbb{P}^5$
where $\hat{P}$ is the blow-up of $\mathbb{P}^2$ at four points
$P_1, P_2, P_3, P_4$ such that $|-K_{\hat{P}}|$ has no fixed
components and $\psi$ is given by the linear system $|-K_{\hat{P}}|$. The
$P_i$'s can be infinitely near, but it is impossible that two of
them are distinct and both infinitely near to another one. We
denote by $e_i$ the exceptional divisor over $P_i$.
In the previous sections, we have studied the case when $\Sigma$ is smooth, from now on, we assume $\Sigma$ is singular. First we
list
all the the possibilities below.

\begin{itemize}
\item[$\hat{P}_1:$]{$P_1, P_2, P_3$ are distinct and lie on a line, and
$P_4$ is distinct from them;}
\item[$\hat{P}_2:$]{$P_1, P_2, P_3$ lie on a line, $P_3$ is
infinitely near to $P_2$, and $P_1,P_4$ are distinct from them
and each other;}
\item[$\hat{P}_3:$]{$P_1, P_2, P_3$ lie on a line, $P_2$ is infinitely near to $P_1$, $P_3$ is
infinitely near to $P_2$, $P_4$
is distinct from them;}
\item[$\hat{P}_4:$]{$P_1, P_2, P_3$ are distinct and lie on a line,
and $P_4$ is infinitely near to $P_3$;}
\item[$\hat{P}_5:$]{$P_1, P_2, P_3$ lie on a line, $P_3$ is
infinitely near to $P_2$, $P_4$ is infinitely near to $P_3$, $P_1$
is distinct from them;}
\item[$\hat{P}_6:$]{$P_1, P_2, P_3$ lie on a line, $P_2$ is
infinitely near to $P_1$, $P_3$ is infinitely near to $P_2$, and $P_4$
is infinitely near to $P_3$;}
\item[$\hat{P}_3':$]{$P_1, P_2, P_3$ lie on a line, $P_1$ and $P_2$ are distinct, $P_3$ is
infinitely near to $P_2$, $P_4$ is infinitely near to $P_1$;}
\item[$\hat{P}_1':$]{no three points lie on a line, $P_1, P_2, P_3$ are distinct, and $P_4$ is infinitely
near to $P_3$;}
\item[$\hat{P}_2':$]{no three points lie on a line, $P_1$ and $P_3$ are distinct, $P_4$ is infinitely
near to $P_3$, and $P_2$ is infinitely near to $P_1$;}
\item[$\hat{P}_4':$]{no three points lie on a line,  $P_3$ is infinitely near to $P_2$, $P_4$ is infinitely
near to $P_3$, and $P_1$ is
distinct from them;}
\item[$\hat{P}_5':$]{no three points lie on a line, $P_2$ is infinitely
near to $P_1$, $P_3$ is infinitely near to $P_2$, and $P_4$ is
infinitely near to $P_3$.}
\end{itemize}

\begin{Claim} Let the notations be as above. Then for $i = 1,2,3,4,5$, the surface $\hat{P}_i$ is isomorphic to
$\hat{P}_i'$.\end{Claim}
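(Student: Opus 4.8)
The plan is to prove, for each $i=1,\dots,5$, that $\hat{P}_i$ and $\hat{P}'_i$ are two realizations, via different blow-downs to $\mathbb{P}^2$, of one and the same weak del Pezzo surface of degree $5$. Each configuration $\hat{P}_i$ (resp.\ $\hat{P}'_i$) carries a birational morphism $\rho\colon \hat{P}_i\to\mathbb{P}^2$ contracting the exceptional configuration over $P_1,\dots,P_4$, hence a geometric basis $l,e_1,e_2,e_3,e_4$ of $\operatorname{Pic}(\hat{P}_i)$, and the surface is pinned down by which of the classes $e_a-e_b$ (strict transform of the exceptional divisor of a point through which an infinitely near point is chosen) and $l-e_a-e_b-e_c$ (strict transform of a line through three of the points) are effective; these are exactly the $(-2)$-curves. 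So first I would tabulate the $(-2)$-curves of each of the ten configurations and observe that $\hat{P}_i$ and $\hat{P}'_i$ always have the same dual graph of $(-2)$-curves: $\hat{P}_1,\hat{P}'_1$ each carry a single $(-2)$-curve ($l-e_1-e_2-e_3$, resp.\ $e_3-e_4$); $\hat{P}_2,\hat{P}'_2$ each carry two disjoint ones; $\hat{P}_3,\hat{P}'_3$ each carry three, two meeting and the third disjoint from both; $\hat{P}_4,\hat{P}'_4$ each carry two meeting in one point; $\hat{P}_5,\hat{P}'_5$ each carry three forming a chain.

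To turn this into an isomorphism I would, exactly as in the construction of the type~$2$ automorphism $\tau$ in Section~\ref{construction}, use a Cremona transformation of $\mathbb{P}^2$ based at three of the four points. Provided those three points are in good position on $\hat{P}_i$ — i.e.\ the net $|2l-e_a-e_b-e_c|$ is base-point free (hence defines a birational morphism $\rho'\colon\hat{P}_i\to\mathbb{P}^2$, since on a weak del Pezzo every nef divisor is base-point free) — the morphism $\rho'$ exhibits $\hat{P}_i$ as the blow-up of $\mathbb{P}^2$ along a new $4$-point configuration, which one reads off by extending $l':=2l-e_a-e_b-e_c$ to a geometric basis $l',e_1',\dots,e_4'$ of $\operatorname{Pic}(\hat{P}_i)$ and locating the $(-2)$-curves inside it. The plan is to do this once for each $i$ with a suitable base triple: for instance, for $i=1$ the Cremona based at $P_1,P_2,P_4$ (which are proper and non-collinear in $\hat{P}_1$) carries the line $l-e_1-e_2-e_3$ to the class $e_4'-e_3'$ while keeping every $l'-e_a'-e_b'-e_c'$ non-effective, which is precisely the defining property of $\hat{P}'_1$; for $i=2,3,4,5$ one makes the analogous explicit choices of base triple and checks the resulting configuration.

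The main obstacle will be the bookkeeping with infinitely near points. For the more degenerate configurations $\hat{P}_3,\hat{P}_4,\hat{P}_5$ (which contain infinitely near chains of length three), the base triple one needs already involves infinitely near points, so the naive substitution $e_a\mapsto l-e_b-e_c$ is no longer the correct change of basis; the new exceptional classes $e_1',\dots,e_4'$ must instead be determined from the constraints $(e_i')^2=-1$, $e_i'\cdot e_j'=-\delta_{ij}$, $e_i'\cdot l'=0$, $-K_{\hat{P}_i}=3l'-\sum e_i'$ together with effectivity, and one has to verify separately that the chosen net is base-point free — equivalently that the three chosen points really are in good position in $\hat{P}_i$, which is exactly where the case-by-case description of the eleven configurations gets used. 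An alternative route that avoids this bookkeeping is to appeal to the fact that a weak del Pezzo surface of degree $5$ is determined up to isomorphism by the type of its configuration of $(-2)$-curves (there is no moduli in degree $5$); the claim would then follow from the table above, the only thing left to check being that $\hat{P}_i$ and $\hat{P}'_i$ genuinely have the same configuration type, including which components meet.
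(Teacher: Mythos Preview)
Your approach is correct and close in spirit to the paper's: both arguments show that $\hat{P}_i$ and $\hat{P}_i'$ are two different blow-downs of one surface to $\mathbb{P}^2$. The paper does this in the opposite direction and slightly more geometrically: starting from $\hat{P}_i'$, it names four explicit disjoint $(-1)$-curves (a mixture of exceptional curves and strict transforms of lines), contracts them, and then checks which remaining curve becomes the line carrying three of the new base points --- for $i=1$, for instance, it contracts $e_1',e_2',e_3',e_4$ on $\hat{P}_1'$ and observes that the $(-2)$-curve $e_3$ maps to a line through the images of $e_1',e_2',e_4$. Your Cremona change of basis $l'=2l-e_a-e_b-e_c$ is exactly the same operation recorded in the Picard lattice, just run from $\hat{P}_i$ towards $\hat{P}_i'$; the paper, like you, leaves the higher-$i$ cases to ``similar'' computations. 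Your alternative route via the fact that a weak del Pezzo surface of degree $5$ is determined up to isomorphism by its $(-2)$-configuration is a genuine shortcut not in the paper: it replaces all the case-by-case contractions by the single tabulation of $(-2)$-curves you already did, at the price of invoking a classification result rather than exhibiting the isomorphism explicitly.
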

\begin{proof}
We identify $\hat{P}_i$ and $\hat{P}_i'$ by considering their
blowing down map to $\mathbb{P}^2$. For simplicity, we focus on the cases $i=1,2$.

For the surface $\hat{P}_1'$, we denote by $e_1', e_2', e_3'$ the
strict transforms of of the lines through the points $P_2$ and
$P_3$, $P_1$ and $P_3$, $P_1$ and $P_2$ respectively, then they
are all $(-1)$-curves. We get a plane after contracting the curves
$e_1', e_2', e_3', e_4$. Note that $e_3$ is mapped to a line, and $e_1',e_2',e_4$ are mapped to three points on the line. So $\hat{P}_1'$ can be obtained by blowing up of $\mathbb{P}^2$ at 4
points with three of which lying on a line. Then we can identify
$\hat{P}_1'$ with $\hat{P}_1$.

For the surface $\hat{P}_2'$, we denote by $e_3',e_4'$ the strict
transform of the lines through $P_1$ and $P_3$, $P_3$ and $P_4$
respectively. Contracting the curves $e_2, , e_3', e_3, e_4'$, we
get a plane. Similarly we identify $\hat{P}_2'$ with
$\hat{P}_2$.

For the other cases, the proof is similar.
\end{proof}
So in the following, we only need to consider the cases $\hat{P} = \hat{P_i},i=1,2,...,6$.
\begin{Notation}\label{note1} Let $S, \hat{P}, \Sigma$ be as above,
let $\bar{\phi} \circ \eta: S\rightarrow \bar{S} \rightarrow
\Sigma$ be the Stein factorization of $\phi: S \rightarrow
\Sigma$. So $\bar{S}$ is the canonical model of $S$. For a divisor
$D \subset S$ and $e \subset \hat{P}$, denote by $\bar{D}$ the divisor $\eta_*D$ on $\bar{S}$ and by
$\bar{e}$ the divisor $\psi_*e$ on $\Sigma$.
\end{Notation}

\begin{Lemma}\label{key1} Let $e\subset \hat{P}$ be a $(-1)-curve$. Denote by
$E$ the strict transform of $\bar{e}$ with respect to $\phi$.
Then
$K_SE = 2$ and either:
\begin{enumerate}
\item[i)]{ $E$ is reduced and irreducible; or}
\item[ii)]{ $E$ is non-reduced and $E=2E'$; or}
\item[iii)]{ $E = A+B$ where both $A$ and $B$ are irreducible reduced
divisors such that $K_SA= K_SB = 1$.}
\end{enumerate}
Moreover if $E$ is reduced,
then $E^2 \geq -6$.
\end{Lemma}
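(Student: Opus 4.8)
The statement is the singular analogue of Lemma~\ref{A.d}, and the plan is to deduce everything from the relation $2K_S\equiv\phi^*(-K_\Sigma)$ (note $-K_\Sigma$ is Cartier, since $\Sigma$ has Du Val singularities), from the fact that the minimal resolution $\eta\colon S\to\bar S$ is crepant, and from adjunction. Throughout, $\phi=\bar\phi\circ\eta$, with $\bar\phi$ finite of degree $4$.

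First I would fix the intersection-theoretic setup. Since $\psi$ is crepant, $\psi^*(-K_\Sigma)=-K_{\hat P}$, so the projection formula gives $(-K_\Sigma)\cdot\bar e=(-K_{\hat P})\cdot e=1$; thus $\bar e$ is a line of $\Sigma\subset\mathbb{P}^5$. Write $\phi^*\bar e=E+\Xi$, where $\Xi$ is the part supported on the $\eta$-exceptional $(-2)$-curves and $E$ is the strict transform. Every such $(-2)$-curve is contracted by $\phi$, so $\phi^*\bar e\cdot\Gamma=\bar e\cdot\phi_*\Gamma=0$ for each of them; hence $\phi^*\bar e\cdot\Xi=0$, and since $K_S$ is trivial on $(-2)$-curves also $K_S\cdot\Xi=0$. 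As $\eta$ preserves intersection numbers of pulled-back divisors,
\[
K_SE=K_S\cdot\phi^*\bar e=\frac12\,\phi^*(-K_\Sigma)\cdot\phi^*\bar e=\frac12\cdot4\cdot(-K_\Sigma)\cdot\bar e=2 ,
\]
which proves $K_SE=2$.

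Next the trichotomy. Because $S$ is minimal of general type, $K_S$ is nef and $K_S\cdot C\ge1$ for every irreducible curve $C$ that is not a $(-2)$-curve: if $K_SC=0$ then $C^2<0$ by the Hodge index theorem and $C^2\ge-2$ by adjunction, so $C$ is a $(-2)$-curve. By construction $E$ has no $(-2)$-curve component, and $E\neq0$ because $\bar\phi$ is finite, so $\bar\phi^{-1}(\bar e)$ is a curve. Writing $E=\sum_C m_C C$, the identity $2=K_SE=\sum_C m_C(K_S\cdot C)\ge\sum_C m_C$ leaves exactly three possibilities: a single reduced component with $K_S\cdot E=2$ (case i); $E=2E'$ with $K_SE'=1$ (case ii); or $E=A+B$ with $A\neq B$ and $K_SA=K_SB=1$ (case iii).

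Finally, if $E$ is reduced I would bound $E^2$ by adjunction. In case (i), $0\le p_a(E)=1+\frac12(E^2+K_SE)$ gives $E^2\ge-4$. In case (iii), $A$ and $B$ are distinct prime divisors, so $A\cdot B\ge0$, while $A^2=2p_a(A)-3\ge-3$ and likewise $B^2\ge-3$; hence $E^2=A^2+B^2+2A\cdot B\ge-6$. The only point requiring care is the first step: $\bar e$ is in general only $\mathbb{Q}$-Cartier on $\Sigma$ (it may pass through the Du Val points), so all pullbacks must be taken in the $\mathbb{Q}$-Cartier sense, and one must correctly split off the $\eta$-exceptional part $\Xi$ and use crepancy to kill $K_S\cdot\Xi$ and $\phi^*\bar e\cdot\Xi$; once this bookkeeping is in place, the remaining steps are immediate and I do not expect a serious obstacle.
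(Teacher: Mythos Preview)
Your proof is correct and follows essentially the same approach as the paper: compute $K_SE=2$ from $(-K_\Sigma)\cdot\bar e=1$ via the projection formula, use that every component of $E$ satisfies $K_S\cdot C\ge1$ (since $E$ has no $(-2)$-curve components) to get the trichotomy, and bound $E^2$ by adjunction on the components. The paper's proof is terser and omits the $\mathbb{Q}$-Cartier bookkeeping and the Hodge-index justification for $K_SC>0$, but the logic is identical; your more explicit handling of the exceptional part $\Xi$ is a welcome clarification rather than a different method.
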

\begin{proof}
Immediately $K_SE = 2$ follows from $-K_\Sigma \bar{e} = 1$. If $A$ is a reduced and irreducible component of $E$, then $K_SA >0$.
If $E$ is not
reduced, then $E = 2E'$ with $E'$ irreducible and reduced. If $E$
is reducible, then $E=A+B$ where $A$ and $B$ are reduced and irreducible divisors such that $K_SA = K_SB = 1$, moreover we have $A^2 \geq -3$ and $B^2
\geq -3$. And it is easy to see that $E^2 \geq -6$ if $E$ is reduced.
\end{proof}

\begin{Proposition}\label{fibers} Let $\hat{P}$ be as in the precious section, and let
$\hat{g}: \hat{P}\rightarrow \mathbb{P}^1$ be a fibration induced
by the pencil $|f|$ such that every $(-2)-curve$ is contained in
one fiber. Then a general fiber $f \in |f|$ is a
rational curve. Precisely we have:\\
$i)$ if $\hat{P} = \hat{P}_1$, then $|f| = |l-e_i|$~for ~some~$i=1,2,3;$\\
$ii)$ if $\hat{P} = \hat{P}_2$, then $|f| = |l-e_1|;$\\
$iii)$ if $\hat{P} = \hat{P}_4$, then $|f| = |l-e_i|$~for ~some~$i=1,2;$\\
$iv)$ if $\hat{P} = \hat{P}_5$, then $|f| = |l-e_1|;$\\
$v)$ if $\hat{P} = \hat{P}_3$ or~$\hat{P} = \hat{P}_6$, there does
not exist such a fibration.\end{Proposition}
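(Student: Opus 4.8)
The plan is to pin down the numerical class of a general fibre $f$ of $\hat{g}$ first, then use the hypothesis on $(-2)$-curves to reduce to a short list of candidate classes, and finally go through the six surfaces one at a time. The only configuration-dependent input needed is the precise list of $(-2)$-curves on each $\hat{P}_i$; everything else is uniform.

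\emph{Step 1: the general fibre is a smooth rational curve.} Since $|-K_{\hat{P}}|$ has no fixed component, $-K_{\hat{P}}$ is nef, and as $(-K_{\hat{P}})^2 = 5 > 0$ it is moreover big, so $\hat{P}$ is a weak del Pezzo surface of degree $5$. The class $f$ is nef (it is $\hat{g}^*$ of a point), effective and nonzero, hence $-K_{\hat{P}}\cdot f \geq 0$; if $-K_{\hat{P}}\cdot f = 0$ then $f$ lies in the orthogonal complement of $-K_{\hat{P}}$, which is negative definite by the Hodge index theorem, and $f^2 = 0$ would force $f \equiv 0$, a contradiction. So $K_{\hat{P}}\cdot f < 0$; since $K_{\hat{P}}\cdot f \equiv f^2 = 0 \pmod 2$ by adjunction we get $K_{\hat{P}}\cdot f \leq -2$, while $p_a(f) = 1 + \tfrac12 K_{\hat{P}}\cdot f \geq 0$ gives $K_{\hat{P}}\cdot f \geq -2$. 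Hence $K_{\hat{P}}\cdot f = -2$, $p_a(f) = 0$, and a general fibre is a smooth rational curve.

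\emph{Step 2 (numerical class) and Step 3 (cutting down).} Write $f \equiv al - b_1e_1 - b_2e_2 - b_3e_3 - b_4e_4$. From $f^2 = 0$ and $K_{\hat{P}}\cdot f = -2$ we obtain $\sum b_i = 3a - 2$ and $\sum b_i^2 = a^2$, and Cauchy--Schwarz $(3a-2)^2 \leq 4\sum b_i^2 = 4a^2$ gives $(5a-2)(a-2) \leq 0$, hence $a \leq 2$; effectivity of $f$ forces $a \geq 0$, and $a = 0$ would give $f \equiv 0$, so $a \in \{1,2\}$. For $a = 1$ one finds $f \equiv l - e_j$ for some $j$; for $a = 2$ equality holds in Cauchy--Schwarz, so all $b_i = 1$ and $f \equiv 2l - e_1 - e_2 - e_3 - e_4$. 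Now in every $\hat{P}_i$ the points $P_1,P_2,P_3$ lie on a line, so its strict transform is an irreducible $(-2)$-curve $\ell_0 \equiv l - e_1 - e_2 - e_3$; by hypothesis $f\cdot\ell_0 = 0$. Since $(2l - e_1 - e_2 - e_3 - e_4)\cdot\ell_0 = -1$ and $(l - e_4)\cdot\ell_0 = 1$, these classes are excluded, so $f \equiv l - e_j$ with $j \in \{1,2,3\}$.

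\emph{Step 4: the six cases.} For each $\hat{P}_i$ the $(-2)$-curves are $\ell_0$ together with the strict transforms $e_k - e_{k+1}$ of the exceptional divisors over points having a further point infinitely near (using that $|-K_{\hat{P}}|$ has no fixed component, so no curve of self-intersection $\le -3$ occurs): for $\hat{P}_1$ only $\ell_0$; for $\hat{P}_2$ also $e_2-e_3$; for $\hat{P}_3$ also $e_1-e_2$ and $e_2-e_3$; for $\hat{P}_4$ also $e_3-e_4$; for $\hat{P}_5$ also $e_2-e_3$ and $e_3-e_4$; for $\hat{P}_6$ also $e_1-e_2$, $e_2-e_3$, $e_3-e_4$. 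The hypothesis forces $f\cdot c = 0$ for each such $c$, and since $(l-e_j)\cdot(e_k-e_{k+1})$ is nonzero precisely when $j \in \{k,k+1\}$, one reads off: for $\hat{P}_1$ any $j \in \{1,2,3\}$ works; for $\hat{P}_2$ and $\hat{P}_5$ only $j = 1$; for $\hat{P}_4$ only $j \in \{1,2\}$; and for $\hat{P}_3$ and $\hat{P}_6$ no $j$ survives, which proves $v)$. Conversely, in each surviving case $P_j$ is a genuine point of $\mathbb{P}^2$, so $|l-e_j|$ is the pull-back of the base-point-free pencil of lines through $P_j$ and defines a $\mathbb{P}^1$-fibration, in whose fibres every $(-2)$-curve lies (having zero intersection with $f$ and negative self-intersection). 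This gives $i)$--$iv)$.

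The only genuine obstacle is the bookkeeping in Step 4: correctly identifying the $(-2)$-curves in the infinitely-near configurations and performing the (short) intersection computations. Steps 1--3 are uniform and formal.
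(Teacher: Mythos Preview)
Your proof is correct and follows the same overall strategy as the paper: establish that a general fibre is a smooth rational curve with $-K_{\hat P}\cdot f=2$, determine the numerical class of $f$, and then impose orthogonality to the $(-2)$-curves. The paper works in the basis of \emph{strict} transforms of the exceptional curves (so $-K_{\hat P}$ looks different in each case) and carries out the classification for one surface $\hat P_2$ by setting up the linear equations $f\cdot c=0$, $f\cdot e_2=0$, $-K_{\hat P}\cdot f=2$ and then bounding the remaining free parameter via $f\cdot e_4'\ge 0$ for a specific $(-1)$-curve $e_4'$, leaving the remaining cases as ``similar calculations.'' You instead work uniformly in the total-transform basis, use Cauchy--Schwarz on $\sum b_i=3a-2$, $\sum b_i^2=a^2$ to get $a\le 2$ in one stroke, and then eliminate the extraneous classes $l-e_4$ and $2l-\sum e_i$ via the single $(-2)$-curve $\ell_0=l-e_1-e_2-e_3$ common to all six surfaces before doing the easy bookkeeping with the $e_k-e_{k+1}$. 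The payoff of your route is a genuinely uniform argument with no case-dependent inequalities; the paper's route is more hands-on but equivalent in content.

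Two very minor remarks. First, your sentence ``$a=0$ would give $f\equiv 0$'' is not literally how it fails: $a=0$ forces $\sum b_i^2=0$ hence all $b_i=0$, which contradicts $\sum b_i=-2$; either way $a=0$ is excluded. Second, in the converse direction it is worth saying one word about why $|l-e_j|$ is base-point-free in the surviving cases: since $P_j$ is an honest point of $\mathbb P^2$ and no other $P_k$ is infinitely near to $P_j$ in those cases, a general line through $P_j$ misses $P_k$ for $k\neq j$, so its strict transform is a smooth fibre. You essentially say this, but making the ``no $P_k$ infinitely near to $P_j$'' check explicit would close the loop.
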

\begin{proof}
For a general element $f\in |f|$, by $-K_{\hat{P}}f>0$, we get
$(K_{\hat{P}} +f)f = 2g(f) - 2<0$, thus it follows that:\\
$1)$ $f$ is a rational curve such that $f^2 = 0$ and $-K_{\hat{P}}f=2$;\\
$2)$ for a $(-2)$-curve $C$, $fC=0$;\\
$3)$ $|f|$ has no fixed part, thus for every effective divisor
$D$, $fD\geq 0$.\\
Note that the Picard group of $\hat{P_i}$ is a free group generated by
$l,~e_1,~e_2,~e_3,~e_4$, so we can set $f \equiv al - b_1e_1 -
b_2e_2 - b_3e_3 - b_4e_4$. By $1)$, we obtain $(a, b_1, b_2,b_3,
b_4) =1$; and by $3)$, we get $a>0, b_i\geq 0$ for $i=1,2,3,4$.

We show the proposition for case $ii)$. In this case, the
equations $f c = 0, f e_2 = 0$ and $-K_{\hat{P}}f= 2$ yield $a -
b_1 - b_3 = 0, ~2b_2 - b_3 = 0$ and $3a - b_1 - b_3 - b_4 = 2$. In
turn we have $a = b_1 +2b_2, ~b_3 = 2b_2$ and $b_4 = 2b_1 + 4b_2 -
2$, thus $f \equiv (b_1+2b_2)l - b_1e_1 - b_2e_2 - 2b_2e_3 -
b_4e_4$.

Denote by $e_4'$ the strict transform of the line through the
point $P_1, P_4$, then $e_4' \equiv l - e_1 -e_4$. By $fe_4'\geq
0$, we obtain
$$a = b_1 + 2b_2 \geq b_1 +b_4 \Rightarrow b_4 \leq
2b_2 \Rightarrow b_4 = 2b_1 + 4b_2 - 2 \leq 2b_2 \Rightarrow b_2
\leq 1 - b_1$$ We have either $b_1 = 1, b_2= b_3 =b_4 = 0$ or $b_1
= 0, b_2 = 1$. The latter is impossible since then $f \equiv 2l
-e_2 -2e_3 -2e_4$, which contradicts $f^2 = 0$.

The other cases can be proven by similar calculations.

\end{proof}

\begin{Remark}\label{rpnf} If the cardinalities of the $(-2)$-curves on $S$
and $\hat{P}$ are equal, then $\bar{S}$ and $\Sigma$ have equal
Picard numbers. If $g: S \rightarrow \mathbb{P}^1$ is a fibration
such that every $(-2)$-curve on $S$ is contained in one fiber, then
it factorizes a fibration of $\bar{g}: \bar{S} \rightarrow
\mathbb{P}^1$. So by Lemma~\ref{pnf}, there exists a
fibration $\bar{u}: \Sigma \rightarrow \mathbb{P}^1$ of $\Sigma$
and a fibration $u: \hat{P} \rightarrow \mathbb{P}^1$ of $\hat{P}$
such that $u = \bar{u} \circ \psi$ and $g = \bar{u} \circ \phi$.
\end{Remark}

\subsection{Analyze all the cases}
\par

All the notations are as in Section \ref{pre}.
And if $P_1,P_2,P_3$ lie on one line, then we denote by $c$ the strict transform of the line.

\subsubsection{The case $\hat{P} = \hat{P}_1$}\label{1}
\par
\begin{Notation} Let $Q = \psi(c)$ be the $A_1-sigularity$ on $\Sigma$.
Denote by $l_i$ the strict transform of a general line through
$P_i$ and by $l_{i4}$ the strict transform of the line through
$P_i$ and $P_4$ for $i = 1,2,3$, then $l_i \equiv l - e_i, ~c
\equiv l - e_1- e_2- e_3$ and $l_{i4} \equiv l - e_i - e_4$.
Denote by $L_i$ (resp. $E_1, E_2, E_3, E_4, L_{i4}$) the strict
transform of $\bar{l_i}$ (resp.$\bar{e_1}, \bar{e_2}, \bar{e_3},
\bar{e_4}, l_{i4}$) with respect to $\phi$.
\end{Notation}

With these
notations, it follows that
\begin{equation}
\begin{split}
-K_{\hat{P}} &\equiv 3l - e_1 - e_2 -e_3 - e_4 \\
&\equiv 2l_4 + c + e_4\\
&\equiv l_{i4} + 2e_i + l_i + c
\end{split}
\end{equation}
consequently
\begin{equation}
\begin{split}
-K_{\Sigma}
&\equiv 2\bar{l_4} + \bar{e_4}\\
&\equiv \bar{l_{i4}} + 2\bar{e_i} + \bar{l_i}
\end{split}
\end{equation}
Note that there exists at most one $(-2)$-curve on $S$ since
$\rho(S) - \rho(\Sigma) = 1$.

\begin{Lemma}\label{reducedornot} $E_4 = 2E_4'$ for some divisor $E_4'$ on $S$.
\end{Lemma}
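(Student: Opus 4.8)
The plan is to use the trichotomy of Lemma~\ref{key1} and to exclude, by a double cover argument in the spirit of the proofs of Theorem~\ref{Theorem A} and Propositions~\ref{mainprop}--\ref{mainprop2}, the possibility that $E_4$ is reduced.

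First I record the numerics. On $\hat{P}_1$ one has $e_4\cdot c=0$, so the curve $\bar{e}_4=\psi_*e_4$ avoids the $A_1$-point $Q=\psi(c)$; hence $\bar{e}_4$ is Cartier on $\Sigma$, $E_4=\phi^*\bar{e}_4$, and $E_4^2=(\deg\phi)\,\bar{e}_4^2=-4$, while $K_SE_4=2$ by Lemma~\ref{key1}. That lemma leaves three cases: $E_4$ reduced and irreducible, or $E_4=2E_4'$, or $E_4=A+B$ with $K_SA=K_SB=1$. Since the middle case is exactly the assertion, it suffices to exclude the cases in which $E_4$ is reduced, and in those cases one checks from the numerics that $E_4$ has at most negligible singularities (a smooth $(-4)$-curve, or two reduced curves meeting in a single point).

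So suppose $E_4$ is reduced. I would then use the relations coming from the pencil $|l-e_4|$ of lines through $P_4$. Since $l_4\cdot c=1$, the divisor $2\bar{l}_4$ is Cartier on $\Sigma$, with $\psi^*(2\bar{l}_4)=2l_4+c$, and $-K_{\hat{P}}\equiv 2l_4+c+e_4$ descends to $-K_\Sigma\equiv 2\bar{l}_4+\bar{e}_4$; pulling back by $\phi$ and using $\phi^*(-K_\Sigma)\equiv 2K_S$ gives $E_4\equiv 2K_S-\phi^*(2\bar{l}_4)$. Similarly, for $i\in\{1,2,3\}$ the divisors $\bar{l}_i$, $\bar{l}_{i4}$ are Cartier, $2\bar{e}_i$ is Cartier with $(2\bar{e}_i)^2=-2$, and $-K_\Sigma\equiv\bar{l}_{i4}+2\bar{e}_i+\bar{l}_i$. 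Playing these identities against one another --- and keeping track of the contribution of the (at most one) $(-2)$-curve of $S$ in the various pullbacks --- I would isolate a line bundle $M$ on $S$ with $2M\equiv E_4$, the analogue here of the relation $2\eta\equiv -E_4$ from Assumption--Notation~\ref{not2}. Producing this $M$ is the step I expect to be the main obstacle, because it requires controlling the behaviour of $\phi$ (equivalently of $\bar{\phi}\colon\bar{S}\to\Sigma$) over the singular point $Q$ and the pullbacks of the non-Cartier Weil divisors $\bar{l}_4$, $\bar{e}_i$.

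With $M$ available, I would form the double cover $\pi\colon Y\to S$ branched over $E_4$, so that $Y$ has at most canonical singularities and Propositions~\ref{A.i}--\ref{B.i} apply. From formula~\eqref{tool} one reads off $K_Y^2=2(K_S+M)^2$, $\chi(\mathcal{O}_Y)=2\chi(\mathcal{O}_S)+\tfrac12 M(K_S+M)$ and $p_g(Y)=h^0(S,K_S+M)$; rewriting $K_S+M$ (using the relations above) as a pullback of an effective, suitably positive divisor on $\Sigma$ plus an effective divisor on $S$, Lemma~\ref{pullback} would bound $p_g(Y)$, hence $q(Y)$, from below --- enough to violate the inequality $K_Y^2\geq 16(q(Y)-1)$ of Proposition~\ref{B.i}, or to get $p_g(Y)\geq 1$ and contradict $p_g(S)=q(S)=0$ via the Albanese analysis of Proposition~\ref{A.i}. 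As a variant, one could instead follow Step~1 of the proof of Theorem~\ref{Theorem A}: a reduced $E_4$ would produce, through the covers $Y_i$, fibrations $u_i\colon S\to\mathbb{P}^1$ with too many double fibers, again a contradiction. Any of these shows $E_4$ cannot be reduced, and then Lemma~\ref{key1} forces $E_4=2E_4'$.
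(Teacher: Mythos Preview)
Your strategy matches the paper's: assume $E_4$ is reduced, build a double cover branched along it, compute invariants via~\eqref{tool}, and contradict Proposition~\ref{B.i}. But you misplace the difficulty and leave the actual work undone. Producing $M$ is immediate, not ``the main obstacle'': from the single relation $-K_\Sigma\equiv 2\bar{l}_4+\bar{e}_4$ you already have $2K_S\equiv 2L_4+E_4+Z$ with $Z$ supported on the (at most one) $(-2)$-curve of $S$; writing $Z=2Z'+Z''$ with $Z''$ reduced gives $2(K_S-L_4-Z')\equiv E_4+Z''$, so $M=K_S-L_4-Z'$. The extra relations $-K_\Sigma\equiv\bar{l}_{i4}+2\bar{e}_i+\bar{l}_i$ and the ``playing identities against one another'' are unnecessary. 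Your assertion $E_4=\phi^*\bar{e}_4$ is also not quite safe: a $(-2)$-curve on $S$ may map to a point of $\bar{e}_4$ and contribute to the pullback, which is precisely why the branch locus is $E_4+Z''$ rather than $E_4$ alone.

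The part you skip is the real content of the argument: controlling $E_4+Z''$ well enough to apply Proposition~\ref{B.i}. The paper observes that $E_4+Z''=\phi^*\bar{e}_4+Z_1$ with $Z_1$ exceptional, so $(E_4+Z'')^2\leq -4$, while $Z''^2\geq -2$ and (if $E_4\neq\phi^*\bar{e}_4$) $E_4^2=-6$ give $(E_4+Z'')^2\geq -8$; integrality of $\chi(Y)$ then forces $(E_4+Z'')^2=-4$, hence $E_4+Z''=\phi^*\bar{e}_4$. This pins the configuration down to either a smooth $(-4)$-curve or two $(-3)$-curves plus a $(-2)$-curve, in each case with only negligible singularities. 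Then $p_g(Y)\geq h^0(\hat{P},l_4+e_4)=3$ via Lemma~\ref{pullback}, so $q(Y)\geq 2$, and $K_Y^2\leq 12<16(q(Y)-1)$ gives the contradiction. Your proposed variant through the covers $Y_i$ of Theorem~\ref{Theorem A} is not available here: that machinery depends on the structure of Assumption--Notation~\ref{not2}, which in turn presupposes a smooth $\Sigma$ and the existence of a $(-4)$-curve already established.
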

\begin{proof}
By $-K_{\Sigma} \equiv 2\bar{l_4} + \bar{e_4}$, we may write
$$2K_S \equiv 2L_4 + E_4 + Z$$ where $Z$ is zero or supported on a
$(-2)$-curve. We write $Z = 2Z' + Z''$ where $Z''$ is reduced.

Suppose otherwise that $E_4$ is reduced. We have $2(K_S -
L_4 - Z') \equiv E_4 + Z''$, and it gives a double
cover $\pi: Y \rightarrow S$ branched along $E_4$ and $Z''$. Observing that $E_4 + Z'' = \phi^*e_4 + Z_1$ where $Z_1$ is zero or supported on some $(-2)$-curves, we conclude that $(E_4 + Z'')^2 = (\phi^*e_4)^2 + Z_1^2 \leq -4$ and equality holds if and only if $E_4 + Z'' = \phi^*e_4$. Note that if $\phi^*e_4$ contains some $(-2)$-curves, i.e, $E_4\neq \phi^*e_4$, then $E_4^2 < -4$, hence $E_4^2 = -6$ by Lemma \ref{key1}. Using Formula \ref{tool} and Lemma \ref{pullback}, we calculate the invariants of $Y$:
\begin{equation}
\begin{split}
\chi(Y) &= 2 + \frac{1+ \frac{(E_4 + Z'')^2}{4}}{2} \leq 2\\
p_g(Y) &= h^0(S, 2K_S - L_4 - Z')\\
&= h^0(S, \phi^*(\bar{l_4} + \bar{e_4}) \\
&\geq h^0(\mathcal{O}_\Sigma(\bar{l_4} + \bar{e_4})) \geq h^0(\mathcal{O}_{\hat{P}}(\psi^*(\bar{l_4} + \bar{e_4})))\geq h^0(\hat{P}, l_4 + e_4) =  3\\
q(Y) &\geq 2
\end{split}
\end{equation}
Note that $(E_4 + Z'')^2 \geq E_4^2 + Z''^2 \geq -8$ since $Z''^2 \geq -2$. To guarantee that $\chi(Y)$ is an integer, we have $(E_4 + Z'')^2 = -4$ and thus $E_4 + Z'' = \phi^*e_4$. In turn we get that either
\begin{enumerate}
\item[i)]{$\phi^*e_4 = E_4$ is a smooth rational $(-4)$-curve; or}
\item[ii)]{ $E_4^2 = -6, Z''^2 = -2, E_4Z'' = 2$, thus $E_4 = A+B$ where $A,B$ are smooth rational $(-3)$-curves and $Z''$ is composed with a $(-2)$-curve.}
\end{enumerate}
In any case, $E_4+ Z''$ has at most simple [3]-points or double points which are negligible singularities. So $Y$ has at most canonical singularities. Then we can apply Proposition~\ref{B.i} and get a contradiction since $K_Y^2 = 2(K_S +
\frac{E_4 + Z''}{2})^2 \leq 12 < 16(q(Y) - 1)$. Hence $E_4$ is reduced, and we can write $E_4 = 2E_4'$.
\end{proof}

\begin{itemize}
\item[Case 1:] {There's a $(-2)$-curve $\theta$ on $S$.}
\end{itemize}

Step 1: Obtain a double cover and a fibration of $S$.

As in the proof of the lemma above, we have $2K_S \equiv 2L_4 + E_4 +
Z$ and write $Z = 2Z' + Z''$ where $Z''$ is reduced, then get a relation
$$2(K_S - L_4 - E_4' - Z') = Z''$$
which gives a double cover $\pi: Y \rightarrow
S$. Using Formula \ref{tool}, calculate the invariants of $Y$:
\begin{equation}
\begin{split}
\chi(Y) &= 2 + \frac{\frac{Z''^2}{4}}{2} \leq 2\\
p_g(Y) &= h^0(S, 2K_S - L_4 -E_4' - Z') = h^0(S,  L_4 + E_4' + Z' + Z'') \geq 2\\
q(Y)&\geq 1.
\end{split}
\end{equation}
Remark that $Z''= 0$ to guarantee that $\chi(Y)$ is an integer. Then applying Proposition~\ref{A.i},
we get a commutative diagram as follows:
\[\begin{CD}
Y          @>\pi>>          S \\
@V\alpha VV                @VgVV \\
B        @>\pi'>>      \mathbb{P}^1
\end{CD} \]
where $\alpha: Y \rightarrow B$ is the Albanese pencil and $\pi':B
\rightarrow \mathbb{P}^1$ is a double cover. Since $\pi^*\theta$
is composed with 2 $(-2)$-curves, $\theta$ is contained in one
fiber of $g: S \rightarrow \mathbb{P}^1$. Then Remark~\ref{rpnf} gives a fibration $v: \Sigma \rightarrow \mathbb{P}^1$ such that $g
= v\circ \phi$. By Proposition~\ref{fibers}, we may assume that $g$ is induced by the pencil $|L_1| = \phi^*\bar{|l_1|}$.

Step 2: Analyze the ramification divisor of $\bar{\phi}$.

Now consider the ramification divisor $R$ of the map $\bar{\phi}:
\bar{S}\rightarrow \Sigma$. We have $R \equiv 3K_{\bar{S}}$. Since $\pi':B \rightarrow
\mathbb{P}^1$ is branched over 4 points, $g: S \rightarrow
\mathbb{P}^1$ has at least 4 double fibers, we select 4 and denote them by
$2M_1, 2M_2, 2M_3, 2M_4$. Note that since every curve in $|\bar{l_1}|$
is reduced, $\bar{\phi}$ is ramified along $\bar{M_i},i=1,2,3,4$, thus $\bar{M_1} + \bar{M_2} + \bar{M_3} + \bar{M_4} \leq R$. Then we have $(R - (\bar{M_1} + \bar{M_2} + \bar{M_3} +
\bar{M_4}))(\bar{\phi}^*e_1) = -2 = (\bar{\phi}^*e_1)^2$ which
contradicts Lemma~\ref{ramification}. Therefore this case does not occur.

\begin{Remark}
The process in Step 2 above will be frequently used in the following proof,
and we call it $ARDP$, namely analyzing the ramification divisor
process. \end{Remark}

Case 2: There's no $(-2)$-curve on $S$.

In this case, $E_i =
\phi^*\bar{e_i}$ and $L_1 = \phi^*\bar{l_1}$ are all Cartier
divisors. By $E_i^2 = -2$ for $i = 1,2,3$, applying Lemma \ref{key1}, we conclude that the
$E_i$'s are reduced.

\begin{Claim} $\phi^*l_{i4} = L_{i4} = 2L_{i4}'$ is non-reduced for $i = 1,2,3$.\end{Claim}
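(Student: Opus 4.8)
The plan is to argue by contradiction, supposing that $L_{i4}=\phi^{*}\bar l_{i4}$ is reduced for some $i\in\{1,2,3\}$. Pulling back (using that $\bar l_{i4}$ is Cartier, since $l_{i4}\cdot c=0$) gives $K_{S}L_{i4}=2$ and $L_{i4}^{2}=-4$, so $L_{i4}$ has arithmetic genus $0$; by Lemma~\ref{key1} it is then either a smooth rational $(-4)$-curve or a sum $A+B$ with $K_{S}A=K_{S}B=1$, and in every reduced case it has at most negligible singularities. Recall from the beginning of Section~\ref{1} that $-K_{\hat P}\equiv l_{i4}+2e_{i}+l_{i}+c$, hence $-K_{\Sigma}\equiv\bar l_{i4}+2\bar e_{i}+\bar l_{i}$; pulling back and using $E_{i}=\phi^{*}\bar e_{i}$ gives $2(K_{S}-E_{i})\equiv L_{i4}+L_{i}$, where $|L_{i}|=\phi^{*}|\bar l_{i}|$ is a base-point-free pencil whose general member is connected (otherwise $S$ would carry a genus-$2$ pencil, contradicting \cite{X1}), so it induces a fibration $u_{i}:S\rightarrow\PP^{1}$. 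A general $L_{i}$ is smooth and, since $L_{i}L_{i4}=0$, disjoint from $L_{i4}$; hence the relation $2(K_{S}-E_{i})\equiv L_{i4}+L_{i}$ produces a double cover $\pi:Y\rightarrow S$ branched over $L_{i4}+L_{i}$, with $Y$ having at most canonical singularities.

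Using formula~\eqref{tool} one computes $K_{Y}^{2}=2(2K_{S}-E_{i})^{2}=20$ and $\chi(\mathcal{O}_{Y})=2+\tfrac12(K_{S}-E_{i})(2K_{S}-E_{i})=3$, while $p_{g}(Y)=h^{0}(S,2K_{S}-E_{i})$. The key point is to bound $p_{g}(Y)$ from below. Since $2K_{S}-E_{i}=\phi^{*}(-K_{\Sigma}-\bar e_{i})$, Lemma~\ref{pullback} gives $h^{0}(S,2K_{S}-E_{i})\geq h^{0}(\Sigma,-K_{\Sigma}-\bar e_{i})$; and because $\bar e_{i}$ is a line in $\PP^{5}$ (it is irreducible with $-K_{\Sigma}\bar e_{i}=1$), the hyperplanes of $\PP^{5}$ containing $\bar e_{i}$ form a $4$-dimensional linear space each of whose members cuts $\Sigma$ in a divisor $\geq\bar e_{i}$, so that $h^{0}(\Sigma,-K_{\Sigma}-\bar e_{i})\geq 4$. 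Therefore $p_{g}(Y)\geq 4$ and $q(Y)=p_{g}(Y)-\chi(\mathcal{O}_{Y})+1\geq 2$.

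Now Proposition~\ref{A.i} applies: the Albanese image of $Y$ is a curve, and there is a fibration $g:S\rightarrow\PP^{1}$ such that the general fibre of $g\circ\pi$ is disconnected. This forces every component of the branch divisor $L_{i4}+L_{i}$ of $\pi$ to be vertical for $g$; since $L_{i}$ is a general member of $|L_{i}|$, it follows that $g=u_{i}$. Moreover $\bar l_{i}\equiv\bar l_{i4}+\bar e_{4}$ on $\Sigma$, so (recall $E_{4}=2E_{4}'$ by Lemma~\ref{reducedornot}) $L_{i4}+2E_{4}'$ is a fibre of $u_{i}$ and the branch locus of $\pi$ maps onto at most two points of $\PP^{1}$; thus $k\leq 2$ in the notation of Proposition~\ref{C.i}. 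Taking Stein factorizations as in that proposition, the base curve $B$ is the normalization of the Albanese image of $Y$, so its genus $b$ satisfies $b\geq q(Y)\geq 2$; hence $u_{i}$ has at least $2b+2-k\geq 4$ double fibres $2M_{1},\dots,2M_{4}$.

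Finally we carry out the $ARDP$. For each $j$ the image $\bar m_{j}:=\phi(M_{j})$ is a reduced member of $|\bar l_{i}|$ with $\phi^{*}\bar m_{j}=2M_{j}$; comparing ramification indices component by component gives $M_{j}\leq R$, and since the $M_{j}$ lie in distinct fibres $R':=R-\sum_{j=1}^{4}M_{j}$ is effective and $\leq R$. From $\bar m_{j}\equiv\bar l_{i}$ we get $M_{j}\cdot\phi^{*}\bar e_{i}=\tfrac12(\phi^{*}\bar m_{j})\cdot(\phi^{*}\bar e_{i})=2(\bar l_{i}\cdot\bar e_{i})=2$, while $R\equiv 3K_{S}$ and $K_{S}\cdot\phi^{*}\bar e_{i}=2$ give $R\cdot\phi^{*}\bar e_{i}=6$; hence $R'\cdot\phi^{*}\bar e_{i}=6-8=-2=(\phi^{*}\bar e_{i})^{2}$, which contradicts Lemma~\ref{ramification} (note $\bar e_{i}^{2}=-\tfrac12<0$). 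So $L_{i4}$ is non-reduced, and $L_{i4}=2L_{i4}'$ by Lemma~\ref{key1}. I expect the main obstacle to be the lower bound $p_{g}(Y)\geq 4$: it is exactly what forces the base genus $b$ of the Albanese fibration to be at least $2$, which in turn forces four double fibres of $u_{i}$ and makes the ramification count close; dealing with $h^{0}$ on the singular surface $\Sigma$ is the point where care is needed.
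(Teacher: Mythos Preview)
Your proof is correct and follows essentially the same route as the paper's: both set up the double cover from $2(K_S-E_i)\equiv L_{i4}+L_i$, compute $\chi(Y)=3$ and $p_g(Y)\geq 4$ (the paper simply asserts $p_g(Y)=4$, $q(Y)=2$), identify $g=u_i$ via the branch locus, obtain at least four double fibres, and finish with the $ARDP$ against $\bar e_i$. Your justification of $p_g(Y)\geq 4$ via hyperplanes through the line $\bar e_i\subset\PP^5$ is exactly what is needed and addresses the concern you flag; the paper's version is just terser, invoking ``$\pi'$ is branched along $6$ points'' (i.e.\ $b=q(Y)=2$) rather than Proposition~\ref{C.i} explicitly.
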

\begin{proof}
To the contrary, suppose that $L_{i4}$ is reduced. By $K_\Sigma \equiv \bar{l_{i4}}
+ 2\bar{e_i} + \bar{l_i}$, we have $2K_S \equiv L_{i4} + 2E_i +
L_i$, thus $$2(K_S - E_1 ) \equiv L_i + L_{i4}.$$ Then by the
relation above, we get a double $\pi: Y \rightarrow S$.
Calculating the invariants of $Y$, we obtain
$$\chi(Y) = 3,~~p_g(Y) = 4,~~q(Y)= 2.$$ By
Proposition~\ref{A.i}, we get the following commutative diagram:
\[\begin{CD}
Y          @>\pi>>          S \\
@V\alpha VV                @VgVV \\
B        @>\pi'>>      \mathbb{P}^1
\end{CD} \]
where $\alpha: Y \rightarrow B$ is the Albanese pencil and $\pi':B
\rightarrow \mathbb{P}^1$ is a double cover. Since $L_i^2 = 0$,
$g: S \rightarrow \mathbb{P}^1$ must be induced by the
pencil $|L_i|$. The map $\pi': B \rightarrow \mathbb{P}^1$ is
branched along 6 points, so there exist at least 4 double fibers, and we choose four double fibers: $2M_1,2M_2, 2M_3, 2M_4$. Then considering the
ramification divisor of the map $\bar{\phi}: \bar{S}\rightarrow
\Sigma$ and going process $ARDP$, we get a
contradiction.
\end{proof}

By the Claim above, we get a relation $2(K_S - L_{i4}' - E_i)
\equiv L_i$, and then a double $\pi: Y \rightarrow S$.
Calculating the invariants of $Y$, we obtain
$$\chi(Y) = 3,~~p_g(Y) = 3,~~q(Y)= 1.$$ By
Proposition~\ref{A.i}, we get the following commutative diagram:
\[\begin{CD}
Y          @>\pi>>          S \\
@V\alpha VV                @VgVV \\
B        @>\pi'>>      \mathbb{P}^1
\end{CD} \]
where $\alpha: Y \rightarrow B$ is the Albanese pencil and $\pi':B
\rightarrow \mathbb{P}^1$ is a double cover. Similarly we can see
$g: S \rightarrow \mathbb{P}^1$ is induced by the
pencil $|L_i|$. The map $\pi': B \rightarrow \mathbb{P}^1$ is
branched at 4 points, so there exist at least 3 double fibers. Select two double fibers
$2M_{i1}, 2M_{i2}$ different from
$2(L_{i4}' + E_4')$. For $i_1 \neq i_2$, if $M_{i_1*}$ and $M_{i_2*}$ have common components, then the image of the common components under the bicanonical map must be contained in some
fibers of the pencils $|\bar{l_{i_1}}|$ and $|\bar{l_{i_2}}|$. Note that the fiber containing $E_i,i=1,2,3$ must be reduced since $\phi^*e_i = E_i$ is reduced in this case. So the image of $M_{ij}$ does not contain
$\bar{e_i}, i =1,2,3,4$, as an easy consequence, any two of the $M_{ij}$'s have no
common components, thus $R_1 = \sum_{i = 1}^{i=3}(M_{i1} + M_{i2}
+ L_{i4}') + E_4' \leq R$.
Put $R_1 = \sum_{i = 1}^{i=3}(M_{i1} + M_{i2}+ L_{i4}') + E_4'$.
It follows that $2R_1 \equiv \phi^*(9\bar{l} - 3\bar{e_1} - 3\bar{e_2} - 3\bar{e_3} - 2\bar{e_4})$ and $R_1 \leq R$. Notice that $2R \equiv 6K_S
\equiv \phi^*(9\bar{l} - 3\bar{e_1} - 3\bar{e_2} - 3\bar{e_3} - 3\bar{e_4})$ which
contradicts $R_1 \leq R$, so we are done.

In conclusion, we have $\hat{P} \neq \hat{P}_1$.

\subsubsection{The case $\hat{P} = \hat{P}_2$}\label{2}
\par

We fail to exclude this case, but we can describe the fibration of
$S$ and give an effective divisor linearly equivalent to $K_S$.

\begin{Notation} Let $Q = \psi(c)$ be the $A_1$-sigularity on
$\Sigma$. Denote by $l_1$ the strict transform of a general line
through $P_1$, by $\hat{e_1}$ the strict transform of the line
through $P_4$ and $P_2$, by $\hat{e_4}$ the strict transform of
the line through $P_4$ and $P_1$. Then $l_1 \equiv l - e_1, c
\equiv l - e_1- e_2- 2e_3, \hat{e_1} \equiv l - e_2 - e_3 - e_4,
\hat{e_4} \equiv l - e_1 - e_4$. Denote by $L_1$ (resp. $\hat{E_1},
\hat{E_4}, E_3, E_4$) the strict transform of
$\bar{l_1}$ (resp.$\bar{\hat{e_1}}, \bar{\hat{e_4}}, \bar{e_3},
\bar{e_4}$) w.r.t. $\phi$.
\end{Notation}

Immediately we have
\begin{equation}
\begin{split}
-K_{\hat{P}} &\equiv 3l - e_1 - e_2 - 2e_3 - e_4 \\
&\equiv 2l_4 + c + e_4\\
&\equiv 3l_1 + 2e_1 - e_2 - 2e_3 - e_4\\
\end{split}
\end{equation}
and then
\begin{equation}
\begin{split}
-K_{\Sigma}
&\equiv 2\bar{l_4} + \bar{e_4}\\
&\equiv 3\bar{l_1} + 2\bar{e_1} - 2\bar{e_3} - \bar{e_4}\\
\end{split}
\end{equation}
Note that there exist at most two $(-2)$-curves on $S$ since
$\rho(S) - \rho(\Sigma) = 2$.

\begin{Lemma}Let the notations be as above. Then $\phi^*\bar{e_4} = 2E_4'$ for some divisor $E_4'$ on $S$.\end{Lemma}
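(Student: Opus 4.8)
The plan is to argue exactly as in the proof of Lemma~\ref{reducedornot}: the anticanonical decomposition $-K_\Sigma \equiv 2\bar{l_4} + \bar{e_4}$ is the same one used there, and the only structural difference is that here $S$ may carry two $(-2)$-curves rather than one, which turns out to be harmless. So I argue by contradiction: write $E_4$ for the strict transform of $\bar{e_4}$ with respect to $\phi$, and suppose that $E_4$ is reduced. From $2K_S \equiv \phi^*(-K_\Sigma) \equiv \phi^*(2\bar{l_4} + \bar{e_4})$ one writes $2K_S \equiv 2L_4 + E_4 + Z$, where $L_4$ is the strict transform of a general member of $|\bar{l_4}|$ and $Z$ is effective and supported on the $(-2)$-curves of $S$; splitting $Z = 2Z' + Z''$ with $Z''$ reduced, the relation $2(K_S - L_4 - Z') \equiv E_4 + Z''$ defines a double cover $\pi : Y \to S$ branched over $D := E_4 + Z''$.

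The crux is to bound $D^2$. As in Lemma~\ref{reducedornot} one has $D = \phi^*\bar{e_4} + Z_1$ for a divisor $Z_1$ supported on $(-2)$-curves; since $\phi^*\bar{e_4}$ is orthogonal to every $\phi$-exceptional curve, $D^2 = (\phi^*\bar{e_4})^2 + Z_1^2 = -4 + Z_1^2$, and the bound $E_4^2 \ge -6$ from Lemma~\ref{key1} confines $Z_1$ to a short list of negative even self-intersections. Feeding this into Formula~\ref{tool}, together with $K_S \cdot D = K_S E_4 = 2$, gives
\[
\chi(\mathcal{O}_Y) \;=\; 2 + \tfrac{1}{2}\cdot\tfrac{D}{2}\Bigl(K_S + \tfrac{D}{2}\Bigr) \;=\; 2 + \tfrac{1}{8}Z_1^2 ,
\]
so integrality of $\chi(\mathcal{O}_Y)$ forces $Z_1 = 0$, hence $D = \phi^*\bar{e_4} = E_4$ and $D^2 = -4$; then $E_4$ reduced with $E_4^2 = -4$ is, by Lemma~\ref{key1}, either a smooth rational $(-4)$-curve or a union $A + B$ of two rational $(-3)$-curves meeting in one node, so $D$ has only negligible singularities and $Y$ has at most canonical singularities.

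It remains to contradict $K_Y^2 \ge 16(q(Y)-1)$, which holds for such $Y$ by Proposition~\ref{B.i} and the fact that it survives negligible branch singularities. From $\chi(\mathcal{O}_Y) = 2$ and $p_g(Y) = h^0(S, 2K_S - L_4 - Z') = h^0(S, \phi^*(\bar{l_4} + \bar{e_4})) \ge h^0(\hat{P}, l) = 3$ — the inequality obtained, as in the proof of Lemma~\ref{reducedornot}, by pushing $H^0(\hat{P}, l)$ down to $\Sigma$ (using $\psi_*\mathcal{O}_{\hat{P}} = \mathcal{O}_\Sigma$) and then pulling up to $S$ via Lemma~\ref{pullback} — one gets $q(Y) \ge 2$. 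On the other hand Formula~\ref{tool} gives $K_Y^2 = 2\bigl(K_S + \tfrac{1}{2}\phi^*\bar{e_4}\bigr)^2 = 2(5 + 2 - 1) = 12 < 16 \le 16(q(Y)-1)$, a contradiction. Hence $E_4$ is not reduced, and by Lemma~\ref{key1} we conclude $\phi^*\bar{e_4} = E_4 = 2E_4'$. I do not expect a genuine obstacle, since this is the verbatim analogue of Lemma~\ref{reducedornot}; the only point needing slightly more care than in the $\hat{P}_1$ case is keeping track of the possible second $(-2)$-curve when bounding $Z_1^2$ and when verifying that $D$ has only negligible singularities in the surviving case.
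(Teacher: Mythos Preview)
Your argument successfully reproduces the first half of the paper's proof and shows that the strict transform $E_4$ is non-reduced, but it stops short of the actual statement. The lemma asserts that the \emph{full pullback} $\phi^*\bar{e_4}$ is divisible by $2$, not merely its strict transform; since $(-2)$-curves on $S$ may be contracted by $\phi$ onto points of $\bar{e_4}$, one has in general $\phi^*\bar{e_4} = E_4 + W$ with $W$ supported on $(-2)$-curves, and your final line ``$\phi^*\bar{e_4} = E_4 = 2E_4'$'' silently assumes $W=0$. Nothing in your argument forces this: the equality $Z_1=0$ you derive holds only under the contradiction hypothesis that $E_4$ is reduced, and once that hypothesis is discharged you have no control over $W$. (There is also a small slip earlier: from $Z_1=0$ you get $D=\phi^*\bar{e_4}=E_4+Z''$, not $D=E_4$; the branch divisor may still contain a $(-2)$-curve, and one must check negligible singularities in that case too, as in case ii) of Lemma~\ref{reducedornot}.)

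The paper handles the missing piece with a short parity argument. Writing $E_4=2D_4$ and $\phi^*\bar{e_4}=2D_4+2Z_1'+Z_1''$ with $Z_1''$ reduced, one has (for general $l_4$) $Z=W$ and hence $2(K_S-L_4-D_4-Z')\equiv Z''$. Since $K_S\cdot(K_S-L_4-D_4-Z')=5-4-1=0$ is even, $(K_S-L_4-D_4-Z')^2$ is even, so $8\mid Z''^2$; but $Z''$ is reduced on at most two $(-2)$-curves, so $Z''^2\ge -4$, forcing $Z''=0$. Choosing $l_4$ general so that no $(-2)$-curve lies over $\bar{l_4}$, one gets $Z_1''=Z''=0$, i.e.\ $\phi^*\bar{e_4}=2(D_4+Z_1')$. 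This is precisely the extra step you need to add; note that Lemma~\ref{reducedornot} in the $\hat{P}_1$ case only claims $E_4=2E_4'$ for the strict transform, so the present lemma is genuinely stronger and not a verbatim analogue.
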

\begin{proof}
By $-K_{\Sigma} \equiv 2\bar{l_4} + \bar{e_4}$, we may write
$$2K_S \equiv 2L_4 + E_4 + Z$$ where $Z$ is zero or supported on a
$(-2)$-curve. Note that $Z$ arises from $\phi^*\bar{e_4}$ or
$\phi^*2\bar{l_4}$. We write $Z = 2Z' + Z''$ where $Z''$ is reduced.

Since $Z''$ contains at most two $(-2)$-curves, so $Z''^2 \geq -4$. Arguing as in the proof of Section~\ref{1}
Lemma~\ref{reducedornot}, we get that $E_4$ is reduced.

We write $E_4 = 2D_4$ and then write $\phi^*e_4 = 2D_4 + 2Z_1' + Z_1''$. We still need to prove $Z_1''=0$. Considering the relation $2(K_S - L_4 - D_4 - Z') = Z''$,
since $K_S(K_S - L_4 - D_4 - Z') = 0$ is even, $(K_S - L_4 - D_4 - Z')^2$ is even, hence $8|Z''^2$. It is only possible that $Z''=0$. Note that since $L_4$ moves, we can assume $Z_1''$ is not contained in $\phi^*\bar{l_4}$. So $Z''=0$ implies $Z_1''=0$, then we are done.
\end{proof}

\begin{Lemma}\label{numbofdoublefibers}Let $g: S \rightarrow \mathbb{P}^1$ be the fibration
induced by the pencil $|L_1|$. Then $g$ has at most 4 double
fibers.\end{Lemma}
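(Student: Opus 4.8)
The plan is to argue by contradiction. Suppose $g$ has at least five double fibres $2M_1,\dots ,2M_5$; I will reach a contradiction by the process $ARDP$ applied to the negative curve $\bar{e_1}=\psi_*(e_1)\subset\Sigma$.

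\emph{Setting up the picture on $\Sigma$.} Since $g$ is induced by $|L_1|$ and $L_1\equiv\phi^*\bar{l_1}$, it factors as $g=\bar{v}\circ\phi$, where $\bar{v}:\Sigma\to\mathbb{P}^1$ is the fibration induced by $|\bar{l_1}|$, i.e. the image under $\psi$ of the pencil $|l-e_1|$ on $\hat{P}_2$. The crucial observation is that $\bar{v}$ has \emph{exactly one} non-reduced fibre. Indeed, by Proposition~\ref{fibers} a general member of $|l-e_1|$ is a reduced irreducible rational curve, and the only reducible members are the fibre through the line $\overline{P_1P_2}$, which equals $c+e_2+2e_3$, and the fibre through the line $\overline{P_1P_4}$, which equals $\hat{e_4}+e_4$; since $\psi$ contracts precisely the two $(-2)$-curves $c$ and $e_2$, the former has image $2\bar{e_3}$ (non-reduced) while the latter has reduced image $\bar{\hat{e_4}}+\bar{e_4}$. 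Hence at most one of the points $g(M_1),\dots ,g(M_5)$ lies over the non-reduced fibre of $\bar{v}$; after renumbering, $\bar{m_i}:=\phi(M_i)$ is a reduced member of $|\bar{l_1}|$ for $i=1,2,3,4$, and then $\phi^*\bar{m_i}=2M_i$ with $\bar{m_i}$ reduced forces $\phi$ to be ramified with branching order $2$ along $M_i$.

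\emph{Running $ARDP$.} Pass to the canonical model: with $\bar{\phi}\circ\eta:S\to\bar{S}\to\Sigma$ the Stein factorisation of $\phi$ as in~\ref{note1} and $R\equiv 3K_{\bar{S}}$ the ramification divisor of $\bar{\phi}$, the previous step gives $\bar{M_i}:=\eta_*M_i\le R$ for $i=1,2,3,4$; as $M_1,\dots ,M_4$ lie in distinct fibres of $g$, the curves $\bar{M_i}$ are pairwise disjoint, so $R':=R-\sum_{i=1}^{4}\bar{M_i}$ is effective and $R'\le R$. Now apply Lemma~\ref{ramification} with $h=\bar{\phi}$ and $e=\bar{e_1}$: one has $\bar{e_1}^{\,2}=e_1^{\,2}+\tfrac{(e_1\cdot c)^2}{-c^2}=-1+\tfrac12=-\tfrac12<0$, hence $(\bar{\phi}^*\bar{e_1})^2=4\bar{e_1}^{\,2}=-2$; from $2K_{\bar{S}}\equiv\bar{\phi}^*(-K_\Sigma)$ and $\deg\bar{\phi}=4$ one gets $R\cdot\bar{\phi}^*\bar{e_1}=3K_{\bar{S}}\cdot\bar{\phi}^*\bar{e_1}=6\,(-K_\Sigma)\cdot\bar{e_1}=6$, and each $\bar{M_i}=\tfrac12\bar{\phi}^*\bar{m_i}$ satisfies $\bar{M_i}\cdot\bar{\phi}^*\bar{e_1}=2\,\bar{m_i}\cdot\bar{e_1}=2\,\bar{l_1}\cdot\bar{e_1}=2$. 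Therefore $R'\cdot\bar{\phi}^*\bar{e_1}=6-8=-2=(\bar{\phi}^*\bar{e_1})^2$, contradicting the strict inequality in Lemma~\ref{ramification}. Hence $g$ has at most four double fibres.

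\emph{Expected main difficulty.} The content of the argument is the reducedness bookkeeping: the passage from ``this case does not occur'' (as for $\hat{P}_1$) to the sharp bound ``four'' is located entirely in the single non-reduced fibre $2\bar{e_3}$ of $\bar{v}$, so one must check carefully both that $\bar{v}$ has exactly one non-reduced fibre and that, for a double fibre with reduced image $\bar{m_i}$, the whole of $\bar{M_i}$ really sits inside $R$ with coefficient at least $1$. The accompanying intersection computations on the singular surface $\Sigma$ — pulling back the $\mathbb{Q}$-Cartier Weil divisors $\bar{e_1}$ and $\bar{m_i}$, and using that $c$ and $e_2$ are disjoint so that the corrections for the two $A_1$-points simply add — are then routine.
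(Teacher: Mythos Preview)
Your proof is correct and follows exactly the paper's approach: the paper's own argument is simply ``otherwise $\bar{\phi}$ is ramified along at least 4 double fibers since the fibration $u:\Sigma\to\mathbb{P}^1$ induced by $|\bar{l_1}|$ has unique non-reduced fiber $2\bar{e_3}$; going process ARDP, we get a contradiction,'' and you have unpacked precisely this, including the explicit verification that $\bar v$ has the single non-reduced fiber $2\bar{e_3}$ and the intersection numbers $(\bar\phi^*\bar{e_1})^2=-2$, $R\cdot\bar\phi^*\bar{e_1}=6$, $\bar{M_i}\cdot\bar\phi^*\bar{e_1}=2$ needed to contradict Lemma~\ref{ramification}.
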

\begin{proof} Otherwise $\bar{\phi}$ is ramified along at least 4 double fibers
since the fibration $u: \Sigma \rightarrow \mathbb{P}^1$ induced
by $|\bar{f_1}|$ has unique non-reduced fiber $2\bar{e_3}$.
Considering the ramification divisor of the map $\bar{\phi}: \bar{S}
\rightarrow \Sigma$ and going process ARDP, we get a contradiction.
\end{proof}

By $-K_\Sigma \equiv 3\bar{l_1} + 2\bar{e_1} - 2\bar{e_3}  -
\bar{e_4}$, we have $2K_S \equiv 3L_1 + 2E_1 - 2E_3 - 2E_4' + Z$
where $Z = \phi^*(2\bar{e_1} - 2\phi^*\bar{e_3}) - 2E_1 + 2E_3$.
Write $Z = 2Z' + Z''$, we get the relation $2(K_S - L_1 - E_1 +
E_3 + E_4' - Z') \equiv Z'' + L_1$, and then a double cover $\pi:
Y \rightarrow S$ branched along $L_1$ and $Z''$. Note that $L_1Z''=0$ and $L_1^2 = 0$. By Formula \ref{tool} and Lemma \ref{pullback}, we calculate the invariants of $Y$:
\begin{equation}
\begin{split}
\chi(Y) &= 2 + \frac{2+ \frac{(L_1 + Z'')^2}{4}}{2} = 2+ \frac{2+ \frac{Z''^2}{4}}{2}\\
p_g(Y) &= h^0(S, 2K_S - L_1 - E_1 +
E_3 + E_4' - Z')\\
&\geq h^0(S, 2L_1 + E_1 -
E_3 - E_4' - Z') \geq h^0(\mathcal{O}_\Sigma(2\bar{l_1} + \bar{e_1} - \bar{e_3} - \bar{e_4}))\\
&\geq h^0(\mathcal{O}_{\hat{P}}(\psi^*(2\bar{l_1} + \bar{e_1} - \bar{e_3} - \bar{e_4}))) \geq h^0(\mathcal{O}_{\hat{P}}(2l - e_1-e_2-e_3 - e_4)) = 3
\end{split}
\end{equation}
and deduce that $Z'' = 0$ to guarantee that $\chi(Y)$ be an integer, thus $\chi(Y) = 3$ and $q(Y) \geq 1$, moreover we have
\begin{equation}\label{equ23}
\phi^*(\bar{e_1} - \bar{e_3}) = E_1 - E_3 + Z' \end{equation}
By
Proposition~\ref{A.i}, we get a commutative diagram
\[\begin{CD}
Y          @>\pi>>          S \\
@V\alpha VV                @VgVV \\
B        @>\pi'>>      \mathbb{P}^1
\end{CD} \]
where $\alpha: Y \rightarrow B$ is the Albanese pencil and $\pi':B
\rightarrow \mathbb{P}^1$ is a double cover. By Lemma
\ref{numbofdoublefibers}, we get that $g(B) = 1$ because otherwise
$g$ will has at least 5 double fibers. Denote by $P_i,i=1,2,3,4$ the branch points of $\pi'$, and assume $g^*P_i = 2M_i,i=1,2,3$ and $g^*P_4 = L_1$. Since $Y$ can be obtained
by the normalization of fiber product $S\times _{\mathbb{P}^1}B$, by Lemma \ref{cvroffbr}, we have $K_S - L_1 - E_1 + E_3 + E_4' - Z'|_F = 0$ if $F$ is a fiber of $g$ different from $2M_i$, and $K_S - L_1 - E_1 + E_3 + E_4' - Z'|_{M_i} = M_i|_{M_i}$. Considering the Chern classes, we conclude that
\begin{equation}\label{equ21}
K_S - L_1 - E_1 + E_3 + E_4' - Z' \equiv M_1 +M_2 - M_3
\end{equation} where the $2M_i$'s are double fibers of $g$.

Similarly, we get another relation
\begin{equation}\label{equ22}
K_S - L_1 - \hat{E_1} + E_3 + \hat{E_4'} - \hat{Z'} \equiv
\hat{M_1} + \hat{M_2} - \hat{M_3}\end{equation} where the
$2\hat{M_i}$'s are double fibers of $g$ and
\begin{equation}\label{equ24}
\phi^*(\bar{\hat{e_1}} - \bar{e_3}) = \hat{E_1} - E_3 + \hat{Z'}
\end{equation}
Since $g$ has at most 4 double fibers, we may assume that
$\hat{M_i} = M_i$ for $i = 1, 2$. It follows that
$$M_1 + M_2 \equiv K_S - L_1 - E_1 + E_3 + E_4' - Z' + M_3 \equiv
K_S - L_1 - \hat{E_1} + E_3 + \hat{E_4'} - \hat{Z'} + \hat{M_3},$$
thus
$$- E_1  + E_4' - Z' + M_3 \equiv - \hat{E_1} +
\hat{E_4'} - \hat{Z'} + \hat{M_3}.$$ The equation \ref{equ23} minus
\ref{equ24} yields $\phi^*\bar{\hat{e_1}} - \phi^*\bar{e_1} = \hat{E_1} + \hat{Z'} - (E_1 - Z')$, thus $\hat{M_3} + \phi^*\bar{e_1} + \hat{E_4'}
\equiv M_3 + \phi^*\bar{\hat{e_1}} + E_4'$. Since $2(\hat{M_3} +
\phi^*\bar{e_1} + \hat{E_4'}) \equiv 2K_S$ and $|2K_S| =
\phi^*|-K_\Sigma|$, $\hat{M_3} + \hat{E_4'} + E_4' + M_3 +
\phi^*(\bar{\hat{e_1}}+\bar{e_1})$ must be the pull-back of some
element in $|-K_\Sigma|$. Since $2\hat{M_3}, 2(\hat{E_4'} + E_4'),
2M_3$ are 3 double fibers (maybe the same), $\phi$ is ramified
along them unless one of them is
$\phi^*\bar{e_3}$. We assume $M_3 = \phi^*\bar{e_3}$, then the only
possibility is $\hat{M_3} = \hat{E_4'} + E_4'$. By \ref{equ21}, we
have $$K_S\equiv \phi^*\bar{e_1} - E_4' + M_1 + M_2.$$

\begin{Claim} $S$ has at most one $(-2)$-curve, and
the bicanonical map $\phi: S \rightarrow \Sigma$ can not lift to a morphism to $\hat{P}$.
\end{Claim}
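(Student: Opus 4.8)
The plan is to establish both assertions by contradiction, feeding the relation $K_S\equiv\phi^*\bar{e_1}-E_4'+M_1+M_2$ just obtained into the numerical machinery of Sections~\ref{2} and~\ref{pre}: the double-cover invariants of Formula~\ref{tool}, Lemma~\ref{A.prop}, the factorisation Lemma~\ref{pnf} (through Remark~\ref{rpnf}), and above all the ramification estimate of Lemma~\ref{ramification} via the process $ARDP$. I would begin by recording that, since $\rho(S)-\rho(\Sigma)=2$ while $\rho(S)=\rho(\hat{P}_2)=5$, the weak del Pezzo $\hat{P}_2$ has exactly two $(-2)$-curves, $c$ and $e_2$ (the latter being the strict transform of the exceptional over $P_2$); they are disjoint, so besides $Q=\psi(c)$ there is a second $A_1$-point $Q'=\psi(e_2)$ of $\Sigma$, and $S$ carries at most two $(-2)$-curves.

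\emph{At most one $(-2)$-curve.} Assume $S$ had two, say $\theta_1,\theta_2$. Then $\bar{S}$ and $\Sigma$ have equal Picard numbers, so Lemma~\ref{pnf} and Remark~\ref{rpnf} apply to the fibration $g$ induced by $|L_1|$: each $\theta_i$ is $\phi$-exceptional, hence lies in a fibre of $g$, whose double fibres are exactly the four divisors identified above, $2M_1$, $2M_2$, $2M_3$ and $2\hat{M_3}$. The two extra curves $\theta_1,\theta_2$ now alter the class of one of the pull-backs entering the relation for $K_S$; substituting the modified relation into $ARDP$, applied to a $(-1)$-curve $\bar{e}$ of $\Sigma$ with $\bar{e}^2<0$ (for instance $\bar{e_1}$ or $\bar{\hat{e_4}}$), one forces the ramification divisor of $\bar\phi\colon\bar{S}\to\Sigma$ to violate the inequality $R'(h^*e)>(h^*e)^2$ of Lemma~\ref{ramification}. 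Equivalently, $\theta_1$ and $\theta_2$ would produce a divisor $d$ on $\hat{P}_2$ with $h^0(d)>1$ and $-K_{\hat{P}_2}-2d$ effective, which is impossible by the argument of Lemma~\ref{subdivisor} adapted to $\hat{P}_2$ (no line of $\mathbb{P}^2$ passes through the three relevant points $P_i$). Hence $S$ has at most one $(-2)$-curve.

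\emph{Non-existence of a lift.} Suppose $\phi=\psi\circ\tilde\phi$ for a morphism $\tilde\phi\colon S\to\hat{P}_2$, generically $4$-to-$1$. Each of $e_1,e_3,e_4,\hat{e_1},\hat{e_4},l_1$ is disjoint from, or meets to even order, the $\psi$-contracted curves $c,e_2$, so every $\phi^*(\cdot)$ appearing above is an honest pull-back $\tilde\phi^*(\cdot)$ of a Cartier divisor on $\hat{P}_2$, and the ramification class $R_{\tilde\phi}=K_S-\tilde\phi^*K_{\hat{P}_2}\equiv\phi^*\bar{e_1}-E_4'+M_1+M_2-\tilde\phi^*K_{\hat{P}_2}$ becomes completely explicit, with support among the curves already named, none of which lies over $Q$ or $Q'$. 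On the other hand $\tilde\phi^*c$ and $\tilde\phi^*e_2$ are disjoint effective divisors with $(\tilde\phi^*c)^2=(\tilde\phi^*e_2)^2=-8$, and since $K_{\hat{P}_2}$ is trivial on $c$ and on $e_2$, every component $\Gamma$ of $\tilde\phi^*c$ or of $\tilde\phi^*e_2$ satisfies $K_S\cdot\Gamma=R_{\tilde\phi}\cdot\Gamma\ge0$. As $S$ has at most one $(-2)$-curve by the previous step, $\tilde\phi^*c$ and $\tilde\phi^*e_2$ cannot both be composed of $(-2)$-curves; hence $R_{\tilde\phi}$ must contain a component dominating $c$ and a component dominating $e_2$, contradicting its explicit description. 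Therefore $\phi$ does not lift to $\hat{P}_2$.

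\emph{Expected main obstacle.} The delicate point throughout is controlling the pull-backs of the Weil, non-Cartier classes $\bar{e_1},\bar{\hat{e_1}}$ meeting the $A_1$-points, and, under the lifting hypothesis, the divisors $\tilde\phi^*c,\tilde\phi^*e_2$: one must keep precise track of the half-integral corrections along $c$ and $e_2$, and rule out ramification of $\tilde\phi$ over them, before the contradictions above can be made clean. This is exactly the situation in which, as the authors remark, appeal to explicit (computer-assisted) computation is likely to be necessary.
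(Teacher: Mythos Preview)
Your argument for the first assertion has a genuine gap. The ARDP step you invoke is not actually carried out: you say that the two $(-2)$-curves ``alter the class of one of the pull-backs entering the relation for $K_S$'' and that this forces a violation of Lemma~\ref{ramification}, but no explicit subdivisor $R'\le R$ is produced and no intersection numbers are computed. The alternative claim that $\theta_1,\theta_2$ ``would produce a divisor $d$ on $\hat{P}_2$ with $h^0(d)>1$ and $-K_{\hat{P}_2}-2d$ effective'' is likewise unsubstantiated; nothing in the setup manufactures such a $d$. In the paper the contradiction comes from an entirely different source: one takes the \emph{\'etale} double cover $\pi\colon X\to S$ associated to the $2$-torsion relation $2(M_1-M_2)\equiv\mathcal{O}_S$, computes $\chi(X)=2$, $p_g(X)\ge2$, hence $q(X)\ge1$, and obtains via Proposition~\ref{A.i} a new fibration $f\colon S\to\mathbb{P}^1$. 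A Stein-factorisation argument shows $f\neq g$, since $g\circ\pi$ factors through a double cover of $\mathbb{P}^1$ branched only over $g(M_1),g(M_2)$, hence over a rational base. But if $S$ carries two $(-2)$-curves then $\rho(\bar{S})=\rho(\Sigma)$, and Remark~\ref{rpnf} together with Proposition~\ref{fibers}~ii) forces every such fibration to come from $|l-e_1|$, i.e.\ $f=g$ --- contradiction. This second-fibration idea is the missing ingredient.

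For the non-lifting statement your instinct is right but the execution is tangled. The ramification divisor $R_{\tilde\phi}$ is not ``completely explicit'' in the sense you need: the relation $K_S\equiv\phi^*\bar{e_1}-E_4'+M_1+M_2$ does not pin down the support of $R_{\tilde\phi}$ away from $c,e_2$. The clean argument, which you almost reach, is simply this: if $\hat\phi\colon S\to\hat{P}$ exists, then $K_S\cdot\hat\phi^*c=4\,K_{\hat{P}}\cdot c=0$ and $K_S\cdot\hat\phi^*e_2=0$; since $S$ is minimal of general type, every component of $\hat\phi^*c$ and of $\hat\phi^*e_2$ is a $(-2)$-curve, and since $c\cap e_2=\emptyset$ these supports are disjoint. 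Hence $S$ carries at least two $(-2)$-curves, contradicting the first part.
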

\begin{proof}
Consider the double cover $\pi: X \rightarrow S$ given by the relation $2(M_1 - M_2) \equiv \mathcal{O}_S$. By $K_S\equiv \phi^*\bar{e_1} - E_4' + M_1 + M_2$, using Formula \ref{tool} and Lemma \ref{pullback}, we get the invariants of $X$ are as follows:
\begin{equation}
\begin{split}
\chi(X) &= 2\\
p_g(X) &= h^0(S, \phi^*\bar{e_1} - E_4' + M_1 + M_2 + (M_1 - M_2)) = h^0(S, \phi^*\bar{e_1} - E_4' + L_1)\\
&\geq h^0(\mathcal{O}_\Sigma(\bar{l_1} + \bar{e_1} - \bar{e_4})\geq h^0(\mathcal{O}_{\hat{P}}(l-e_4)) = 2\\
q(X) &\geq 1
\end{split}
\end{equation}
Therefore we get a commutative diagram as follows:
\[\begin{CD}
X         @>\pi>>          S \\
@V\alpha VV                @VfVV \\
B        @>\pi'>>      \mathbb{P}^1
\end{CD} \]
where $\alpha: Y \rightarrow B$ is the Albanese pencil and $\pi':B
\rightarrow \mathbb{P}^1$ is a double cover. We claim that the fibration $f:S\rightarrow \mathbb{P}^1$
is different from the fibration $g: S\rightarrow \mathbb{P}^1$  defined by the pencil $|L_1|$. Indeed, considering the Stein factorization $\pi'' \circ h: X \rightarrow C \rightarrow \mathbb{P}^1$ of the map $g \circ \pi$, $\pi''$ is a double cover branched along the two points $g(M_1),g(M_2)$, so $f$ is different from $g$. If $S$ has two $(-2)$ curves, then applying Remark \ref{rpnf}, Proposition \ref{fibers} tells that $f$ coincides with $g$, thus a contradiction follows.

If $\phi$ lifts to a morphism $\hat{\phi}: S \rightarrow \hat{P}$, then $\hat{\phi}^*cK_S = \hat{\phi}^*e_2K_S = 0$, thus $\phi$ contracts at least two $(-2)$-curves. For the same reason, this is impossible.
\end{proof}

In conclusion, with the notations and assumptions above, we have
\begin{enumerate}
\item[i)]{The pencil $|L_1|$ induces a genus 3 fibration with exactly 4 double fibers: $M_1, M_2, M_3=\phi^*\bar{e_3}, \hat{M_3} = 2(\hat{E_4}' + E_4')$;}
\item[ii)]{$K_S\equiv \phi^*\bar{e_1} - E_4' + M_1 + M_2$;}
\item[iii)]{$S$ contains at most one $(-2)$-curve, and the bicanonical map $\phi: S \rightarrow \Sigma$ can not lift to a morphism to $\hat{P}$.}
\end{enumerate}

\subsubsection{The case $\hat{P} = \hat{P}_4$}\label{4}

\begin{Notation} Denote by $l_3$ the strict transform of a general line
through $P_3$ and by $l_1$ the strict transform of a general line
through $P_1$. Let $Q = \psi(c + e_3)$ be the $A_2$-singularity
on $\Sigma$. Denote by $L_3$ (resp.$L_1, E_1, E_4, E_2$) the strict
transform of $\bar{l_3}$ (resp.$\bar{l_1}, \bar{e_1}, \bar{e_4},
\bar{e_2}$) with respect to $\phi$. \end{Notation}

With the notations above, it follows that
$$-K_{\hat{P}} \equiv 3l - e_1 - e_2 -e_3 -2e_4 \equiv 2l_3 + e_4
+ c + 2e_3$$ thus
$$-K_{\Sigma} \equiv 2\bar{l_3} + \bar{e_4}$$ Immediately we have
$$\psi^*\bar{l_3} = l_3 + \frac{2}{3}e_3+ \frac{1}{3}c,~~~~~~~~~
\psi^*\bar{e_4} = e_4 + \frac{2}{3}e_3+ \frac{1}{3}c$$
$$\psi^*\bar{e_2} = e_2 + \frac{2}{3}c + \frac{1}{3}e_3,~~~~~~~~~
\psi^*\bar{e_1} = e_1 + \frac{2}{3}c + \frac{1}{3}e_3$$ and then
$$\bar{l_3}\bar{e_1} = \bar{e_4}\bar{e_1} = \frac{1}{3},~~~
\bar{e_2}\bar{e_1} = \frac{2}{3},~~~\bar{e_1}^2 = -\frac{1}{3}.$$

Note that there exist at most two $(-2)$-curves on $S$ since
$\rho(S) - \rho(\Sigma) = 2$. First we introduce the following
claim.

\begin{Claim}\label{layoutof(-2)-curves} There are exactly two $(-2)$-curves which we denote by $\theta_1,\theta_2$
such that $\theta_1\theta_2 = 1$ and $\phi^{-1}Q = \theta_1\cup
\theta_2$.
\end{Claim}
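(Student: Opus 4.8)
The plan is to read off the $(-2)$-curves of $S$ from the local behaviour of the finite morphism $\bar\phi\colon\bar S\to\Sigma$ over the point $Q$. Recall that $Q$ is the only singular point of $\Sigma$ and is a rational double point of type $A_2$, so the analytic germ $(\Sigma,Q)$ is the cyclic quotient $\CC^2/(\ZZ/3)$ of type $\frac13(1,2)$ and its local divisor class group is $\ZZ/3$. Write $\bar\phi^{-1}(Q)=\{\bar q_1,\dots,\bar q_k\}$ and let $d_i$ be the local degree of $\bar\phi$ at $\bar q_i$, so that $\sum_i d_i=4$. For each $i$ there is a pull-back $\bar\phi^*$ on local class groups and a cycle-theoretic push-forward $\bar\phi_*$ with $\bar\phi_*\bar\phi^*=d_i$, so whenever the target of $\bar\phi^*$ is too small the local degree is constrained: if $\bar S$ is smooth at $\bar q_i$ then the local class group there is trivial and hence $3\mid d_i$, and since $\operatorname{Hom}(\ZZ/3,\ZZ/2)=0$ the same holds if $\bar S$ has an $A_1$-point at $\bar q_i$.

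First I would combine this with the bound "$S$ has at most two $(-2)$-curves'' (so that the only rational double points $\bar S$ can carry are $A_1$ and $A_2$). Since $\sum_id_i=4$ is not divisible by $3$, not every $\bar q_i$ can be smooth or $A_1$, so some $\bar q_i$ — say $\bar q_1$ — is an $A_2$-point. (An $A_1$-point over $Q$ is impossible: it would have local degree $3$, the single remaining preimage point would have local degree $1$, hence would be either smooth, excluded above, or isomorphic to the germ $(\Sigma,Q)$, i.e.\ a second $A_2$-point; but an $A_1$ together with an $A_2$ already contributes three $(-2)$-curves.) The minimal resolution $\eta\colon S\to\bar S$ replaces $\bar q_1$ by a chain $\theta_1+\theta_2$ of two $(-2)$-curves with $\theta_1\theta_2=1$, and by the same bound these are all the $(-2)$-curves of $S$; thus $S$ has exactly two $(-2)$-curves, $\theta_1\theta_2=1$, and $\eta^{-1}(\bar q_1)=\theta_1\cup\theta_2$.

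It then remains to show that $Q$ has no other preimage, i.e.\ $d_1=4$; granting this, $\phi^{-1}(Q)=\eta^{-1}\bigl(\bar\phi^{-1}(Q)\bigr)=\theta_1\cup\theta_2$ and the claim follows. By the preceding steps the only alternative is $d_1=1$ together with one extra preimage point $\bar q_2$, necessarily a smooth point of $\bar S$ with $d_2=3$; there $\bar\phi$ is, up to analytic isomorphism, the quotient map $\CC^2\to\CC^2/(\ZZ/3)$ (a finite degree-$3$ morphism from a smooth germ onto this $A_2$ germ is forced, via purity of the branch locus and simple connectedness of the source, to factor as an isomorphism through the quotient map), in particular unramified in codimension one, while $\bar\phi$ is a local isomorphism at $\bar q_1$. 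I expect \emph{this} last exclusion to be the only genuine obstacle: unlike everything else, the extra degree-$3$ sheet leaves no trace on intersection numbers (compare $\bar e_1^2=-\tfrac13$) or on the ramification divisor, and must be ruled out globally. The route I would take is to pass to the normalization $\tilde S$ of the fibre product $\bar S\times_\Sigma\hat P$: since $\psi\colon\hat P\to\Sigma$ is crepant, $\tilde S\to\bar S$ would coincide with the minimal resolution over $\bar q_1$ but would blow up the \emph{smooth} point $\bar q_2$, producing a non-minimal smooth surface of general type admitting a finite degree-$4$ morphism onto $\hat P$; a minimality/bicanonical argument in the spirit of the non-liftability assertion of Theorem~\ref{Theorem B} then yields a contradiction.

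Finally, an alternative to this whole scheme, closer to the style of \S\ref{1}, would be to pull back $-K_\Sigma\equiv 2\bar l_3+\bar e_4$ to obtain $2K_S\equiv 2L_3+E_4+Z$ with $Z$ supported on the $(-2)$-curves of $S$, decide whether $E_4$ is reduced as in Lemma~\ref{reducedornot}, and feed the resulting double covers $Y\to S$ into Proposition~\ref{B.i} to pin down $Z$; but even on that route the precise assertions $\theta_1\theta_2=1$ and $\phi^{-1}(Q)=\theta_1\cup\theta_2$ ultimately come down to the local analysis of $\bar\phi$ over $Q$ above, so I would organize the proof around that analysis.
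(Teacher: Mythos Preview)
Your local-class-group approach is correct for the essential content of the claim and is genuinely different from the paper's argument. The paper proceeds by pure intersection theory: it writes $\phi^*\bar e_1=E_1+x\theta_1+y\theta_2$ with the $\theta_i$ ranging over the (at most two) $(-2)$-curves of $S$, observes that $(\phi^*\bar e_1)^2=4\bar e_1^2=-\tfrac43$ forces one of $x,y$ to have denominator divisible by $3$, and then solves the linear system coming from $\phi^*\bar e_1\cdot\theta_i=0$ by Cramer's rule. The determinant of that system is $4-(\theta_1\theta_2)^2$, and the divisibility condition forces $\theta_1\theta_2=1$ (the value $0$ is excluded because then the denominators divide $4$; values $\ge 2$ are excluded by negative definiteness of the exceptional lattice). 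This is shorter and entirely self-contained; your argument is more conceptual and explains \emph{why} the $A_2$ configuration must appear, but it imports the theory of local divisor class groups and the push--pull identity.

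The part you flag as ``the only genuine obstacle'' --- ruling out an additional smooth preimage $\bar q_2$ with local degree $3$ --- is not needed, and the paper does not prove it either. The paper's proof stops at $\theta_1\theta_2=1$; the phrase ``$\phi^{-1}Q=\theta_1\cup\theta_2$'' in the statement is used only to locate the two $(-2)$-curves, and every subsequent argument in \S7.2.3 (writing $2K_S=2L_3+E_4+Z$ with $Z$ supported on the $\theta_i$, the table of cases, the double-cover computations) depends solely on the existence of exactly two $(-2)$-curves with $\theta_1\theta_2=1$, not on $\bar\phi^{-1}(Q)$ being a single point. So your elaborate fibre-product route via $\bar S\times_\Sigma\hat P$ and a Theorem~\ref{Theorem B}-style non-liftability argument is unnecessary here (and, as stated, too vague to be a proof). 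Your ``alternative'' paragraph at the end is also not what the paper does for this claim: the relation $2K_S\equiv 2L_3+E_4+Z$ is used \emph{after} the claim, whereas the claim itself is proved from $\phi^*\bar e_1$ alone.
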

\begin{proof}
By $\bar{e_1}^2 = -\frac{1}{3}$, we have $(\phi^*\bar{e_1})^2 =
-\frac{4}{3}$. Write $\phi^*\bar{e_1} = E_1 + x\theta_1 +
y\theta_2$ where $\theta_i$ is a $(-2)$-curve or zero and $x,y$
are rational numbers. Note that if $\theta_i$ is nonzero then so is its coefficient. We set the coefficient of $\theta_i$ to be
zero if $\theta_i$ is zero, so it makes sense to set
$\theta_i^2 = -2$. Then we have
$$(\phi^*\bar{e_1})^2 = E_1^2 + 2xE_1\theta_1 + 2yE_1\theta_2 -2x^2 +
2xy\theta_1\theta_2 - 2y^2 = -\frac{4}{3},$$ therefore one of the denominators of $x,y$ is divided by 3. By $\phi^*\bar{e_1}\theta_1 = \phi^*\bar{e_1}\theta_2 = 0$,
we get
\begin{equation}
\begin{split}
2x - \theta_1\theta_2y = E_1\theta_1\\
-\theta_1\theta_2x - 2y = E_2\theta_2
\end{split}
\end{equation}
Applying Crammar's rule, we get that $3|4 - \theta_1\theta_2$,
hence $\theta_1\theta_2 = 1$. So the claim is true.
\end{proof}
Denote by $\theta_1, \theta_2$ the two $(-2)$-curves on $S$. By
$-K_{\Sigma} \equiv 2\bar{l_3} + \bar{e_4}$, we may write $$2K_S =
2L_3 + E_4 + Z,$$ then $L_3Z>0, E_4Z>0$ since $\bar{l_3}$ and
$\bar{e_4}$ pass through the singular point on $\Sigma$. Set $Z =
a\theta_1 +b\theta_2$ where $a,b$ are positive integers. We assume
$b\leq a$. By
$$K_SL_3 = 4, K_SE_4 = 2, K_S\theta_i = 0~i=1,2,$$ we get that
\begin{equation}\label{inequt1}
8 = 2K_SL_3 = 2L_3^2 + L_3E_4 + L_3Z \Rightarrow L_3Z = 8 - 2L_3^2
- L_3E_4 >0
\end{equation}
\begin{equation}\label{inequt2}
4 = 2K_SE_4 = 2L_3E_4 + E_4^2 + E_4Z \Rightarrow E_4Z = 4 - E_4^2
- 2L_3E_4 >0
\end{equation}
\begin{equation}\label{equt1}
\begin{split}
&0 = 2K_SZ = 2L_3Z + E_4Z + Z^2 \\
&\Rightarrow -Z^2 = 2L_3Z + E_4Z = 20 - 4L_3^2 - 4L_3E_4 - E_4^2\\
& \Rightarrow a^2 + b^2 - ab = 10 - 2L_3^2 - 2L_3E_4 -
\frac{E_4^2}{2}
\end{split}
\end{equation}
By $E_4\theta_i \geq 0, L_3\theta_i\geq 0$, we have $Z\theta_i
\leq 0$ which implies
\begin{equation}\label{inequt3}
b\leq a \leq 2b
\end{equation}

Applying the argument in Section~\ref{1} Case 1 to the relation $2K_S =
2L_3 + E_4 + Z$, we exclude the case when
$E_4$ is non-reduced. From now on, we assume that $E_4$ is reduced.

By $K_SL_3 = 4$, applying Hodge index theorem, we have $L_3^2 =
0$~or~$2$. By Lemma \ref{key1}, we have $E_4^2 = -2$ or~
$-4$~or~$-6$. By $L_3Z = 8 - 2L_3^2 - L_3E_4 \leq 8$, we have
\begin{equation}\label{inequt4}
min\{a,b\}\leq 8
\end{equation}

With the equation \ref{equt1} and the inequalities \ref{inequt1},
\ref{inequt2}, \ref{inequt3}, \ref{inequt4}, going a computer
program, we list all the possibilities:
\begin{align*}
&a  &  &b  &  &L_3^2    &  &L_3E_4    &  &E_4^2    &  &E_4Z \\
&2  &  &1  &  &2        &  &2         &  &-2       &  &2      \\
&2  &  &2  &  &2        &  &2         &  &-4       &  &4      \\
&3  &  &2  &  &0        &  &2         &  &-2       &  &2      \\
&3  &  &2  &  &2        &  &0         &  &-2       &  &6      \\
&3  &  &3  &  &0        &  &1         &  &-2       &  &4      \\
&4  &  &2  &  &0        &  &0         &  &-4       &  &8      \\
&2  &  &1  &  &2        &  &3         &  &-6       &  &4      \\
&3  &  &2  &  &0        &  &3         &  &-6       &  &4      \\
&3  &  &2  &  &2        &  &1         &  &-6       &  &8      \\
&3  &  &3  &  &0        &  &2         &  &-6       &  &6      \\
&3  &  &3  &  &2        &  &0         &  &-6       &  &10      \\
&4  &  &3  &  &0        &  &0         &  &-6       &  &10     \\
\end{align*}

Write $Z = 2Z' + Z''$ where $Z''$ is reduced. Then by $2(K_S - L_3
- Z') \equiv E_4 + Z''$, we get a double cover $\pi:Y \rightarrow
S$ branched over $E_4$ and $Z''$. Check case by case listed in
the table that $E_4 + Z''$ has at most negligible singularities as follows: say for the case in the first row, we have $Z''= \theta_2$, and by $\theta_2Z = 0$, we deduce $E_4Z'' = 0$, then since $E_4$ has arithmetic genus 1, we can see that $E_4 + Z''$ has at most negligible singularities; say for the case in the 9th row, we have $Z''=\theta_1$ and $E_4\theta_1 \leq 2$, then since $E_4$ is composed with 2 disjoint smooth $(-3)$-curves, $E_4\cap Z''$ are the singularities of $E_4 + Z''$, thus $E_4 + Z''$ has at most double points, so we are done. So $Y$ has at most
canonical singularities. Then we can use Formula \ref{tool} to calculate the invariants of $Y$:
$$\chi(Y) = 2 + \frac{1}{2}(K_S + \frac{E_4 + Z''}{2})\frac{E_4 +
Z''}{2} = 2 + \frac{1}{2}(1 + \frac{E_4^2 + Z''^2 + 2E_4Z''}{4})$$
$$p_g(Y) = h^0(S, 2K_S - L_3 - Z') = h^0(S, L_3 + Z' + E_4 + Z'')
\geq 2.$$

\begin{Claim} $\chi(Y) \leq 2$ and $q(Y)\geq 1$.\end{Claim}
\begin{proof}
We only need to show that $\frac{1}{2}(1 + \frac{E_4^2 + Z''^2 +
2E_4Z''}{4}) < 1$, i.e., $E_4^2 + Z''^2 + 2E_4Z'' < 4$. If $Z'' =
0$, then we are done. So we may assume $Z'' \neq 0$, then
$Z''^2 = -2$, thus it suffices to show $E_4Z''< 3 -
\frac{E_4^2}{2}$. From the table above, it follows from the observation that $E_4Z'' \leq \frac{E_4Z}{b}$.
\end{proof}

We denote by $\alpha: Y \rightarrow B$ the Albanese pencil. By proposition~\ref{B.i}, we get a fibration $g: S \rightarrow
\mathbb{P}^1$ and a double cover $\pi': B \rightarrow
\mathbb{P}^1$ such that $g\circ \pi = \pi' \circ \alpha$. Checking
that $(E_4 + Z'')\theta_i < 4$, then using the Riemann-Hurwitz formula,
$\pi^{-1}\theta_i$ is composed with one or two rational curves (maybe singular) thus contained in some fibers of $\alpha$,
hence $\theta_i$ is contained in one fiber of $g$. Since $\rho(\bar{S}) =
\rho(\Sigma)$, by Remark~\ref{rpnf}, we may assume $g$ is induced
by the pencil $|L_1| = \phi^*|\bar{l_1}|$. Since $\pi': B
\rightarrow \mathbb{P}^1$ is branched over at least 4 points, so
$\bar{g}: \bar{S} \rightarrow \mathbb{P}^1$ has at least 3 double
fibers $2\bar{M_1}, 2\bar{M_2}, 2\bar{M_3}$ which does not contain
$\bar{E_4}$ as a component. Since every fiber of $\bar{g'}$ is
reduced, $\bar{\phi}$ is ramified along $\bar{M_1}, \bar{M_2},
\bar{M_3}$.
\begin{Claim} With the assumptions above, $E_2$ is non-reduced. \end{Claim}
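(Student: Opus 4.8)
The plan is to argue by contradiction and to finish with the ramification divisor process ($ARDP$). \emph{Assume $E_2$ is reduced.} Then by Lemma~\ref{key1} $E_2$ is a reduced curve with $K_SE_2=2$ and $E_2^2\ge -6$; writing $\phi^{*}\bar e_2=E_2+x\theta_1+y\theta_2$ with $x,y\ge 0$, the conditions $\phi^{*}\bar e_2\cdot\theta_i=0$ and $(\phi^{*}\bar e_2)^2=4\bar e_2^{\,2}=-\tfrac43$ together with $\theta_1\theta_2=1$ (Claim~\ref{layoutof(-2)-curves}) pin down $x,y$ and the numbers $E_2^2,\ E_2\theta_i$ up to a short list, exactly as in the table preceding the claim. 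I would fix such a numerical type and keep it throughout.

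The key step is to manufacture an auxiliary double cover that forces a genus $3$ pencil on $S$ with four double fibres. Using a linear equivalence that exhibits $\bar e_2$ with multiplicity two, for instance $-K_\Sigma\equiv 2\bar e_2+\bar e_1+\bar l_2$ (with $\bar l_2$ the image of a general line through $P_2$, so that $\bar l_2$ is Cartier near $Q$, $\bar l_2^{\,2}=0$, $K_SL_2=4$, $\bar l_2\bar e_2=1$, $\bar l_2\bar e_1=0$), and using that $E_2$ is reduced, one gets $2K_S\equiv L_2+2E_2+N$ with $N:=\phi^{*}(\bar e_1+2\bar e_2)-2E_2$ an integral effective divisor supported on the strict transform of $\bar e_1$ and on $\theta_1,\theta_2$. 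Writing $N=2W+W''$ with $W''$ reduced, the relation $2(K_S-E_2-W)\equiv L_2+W''$ defines a double cover $\pi:Y\to S$ branched along $L_2+W''$. By Formula~\ref{tool}, and since $L_2W''=0$, one has $\chi(Y)=3+\tfrac14K_SW''+\tfrac18(W'')^{2}$, so integrality of $\chi(Y)$ combined with the numerical list for the components of $W''$ forces $\chi(Y)\le 3$; and by Lemma~\ref{pullback}, comparing $h^0(S,2K_S-E_2-W)=h^0(S,L_2+E_2+W+W'')$ with $h^0$ of a suitable Weil divisor on $\Sigma$, namely $\ge h^0(\mathcal O_{\hat P}(2l-e_2-e_3))=4$, one obtains $p_g(Y)\ge 4$, hence $q(Y)\ge 2$. (One must also check, in each case of the list, that $L_2+W''$ has at most negligible singularities, so that $Y$ has at most canonical singularities and Propositions~\ref{A.i} and \ref{C.i} apply.)

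By Proposition~\ref{A.i} the Albanese pencil of $Y$ descends to a fibration $g_0:S\to\mathbb P^1$ through a degree $2$ map $B\to\mathbb P^1$, with $B$ of genus $q(Y)$. Since $\pi$ is ramified along the multisection $L_2$, a general fibre of $g_0$ must be disjoint from $L_2$; as $L_2^{\,2}=0$, $K_SL_2=4$ and $S$ carries no genus $2$ fibration, $L_2$ is a single fibre, so $g_0$ is the genus $3$ fibration $|L_2|$, and it is the pull-back of $|\bar l_2|$ on $\Sigma$ (Remark~\ref{rpnf}, the two $(-2)$-curves lying in one of its fibres). The branch locus $L_2+W''$ meets at most two fibres of $|L_2|$, so Proposition~\ref{C.i} gives that $|L_2|$ has at least $2q(Y)+2-2\ge 4$ double fibres $2\bar M_1,\dots,2\bar M_4$ on $\bar S$. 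As every member of $|\bar l_2|$ is reduced, $\bar\phi$ is ramified with branching order $2$ along each $\bar M_i$, so $\sum_{i=1}^4\bar M_i\le R\equiv 3K_{\bar S}$. Then, since $K_{\bar S}\bar\phi^{*}\bar e_2=2$ and $\bar M_i\bar\phi^{*}\bar e_2=2\bar l_2\bar e_2=2$, we get $\big(R-\sum_{i=1}^4\bar M_i\big)(\bar\phi^{*}\bar e_2)=6-8=-2<-\tfrac43=(\bar\phi^{*}\bar e_2)^2$, which contradicts Lemma~\ref{ramification} applied to $\bar e_2$ (whose self-intersection is negative). This proves that $E_2$ is non-reduced.

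The main obstacle is the second step: finding the precise relation that isolates a single copy of the non-Cartier divisor $\phi^{*}\bar e_2$ inside a half-$(2K_S)$ divisor with effective complement, and then, for each of the finitely many numerical types of $\phi^{*}\bar e_2$ (and of the strict transform of $\bar e_1$) produced in the first step, verifying both that $L_2+W''$ has only negligible singularities and that the $h^0$-estimate really yields $p_g(Y)\ge 4$. This is exactly the kind of bookkeeping — tracking the thirds coming from the $A_2$-point — for which a computer check, as elsewhere in the paper, may be the cleanest route; once $Y$ and its invariants are under control, the identification of $g_0$ and the $ARDP$ step are routine.
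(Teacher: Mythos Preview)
Your approach is genuinely different from the paper's, and in fact much harder than what is needed. The paper's proof is about five lines and uses \emph{only} the double cover $\pi:Y\to S$ (branched along $E_4+Z''$) and the fibration $g:S\to\mathbb P^1$ induced by $|L_1|$ that were already constructed in the paragraphs immediately preceding the Claim. The key observation is that $\bar e_2$ and $\bar e_4$ lie in the \emph{same} fibre of $|\bar l_1|$ (the one coming from the line $c$ through $P_1,P_2,P_3$), so $g(E_2)=g(E_4)$. Since $\pi$ is branched over $E_4$, the base double cover $\pi':B\to\mathbb P^1$ must be branched over $g(E_4)=g(E_2)$. If $E_2$ were reduced, then in the fibre product $B\times_{\mathbb P^1}S$ the projection to $S$ is ramified along $E_2$ and is already normal there; since $Y$ maps birationally to this fibre product, $\pi$ would also be ramified along $E_2$ --- contradicting the fact that the branch locus of $\pi$ is exactly $E_4+Z''$ and $E_2$ is neither $E_4$ nor a $(-2)$-curve.

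Your route --- building a \emph{new} double cover from $-K_\Sigma\equiv \bar l_2+2\bar e_2+\bar e_1$, forcing $q(Y)\ge 2$, extracting four double fibres of $|L_2|$, then running ARDP against $\bar e_2$ --- is in principle a valid strategy, and your final ARDP inequality $(R-\sum\bar M_i)\cdot\bar\phi^*\bar e_2=-2\le -\tfrac43=(\bar\phi^*\bar e_2)^2$ does contradict Lemma~\ref{ramification}. But two of your intermediate steps are genuine gaps as written. First, the bound $p_g(Y)\ge h^0(\mathcal O_{\hat P}(2l-e_2-e_3))$ needs an explicit Weil divisor $D$ on $\Sigma$ with $[\phi^*D]\le L_2+E_2+W+W''$; you have not produced one, and the obvious candidates involve fractional coefficients in $\theta_1,\theta_2$ whose floors must be compared to the (unknown) coefficients in $W+W''$. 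Second, ``integrality of $\chi(Y)$ forces $\chi(Y)\le 3$'' depends on the full numerical classification of $E_1,E_2,\theta_1,\theta_2$ and their mutual intersections, which you defer to a computer check you have not carried out; without it you cannot rule out, e.g., $K_SW''=2$ with $W''^2=0$ sneaking $\chi(Y)$ up. Even if both gaps close, you have recreated from scratch a machine that the paper had already built --- the existing cover $\pi$ and fibration $g$ give the result immediately once you notice $g(E_2)=g(E_4)$.
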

\begin{proof}
First remark that $\pi': B \rightarrow \mathbb{P}^1$ is branched
over the point $g(E_4) = g(E_2)$. Let
$Z = B\times_{\mathbb{P}^1}S$, and denote by $p: Z \rightarrow S$ the projection. If $E_2$ is reduced, then the map $p$ is
branched along $E_2$, and $Z$ is normal along the locus over $E_2$.
Note that there is a natural birational morphism $h: Y\rightarrow
Z$ such that $\pi = p\circ h$, so the map $\pi: Y\rightarrow S$ is
also branched over $E_2$. But the branch locus of $\pi$ does not contain $E_2$, and we are done.
\end{proof}

By the claim, we may assume $E_2= 2E_2'$, then $\bar{\phi}$ is
ramified along $\bar{E_2'}$. Denote the ramification
divisor of $\bar{\phi}$ by $R$,
and put $R_1 = \bar{M_1} + \bar{M_2} + \bar{M_3} + \bar{E_2'}$. Note that $M_i(\bar{\phi}^*\bar{e_1}) = 2,i=1,2,3$ and $\bar{E_2'}(\bar{\phi}^*\bar{e_1}) = \frac{4}{3}$ since $\bar{e_2}\bar{e_1} = \frac{2}{3}$. We
get a contradiction from Lemma~\ref{ramification} since $(R-R_1)\bar{E_1} = -\frac{4}{3} = (\bar{\phi}^*\bar{e_1})^2$.

In conclusion, we prove $\hat{P} \neq \hat{P}_4$.

\subsubsection{The case $\hat{P} = \hat{P}_3$}\label{3}

\begin{Notation} We denote by $l_4$ the strict transform of a
general line through $P_4$, by $l_1$ the strict transform of a
general line through $P_1$. Let $Q_1 = \psi(c)$ and $Q_2 =
\psi(e_1+ e_2)$. Denote by $L_4$ (resp.$L_1, E_3, E_4$) the strict
transforms of $\bar{l_4}$ (resp.$\bar{l_1}, \bar{e_3}, \bar{e_4}$)
with respect to $\phi$.\end{Notation}

Immediately, it follows that $l_4 \equiv l -e_4$, $l_1 \equiv l -
e_1 - e_2 - e_3$ and $c \equiv l - e_1 - 2e_2 - 3e_3$. So we have
\begin{equation}
\begin{split}
-K_{\hat{P}} &\equiv 3l - e_1 - 2e_2 - 3e_3 - e_4 \\
&\equiv 3l_4 +2e_4 - e_1 - 2e_2 - 3e_3\\
&\equiv 2l_4 + e_4 + c
\end{split}
\end{equation}
hence
\begin{equation}
\begin{split}
-K_{\Sigma}
&\equiv 3\bar{l_4} + 2\bar{e_4} - 3\bar{e_3}\\
&\equiv 2\bar{l_4} + \bar{e_4}
\end{split}
\end{equation}
Considering the pull-backs $$\psi^*\bar{l_4} = l_4 + \frac{1}{2}c,~~~
\psi^*\bar{l_1} = l_1 + \frac{2}{3}e_1+
\frac{1}{3}e_2,~~~\psi^*\bar{e_3} = e_3 + \frac{2}{3}e_2+
\frac{1}{3}e_1 + \frac{1}{2}c$$ we get
$$\bar{l_4}^2 = \frac{1}{2},~~~\bar{l_1}^2 = \frac{2}{3},~~~\bar{e_3}^2 = \frac{1}{6}.$$

Using the fact $\bar{l_1}^2 = \frac{2}{3}$, similar argument as in
Section~\ref{4} Claim~\ref{layoutof(-2)-curves} shows

\begin{Claim} $\phi^{-1}Q_2 = \theta_2 \cup \theta_3$ where $\theta_2$ and
$\theta_3$ are $(-2)$-curves such that $\theta_2\theta_3 = 1$.
\end{Claim}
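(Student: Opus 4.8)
The plan is to imitate the proof of Claim~\ref{layoutof(-2)-curves}, using the $(-2)$-curves that occur in the pull-back $\phi^*\bar{l_1}$ as a probe for the singularity $Q_2$. The numerical input is that $\phi: S\rightarrow\Sigma$ is finite of degree $4$ and $\bar{l_1}$ is $\mathbb{Q}$-Cartier (since $\Sigma$ has only rational double points), so the projection formula gives $(\phi^*\bar{l_1})^2=4\,\bar{l_1}^2=\frac{8}{3}$. On the other hand, from $\psi^*\bar{l_1}=l_1+\frac{2}{3}e_1+\frac{1}{3}e_2$ one reads off that $\bar{l_1}$ passes through $Q_2$ but not through $Q_1$; hence, writing $\phi^*\bar{l_1}=L_1+\sum_i x_i\theta_i$ with the $\theta_i$ running over the $(-2)$-curves of $S$ and the $x_i$ positive rationals, only the $\theta_i$ lying over $Q_2$ can occur, and conversely every $(-2)$-curve of $S$ over $Q_2$ does occur with positive coefficient. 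As $S$ has at most $\rho(S)-\rho(\Sigma)=3$ $(-2)$-curves, one over $Q_1$ coming from $c$ and the rest over $Q_2$, at most two of them lie over $Q_2$; so I may write $\phi^*\bar{l_1}=L_1+x\theta_2+y\theta_3$ with each $\theta_i$ a $(-2)$-curve of $S$ over $Q_2$ or zero, and $x,y\in\mathbb{Q}$.

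Now the intersection form finishes the argument exactly as in Claim~\ref{layoutof(-2)-curves}. Intersecting $\phi^*\bar{l_1}$ with $\theta_2$ and with $\theta_3$ (using $\phi^*\bar{l_1}\cdot\theta_i=0$) gives a linear system in $x,y$ with matrix $\left(\begin{smallmatrix}-2 & \theta_2\theta_3\\ \theta_2\theta_3 & -2\end{smallmatrix}\right)$, of determinant $4-(\theta_2\theta_3)^2$. Applying Cramer's rule as in Claim~\ref{layoutof(-2)-curves} and comparing with the expansion $(\phi^*\bar{l_1})^2=L_1^2+2xL_1\theta_2+2yL_1\theta_3-2x^2-2y^2+2(\theta_2\theta_3)xy=\frac{8}{3}$ — in which $L_1^2$ and the $L_1\theta_i$ are integers, so the genuinely fractional part has reduced denominator $3$ — one obtains $3\mid 4-(\theta_2\theta_3)^2$. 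Since $\theta_2,\theta_3$ are distinct $(-2)$-curves contracted by $S\rightarrow\bar S$ (or one of them is zero), their intersection matrix is negative definite, which forces $\theta_2\theta_3\le 1$; hence $\theta_2\theta_3=1$. In particular both $\theta_2,\theta_3$ are genuine $(-2)$-curves, and as they are precisely the $(-2)$-curves of $S$ over $Q_2$, we conclude $\phi^{-1}Q_2=\theta_2\cup\theta_3$ with $\theta_2\theta_3=1$.

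I expect the main obstacle to be the bookkeeping in the first paragraph rather than the computation: one must be sure that no $(-2)$-curve over $Q_1$, and no other spurious component, enters $\phi^*\bar{l_1}$, and that $S$ has at most two $(-2)$-curves over $Q_2$. Both points rest on the Picard-number comparison $\rho(S)-\rho(\Sigma)=3$ (via Remark~\ref{rpnf} and the explicit list of $(-2)$-curves on $\hat{P}_3$) together with the pull-back formula $\psi^*\bar{l_1}=l_1+\frac23 e_1+\frac13 e_2$; once these are secured, what remains is the same elementary Cramer-rule manipulation already carried out in Section~\ref{4}.
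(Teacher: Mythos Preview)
Your approach is the paper's: its entire proof is the single sentence ``Using the fact $\bar{l_1}^2=\frac{2}{3}$, similar argument as in Section~\ref{4} Claim~\ref{layoutof(-2)-curves} shows\ldots'', i.e.\ exactly the denominator-$3$ Cramer computation with $\phi^*\bar{l_1}$ that you outline.

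The one loose thread is your justification that at most two $(-2)$-curves of $S$ lie over $Q_2$. You deduce this from ``one over $Q_1$ coming from $c$'', but $c$ is a $(-2)$-curve on $\hat{P}_3$, not on $S$; nothing so far forces $S$ to carry a $(-2)$-curve over $Q_1$, and indeed the paper's Case~1 immediately following this Claim treats precisely the situation where it does not. A priori all three of the at most $\rho(S)-\rho(\Sigma)=3$ $(-2)$-curves of $S$ could map to $Q_2$, and then the two-variable Cramer setup is inadequate. The repair is routine --- carry a third $(-2)$-curve through the computation as the paper does in Claim~\ref{nofc} for $\hat{P}_5$, and check that no negative-definite configuration on three vertices produces the required denominator~$3$ --- but the reduction to two variables needs a different argument than the one you give.
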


Then there exists at most one $(-2)$-curve on $S$ except for
$\theta_2,\theta_3$ since $\rho(S) - \rho(\Sigma) = 3$.

\begin{itemize}
\item[Case 1:]{There's no $(-2)$-curve aside from
$\theta_2,\theta_3$, consequently $\phi^{-1}Q_1$ is composed with two
points.}
\end{itemize}

We write $\phi^*3\bar{e_3} = 3E_3 + Z$ where $Z =
a\theta_2 + b\theta_3$ with $a,b$ are positive integers, and
assume $a \geq b$.

Note that in this case, $\phi^*\bar{l_4}, \phi^*\bar{e_4}$ are Cartier divisors that do not contain $(-2)$-curves.
Then by $-K_{\Sigma} \equiv 3\bar{l_4} + 2\bar{e_4} - 3\bar{e_3}$,
we have $$2K_S \equiv 3L_4 + 2\phi^*\bar{e_4} - 3E_3 - Z$$

Subcase 1: $E_3$ is reduced.

In this case, by Lemma~\ref{key1}, since $(\phi^*\bar{e_3})^2 = \frac{2}{3}$ and $E_3^2 \leq (\phi^*\bar{e_3})^2$, it is possible that
$E_3^2=0$~or~$-2$~or~$-4$~or~$-6$. Combining the two formulas
$(\phi^*3\bar{e_3})^2 = 6$ and $(\phi^*\bar{e_3})\theta_2 =
(\phi^*\bar{e_3})\theta_3 = 0$, we get that:
$$9E_3^2 + 6aE_3\theta_2 + 6bE_3\theta_3 - 2a^2 + 2ab - 2b^2 = 6$$
$$3E_3\theta_2 - 2a + b = 3E_3\theta_3 + a - 2b = 0$$
Resolving these
equations, we get that either
$$E_3^2 = 0, ~~Z = 2\theta_2 + \theta_3,~~E_3\theta_2 = 1,
~~E_3\theta_3 = 0
$$ or
$$E_3^2 = -2, ~~Z = 4\theta_2 +
2\theta_3, ~~E_3\theta_2 = 2, ~~E_3\theta_3 = 0$$ or
$$E_3^2 = -4, ~~Z = 5\theta_2 +
4\theta_3~~E_3\theta_2 = 2, ~~E_3\theta_3 = 1.$$

If we write $Z = 2Z' +
Z''$ where $Z''$ is reduced, then we have
$$2(K_S - L_4 - \phi^*\bar{e_4} + 2E_3 + Z' + Z'')
\equiv L_4 + E_3 + Z''$$ By the relation, we get a double cover $\pi: Y
\rightarrow S$. Note that $L_4^2= 4\bar{l_4}^2 = 2$, $L_4Z = 0$ and
$L_4E_3 = 2$. Checking that $[-\phi^*\bar{e_3}] = [-E_3 - \frac{Z}{3}] \leq -E_3 - Z'$, then by use of Formula \ref{tool} and Lemma~\ref{pullback}, we calculate the
invariants of $Y$ as follows:
\begin{equation}
\begin{split}
\chi(Y) &= 2 + \frac{(K + \frac{L_4 + E_3 + Z''}{2}) (\frac{L_4 +
E_3 + Z''}{2})}{2}\\
 & = 2 + \frac{3 + \frac{L_4^2 + 2L_4E_3 + Z''^2 + E_3^2
+ 2E_3Z''}{4}}{2} = 4\\
p_g(Y) &= h^0(S, 2K_S - L_4 - \phi^*\bar{e_4} + 2E_3 + Z' + Z'')= h^0(S, 2L_4 + \phi^*\bar{e_4} - E_3 - Z') \\
&\geq h^0(\mathcal{O}_\Sigma(2\bar{l_4} + \bar{e_4} - \bar{e_3}))
\geq h^0(\hat{P}, 2l - e_1 - e_2 - e_3 - e_4) \geq 4,
\end{split}
\end{equation}
thus $q(Y) \geq 1$. By Proposition~\ref{A.i}, we obtain
\[\begin{CD}
Y          @>\pi>>          S \\
@V\alpha VV                @VgVV \\
B        @>\pi'>>      \mathbb{P}^1
\end{CD} \]
where $\alpha: Y \rightarrow B$ is the Albanese pencil and $\pi':B
\rightarrow \mathbb{P}^1$ is a double cover. Note that $L_4$ is contained in
one fiber since it is branched over by $\pi: Y \rightarrow S$.
This contradicts $L_4^2 = 2$, so this case does not occur.

Subcase 2: $E_3$ is non-reduced.

By Lemma \ref{key1}, we can assume $E_3 = 2E_3'$. Then we get:
$$2(K_S - L_4 - \phi^*\bar{e_4} + 3E_3' + Z' + Z'')
\equiv L_4 + Z''.$$ Considering the double cover $\pi: Y \rightarrow S$ given by the
relation, and calculating the invariants of $Y$, we obtain:
\begin{equation}
\begin{split}
\chi(Y) &= 2 + \frac{(K + \frac{L_4 + Z''}{2}) (\frac{L_4 +
Z''}{2})}{2} \\
& = 2 + \frac{2 + \frac{L_4^2 + Z''^2}{4}}{2} \leq 3\\
p_g(Y) &= h^0(S, 2K_S - L_4 - \phi^*\bar{e_4} + 3E_3' + Z' + Z'')\\
&= h^0(S, 2L_4 + \phi^*\bar{e_4} - 3E_3' - Z') \\
&\geq h^0(\mathcal{O}_\Sigma(2\bar{l_4} + \bar{e_4} - 2\bar{e_3}))
\geq h^0(\hat{P}, 2l - e_1 - 2e_2 - 2e_3 - e_4) =3,
\end{split}
\end{equation}
and thus $q(Y) \geq 1$. By similar argument as above, we get a contradiction.

\begin{itemize}
\item[Case 2:]{There exists another $(-2)$-curve on $S$ except
for $\theta_2,\theta_3$.}
\end{itemize}

Since there are 3 $(-2)$-curves on $S$, $\rho(\bar{S}) =
\rho(\Sigma) = 2$. By $-K_{\Sigma} \equiv 2\bar{l_4} + \bar{e_4}$,
we have
$$2K_S \equiv 2L_4 + E_4 + Z$$
where $Z$ is zero or supported on exactly one $(-2)$-curve since $\bar{l_4}$ and $\bar{e_4}$ do not contain the $A_2$-singularity $Q_2$. We may write $Z=2Z'
+ Z''$ where $Z''$ is reduced. No matter whether $E_4$ is reduced
or not, by the double covering trick, arguing as in the proof of Lemma \ref{reducedornot} and Case 1 in Section \ref{1}, it is easy to show that $S$
has a fibration such that every $(-2)$-curve is contained in one
fiber. We omit the details. Then by Remark~\ref{rpnf}, there exists a fibration of
$\hat{P}$ such that every $(-2)$-curve is contained in one fiber.
However, this contradicts Proposition~\ref{fibers}.

In conclusion we prove $\hat{P} \neq \hat{P}_3$

\subsubsection{The case $\hat{P} = \hat{P}_5$}\label{5}

\begin{Notation} Denote by $l_2$ the strict transform of a general
line through $P_2$. Let $Q = \psi(c + e_2 +e_3)$ be the
$A_3$-singularity on $\Sigma$. Denote by $L_2$ (resp.$E_1, E_4$) the
strict transforms of $\bar{l_2}$ (resp.$\bar{e_1}, \bar{e_4}$)
with respect to $\phi$.
\end{Notation}

It follows that $l_2 \equiv l -e_2 - e_3 -e_4$, $c \equiv l - e_1
- e_2 - 2e_3 - 2e_4$, and then
$$-K_{\hat{P}} \equiv 3l - e_1 - e_2 - 2e_3 - 3e_4
\equiv 2l_2 + e_4 + c + 2e_2 + 2e_3$$ hence
$$-K_{\Sigma} \equiv 2\bar{l_2} + \bar{e_4}$$
Considering the pull-backs
$$\psi^*\bar{l_2} = l_2 + \frac{3}{4}e_2+ \frac{1}{2}e_3 +
\frac{1}{4}c
\equiv \frac{5}{4}l - \frac{1}{4}e_1 - \frac{1}{2}e_2 - e_3-
\frac{3}{2}e_4$$
$$\psi^*\bar{e_4} = e_4 + e_3+ \frac{1}{2}(e_2
+ c) \equiv \frac{1}{2}(l - e_1)$$
$$\psi^*\bar{e_1} = e_1 + \frac{3}{4}c+ \frac{1}{2}e_3 +
\frac{1}{4}e_2 \equiv \frac{3}{4}l + \frac{1}{4}e_1 -
\frac{1}{2}e_2 - e_3 - \frac{3}{2}e_4 \equiv \bar{l_2} -
\bar{e_4},$$ we get
$$\bar{l_2}^2 = \frac{3}{4},~~\bar{e_4}^2 = 0,~~\bar{l_2}\bar{e_1} =\frac{1}{4},~~ \bar{e_4}\bar{e_1} = \frac{1}{2},~~
\bar{l_2}\bar{e_4} = \frac{1}{2},~~\bar{e_1}^2 = -\frac{1}{4}.$$

Since $\rho(S) - \rho(\Sigma) = 3$, $S$ has at most three $(-2)$-curves, precisely we have the following claim.
\begin{Claim}\label{nofc} $\phi^{-1}(Q)$ is composed with either three $(-2)$-curves which we denote by $\theta_1,\theta_2, \theta_3$
such that $\theta_1\theta_2 = \theta_2\theta_3 = 1$ or 2 disjoint
$(-2)-curves$.
\end{Claim}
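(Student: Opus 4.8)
The plan is to pin down the configuration of $(-2)$-curves lying over the $A_3$-point $Q$. Write $\Theta$ for the one-dimensional part of $\phi^{-1}(Q)$. Since $2K_S=\phi^*(-K_\Sigma)$ with $-K_\Sigma$ ample, every $(-2)$-curve $\theta$ of $S$ has $2K_S\theta=\phi^*(-K_\Sigma)\cdot\theta=0$, so it is contracted by $\phi$ to a singular point of $\Sigma$; as $\Sigma$ has only the singularity $Q$, $\Theta$ is exactly the union of the $(-2)$-curves of $S$, and $\eta\colon S\to\bar S$ contracts $\Theta$ onto the rational double points of $\bar S$ above $Q$. Hence the dual graph of $\Theta$ is a disjoint union of $A$--$D$--$E$ diagrams; since $\rho(S)-\rho(\Sigma)=3$ it has at most three vertices, so $D_n,E_6,E_7,E_8$ are excluded and the only a priori possibilities for $\Theta$ are $\varnothing$, $A_1$, $A_2$, the $A_3$-chain, $A_1\sqcup A_1$, $A_2\sqcup A_1$ and $3A_1$.

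First I would cut this list down by counting local degrees of the finite map $\bar\phi\colon\bar S\to\Sigma$. A rational double point of type $A_n$ is the quotient singularity $\mathbb C^2/(\mathbb Z/(n+1))$, and $(\Sigma,Q)=\mathbb C^2/(\mathbb Z/4)$; lifting a finite germ $(\bar S,p)\to(\Sigma,Q)$ to the smooth covers shows that $4$ divides $|\pi_1^{\mathrm{loc}}(\bar S,p)|\cdot\deg_p\bar\phi$, whence $\deg_p\bar\phi\ge 4$ if $p$ is smooth, $\deg_p\bar\phi\in\{2,4\}$ if $p$ is of type $A_1$, $\deg_p\bar\phi\ge 4$ if of type $A_2$, and $\deg_p\bar\phi\ge 1$ only for type $A_3$. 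Since $\sum_{p\in\bar\phi^{-1}(Q)}\deg_p\bar\phi=4$ and an $A_3$-point already carries three curves (so at most one such point can occur, given $\rho(S)-\rho(\Sigma)=3$), the numerics leave only two cases: a single point above $Q$ of local degree $4$, in which case $\Theta\in\{\varnothing,A_1,A_2,A_3\text{-chain}\}$; or two points above $Q$, necessarily both of type $A_1$ and local degree $2$, in which case $\Theta=A_1\sqcup A_1$. In particular $A_2\sqcup A_1$ and $3A_1$ are impossible.

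It then remains to exclude $\Theta=\varnothing$, $\Theta=A_1$ and $\Theta=A_2$, which I would do with a parity argument on $\phi^*\bar e_1$. Let $E_1$ be the strict transform of $\bar e_1$; by Lemma~\ref{key1} one has $K_SE_1=2$, and in each of the three alternatives of that lemma adjunction makes $E_1^2$ \emph{even} (for $E_1$ irreducible, $E_1^2=2p_a(E_1)-4$; for $E_1=2E_1'$, $E_1^2\in4\mathbb Z$; for $E_1=A+B$ with $K_SA=K_SB=1$, both $A^2$ and $B^2$ are odd, so $E_1^2=A^2+B^2+2A\cdot B$ is even). Now $\phi^*\bar e_1=\eta^*\bar\phi^*\bar e_1$ is orthogonal to every $\eta$-exceptional curve, so writing $\phi^*\bar e_1=E_1+\sum_ix_i\theta_i$ the coefficient vector is $\vec x=-M^{-1}\vec v$, where $M=(\theta_i\cdot\theta_j)$ is negative definite (Artin) and $\vec v=(E_1\cdot\theta_i)$ is a nonzero vector of nonnegative integers — nonzero because $\bar e_1$, meeting the end $c$ of the $A_3$-chain on $\hat P_5$, passes through $Q$. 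Then $(\phi^*\bar e_1)^2=4\bar e_1^2=4\cdot(-\tfrac14)=-1$ gives $-\vec v^{\,T}M^{-1}\vec v=-1-E_1^2$, an \emph{odd} integer. Evaluating this quadratic form: for $M$ empty it is $0$; for $A_1$ it is $\tfrac12v_1^2$, which cannot lie in $2\mathbb Z+1$ because $v_1^2\not\equiv2\pmod4$; for $A_2$ it is $\tfrac23(v_1^2+v_1v_2+v_2^2)$, and integrality forces $3\mid v_1^2+v_1v_2+v_2^2$, which makes the value even. So all three are excluded, and only the $A_3$-chain and $A_1\sqcup A_1$ pass both tests, which is the claim.

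The main obstacle is the bookkeeping in the two reduction steps: the local-degree estimate for germs of finite morphisms between quotient singularities has to be applied with care to every combination of point types summing to degree $4$, and the "integral $\Rightarrow$ even" implication for the forms $-\vec v^{\,T}M^{-1}\vec v$ must be checked configuration by configuration — exactly the sort of finite but tedious case analysis that the authors elsewhere delegate to a computer. One should also double-check that the evenness of $E_1^2$ really holds in the reducible case of Lemma~\ref{key1}, since that is the single input feeding the parity contradiction.
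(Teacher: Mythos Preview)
Your proof is correct and takes a genuinely different route from the paper's.

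The paper works entirely with the pullbacks of the two Weil divisors $\bar e_1$ and $\bar e_4$: writing $\phi^*\bar e_1=E_1+x\theta_1+y\theta_2+z\theta_3$ and $\phi^*\bar e_4=E_4+u\theta_1+v\theta_2+w\theta_3$, it runs a case analysis on the dual graph $(\theta_1\theta_2,\theta_2\theta_3)\in\{0,1\}^2$ and uses the three equations $(\phi^*\bar e_1)^2=-1$, $(\phi^*\bar e_4)^2=0$, $(\phi^*\bar e_1)(\phi^*\bar e_4)=2$ together with parity of $E_1^2$ and $E_4^2$ to force, in the disconnected case, exactly two curves with $k_1=k_2=1$, $n_1=n_2=2$, $k_3=n_3=0$; the second divisor $\bar e_4$ is essential there to kill $3A_1$.

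You instead split the work: the singularity-theoretic step disposes of $3A_1$ and $A_2\sqcup A_1$ by the divisibility $4\mid (m{+}1)\deg_p\bar\phi$ for a finite germ from an $A_m$-point to an $A_3$-point, and then a clean parity argument on the single form $-\vec v^{\,T}M^{-1}\vec v=-1-E_1^2$ (odd, since $E_1^2$ is even in all three alternatives of Lemma~\ref{key1}) eliminates $\varnothing$, $A_1$, $A_2$. This is more conceptual and avoids $\bar e_4$ altogether. One small remark: your phrase ``lifting to the smooth covers'' is correct but terse; the cleanest justification is either the class-group/self-intersection computation (pulling back a Weil divisor $D$ on $\Sigma$ with $D^2=-\tfrac14$ gives $(\bar\phi^*D)^2=-\tfrac{\deg_p\bar\phi}{4}\in\tfrac1{m+1}\ZZ$) or the link argument (the composite $S^3\to L(m{+}1,1)\to L(4,1)$ has degree $(m{+}1)\deg_p\bar\phi$ and, since $\pi_1(S^3)=0$, lifts to $S^3\to S^3$ of integral degree $(m{+}1)\deg_p\bar\phi/4$). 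Either makes the divisibility immediate, and with that your argument is complete.
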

\begin{proof}
Set $\phi^*\bar{e_1} = E_1 + x\theta_1 + y\theta_2 +
z\theta_4$ and $\phi^*\bar{e_4} = D_4 + u\theta_1 + v\theta_2
+ w\theta_4$ where $\theta_i$ is either zero or a $(-2)$-curve
mapped to $Q$. Note that if $\theta_i$ is non-zero, then so is its coefficient appearing in $\phi^*\bar{e_j},j=1,4$. We set the coefficient of $\theta_i$ to be zero if $\theta_i =0$, so we can assume $\theta_i^2 = -2$ in the following calculations.
Set: $E_1\theta_i = k_i,E_4\theta_i = n_i$~for
$i=1,2,3$;~$\theta_1\theta_3 = 0,\theta_1\theta_2 =
\alpha,\theta_2\theta_3 = \beta$ where $\alpha,\beta=0~or~1$.

The equations $(\phi^*\bar{e_1})\theta_i = 0$ and $(\phi^*\bar{e_4})\theta_i =
0$ yield
\begin{align*}
2x         -\alpha y                &= k_1 &   & &    2u          -\alpha v          &= n_1\\
-\alpha x  +2y          -\beta z    &= k_2 &and& &    -\alpha u  + 2v        -\beta w  &= n_2\\
        -\beta y    +2z             &= k_3 &   & &            \beta v   +2w       &= n_3
\end{align*}

If $\alpha = \beta = 0$, then the equations $(\phi^*\bar{e_1})^2 = -1,
(\phi^*\bar{e_4})^2 = 0, (\phi^*\bar{e_1})(\phi^*\bar{e_4}) = 2$ yield
\begin{equation}
\begin{split}
E_1^2 + \frac{1}{2}(k_1^2 + k_2^2 + k_3^2) &= -1\\
E_4^2 + \frac{1}{2}(n_1^2 + n_2^2 + n_3^2) &= 0\\
E_1E_4 + \frac{1}{2}(n_1k_1 + n_2k_2 + n_3k_3) &= 2
\end{split}
\end{equation}
Since $E_1^2$ is even, the first equation implies two of
$k_1,k_2,k_3$ are odds, we may assume $k_1,k_2$ are odds, so
$\theta_1,\theta_2$ are non-zero, and $n_1, n_2>0$ since then $u,v>0$. Since $E_4^2$
is even, the second equation implies that $n_1, n_2, n_3$ are even
simultaneously. Since $E_1E_4 \geq 0$, the third equation implies
$k_1 = k_2 = 1, n_1 = n_2 = 2, k_3 = n_3 = 0$. Then we conclude
that that $\phi^{-1}Q$ is two disjoint $(-2)$-curves.

If $\alpha =1, \beta = 0$, then $(\phi^*\bar{e_1})^2 = -1$ yields:
$$E_1^2 + \frac{2}{3}(k_1^2 + k_2^2 + k_1k_2) + \frac{1}{2}k_3^2=
-1$$ To guarantee the left hand side is an integer, $k_3$ should be even,
but then the left side of the equation is even.

If $\alpha = \beta = 1$, then $\phi^{-1}(Q)$ is composed with 3
$(-2)$-curves and $\theta_1\theta_2 = \theta_2\theta_3 = 1$, and we are done.
\end{proof}

\begin{itemize}
\item[Case 1:]{$\phi^{-1}(Q)$ is composed with 3 $(-2)$-curves.}
\end{itemize}

By $-K_{\Sigma} \equiv 2\bar{l_2} + \bar{e_4}$, we may write $2K_S
= 2L_2 + E_4 + Z$, and write $Z = 2Z' + Z''$ where $Z''$ is reduced.

First if $E_4$ is non-reduced, we may write $E_4 =
2E_4'$. Consider the double cover of $S$ given by the relation $2(K_S - L_2 - E-4' - Z') \equiv Z''$.
Note that $\bar{E_4} = 2\bar{E_4}'$, so $\bar{E_4}'\leq \bar{R}$. Then arguing as Section \ref{1} case 1, we get a contradiction.

Now we assume that $E_4$ is reduced. Since $\bar{e_4}^2 = 0$ and $\phi^*\bar{e_4}$ contains $(-2)$-curves, so $E_4^2 < 0$. Then Lemma~\ref{key1} tells that
\begin{equation}\label{51}
E_4^2 = -2 ~or~
-4~or~-6.
\end{equation}
Assume $Z = a\theta_1 + b\theta_2 + c\theta_3$ and $a\geq c$. Then equation $2K_S
= 2L_2 + E_4 + Z$ intersecting $E_4$ yields $0< E_4Z = 4 - E_4^2 - 2L_2E_4\leq 10$ which implies
\begin{equation}\label{52}
L_2E_4 < 5, min\{a,b,c\}\leq 10,a\geq c
\end{equation}
By $Z \equiv 2K_S- 2L_2 - E_4$, calculating $Z^2$, we get that
\begin{equation}\label{53}
-\frac{Z^2}{2} = a^2 +
b^2 + c^2 - ab - bc = 10 - 2L_2^2 - 2L_2E_4 - \frac{E_4^2}{2}
\end{equation}
And $Z\theta_i \leq 0,i=1,2,3$ yields that
\begin{equation}\label{54}
2a\geq
b, 2b\geq a + c, 2c \geq b
\end{equation}

Note that since $L_2^2 < 4\bar{l_2}^2 = 3$, we get
\begin{equation}\label{55}
L^2 = 0~or~2
\end{equation}

Then by \ref{51},\ref{52},\ref{53},\ref{54} and \ref{55},
going a computer program, we get the following possibilities:
\begin{align*}
&a  &  &b  &   &c  &  &L_2^2  &  &L_2E_4    &  &E_4^2    &  &E_4Z \\
&1  &  &1  &   &1  &  &2      &  &4         &  &-6       &  &2     \\
&1  &  &2  &   &1  &  &2      &  &3         &  &-4       &  &2      \\
&2  &  &2  &   &1  &  &2      &  &2         &  &-2       &  &2      \\
&2  &  &3  &   &2  &  &2      &  &1         &  &-2       &  &4      \\
&3  &  &2  &   &1  &  &0      &  &3         &  &-4       &  &2      \\
&3  &  &2  &   &1  &  &2      &  &1         &  &-4       &  &6      \\
&3  &  &3  &   &2  &  &2      &  &0         &  &-2       &  &6      \\
&3  &  &3  &   &2  &  &0      &  &2         &  &-2       &  &2      \\
&3  &  &3  &   &3  &  &0      &  &1         &  &-2       &  &4      \\
&3  &  &4  &   &2  &  &0      &  &1         &  &-2       &  &4      \\
&3  &  &4  &   &3  &  &0      &  &1         &  &-4       &  &2      \\
&4  &  &3  &   &2  &  &0      &  &0         &  &-2       &  &6      \\
&2  &  &4  &   &2  &  &0      &  &2         &  &-4       &  &4      \\
&4  &  &4  &   &2  &  &0      &  &0         &  &-4       &  &8      \\
&2  &  &4  &   &2  &  &2      &  &0         &  &-4       &  &8      \\
&3  &  &2  &   &1  &  &3      &  &0         &  &-4       &  &8      \\
&2  &  &2  &   &1  &  &2      &  &3         &  &-6       &  &4      \\
&3  &  &4  &   &2  &  &0      &  &2         &  &-6       &  &6     \\
&3  &  &4  &   &2  &  &2      &  &0         &  &-6       &  &10      \\
&4  &  &4  &   &3  &  &0      &  &0         &  &-6       &  &10      \\
\end{align*}
Consider the double cover $\pi:Y \rightarrow S$
branched over $E_4$ and $Z''$ given by $2(K_S - L_2 - Z') \equiv
E_4 + Z''$ and check that $E_4+Z''$ has at most negligible singularities.
So $Y$ has at most canonical singularities, applying Formula \ref{tool}, we get
$$\chi(Y) = 2 + \frac{1}{2}(K_S +
\frac{E_4 + Z''}{2})\frac{E_4 + Z''}{2} = 2 + \frac{1}{2}(1 +
\frac{E_4^2 + Z''^2 + 2E_4Z''}{2}),$$ $$p_g(Y) = h^0(S, 2K_S - L_2
- Z') = h^0(S, L_2 + Z' + E_4 + Z'') \geq 2.$$

\begin{Claim} $\chi(Y) \leq 2$ and $q(Y)\geq 1$.\end{Claim}
\begin{proof}
We only need to show that $\frac{1}{2}(1 + \frac{E_4^2 + Z''^2 +
2E_4Z''}{4}) < 1$, i.e., $E_4^2 + Z''^2 + 2E_4Z'' < 4$. If $Z'' =
0$, then we are done. Now we assume $Z'' \neq 0$, then $Z''^2
= -2~or~-4$, so it suffices to show that $E_4Z''< 3 -
\frac{E_4^2}{2}$. Observing that $E_4Z'' \leq \frac{E_4Z}{min\{a,b,c\}}$,
we exclude all the possibilities listed above except the case $Z = 3\theta_1 + 2\theta_2 + \theta_3$, $E_4Z =
6$ and $Z'' = \theta_1 + \theta_3$. Immediately we get
$Z\theta_3=0$ which implies that $E_4\theta_3 = 0$. By $E_4Z = 6$,
we have $E_4\theta_1 \leq 2$, hence $E_4Z'' \leq 2 <4$. Then the
claim holds true.
\end{proof}
Denote by $\alpha: Y \rightarrow B$ the Albanese map to the
image.
By Proposition~\ref{A.i}, we get the following commutative
diagram:
\[\begin{CD}
Y          @>\pi>>          S \\
@V\alpha VV                @VgVV \\
B        @>\pi'>>      \mathbb{P}^1
\end{CD} \]
where $\pi':B
\rightarrow \mathbb{P}^1$ is a double cover. Checking that $(E_4 + Z'')\theta_i \leq 3$, we can see the inverse
image $\phi^{-1}\theta_i$ is composed with 1 or 2 rational curves
for $i = 1,2,3$, hence $\theta_i$ is contained in one
fiber of $g$. By Remark~\ref{rpnf}, applying
Proposition~\ref{fibers}, the fibration $g: S \rightarrow
\mathbb{P}^1$ is given by the pencil $|F| = \phi^*|\bar{l_1}|$.
Let $P = g(E_4)$, then $g^*P_4 = 2E_4 + D$ since $2\bar{e_4}
\equiv \bar{l_1} - \bar{e_1}$. Let $Z = B\times_{\mathbb{P}^1}S$,
and let $\tilde{Z}\rightarrow Z$ be the normalization. Since $g^*P
= 2E_4 + D$ and $\pi'$ is branched over $P$, the map $p: \tilde{Z}
\rightarrow S$ is not branched over $E_4$. There is a morphism $h:
Y\rightarrow \tilde{Z}$ such that $\pi = p\circ h$. Note that $h:
Y\rightarrow Z$ is birational morphism since both $\pi$ and $p$
have the same degree 2. So the map $\pi: Y\rightarrow S$ is not
branched over $E_4$ which contradicts the construction. Therefore
this case does not occur.

\begin{itemize}
\item[Case 2:]{$\phi^{-1}Q$ is composed with two disjoint $(-2)$-curves.}
\end{itemize}

Assume that $\phi^{-1}Q = \theta_1 \cup \theta_2$ where $\theta_1
$ and $\theta_2$ are two disjoint $(-2)$-curves. Remark that there might be another
$(-2)$-curve $\theta_3$ disjoint from $\theta_1,\theta_2$ mapped to $\bar{e_1}$ via $\phi$.

Subcase 1: There is no such a $(-2)$-curve $\theta_3$ that is disjoint from $\theta_1,\theta_2$ and mapped to $\bar{e_1}$ via $\phi$.

By the proof of the claim \ref{nofc}, we have $\phi^*\bar{e_1} =
E_1 + \frac{1}{2}(\theta_1 + \theta_2)$ where $a$ is an integer
and is set to be zero if there is no such a $(-2)$-curve, $\phi^*\bar{l_2} = L_2 +
\frac{1}{2}(\theta_1 + \theta_2)$, $E_1^2 = -2$, $L_2^2 = 2$ and
$E_1L_2 = 0$, in particular, $E_1$ is reduced.

By $-K_{\hat{P}} \equiv 3l - e_1 - e_2 - 2e_3 - 3e_4 \equiv 3l_2 -
e_1 + 2e_2 + e_3,$ we have $-K_{\Sigma} \equiv 3\bar{l_2} -
\bar{e_1}$. So we have $2K_S \equiv 3L_2 - E_1 +  \theta_1 +
 \theta_2$, thus $2(K_S - L_2 +E_1) \equiv L_2 + E_1 +  \theta_1 +
  \theta_2$. In turn we get a double cover $\pi: Y \rightarrow S$. Similarly, check that $Y$ has at most canonical singularities.
Calculating the invariants of $Y$, we obtain
$$\chi(Y) = 4,~~p_g(Y) \geq 4,~~q(Y)\geq 1.$$ Note that $p_g(Y) \geq 4$ is due to $p_g(Y) \geq h^0(\mathcal{O}_\Sigma(2\bar{l_2})) = h^0(\hat{P}, 2l - e_2 - e_3 -2e_4) = 4$. By
Proposition~\ref{A.i}, we get the following commutative diagram:
\[\begin{CD}
Y          @>\pi>>          S \\
@V\alpha VV                @VgVV \\
B        @>\pi'>>      \mathbb{P}^1
\end{CD} \]
where $\alpha: Y \rightarrow B$ is the Albanese map and $\pi':B
\rightarrow \mathbb{P}^1$ is a double cover. $L_2$ must be
contained in one fiber, but this contradicts $L_2^2 = 2 > 0$. So
this case does not occur.

Subcase 2: There is a $(-2)$-curve $\theta_3$ disjoint from $\theta_1,\theta_2$ mapped to $\bar{e_1}$ via $\phi$.

Note that $\theta_3$ is not mapped to $\bar{e_4}$ and
$\bar{l_2}$, so
$\phi^*\bar{e_4} = E_4 + (\theta_1 + \theta_2)$ and
$\phi^*\bar{l_2} = L_2 + \frac{1}{2}(\theta_1 + \theta_2)$. Therefore we get the ralation $2K_S \equiv 2L_2 + E_4 + 2(\theta_1 + \theta_2)$.
Then similar argument as in Case 1 shows that this case
does not occur.

In conclusion, we prove that $\hat{P} \neq \hat{P}_5$

\subsubsection{The case $\hat{P} = \hat{P}_6$}\label{6}

\begin{Notation}Let $Q$ be the $A_4$-singularity on $\Sigma$. We denote
by $l_1$ the strict transform of a general line through $P_1$ and
by $c$ the strict transform of the line through $P_1, P_2, P_3$.
\end{Notation}

Immediately it follows that $l_1 \equiv l  - e_1 - e_2 - e_3 -e_4$ and $c \equiv l -
e_1 - 2e_2 - 3e_3 - 3e_4$.
Then we have: $$-K_{\hat{P}} \equiv 3l - e_1 - 2e_2 -3e_3 -4e_4
\equiv 2l_1 + e_4 + c +2e_1 + 2e_2 + 2e_3$$ and
$$-K_{\Sigma}
\equiv 2\bar{l_1} + \bar{e_4}
$$
About the $(-2)$-curves on $S$, we have the following claim:
\begin{Claim} $\phi^{-1}Q$ is composed with 4 $(-2)$-curves: $\theta_1,\theta_2, \theta_3,\theta_4$ such that
$\theta_i\theta_{i+1} = 1$~for~$i=1,2,3$.
\end{Claim}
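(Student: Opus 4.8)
The plan is to mimic the argument of Claim~\ref{layoutof(-2)-curves} (the case $\hat P_4$), exploiting that $\Sigma$ now carries an $A_4$ singularity, whose local divisor class group at $Q$ is $\mathbb{Z}/5\mathbb{Z}$. First I would record the geometry of $\hat P_6$: its $(-2)$-curves are exactly $e_1,e_2,e_3$ and $c$, with $e_1e_2=e_2e_3=e_3c=1$ and all other products zero, i.e. they form a single $A_4$ chain; hence $\psi$ contracts precisely these four curves, $Q$ is the only singular point of $\Sigma$, and $\rho(\Sigma)=\rho(\hat P_6)-4=1$, so $\rho(S)-\rho(\Sigma)=4$ and $S$ carries at most four $(-2)$-curves. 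Using the inverse of the $A_4$ Cartan matrix one computes $\psi^*\bar e_4=e_4+\tfrac25e_1+\tfrac45e_2+\tfrac65e_3+\tfrac35c$, whence $\bar e_4^{\,2}=1/5$ and therefore $(\phi^*\bar e_4)^2=4\bar e_4^{\,2}=4/5$.

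Next, write $\phi^*\bar e_4=E_4+\sum_i x_i\theta_i$, where $E_4$ is the strict transform of $\bar e_4$ under $\phi$, the $\theta_i$ are the $(-2)$-curves of $S$ occurring in the pull-back, and $x_i\in\mathbb{Q}_{>0}$; note that $\phi$ contracts exactly the $(-2)$-curves of $S$, since $\phi=\bar\phi\circ\eta$ with $\bar\phi$ finite and $\eta$ the contraction of those curves. Because $\bar e_4$ is $\mathbb{Q}$-Cartier and trivial in a neighbourhood of the image of each $\theta_i$ (a rational singularity or a smooth point of $\Sigma$), we get $\phi^*\bar e_4\cdot\theta_i=0$ for every $(-2)$-curve $\theta_i$. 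This yields the linear system $\Theta\,(x_i)^{t}=-(E_4\theta_i)^{t}$, where $\Theta=(\theta_i\theta_j)$ is the (negative definite) intersection matrix of the $\theta_i$; hence each $x_i\in\frac{1}{|\det\Theta|}\mathbb{Z}$ and $(\phi^*\bar e_4)^2=E_4^2+\sum_i x_i(E_4\theta_i)\in\frac{1}{|\det\Theta|}\mathbb{Z}$. Comparing with $(\phi^*\bar e_4)^2=4/5$ forces $5\mid\det\Theta$.

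Finally, $\Theta$ is a negative definite symmetric integral matrix with $-2$ on the diagonal and entries in $\{0,1\}$ off the diagonal (two distinct $(-2)$-curves meet in at most one point, else $(\theta_i+\theta_j)^2\ge0$ would contradict negative definiteness of the contracted configuration on $\bar S$), and its size is at most $\rho(S)-\rho(\Sigma)=4$. By the classification of such matrices as orthogonal sums of $ADE$ Cartan matrices, the only possibility with $5\mid\det$ of size $\le 4$ is the $A_4$ Cartan matrix, of determinant $5$. Thus $S$ has exactly four $(-2)$-curves $\theta_1,\dots,\theta_4$, all occurring in $\phi^*\bar e_4$ and forming an $A_4$ chain $\theta_i\theta_{i+1}=1$ (other products zero); since they are connected their common image is one point of $\Sigma$ lying on $\bar e_4$, and it must be $Q$, for at a smooth point $\bar e_4$ would be Cartier and the $x_i$ would be integers. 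Hence $\phi^{-1}(Q)=\theta_1\cup\theta_2\cup\theta_3\cup\theta_4$, proving the claim.

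The step requiring the most care is the passage from $(\phi^*\bar e_4)^2=4/5$ to $5\mid\det\Theta$ — in particular the justification that $\phi^*\bar e_4\cdot\theta_i=0$ and that the $x_i$ have denominator dividing $\det\Theta$ — together with the use of the $ADE$ classification under the rank bound $\le 4$. Here the fact that $5$ is prime is what makes the configuration rigid, in contrast with the composite determinant $4$ in the $A_3$ case of Section~\ref{5}, where the analogue of this argument leaves open both the $A_3$ and the $A_1\oplus A_1$ configurations.
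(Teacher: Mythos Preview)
Your argument is correct and follows essentially the same route as the paper. The paper's proof simply cites ``similar argument as in the proof of Claim~\ref{layoutof(-2)-curves}'' using $\bar l_1^{\,2}=4/5$, whereas you use $\bar e_4^{\,2}=1/5$; both give a denominator $5$ in $(\phi^*\,\cdot\,)^2$, and your explicit appeal to the $ADE$ classification (in place of a bare-hands Cramer computation as in the $\hat P_4$ case) is a clean way to finish under the rank bound $\le 4$.
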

\begin{proof}
Considering the self-intersection $\bar{l_1}^2 = \frac{4}{5}$,
then the claim follows from similar argument as in the proof of
Claim~\ref{layoutof(-2)-curves} in Section~\ref{4}.
\end{proof}

By the claim, $\hat{P}$ and $S$ has the same Picard
number. Denote by $L_1, E_4$ the strict
transforms of $\bar{l_1}, \bar{e_4}$ respectively. We can write
$2K_S = 2L_1 + E_4 + Z$, and assume $Z = a\theta_1+ b\theta_2 +
c\theta_3 + d\theta_4$ where $a,b,c,d$ are positive integers and $a\geq d$.

\begin{Claim} There exists a fibration $g: S \rightarrow \mathbb{P}^1$ such that
every $(-2)$-curve is contained in a fiber.\end{Claim}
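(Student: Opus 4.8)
The plan is to mimic the double-cover argument of Section~\ref{1} (Case~1) and its $\hat{P}_3$ analogue in Section~\ref{3}. Starting from the relation $2K_S\equiv 2L_1+E_4+Z$ with $Z=a\theta_1+b\theta_2+c\theta_3+d\theta_4$ (all coefficients positive, since $\bar{l_1}$ and $\bar{e_4}$ pass through the $A_4$-point $Q$), I would write $Z=2Z'+Z''$ with $Z''$ reduced and split into the two cases of Lemma~\ref{key1}. If $E_4=2E_4'$ is non-reduced, the relation $2(K_S-L_1-E_4'-Z')\equiv Z''$ gives a double cover $\pi\colon Y\to S$ branched over $Z''$, which, being supported on a sub-configuration of the $A_4$-chain, has at most nodes, so $Y$ has canonical singularities; integrality of $\chi(\mathcal O_Y)=2+\tfrac{1}{8}(Z'')^2$ (note $K_SZ''=0$) then forces $Z''=0$. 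If $E_4$ is reduced, the relation $2(K_S-L_1-Z')\equiv E_4+Z''$ gives a double cover branched over $E_4+Z''$, and one checks, using the possible shapes of $E_4$ from Lemma~\ref{key1} ($E_4^2\ge -6$, and $E_4$ either irreducible of arithmetic genus $\le 1$ or a union of two smooth rational $(-3)$-curves) together with the bound $E_4\theta_i\le E_4Z/\min\{a,b,c,d\}$ obtained from $E_4Z=4-E_4^2-2L_1E_4$, that $E_4+Z''$ has only negligible singularities; integrality of $\chi(\mathcal O_Y)$ again cuts the data down to a short explicit list.

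In every case I would compute $\chi(\mathcal O_Y)$ from formula~\ref{tool} and bound $p_g(Y)$ from below by pulling sections back through $\phi$ and $\psi$ from $\hat{P}$ via Lemma~\ref{pullback}, exactly as in Section~\ref{1}; this gives $p_g(Y)\ge 2$, and together with $\chi(\mathcal O_Y)\le 2$ it yields $q(Y)\ge 1$. By Proposition~\ref{A.i} (applicable to canonical singularities by the Remark following Proposition~\ref{C.i}) the Albanese image of $Y$ is a curve $B$ of positive genus and there is a fibration $g\colon S\to\mathbb P^1$ together with a double cover $\pi'\colon B\to\mathbb P^1$ with $\pi'\circ\alpha=g\circ\pi$. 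Finally, checking that $(E_4+Z'')\theta_i$ (resp. $Z''\theta_i$) is at most $3$ shows that $\pi^{-1}(\theta_i)$ is a union of rational curves; since $B$ has positive genus these are contracted by the Albanese map $\alpha$, hence $\pi^{-1}(\theta_i)$ lies in a fibre of $\alpha$ and $\theta_i$ lies in a fibre of $g$. As $\rho(S)-\rho(\Sigma)=4$ forces $\theta_1,\theta_2,\theta_3,\theta_4$ to be all the $(-2)$-curves on $S$, this proves the claim.

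I expect the reduced-$E_4$ case to be the main obstacle: one must eliminate, via the integrality of $\chi(\mathcal O_Y)$ and the singularity type of the branch divisor $E_4+Z''$, every tuple $(a,b,c,d,L_1^2,L_1E_4,E_4^2)$ incompatible with $q(Y)\ge 1$ or with $(E_4+Z'')\theta_i\le 3$. This is the $\hat{P}_6$-counterpart of the computer-assisted tables of Sections~\ref{4} and~\ref{5}, but because the $A_4$-chain is rigid (a reduced subchain has self-intersection only $0$, $-2$ or $-4$) the resulting case list should be short enough to dispatch by hand, so that the double cover always supplies the desired fibration $g$.
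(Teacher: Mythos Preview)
Your proposal is correct and follows the same double-cover strategy as the paper: split into the non-reduced case (where $2(K_S-L_1-E_4'-Z')\equiv Z''$ and integrality of $\chi(\mathcal O_Y)=2+\tfrac{1}{8}(Z'')^2$ forces $Z''=0$) and the reduced case (where $2(K_S-L_1-Z')\equiv E_4+Z''$), then in each case use Proposition~\ref{A.i} to produce the fibration $g$ and check $(E_4+Z'')\theta_i\le 3$ so the $(-2)$-curves lie in fibers. The paper refers back to the argument of Section~\ref{5} Case~1 and carries out exactly this program.

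One caveat: your expectation that the reduced-$E_4$ case list is ``short enough to dispatch by hand'' is too optimistic. The paper lists 43 tuples $(a,b,c,d,L_1^2,L_1E_4,E_4^2,E_4Z)$ satisfying the constraints $Z\theta_i\le 0$, $a^2+b^2+c^2+d^2-ab-bc-cd=10-2L_1^2-2L_1E_4-\tfrac{1}{2}E_4^2$, $L_1^2\in\{0,2\}$, $E_4^2\in\{-2,-4,-6\}$, and $\min\{a,b,c,d\}$ bounded via $E_4Z$---more cases than in either the $\hat P_4$ or $\hat P_5$ tables, and the paper explicitly resorts to a computer enumeration. The verification that $E_4+Z''$ has only negligible singularities and that $\chi(\mathcal O_Y)\le 2$ then proceeds case by case exactly as in Section~\ref{5}; the rigidity of the $A_4$-chain governs the possible values of $(Z'')^2$ but does not by itself shorten the list of admissible coefficient vectors $(a,b,c,d)$.
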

\begin{proof}
Case 1: $E_4$ is reduced.

Arguing as in Section \ref{5} Case 1, for the readers' convenience, we list all the possibilities for $Z$ and the the
intersection numbers of the divisors involved in the proof:
\begin{align*}
&a  &  &b  &   &c  &  &d  &  &L_1^2  &  &L_1E_4    &  &E_4^2    &  &E_4Z \\
&1  &  &2  &   &2  &  &1  &  &2      &  &3         &  &-4       &  &2     \\
&2  &  &2  &   &2  &  &1  &  &2      &  &2         &  &-2       &  &2     \\
&2  &  &2  &   &2  &  &2  &  &2      &  &2         &  &-4       &  &4     \\
&2  &  &3  &   &2  &  &1  &  &2      &  &2         &  &-4       &  &4     \\
&2  &  &3  &   &3  &  &2  &  &2      &  &1         &  &-2       &  &4     \\
&2  &  &3  &   &4  &  &2  &  &0      &  &2         &  &-2       &  &2     \\
&2  &  &3  &   &4  &  &2  &  &2      &  &0         &  &-2       &  &6     \\
&2  &  &4  &   &3  &  &2  &  &2      &  &0         &  &-2       &  &6     \\
&2  &  &4  &   &4  &  &2  &  &0      &  &2         &  &-4       &  &4     \\
&2  &  &4  &   &4  &  &2  &  &2      &  &0         &  &-4       &  &8     \\
&3  &  &3  &   &2  &  &1  &  &0      &  &3         &  &-4       &  &2     \\
&3  &  &3  &   &2  &  &1  &  &2      &  &1         &  &-4       &  &6     \\
&3  &  &3  &   &3  &  &2  &  &0      &  &2         &  &-2       &  &2     \\
&3  &  &3  &   &3  &  &3  &  &0      &  &1         &  &-2       &  &4     \\
&3  &  &4  &   &3  &  &2  &  &2      &  &0         &  &-4       &  &8     \\
&3  &  &4  &   &3  &  &2  &  &0      &  &2         &  &-4       &  &4     \\
&3  &  &4  &   &4  &  &2  &  &0      &  &1         &  &-2       &  &4     \\
&3  &  &4  &   &4  &  &3  &  &0      &  &1         &  &-4       &  &6     \\
&3  &  &4  &   &5  &  &3  &  &0      &  &0         &  &-4       &  &8     \\
&3  &  &5  &   &4  &  &2  &  &0      &  &0         &  &-2       &  &6     \\
&3  &  &5  &   &4  &  &3  &  &0      &  &0         &  &-4       &  &8     \\
&4  &  &3  &   &2  &  &1  &  &0      &  &1         &  &-4       &  &6     \\
&4  &  &4  &   &3  &  &2  &  &0      &  &0         &  &-2       &  &6     \\
&4  &  &4  &   &4  &  &2  &  &0      &  &0         &  &-4       &  &8     \\
&2  &  &3  &   &3  &  &2  &  &0      &  &4         &  &-6       &  &2     \\
&1  &  &1  &   &1  &  &1  &  &2      &  &4         &  &-6       &  &2     \\
&2  &  &2  &   &2  &  &1  &  &2      &  &3         &  &-6       &  &4     \\
&2  &  &3  &   &3  &  &2  &  &2      &  &2         &  &-6       &  &6     \\
&2  &  &3  &   &4  &  &2  &  &0      &  &3         &  &-6       &  &4     \\
&2  &  &3  &   &4  &  &2  &  &2      &  &1         &  &-6       &  &8     \\
&2  &  &4  &   &3  &  &2  &  &0      &  &3         &  &-6       &  &4     \\
&2  &  &4  &   &3  &  &2  &  &2      &  &1         &  &-6       &  &8     \\
&3  &  &3  &   &3  &  &2  &  &0      &  &3         &  &-6       &  &4     \\
&3  &  &3  &   &3  &  &2  &  &2      &  &1         &  &-6       &  &8     \\
&3  &  &3  &   &3  &  &3  &  &0      &  &2         &  &-6       &  &6     \\
&3  &  &3  &   &3  &  &3  &  &2      &  &0         &  &-6       &  &10     \\
&3  &  &4  &   &4  &  &2  &  &0      &  &2         &  &-6       &  &6    \\
&3  &  &4  &   &4  &  &2  &  &2      &  &0         &  &-6       &  &10     \\
&3  &  &5  &   &4  &  &2  &  &0      &  &1         &  &-6       &  &8    \\
&3  &  &5  &   &5  &  &3  &  &0      &  &0         &  &-6       &  &10     \\
&4  &  &4  &   &3  &  &2  &  &0      &  &1         &  &-6       &  &8    \\
&4  &  &4  &   &4  &  &3  &  &0      &  &0         &  &-6       &  &10     \\
&4  &  &5  &   &4  &  &2  &  &0      &  &0         &  &-6       &  &10    \\
\end{align*}
Then going similar progress as in Section~\ref{5} Case 1,
we get a fibration $g: S \rightarrow \mathbb{P}^1$ such that
every $(-2)$-curve is contained in a fiber.

Case 2: $E_4$ is non-reduced.

Considering the relation $2K_S = 2L_1 + E_4 + Z$, using double cover trick, it is easy to prove that there exists such a
fibration.
\end{proof}
By the claim, Remark~\ref{rpnf} tells that there is a fibration
$\hat{g}: \hat{P} \rightarrow \mathbb{P}^1$ of $\hat{P}$ such that
every $(-2)$-curve is contained in one fiber. Then a contradiction follows from
Proposition~\ref{fibers}, and we are done.

\subsubsection{Conclusion}
Finally, we complete the proof of Theorem \ref{Theorem B}.


\begin{thebibliography}{1dffs}
\bibitem[BC]{BC} I.Bauer, F.Catanese, Burniat surfaces II: secondary Burniat surfaces
form three connected components of the moduli
space, Invent.Math. 180 (2010), 559¨C588.
\bibitem[BPV]{BPV} W. Barth, C. Peters, A. Van de Ven, Compact complex
surfaces, Ergebnisse der Mathematik und ihrer Grenzgebiete, 3.
Folge, Band 4, Springer-Verlag, Berlin (1984).
\bibitem[Bu]{Bu} P. Burniat, Sur les surfaces de genre $P_{12} > 0$, Ann. Mat.
Pura Appl., IV Ser., 71 (1966), 1-24.
\bibitem[Cat]{Cat} F. Catanese, Singular bidouble covers and the construction
of interesting algebraic surfaces, in Algebraic Geometry:
Hirzebruch 70, A.M.S. Contemporary Mathematics, vol. 241 (1999),
97-120.
\bibitem[CC]{CC} F. Catanese, C. Ciliberto, On the irregularity of cyclic
coverings of algebraic varieties, Geometry of Complex Projective
Varieties - Ce- traro (Italy), June 1990, (A. Lanteri, M.
Palleschi, D. C. Struppa eds.), Mediterranean Press (1993),
89¨C116.
\bibitem[Har]{Har} Hartshorne, Algebraic geometry, Springer Verlag, Berlin (1987).
\bibitem[M]{M} M. Mendes Lopes, The degree of the generators of the canonical
ring of surfaces of general type with $p_g = 0$, Arch. Math. 69
(1997), 435-440.
\bibitem[MP1]{MP2} M. Mendes Lopes, R. Pardini, A connected component of the
moduli space of surfaces of general type with $p_g = 0$, Topology
40 (5) (2001), 977-991.
\bibitem[MP2]{MP3} M. Mendes Lopes, R. Pardini, The bicanonical map of surfaces
with $p_g = 0$ and $K^2 \geq 7$,  Bull. London Math. Soc. 33
(2001) 337-343.
\bibitem[MP3]{MP4} M. Mendes Lopes, R. Pardini,  A survey on the bicanonical
map of surfaces with $p_g=0$ and $K^2 \geq 2$, Algebraic Geometry.
A volume in memory of Paolo Francia, Walter de Gruyter, (2002)
(math.AG/0202187).
\bibitem[MP4]{MP5} M. Mendes Lopes, R. Pardini The classification of surfaces
with $p_g=0$, $K^2=6$ and non birational bicanonical map, Math.
Ann. 329 (2004) 535-552 (math.AG/0301138).
\bibitem[MP5]{MP6} M. Mendes Lopes, R. Pardini The degree of the bicanonical
map of the surface $p_g=0$, Proc. Amer. Math. Soc,135:55 (2007),
1279-1282 (math.AG/0505231v2).
\bibitem[Miy]{Miy} Y. Miyaoka, The maximal number of quotient singularities on
surfaces with given numerical invariants, Math. Ann., 268 (1973),
159-171.
\bibitem[Na]{Na} M. Nagata, On rational surfaces I, Mem. Coll. Sci., U. of Kyoto, ser.
A, vol. XXXII, Mathematics No. 3 (1960), 35-370.
\bibitem[Par1]{Par1} R. Pardini, Abelian covers of algebraic varieties, J. reine
angew. Math. 417 (1991), 191-213.
\bibitem[Par2]{Par2} R. Pardini, The classification of double planes of general
type with $K^2 = 8$ and $p_g = 0$, J. of Algebra 259 (2003),
95-118 (math.AG/0107100).
\bibitem[Pet]{Pet} C. Peters, On certain examples of surfaces with $p_g = 0$ due
to Burniat, Nagoya Math. J., Vol. 166 (1977), 109-119.
\bibitem[Re]{Re} I. Reider, Vector bundles of rank 2 and linear systems on
algebraic surfaces, Ann. of Math., 127 (1988), 309-316.
\bibitem[X1]{X1} G. Xiao, Finitude de l¡¯application bicanonique des surfaces
de type g¡äen¡äeral, Bull. Soc. Math. France, 113 (1985), 23-51.
\bibitem[X2]{X2} G. Xiao, Degree of the bicanonical map of a surface of
general type, Amer. J. of Math., 112 (5) (1990), 713-737.

\end{thebibliography}
\end{document}